\documentclass[11pt,letterpaper]{amsart}

\usepackage{fullpage,url,amssymb}
\usepackage[pdfencoding=auto,psdextra,colorlinks=true, citecolor=blue,bookmarksdepth=10]{hyperref}
\usepackage{bookmark}
    \usepackage{amssymb,amsrefs}
\usepackage{amsmath}
\usepackage{amscd,latexsym}
\usepackage{bookmark}
 \usepackage{amssymb,amsfonts,bm}
\usepackage[all,arc]{xy}
\usepackage{enumerate}
\usepackage{mathrsfs}
\usepackage{amscd}
\usepackage{tikz}
\usepackage{tikz-cd}
\usepackage{amsmath}
\usepackage{latexsym}
\usepackage{mathdots}
\usepackage{amsrefs}
\usepackage{graphicx}
\usepackage{mathtools}
\usepackage{leftidx}
\usepackage{tensor}
\usepackage{dsfont}
\usepackage{tikz-cd} 
\usepackage[new]{old-arrows}
\usepackage{xcolor}
\usepackage{bbm}
\usepackage{enumitem}

\theoremstyle{plane}
\newtheorem{theorem}{Theorem}[section]
\newtheorem{lemma}[theorem]{Lemma}
 \newtheorem{corollary}[theorem]{Corollary}
      \newtheorem{proposition}[theorem]{Proposition}

        \newtheorem{theorem*}{Theorem}

\theoremstyle{definition}
\newtheorem*{definition*}{Definition}
\newtheorem{definition}[theorem]{Definition}

\theoremstyle{remark}

\newtheorem*{acknowledgments}{Acknowledgments}

\numberwithin{equation}{section}



\begin{document}

\title{Rankin-Selberg integrals for local symmetric square factors on $GL\mathrm{(2)}$}
\author{Yeongseong Jo}
\address{Department of Mathematics, The University of Iowa, Iowa City, IA 52242, USA}
\email{jo.59@buckeyemail.osu.edu}




\subjclass[2020]{Primary ; 11F70, Secondary ; 11F66, 11F85, 22E50}



\keywords{Exceptional poles and Bernstein-Zelevinsky derivatives, Howe vectors, Local symmetric square $L$-functions, Stability of symmetric square $\gamma$-factors}

\begin{abstract}
Let $\pi$ be an irreducible admissible (complex) representation of $GL(2)$ over a non-archimedean characteristic zero local field with odd residual characteristic. In this paper we prove the equality between the local symmetric square $L$-function associated to $\pi$ arising from integral representations and the corresponding Artin $L$-function for its Langlands parameter through the local Langlands correspondence. With this in hand, we show the stability of local symmetric $\gamma$-factors attached to $\pi$ under highly ramified twists.
\end{abstract}

\maketitle



\section{Introduction}

Let $F$ be a $p$-adic field with $p \neq 2$. We study the Rankin-Selberg integral of the local symmetric square $L$-functions for an irreducible admissible (complex) representation $\pi$ of $GL_2(F)$, introduced by Yamana \cite{Ya17}. The ultimate goal of this paper is twofold. The first aim is to show that the automorphic symmetric square $L$-function attached to $\pi$ by the theory of integral representations is equal to the corresponding Artin $L$-function for its Langlands parameter via the local Langlands correspondence.

\par
To elaborate our result more rigorously, let $q$ be the cardinality of the residue field of $F$. The local symmetric square $L$-function $L(s,\pi,\mathrm{Sym}^2)$ is defined as the unique normalized generator of a $\mathbb{C}[q^{\pm s/2}]$-fractional ideal spanned by Rankin-Selberg integrals for the space of good sections. The generator is oftentimes referred as the greatest common divisor (gcd) \cite{Kaplan13}. Let $\mathrm{Sym}^2:GL_2(\mathbb{C}) \rightarrow GL_3(\mathbb{C})$ be the symmetric square representation. We can relate to the local symmetric square $L$-function $L(s,\mathrm{Sym}^2(\rho(\pi)))$ of Artin type, where $\rho$ stands for the local Langlands correspondence.

\begin{theorem*}[The equality]
\label{main1} Let $\pi$ be an irreducible admissible representation of $GL_2(F)$ and $\rho(\pi)$ the associated Langlands parameter. Then we have
\[
  L(s,\pi,\mathrm{Sym}^2)=L(s,\mathrm{Sym}^2(\rho(\pi))).
\]
\end{theorem*}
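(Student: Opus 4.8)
The plan is to run a case analysis along the Bernstein--Zelevinsky classification of $\mathrm{Irr}(GL_2(F))$: every irreducible admissible $\pi$ is supercuspidal, an irreducible principal series $\mathrm{Ind}(\chi_1,\chi_2)$, a twisted Steinberg $\mathrm{St}_2\otimes\chi$, or one-dimensional $\chi\circ\det$. For each type I would identify the $\mathbb{C}[q^{\pm s/2}]$-fractional ideal spanned by the Rankin--Selberg integrals over good sections and compare it with the Artin factor, whose computation on the Galois side is elementary: for non-supercuspidal $\pi$ one has $\mathrm{Sym}^2(\chi_1\oplus\chi_2)=\chi_1^2\oplus\chi_1\chi_2\oplus\chi_2^2$, so $L(s,\mathrm{Sym}^2(\rho(\pi)))$ is a product of three Tate factors with the appropriate shifts, while for supercuspidal $\pi$ the parameter $\rho(\pi)$ is irreducible of dimension two and $\mathrm{Sym}^2(\rho(\pi))$ is either irreducible, so $L(s,\mathrm{Sym}^2(\rho(\pi)))=1$, or, by Gelbart--Jacquet, the sum of a two-dimensional piece and a character, giving a product of at most two Tate factors. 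Both inclusions of fractional ideals then have to be verified: that every pole of the Rankin--Selberg integral already appears in the Artin factor, and conversely that every pole of the Artin factor is realized by a choice of good section and Whittaker function.

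For the non-supercuspidal cases the engine is multiplicativity of the local integral. Unfolding Yamana's integral and restricting the Whittaker function of $\pi$ to the mirabolic subgroup of $GL_2(F)$, the Bernstein--Zelevinsky filtration rewrites the integrand in terms of the inducing characters; together with the analytic properties of good sections this should show that the ideal attached to $\pi=\mathrm{Ind}(\chi_1,\chi_2)$ is generated by $L(s,\chi_1^2)L(s,\chi_1\chi_2)L(s,\chi_2^2)$. The cases $\mathrm{St}_2\otimes\chi$ and $\chi\circ\det$ follow by realizing them as the generic, resp. the Langlands, constituent of a reducible principal series and tracking, through the Bernstein--Zelevinsky derivatives, which of the three Tate factors is inherited by the corresponding subquotient of the fractional ideal and which is cancelled; this reproduces the known shape of $L(s,\mathrm{Sym}^2(\rho(\pi)))$ for special and one-dimensional representations.

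The supercuspidal case is the crux. For $\pi$ supercuspidal, $L(s,\pi,\mathrm{Sym}^2)$ is governed entirely by its \emph{exceptional} poles in the sense of Cogdell--Piatetski-Shapiro, so it suffices to locate these: the integral acquires an exceptional pole at $s=s_0$ precisely when $\pi$, normalized by an unramified twist depending on $s_0$, carries a nonzero symmetric-square (Shalika-type) functional, which happens exactly when $\pi$ is automorphically induced from a character of a quadratic extension with the inducing data suitably unramified; for non-dihedral $\pi$ there is no such functional and the integral is entire. On the Galois side, for $\pi=\mathrm{Ind}_E^F(\theta)$ one has $\mathrm{Sym}^2(\rho(\pi))=\mathrm{Ind}_E^F(\theta^2)\oplus\omega_\pi\eta_{E/F}$, whose Artin $L$-function has poles at exactly the same points, while for non-dihedral $\pi$ it equals $1$. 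Uniqueness of the symmetric-square functional together with the compatibility of the local Langlands correspondence with the symmetric square on $GL_2$-parameters (Henniart) identifies the two conditions, and comparing residual orders completes the case. I expect two delicate points. First, ruling out spurious poles coming from bad sections, which requires the good-section formalism (in the style of Piatetski-Shapiro--Rallis and Kaplan) to confine the poles of the integral to those visible in the Whittaker model and its derivatives. Second, the precise bookkeeping of the chain ``exceptional pole $\Longleftrightarrow$ distinction functional $\Longleftrightarrow$ pole of the Artin factor'', for which I would invoke the classification of (conjugate-)self-dual supercuspidal representations of $GL_2(F)$ and, if necessary, a comparison with the Langlands--Shahidi symmetric square $\gamma$-factor. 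Assembling the four cases gives the asserted identity; the Howe-vector techniques from the keyword list enter in the subsequent stability statement rather than in the equality itself.
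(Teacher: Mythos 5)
Your overall architecture (case analysis over the classification, two inclusions of fractional ideals, exceptional poles detected by distinction) is in the right spirit, but there are two genuine gaps relative to what is actually needed. First, in the principal-series case the mechanism you describe --- restricting the Whittaker function to the mirabolic subgroup and running the Bernstein--Zelevinsky filtration --- only produces the \emph{regular} part of the $L$-function, namely $\mathrm{l.c.m.}\{L(s,\chi_1^2)^{-1},L(s,\chi_2^2)^{-1}\}$ coming from the first derivatives. It cannot see the cross factor $L(s,\chi_1\chi_2)$: that factor lives entirely in the exceptional part, which is generated by the non-holomorphic good sections (images under the normalized intertwining operator) and is characterized by $\theta$-distinction of $\pi\otimes\theta^{\psi}\otimes\theta^{\psi^{-1}}$, not by any mirabolic restriction. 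Establishing the equivalence ``exceptional pole $\Leftrightarrow$ $\theta$-distinguished $\Leftrightarrow$ $\chi_1\chi_2$ trivial after the appropriate twist'' requires (i) the multiplicity-one statement $\dim\mathrm{Hom}_{GL_2}(\pi\otimes\theta^{\psi}\otimes I(1,\omega_\pi^{-1}),\mathbb{C})\le 1$, which is \emph{not} known for all irreducible admissible $\pi$ and is only secured after deforming $\mathrm{Ind}(\chi_1\nu^{u_1}\boxtimes\chi_2\nu^{u_2})$ to general position; and (ii) Kaplan's theorem that $\mathrm{Ind}(\chi\boxtimes\chi^{-1})$ is $\theta$-distinguished for the lower bound. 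The paper therefore runs the Cogdell--Piatetski-Shapiro deformation-and-specialization argument (Bernstein's continuation principle, Hartogs' theorem to discard codimension-two loci, then specialization at $u=0$), and explicitly avoids proving multiplicativity of the integral --- it only obtains a \emph{weak} multiplicativity of $\gamma$-factors up to units. Your plan leans on multiplicativity as ``the engine,'' which is precisely the step the paper cannot and does not carry out directly.

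Second, your treatment of the supercuspidal and Steinberg cases re-derives, in sketch form, results that the paper simply imports: the equality for discrete series of $GL_n(F)$ is Yamana's theorem, and the paper's proof of the main equality consists of that citation plus the inductive formula $L(s,\pi,\mathrm{Sym}^2)=L(s+u_1+u_2,\pi_1\times\pi_2)\prod_i L(s+2u_i,\pi_i,\mathrm{Sym}^2)$ for representations of Langlands type, combined with the compatibility of the local Langlands correspondence with these operations. Your dihedral/non-dihedral analysis via $\mathrm{Sym}^2(\mathrm{Ind}_E^F\theta)=\mathrm{Ind}_E^F(\theta^2)\oplus\omega_\pi\eta_{E/F}$ is the correct picture but is left as an expectation (``I would invoke the classification\dots if necessary a comparison with the Langlands--Shahidi $\gamma$-factor''), so as written it is not a proof of the hardest case. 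Finally, note that for non-generic $\pi$ (e.g.\ $\chi\circ\det$) the local factor is \emph{defined} through the Whittaker model of the full Langlands-type induced representation, so there is nothing to ``track through to the subquotient''; the product formula for the induced representation already gives the answer.
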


The discrete series case for $GL_n(F)$ has already been proven by the work of Yamana \cite{Ya17}. It was accomplished in  \cite{Henniart} that local symmetric square $L$-functions for $GL_n(F)$ from the Langlands-Shahidi method \cite{Shahidi} coincide with counterpart Artin $L$-functions. One immediate corollary of Theorem \ref{main1} is the factorization
\begin{equation}
\label{intro-factorization}
 L(s,\pi \times \pi)=L(s,\omega_{\pi})L(s,\pi,\mathrm{Sym}^2)=L(s,\pi,\wedge^2)L(s,\pi,\mathrm{Sym}^2)
  \end{equation}
where $\omega_{\pi}$ is the central character of $\pi$, and $L(s,\pi \times \pi)$, $L(s,\omega_{\pi})$ and $L(s,\pi,\wedge^2)$ denote, respectively, the Rankin-Selberg $L$-function for $GL_2(F) \times GL_2(F)$ \cite{JPSS}, the Tate $L$-function \cite{Tate}, and the Jacquet-Shalika  exterior square $L$-function for $GL_2(F)$ (See \cite{JO20-2}). As opposed to contending that \eqref{intro-factorization} is the definition in Gelbart and Jacquet \citelist{\cite{Gelbart-Jacquet}*{\S 3}}, we provide a natural way to define $L$-functions and express them directly in terms of inducing data.

\par
The second purpose of this note is to prove the stability of symmetric square local factors. Let $\psi$ be a fixed additive character of the field $F$. The Rankin-Selberg integrals satisfy a functional equation to define $\gamma$-factors $\gamma(s,\pi,\mathrm{Sym}^2,\psi)$. In contrast with the $\gamma$-factor being a rational function in $\mathbb{C}(q^{-s/2})$, an $\varepsilon$-factor $\varepsilon(s,\pi,\mathrm{Sym}^2,\psi)$ appearing in the functional equation is exponential, namely, a unit in $\mathbb{C}[q^{\pm s/2}]$.

\begin{theorem*}[The analytic stability] Let $\pi$ and $\sigma$ be irreducible admissible representations of $GL_2(F)$ sharing the same central character. For every sufficiently highly ramified characters $\chi$
of $F^{\times}$, identified as a character of $GL_2(F)$ through the determinant, we obtain
\[
  \gamma(s,\pi \otimes \chi,\mathrm{Sym}^2,\psi)=\gamma(s,\sigma \otimes \chi,\mathrm{Sym}^2,\psi).
\]
In this situation, the $L$ and epsilon factors stabilize as well:
\[
  L(s,\pi \otimes \chi, \mathrm{Sym}^2)=L(s,\sigma \otimes \chi, \mathrm{Sym}^2)=1 \quad \text{and} \quad
  \varepsilon(s,\pi \otimes \chi,\mathrm{Sym}^2,\psi)=\varepsilon(s,\sigma \otimes \chi,\mathrm{Sym}^2,\psi).
\]
\end{theorem*}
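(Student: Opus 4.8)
The plan is to reduce the stability of symmetric square $\gamma$-factors to the already-established stability theory for $GL_2$ Rankin--Selberg and Tate $\gamma$-factors via the factorization in \eqref{intro-factorization}. Concretely, once we know the equality theorem, the local $\gamma$-factor attached to the integral representation agrees with the Artin-side $\gamma$-factor $\gamma(s,\mathrm{Sym}^2(\rho(\pi)),\psi)$, and therefore it suffices to prove stability on the Galois side. Twisting $\pi$ by $\chi$ corresponds on the parameter side to $\rho(\pi)\otimes\chi$, and one has the functorial decomposition $\rho(\pi)\otimes\rho(\pi) = \mathrm{Sym}^2(\rho(\pi))\oplus(\det\rho(\pi))$, hence $\mathrm{Sym}^2(\rho(\pi)\otimes\chi)=\mathrm{Sym}^2(\rho(\pi))\otimes\chi^2$ and
\[
  \gamma(s,\mathrm{Sym}^2(\rho(\pi)\otimes\chi),\psi)
  =\frac{\gamma(s,(\rho(\pi)\otimes\chi)\times(\rho(\pi)\otimes\chi),\psi)}{\gamma(s,\omega_\pi\chi^2,\psi)}.
\]
The denominator $\gamma(s,\omega_\pi\chi^2,\psi)$ depends only on the central character $\omega_\pi$ (shared by $\pi$ and $\sigma$) and on $\chi$, so it is literally identical for $\pi$ and $\sigma$; only the numerator must be handled.

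For the numerator I would invoke the stability of Rankin--Selberg $\gamma$-factors for $GL_2\times GL_2$ under highly ramified twists. This is standard: by Jacquet--Shalika / Cogdell--Piatetski-Shapiro type arguments (or, since we only need $GL_2$, by the explicit computations available for $GL_2\times GL_2$), for $\chi$ of sufficiently large conductor depending on $\pi$ and $\sigma$ one has $\gamma(s,(\pi\otimes\chi)\times(\pi\otimes\chi),\psi)=\gamma(s,(\sigma\otimes\chi)\times(\sigma\otimes\chi),\psi)$, and moreover both sides equal a monomial in $q^{-s}$ determined only by the central characters and $\chi$. Combining this with the observation about the denominator yields $\gamma(s,\pi\otimes\chi,\mathrm{Sym}^2,\psi)=\gamma(s,\sigma\otimes\chi,\mathrm{Sym}^2,\psi)$, which is the first assertion. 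For the statements about $L$ and $\varepsilon$: the highly ramified twist forces both integral-representation $L$-factors $L(s,\pi\otimes\chi,\mathrm{Sym}^2)$ and $L(s,\sigma\otimes\chi,\mathrm{Sym}^2)$ to be trivial --- this follows because, by the equality theorem, they equal Artin $L$-functions of $\mathrm{Sym}^2(\rho(\pi)\otimes\chi)$, which for $\chi$ ramified enough has no subrepresentation of the form (unramified twist of trivial), so the Artin $L$-factor is $1$; equivalently one argues directly that the Rankin--Selberg integrals defining the $\mathbb{C}[q^{\pm s/2}]$-ideal become entire. Since $\gamma = \varepsilon\cdot L(1-s,\widetilde{\cdot})/L(s,\cdot)$ and both $L$-factors are now $1$, we get $\varepsilon(s,\pi\otimes\chi,\mathrm{Sym}^2,\psi)=\gamma(s,\pi\otimes\chi,\mathrm{Sym}^2,\psi)=\gamma(s,\sigma\otimes\chi,\mathrm{Sym}^2,\psi)=\varepsilon(s,\sigma\otimes\chi,\mathrm{Sym}^2,\psi)$.

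Alternatively --- and this is likely the approach the author actually takes, since the paper's keywords advertise Howe vectors --- one can prove stability \emph{intrinsically} on the integral-representation side without passing through the equality theorem, by inserting partial Bessel/Howe vectors into the Rankin--Selberg integral. In that approach one chooses, for each of $\pi$ and $\sigma$, a Whittaker function that is a Howe vector of sufficiently deep level, writes out the symmetric square integral against a section twisted by the highly ramified $\chi$, and shows by a change-of-variables and the rapid oscillation of $\psi\chi$ that the integral localizes to a small neighborhood of the identity where it depends only on $\chi$, $\psi$, $s$ and the central character --- not on $\pi$. The main obstacle in this route is the combinatorial/asymptotic bookkeeping: one must control the support of the Howe vector under the specific unipotent integration defining Yamana's symmetric square integral for $GL_2$, and verify that the conductor of $\chi$ needed grows only with the depth at which $\pi$ and $\sigma$ become ``constant,'' i.e. with their own conductors. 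If instead one uses the factorization route, the main obstacle is simply ensuring that the stability of $GL_2\times GL_2$ Rankin--Selberg $\gamma$-factors is available in exactly the normalization used here and that the local Langlands correspondence is compatible with the relevant twisting and tensor-product identities --- both of which are by now standard, so I would lead with the factorization argument and relegate the Howe-vector computation to a remark or a self-contained second proof.
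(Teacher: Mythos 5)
Your lead argument has a genuine gap at its very first step. The equality theorem of this paper identifies only the \emph{$L$-functions} $L(s,\pi,\mathrm{Sym}^2)$ and $L(s,\mathrm{Sym}^2(\rho(\pi)))$; it does not identify the $\gamma$-factor defined by the functional equation \eqref{func-Nhat} with the Galois-side (or Langlands--Shahidi) $\gamma$-factor, because the $\varepsilon$-factor is left undetermined by an equality of $L$-factors alone. The introduction states this explicitly: ``the matching of two types of $\gamma$-factors is unfortunately not recorded anywhere,'' and this is precisely the reason the paper gives for proving stability intrinsically. Likewise, the Rankin--Selberg-side identity $\gamma(s,\pi,\mathrm{Sym}^2,\psi)=\gamma(s,\pi\times\pi,\psi)\,\gamma(s,\omega_\pi,\psi)^{-1}$ that your reduction needs is nowhere available: the paper only proves multiplicativity of $\gamma$ \emph{up to units} (Proposition \ref{Weak-multiplicativity}), and its Proposition \ref{division} relates $\gamma$ to $\Gamma$ through the Tate factor of $\omega_\pi^2$, not $\omega_\pi$. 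So the factorization route, as written, assumes exactly the unproven comparison the paper is trying to avoid.

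Your alternative sketch is the paper's actual proof, and your instinct that the keywords point there is right; but the order of priority should be reversed, and the sketch omits the points where the work actually lies. The paper (i) reduces to unramified $\psi$ via a dependence-on-$\psi$ formula; (ii) chooses a test section $f^i_{2s-1}$ supported on $\widetilde{Z}^2\widetilde{P}\,\overline{N}_i^\ast$ so that $I(W_m,W_{\theta^\psi,m},f^i_{2s-1},\chi)=q^{-3i-m}$ is a nonzero constant (Proposition \ref{constant}) --- this is what makes the functional equation usable, rather than a localization of a single integral to a neighborhood of the identity; (iii) computes the image of $f^i_{2s-1}$ under the intertwining operator and expresses the \emph{difference} of the dual-side integrals for $\pi$ and $\sigma$ as a Mellin transform, over the compact set $\mathfrak{p}^{-9L}$, of $[W^1_{3L}(t(a,1)w_2)-W^2_{3L}(t(a,1)w_2)]\,j_{\theta^\psi}(a)$ against $\omega(a)^{-1}\mu_\psi(a^{-1})^{-1}\chi^{-1}(a)|a|^{-\frac{2s+1}{4}}$; (iv) uses the local uniform smoothness of the Bessel function $j_{\theta^\psi}$ of the exceptional representation (Proposition \ref{Bessel}) to see the integrand is invariant under $1+\mathfrak{p}^{6L}$, so a sufficiently ramified $\chi$ annihilates the integral; and (v) passes from $\Gamma$ to $\gamma$ via Proposition \ref{division} and the stability of the Tate factor $\gamma(2s-1,\omega^2_{\pi\otimes\chi},\psi)$. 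The metaplectic cocycle bookkeeping ($\mathfrak{s}[t(z,z)t(1,a)w_2 n(x)]$ versus the product of sections, and the support lemma forcing $z\in(F^\times)^2$) is also a nontrivial part of the argument that your sketch elides. The final assertions about $L=1$ and $\varepsilon$ are handled essentially as you say, via Yamana's Lemma 3.15 and the Jacquet--Shalika argument.
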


The analogous result has been settled for $GL_n$ over a non-archimedean field $F$ with characteristic zero \cite{CoShTs} and with positive characteristic \cite{Ganapathy-Lomeli} in the formulation of Langlands-Shahidi local coefficients.

\par

The Rankin-Selberg method plays a profound role in analyzing Langlands automorphic $L$-functions in company with the Langlands-Shahidi method \cite{CoShTs,Ganapathy-Lomeli,GaoShahidiSzpruch,Shahidi} and the doubling method \cite{HKS,Ya14}. The main subject of the Rankin-Selberg convolution is to determine the integral representation which admits a factorization into an Euler product. More importantly the global integral ought to produce the symmetric square $L$-function with unramified data. This method originated from the construction of Shimura \cite{Shimura} for $n=2$. Later this integral was reformulated in adelic language by Gelbart and Jacquet \cite{Gelbart-Jacquet} for $n=2$ and by Patterson and Piatetski-Shapiro \cite{PP} for $n=3$. In the early 1990's Bump and Ginzburg \cite{BuGi} extended the method to arbitrary $n$. Afterward Takeda \cite{TA14,Takeda15}
carried out the twisted case for general $n$. However the integrals of Gelbart-Jacquet and Takeda are not taken over $GL_2(F)$ so the family of integrals does not afford a $GL_2(F)$-trilinear form which is an essential ingredient to characterize the exceptional poles of $L$-function. For this reason, we mainly treat a modified local Zeta integral introduced by Yamana \cite{Ya17}.

\par
One significant difference from the setting of Jacquet, Piatetski-Shapiro, and Shalika \cite{JPSS}, is that in comparison to Fourier transforms, the functional equation involves the intertwining operator. In the literature one might be temped to try the space of holomorphic sections to generate a $\mathbb{C}[q^{\pm s/2}]$-fractional ideal but the poles of this fractional ideal only contribute the regular $L$-function. The notion of good sections appeared in the work of Piatetski-Shapiro and Rallis \cite{PSRA87}. Taking a lead of their methodology, the set of ``good sections" consists of holomorphic sections and the image of holomorphic sections under the normalized operator. In turn the additional pole coming from the normalized operator is attributed to the exceptional $L$-function. The noteworthy discovery is that adapting good sections compensates the lack of symmetry of the functional equation caused by just using holomorphic sections. For the sake of explaining it, the operator is a bijection on the space of good sections although holomorphic sections are not necessarily mapped to themselves.

\par
The terminologies of ``exceptional" and ``regular" parts of $L$-functions have been extensively exploited in the construction of $L$-functions for $GSp_4(F)$. We refer to \cite{Schmidt-Tran} and the references therein, which are all based on the work of Piatetski-Shapiro \cite{Piatetski-Shapiro}. The method of good sections was reshaped and rapidly developed in the perspective of the doubling method \cite{HKS,Ya14}. Nevertheless it takes several years for this approach to emerge in the study of non-archimedean local $L$-functions through integral representations. Kaplan \cite{Kaplan13} applied good sections to $L$-functions for $SO_{2m}(F) \times GL_n(F)$ and recently Chen \cite{Chen} implemented the study of Asai cube $L$-functions for $GL_2(F)$, which is precisely what is used by Piatetski-Shapiro and Rallis \cite{PSRA87} in the contexture of Rankin triple product for $GL_2(F)$. Thankfully the main result of \cite{JO20} asserts that $L$-functions supplied by auxiliary variables of Schwartz-Bruhat functions and good sections are in fact the same in the framework of numerous $GL_n(F)$-type cases such as local Rankin-Selberg, Asai, and exterior square $L$-functions.

\par
A major shortcoming of utilizing the Rankin-Selberg method is that one needs to prove multiplicativity of $\gamma$-factors. Cogdell and Piatetski-Shapiro \cite{Cogdell-PS} devised a systematic machinery to compute local $L$-functions without relying on multiplicative properties. The crux of their observation is to interpret the occurrence of poles of exceptional $L$-functions for Bernstein-Zelevinsky derivatives of representations \cite{BernsteinZelevinsky} as the appearance of various distinguished representations. In order to control the location of poles, they suggest suitably deforming representations for which $L$-functions are tractable. This technique is adapted to tackle the problem of computing  Asai \cite{Matringe09} and Bump-Friedberg $L$-functions \cite{Matringe} by Matringe and exterior square $L$-functions by the author \cite{JO-3,JO20-2}. See \cite{Chai} also for the derivatives and exceptional poles on archimedean places. Along the line of this prototype, Kable \cite{Kable01} initiated the project to understand the structure of the symmetric square $L$-function in the late 1990's. To gain an intimate knowledge of  that $L$-function, Kable was led to examine all the derivatives of exceptional representations. After that, the remaining task was the dyadic case and this computation was concluded by Kaplan \cite{Kaplan17} and Yamana \cite{Ya17} independently. As a continuation of their direction, we complete the particular case of $n=2$. It might be possible to reduce the local and global functorial lifts of Gelbart and Jacquet \cite{Gelbart-Jacquet} to our main results. However it is our belief that the context of the present paper will work out the higher ranked case of $GL_n(F)$ and we plan to do so in near future.


\par
The stability of symmetric square $L$-functions is known for $GL_2(F)$ \cite[(6.4)]{Gelbart-Jacquet}. Nonetheless the stability of $L$-functions does not directly imply the stability of the corresponding gamma and epsilon factors. In principle, our result should follow from the equality between Rankin-Selberg and Langlands-Shahidi local symmetric square $\gamma$-factors. To the author's understanding, the matching of two types of $\gamma$-factors is unfortunately not recorded anywhere. In this regard, a proof of stability of $\gamma$-factors within the context of integral representations has its own merit. Our approach in the proof is rooted on asymptotic analysis of partial Bessel functions associated with Howe vectors which can be viewed as an extension of the work \cite{ChaiZhang,Zhang} to the framework of the metaplectic group.

\par
The interested reader will notice that many of our calculation apply over any non-archimedean local field. The restriction to the characteristic on the field $F$ throughout this article comes from the intrinsic nature of the global integral built in number fields \cite{TA14,Takeda15}, which is sufficient for its global application.
Nevertheless the Bump and Ginzburg global integral \cite{BuGi} is actually constructed over (global) function fields. We pursue an in-depth investigation on the comparison of $L$-functions and stability of $\gamma$-factors in the positive characteristic case.

\par
The rest of the paper is organized as follows. Section \ref{sec2} contains the preliminary, including a brief review of the metaplectic group, exceptional representations and Rankin-Selberg integrals. Section \ref{sec3} is concerned with the exceptional and regular $L$-function. At the end of Section \ref{sec3} we deduce the factorization formula. The local symmetric square $L$-functions are computed in Section \ref{sec4} and Section \ref{sec5} is devoted to the stability of local factors.

\section{The Rankin-Selberg Integrals}
\label{sec2}
\subsection{The metaplectic group}
\label{sec2:1}
Let $F$ be a non-archimedean local field of characteristic zero with odd residual characteristic. We let $\mathcal{O}$ be the ring of integers of $F$, $\mathfrak{p}$ the unique prime ideal of $\mathcal{O}$, and $\varpi$ a uniformizer, so $\mathfrak{p}=(\varpi)$. We normalize the absolute value by $|\varpi|^{-1}=|\mathcal{O} \slash \mathfrak{p}|=q$. For each subgroup $H(F) \subset GL_2(F)$, we often write $H$ for $H(F)$ when the base field is clear from the context. Let $B=TN$ denote the Borel subgroup of upper triangular matrices, where
\[
  T=\left\{ t(a,b) := \begin{pmatrix} a & \\ & b \end{pmatrix} \; \middle| \; a, b \in F^{\times} \right\}
\]
is the maximal torus made of diagonal matrices and
\[
  N=\left\{ n(x) :=\begin{pmatrix} 1 & x \\ & 1 \end{pmatrix} \; \middle| \; x \in F \right\}
\]
is the unipotent radical of $B$. Let $Z$ denote the center of $GL_2$ and let $A$ denote the subtorus
\[
  A=\{ t(a,1) \; | \; a \in F^{\times} \}.
\]
Let
\[
 \overline{N}=\left\{ \overline{n}(x) :=\begin{pmatrix} 1 & \\ x & 1 \end{pmatrix} \; \middle| \; x \in F \right\}
\]
be the unipotent subgroup opposed to $N$ and $\overline{B}=T\overline{N}$ the lower triangular Borel subgroup. We write
\[
  w_2=\begin{pmatrix} & 1 \\ 1 & \end{pmatrix}
\]
to denote the long Weyl element in $GL_2$. Let $W$ denote the Weyl group defined by $W=N_{GL_2}(T)/T=\{I_2,w_2\}$. We recall the Bruhat decomposition
\[
  GL_2=B \cup Bw_2N,
\]
with uniqueness of expression, that is, every $g \not \in B$ has the unique expression of the form $g=bw_2n$, $b \in B, n \in N$. 
We denote by $P$ the mirabolic subgroup given by
\[
  P=\left\{  \begin{pmatrix} a & x \\ & 1 \end{pmatrix} \; \middle| \;  a \in F^{\times}, x \in F \right\} \cong A \ltimes N.
\]
Put $K=GL_2(\mathcal{O})$, the standard maximal compact subgroup of $GL_2(F)$. 

\par
The content of $\S$ \ref{sec2:1} and $\S$ \ref{sec2:2} is basically a summary of necessary definitions and essential results in \cite{TA14, Takeda15, Ya17}. 
For $a,a^{\prime}$ and $b \in F^{\times}$, the Hilbert symbol  is a map $(\cdot,\cdot)_F : F^{\times} \times F^{\times} \rightarrow \{ \pm 1 \}$ satisfying (See \citelist{\cite{Weil}*{Chapter VIII. \S 5}})
\begin{enumerate}[label=$(\mathrm{\arabic*})$]
\item $(a,b)_F(a^{\prime},b)_F=(aa^{\prime},b)_F$
\item $(a,b)_F(b,a)_F=1$
\item $(a,-a)_F(a,1-a)_F=1$
\item $\{a \; | \; (a,y)_F=1 \; \text{for all $y \in F^{\times}$}\}=(F^{\times})^2$, where $(F^{\times})^2=\{a^2 \; | \; a \in F^{\times} \}$.
\end{enumerate}
It is noteworthy that $(a,b)_F=1$ for all $a, b \in \mathcal{O}^{\times}$ if and only if $|2|=1$ (cf. \cite[Section 1.1.3]{Kaplan}). Hence the Hilbert symbol is unramified.
We set
\[
  \bm{X}\begin{pmatrix} a & b \\ c & d \end{pmatrix}=
  \begin{cases}
  c & \text{if $c\neq 0$,}\\
  d & \text{if $c=0$.}
  \end{cases}
\]
The Kubota $2$-cocycle $\sigma_2 : GL_2 \times GL_2 \rightarrow \{\pm 1 \}$ is defined by
\[
  \sigma_2(g_1,g_2)=\left( \mathrm{det}(g_1),\frac{\bm{X}(g_1g_2)}{\bm{X}(g_1)} \right)_F \left( \frac{\bm{X}(g_1g_2)}{\bm{X}(g_1)}, \frac{\bm{X}(g_1g_2)}{\bm{X}(g_2)} \right)_F.
\]

\par
The \textit{metaplectic double cover} $\widetilde{GL}_2$ is a non-trivial central extension of $GL_2$ by $\{ \pm 1 \}$:
\[
  1 \longrightarrow \{ \pm 1 \} \longrightarrow  \widetilde{GL}_2 \overset{pr}{\longrightarrow} GL_2 \longrightarrow 1,
\]
where $pr$ is a canonical projection given by $pr(g,\xi)=g$ for $g \in GL_2$ and $\xi \in \{ \pm 1\}$. As a set, $\widetilde{GL}_2$ is realized to be 
\[
 \widetilde{GL}_2=GL_2(F) \times \{ \pm 1 \}=\{  (g,\xi) \; | \; g \in GL_2(F), \xi \in \{ \pm 1 \} \}
\]
and the group law is defined by
\[
  (g_1,\xi_1) \cdot (g_2,\xi_2)=(g_1g_2,\sigma_2(g_1,g_2)\xi_1\xi_2).
\]
It is known that there exists a compact subgroup $\mathcal{K}$ of $GL_2$ which splits in $\widetilde{GL}_2$  (cf. \citelist{\cite{TA14}*{\S 1.1} \cite{Ya17}*{1B}}), that is to say, there is a continuous map $s_2 : GL_2 \rightarrow \{ \pm 1 \}$ such that $\sigma_2(k_1,k_2)=s_2(k_1)s_2(k_2)s_2(k_1k_2)$ for all $k_1,k_2 \in \mathcal{K}$. If the residue characteristic of $F$ is odd (that being said, $|2|=1$) then we can take $\mathcal{K}=K$ (See \citelist{\cite{TA14}*{\S 1.1}}) and $s_2(k_1)s_2(k_2)s_2(k_1k_2)=1$ for all $k_1,k_2 \in K$. With this choice of $s_2$, the section $\mathcal{K} \rightarrow \widetilde{GL}_2$ defined by $k \mapsto (k,s_2(k))$ is what is called the canonical lift of Kazhdan and Patterson \cite{KaPa}. We define another $2$-cocycle $\tau_2$ by
\[
 \tau_2(g_1,g_2)=\sigma_2(g_1,g_2)s_2(g_1)s_2(g_2)s_2(g_1g_2) \;\; \text{for $g_1, g_2 \in GL_2$.}
\]
The choice of $s_2$, and hence $\tau_2$, is not unique. However as explained in \cite[P.181]{TA14}, we assume that $s_2$ is chosen to be trivial so that $\tau_2$ coincides with $\sigma_2$. We define a set theoretic section $\mathfrak{s}$ by $\mathfrak{s}(g)=(g,1)$ (cf. \cite[(1.1.4)]{Gelbart-PS}). We remark that the multiplication in the image $\mathfrak{s}(GL_2)$ is given via $\sigma_2$, by means of
\[
  (g_1,1)(g_2,1)=(g_1g_2,\sigma_2(g_1,g_2)) \;\; \text{for $g_1, g_2 \in GL_2$.}
\]
For every $m$, let $K_m$ be the $m$-th congruence subgroup, that is, $K_m=\{k\in K \; | \; k \equiv I_2 \;(\mathrm{mod}\; \mathfrak{p}^m) \}$. Then the collection $\{ \mathfrak{s}(K_m) \; | \; m \geq 0 \}$ is a basis of compact open neighborhoods of the identity in $\widetilde{GL}_2$ and the topology of $\widetilde{GL}_2$ as a locally compact group is determined by the embedding $\mathfrak{s} : K \rightarrow \widetilde{GL}_2$.


\par
We introduce the basic property of Hilbert symbol. For $g \in GL_2$, $g=n_1twn_2$ for some $n_1, n_2\in N$, $t \in T$, and $w \in W$.  
We define $\mathfrak{t}$ to be the map $GL_2 \rightarrow T$, given by $\mathfrak{t}(g)=\mathfrak{t}(n_1twn_2)=t$.

\begin{lemma}\citelist{\cite{BLS}*{Theorem 7}}
\label{BLS}
Suppose that $n \in N$, $g \in GL_2$ and $w \in W$. Let $t=t(a,b)$ and $t^{\prime}=t(a^{\prime},b^{\prime})$ be elements in $T$. Then we have
\begin{enumerate}[label=$(\mathrm{\arabic*})$]
\item\label{BLS-1} $\sigma_2(n,g)=\sigma_2(g,n)=1$ for all $n \in N,\; g \in GL_2$
\item\label{BLS-2} $\sigma_2(t,t^{\prime})=(a,b^{\prime})_F$ for all $t,\; t^{\prime} \in T$
\item\label{BLS-3} $\sigma_2(t,g)=\sigma_2(t,\mathfrak{t}(g))$ for all $t \in T,\; g \in GL_2$
\item\label{BLS-4} $\sigma_2(w,g)=\sigma_2(\mathfrak{t}(wg)\mathfrak{t}(g)^{-1},-\mathfrak{t}(g))$ for all $w \in W, \; g \in GL_2$.
\end{enumerate}
\end{lemma}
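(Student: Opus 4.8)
The plan is to verify each of the four identities by a direct computation from the explicit formula for the Kubota cocycle $\sigma_2$, using only properties (1)--(4) of the Hilbert symbol recorded above together with its bimultiplicativity and symmetry, both of which follow from (1) and (2) since the symbol is $\{\pm 1\}$-valued; this is precisely Theorem~7 of \cite{BLS}, and the argument is elementary matrix bookkeeping. I will also use the $2$-cocycle identity for $\sigma_2$, which is just associativity of the group law on $\widetilde{GL}_2$, and the elementary relations $(x,1)_F=1$ and $(x,-x)_F=1$.

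First I would record three auxiliary facts about $\bm{X}$, each immediate from its definition together with the Bruhat decomposition: (i) $\bm{X}(ng)=\bm{X}(gn)=\bm{X}(g)$ for all $n\in N$ and $g\in GL_2$, because left or right multiplication by a member of $N$ alters neither the $(2,1)$-entry nor, when that entry vanishes, the $(2,2)$-entry; (ii) $\bm{X}(t(a,b)g)=b\,\bm{X}(g)$, because left multiplication by $t(a,b)$ scales the bottom row by $b$; and (iii) $\bm{X}(g)=\bm{X}(\mathfrak{t}(g))$, obtained by writing $g=n_1tw n_2$ and checking in each of the cells $w\in\{I_2,w_2\}$ that the entry selected by $\bm{X}$ equals the lower-right coordinate of $\mathfrak{t}(g)$.

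With these in hand, the first identity follows because $\det n=1$ trivializes the first Hilbert symbol in the formula while $\bm{X}(ng)/\bm{X}(g)=1$ trivializes the second, and symmetrically for $\sigma_2(g,n)$; the second follows by substituting $\bm{X}(t)=b$, $\bm{X}(t')=b'$, $\bm{X}(tt')=bb'$ and reducing $(ab,b')_F(b',b)_F$ to $(a,b')_F$ via bimultiplicativity and symmetry; and the third follows because fact (ii) shows that $\sigma_2(t,g)$ depends on $g$ only through $\bm{X}(g)$, which fact (iii) identifies with $\bm{X}(\mathfrak{t}(g))$, whence $\sigma_2(t,g)=\sigma_2(t,\mathfrak{t}(g))$.

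The delicate part is the fourth identity. Since $I_2\in N$, the case $w=I_2$ is already contained in the first identity. For $w=w_2$ I would first remove the right-hand unipotent via the cocycle identity and the first identity, obtaining $\sigma_2(w_2,hn_2)=\sigma_2(w_2,h)$, and then analyse $g$ according to its Bruhat cell: when $g\in B$ one has $w_2g\in Bw_2N$ automatically, whereas when $g\in Bw_2N$ the cell of $w_2g$ depends on the vanishing of the leading unipotent coordinate, which forces a further subdivision. In each configuration one reads off $\bm{X}(w_2g)$, $\bm{X}(g)$, $\mathfrak{t}(g)$ and $\mathfrak{t}(w_2g)$ explicitly and checks that the two sides match, now using in addition $(x,-x)_F=1$ to absorb the sign factors introduced by $\det w_2=-1$. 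The main obstacle is exactly this bookkeeping in the fourth identity: multiplication by $w_2$ toggles the Bruhat cell, and since $\bm{X}$ is read off from different matrix entries on $N$ and on $Nw_2N$, one must carefully track which entry is selected at each stage and how the resulting Hilbert symbols recombine.
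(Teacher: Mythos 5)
The paper does not actually prove this lemma: it is quoted verbatim from \cite{BLS}*{Theorem 7}, so any direct verification is necessarily your own route. Your auxiliary facts about $\bm{X}$ are correct, and your arguments for (1), (2) and (3) go through exactly as you describe: $\bm{X}(ng)=\bm{X}(gn)=\bm{X}(g)$, $\bm{X}(t(a,b)g)=b\,\bm{X}(g)$ and $\bm{X}(g)=\bm{X}(\mathfrak{t}(g))$ reduce everything to Hilbert-symbol manipulations, and the reduction $(ab,b')_F(b',b)_F=(a,b')_F$ is right.

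The gap is in (4), and it is not merely bookkeeping: the identity is \emph{not} a formal consequence of the displayed Kubota formula, so the case-by-case verification you promise cannot close. Test the easiest configuration, $g=t(a,b)\in T$, which involves no toggling of Bruhat cells at all. Then $w_2g=\left(\begin{smallmatrix}0&b\\a&0\end{smallmatrix}\right)$, so $\bm{X}(w_2g)=a$, $\bm{X}(w_2)=1$, $\bm{X}(g)=b$, and
\[
\sigma_2(w_2,t(a,b))=(-1,a)_F\,(a,a/b)_F=(a,-a)_F\,(a,b)_F=(a,b)_F,
\]
whereas $\mathfrak{t}(w_2g)=t(b,a)$ gives, via item (2),
\[
\sigma_2\bigl(\mathfrak{t}(w_2g)\mathfrak{t}(g)^{-1},-\mathfrak{t}(g)\bigr)=\sigma_2\bigl(t(b/a,a/b),t(-a,-b)\bigr)=(b/a,-b)_F=(a,-b)_F .
\]
The two sides differ by $(a,-1)_F$, which equals $-1$ for instance when $F=\mathbb{Q}_3$ and $a=3$. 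So either item (4) fails for the cocycle literally defined by the printed formula, or (more plausibly) the printed formula is not literally the block-compatible cocycle of \cite{BLS} for which Theorem 7 is proved --- the two constructions differ in normalization and in the choice of Weyl representatives ($w_\alpha(1)=\left(\begin{smallmatrix}0&1\\-1&0\end{smallmatrix}\right)$ of determinant one in \cite{BLS}, versus $w_2$ here). Either way, elementary matrix bookkeeping from the displayed formula does not prove (4); a correct argument must either import the identity from the Steinberg-relation/block-compatibility construction of \cite{BLS} or first pin down the exact relation between the two cocycles. Your treatment of (1)--(3) is unaffected, since those three identities are consistent with the printed formula.
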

 
 \ref{BLS-3} and  \ref{BLS-4} reduces the calculation to  \ref{BLS-2} and then we compute the cocycle explicitly by Hilbert symbols. Then we deduce the following general formula.
 
\begin{lemma}\citelist{\cite{BLS}*{Theorem 7}}
Let $g, g^{\prime} \in GL_2$ and suppose $g=n_1twn_2$. Then
\[
\sigma_2(g,g^{\prime})=\sigma_2(t,wn_2g^{\prime})\sigma_2(w,n_2g^{\prime}).
\]
\end{lemma}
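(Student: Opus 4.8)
The plan is to deduce this from Lemma \ref{BLS} together with the $2$-cocycle identity for $\sigma_2$, which is forced by associativity of the group law on $\widetilde{GL}_2$:
\[
\sigma_2(g_1,g_2)\,\sigma_2(g_1g_2,g_3)=\sigma_2(g_2,g_3)\,\sigma_2(g_1,g_2g_3),\qquad g_1,g_2,g_3\in GL_2.
\]
First I would strip off the leftmost unipotent factor: applying the cocycle identity to the triple $(n_1,twn_2,g')$ and invoking $\sigma_2(n_1,-)=1$ twice by Lemma \ref{BLS}\ref{BLS-1}, one obtains $\sigma_2(g,g')=\sigma_2(twn_2,g')$.

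Next I would detach the torus factor. The cocycle identity applied to $(t,wn_2,g')$ reads
\[
\sigma_2(t,wn_2)\,\sigma_2(twn_2,g')=\sigma_2(t,wn_2g')\,\sigma_2(wn_2,g'),
\]
so it remains to check that $\sigma_2(t,wn_2)=1$. Here I would use Lemma \ref{BLS}\ref{BLS-3} to replace $\sigma_2(t,wn_2)$ by $\sigma_2(t,\mathfrak{t}(wn_2))$, observe that in either case $w=I_2$ or $w=w_2$ the Bruhat normal form of $wn_2$ carries trivial torus part, i.e.\ $\mathfrak{t}(wn_2)=I_2$, and then conclude $\sigma_2(t,I_2)=(a,1)_F=1$ by Lemma \ref{BLS}\ref{BLS-2} (writing $t=t(a,b)$). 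Hence $\sigma_2(twn_2,g')=\sigma_2(t,wn_2g')\,\sigma_2(wn_2,g')$.

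Finally I would break up $\sigma_2(wn_2,g')$ in the same spirit: the cocycle identity on $(w,n_2,g')$ combined with $\sigma_2(w,n_2)=1$ and $\sigma_2(n_2,g')=1$, both instances of Lemma \ref{BLS}\ref{BLS-1}, gives $\sigma_2(wn_2,g')=\sigma_2(w,n_2g')$. Concatenating the three identities yields $\sigma_2(g,g')=\sigma_2(t,wn_2g')\,\sigma_2(w,n_2g')$, as claimed. The argument is essentially formal manipulation of the cocycle relation and the $N$-triviality in Lemma \ref{BLS}\ref{BLS-1}; the only point that is not pure bookkeeping is the vanishing $\sigma_2(t,wn_2)=1$, i.e.\ the observation that $wn_2$ has no torus contribution in its Bruhat decomposition, so that is the (very mild) main obstacle.
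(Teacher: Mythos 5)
Your argument is correct: the three applications of the $2$-cocycle identity, combined with the $N$-triviality in Lemma \ref{BLS}\ref{BLS-1} and the observation that $\mathfrak{t}(wn_2)=I_2$ (so $\sigma_2(t,wn_2)=(a,1)_F=1$ by \ref{BLS-2} and \ref{BLS-3}), cleanly yield the stated factorization. The paper itself gives no proof of this lemma but simply cites \cite{BLS}*{Theorem 7}, and your derivation is exactly the kind of formal cocycle manipulation that result rests on, so nothing is missing.
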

The right-hand side can be computed by \ref{BLS-3} and  \ref{BLS-4} of Lemma \ref{BLS}. For each subgroup $H \subset GL_2$, we let $\widetilde{H}=pr^{-1}(H)$ the metaplectic preimage of $H$ under $pr$. $\mathfrak{s}$ splits $H$ whenever the cocycle $\sigma_2$ is trivial on $H \times H$. In this case we simply denote by $H^{\ast}$ the image $\mathfrak{s}(H)$. Then $\widetilde{H}$ is the direct product of $\{ \pm 1 \}$ with $H^{\ast}$. We see from Lemma \ref{BLS} that $\mathfrak{s}$ splits the following subgroups $N$, $A$, and
\[
  Z^2=\left\{ \begin{pmatrix} a & \\ & a \end{pmatrix} \; \middle| \; a \in (F^{\times})^2 \right\}.
\]
Accordingly we denote $\mathfrak{s}(N)$, $\mathfrak{s}(A)$, and $\mathfrak{s}(Z^2)$ by $N^{\ast}$, $A^{\ast}$, and ${Z^2}^{\ast}$. In particular $\mathfrak{s}$ splits $W$ if and only if $(-1,-1)_F=1$ (See \cite[Section 5]{BLS}). We note that $\widetilde{GL}_1=GL_1 \times \{ \pm 1 \}$, where the product is given by the direct product. Also we define $\widetilde{F^{\times}}$ to be $\widetilde{F^{\times}}=F^{\times} \times \{ \pm 1 \}$
as a set but the product is $(a_1,\xi_1)\cdot(a_2,\xi_2)=(a_1a_2,(a_1,a_2)_F\xi_1\xi_2)$. 

\par
We know from Lemma \ref{BLS} that $\sigma_2(a_1I_2,a_2I_2)=(a_1,a_2)_F$. Hence $\widetilde{Z}$ is not the center of $\widetilde{GL}_2$. As a matter of the fact, $\widetilde{Z}^2$ is the center of $\widetilde{GL}_2$ and $\widetilde{Z}$ is isomorphic to $\widetilde{F^{\times}}$. We also note that $\widetilde{Z}$ is the center of $\widetilde{GL}_2^{(2)}$, where $\widetilde{GL}_2^{(2)}$ is the metaplectic preimage of
\[
 GL_2^{(2)}=\{ g \in GL_2 \; | \; \mathrm{det}(g) \in (F^{\times})^2 \}.
\]

\par
We fix a non-trivial continuous additive character $\psi$ of $F$. We define $\mathfrak{f}(\psi)$, the conductor of $\psi$, to be the smallest positive integer $m$ such that $\psi$ is trivial on $\mathfrak{p}^m$. For each $a \in F^{\times}$, we denote by $\psi_a$ the additive character defined by $\psi_a(x)=\psi(ax)$. The map $F \rightarrow \mathbb{C}^{\times}$ defined by $x \mapsto \psi(x^2)$ is what Weil called a {\it character of second degree}. The Weil index $\gamma(\psi)$ of $\psi$ \cite{Weil} is an eighth root of unity attached to any character of second degree $\psi$. Likewise we can define $\gamma(\psi_a)$ for each $a \in F$. We put
\[
 \mu_{\psi}(a)=\frac{\gamma(\psi_a)}{\gamma(\psi)}.
\]
What is particularly important is that
\[
  \mu_{\psi}(ab)=\mu_{\psi}(a)\mu_{\psi}(b)(a,b)_F \quad \text{and} \quad \mu_{\psi}(ab^2)=\mu_{\psi}(a)
\]
for $a, b \in F^{\times}$. Hence it extends to a group homomorphism $\widetilde{F^{\times}} \rightarrow \mathbb{C}$ defined by $(a,\xi) \mapsto \xi\mu_{\psi}(a)$.

\subsection{Exceptional representations}
\label{sec2:2}
In \S \ref{sec2:2}, we review some technical results on the exceptional representations. Let $H$ be a subgroup of $GL_2$. We let $\textbf{1}_{H}$ denote the trivial character on $H$. Let $\pi$ denote an irreducible admissible representation of $\widetilde{H}$. $\pi$ is said to be \textit{genuine} if $\pi((1,\xi)h)=\xi \pi(h)$ for all $\xi \in \{ \pm 1 \}$ and $h \in H$, that is, each element in $(1,\xi) \in \widetilde{H}$ acts as a multiplication by $\xi$. Any representation $\pi$ of $H$ can be pulled back to a non-genuine representation of $\widetilde{H}$ by composing it with the canonical projection $pr : \widetilde{GL}_2 \rightarrow GL_2$. In particular, for a Borel subgroup $B$, we view the modular character of $\delta_B$ as a character on $\widetilde{B}$ in this way.

\par
As we discussed before, each element in $\widetilde{N}$ can be written in the form $(1,\xi)n^{\ast}$ for $n^{\ast} \in N^{\ast}$ and $\xi \in \{ \pm 1 \}$ and $(1,\xi) \in \widetilde{T}$. We can check by exploiting Lemma \ref{BLS} that $\mathfrak{s}(t)\mathfrak{s}(n)\mathfrak{s}(t)^{-1}=\mathfrak{s}(tnt^{-1})$  for all $t \in T$ and $n \in N$. Thus $\widetilde{B}=\widetilde{T}N^{\ast}$ and
$N^{\ast}$ is normalized by $\widetilde{T}$. We also have $\widetilde{T} \cap N^{\ast}=\{ (1,1) \}$. For the maximal torus $T \subset B$, we let
\[
  T^e=\left\{ \begin{pmatrix} a & \\ & b \end{pmatrix} \; \middle| \; \text{$ab^{-1}$ is square } \right\}.
\]
The metaplectic preimage $\widetilde{T}^e$ is a maximal abelian subgroup of $\widetilde{T}$. We define a character $\omega^{\psi}$ on $\widetilde{T}^e$ by
$\omega^{\psi}((1,\xi)\mathfrak{s}(t(a,b)))=\xi\mu_{\psi}(b)^{-1}$. For $t(a,b)$ and $t(a^{\prime},b^{\prime}) \in \widetilde{T}^e$, Lemme \ref{BLS} implies that $\sigma_2(t(a,b),t(a^{\prime},b^{\prime}))=(a,a^{\prime})_F$. We can conclude that $\omega^{\psi}$ is indeed a genuine character of $\widetilde{T}^e$. The {\it exceptional representation} $\theta^{\psi}$ \cite{KaPa} is the unique irreducible quotient of the normalized induced representation $\mathrm{Ind}^{\widetilde{GL}_2}_{\widetilde{T}^e N^{\ast}}(\omega^{\psi} \otimes \delta_{B}^{1/4})$, isomorphic to the unique irreducible subrepresentation of $\mathrm{Ind}^{\widetilde{GL}_2}_{\widetilde{T}^eN^{\ast}}(\omega^{\psi} \otimes \delta_{B}^{-1/4})$. Here normalized induction means that $\mathrm{Ind}^{\widetilde{GL}_2}_{\widetilde{T}^e N^{\ast}}(\omega^{\psi} \otimes \delta_{B}^{1/4})$ is unitarizable whenever $\omega^{\psi} \otimes \delta_{B}^{1/4}$ is unitarizable. 
Let $\widetilde{T}^2$ be the inverse image of $T^2$, where $T^2=\{t^2 \; | \; t \in T \}$ is the set of square elements in the torus. $\widetilde{T}^2\widetilde{Z}^2$ is the center of $\widetilde{T}$. In general a character $\omega$ of $\widetilde{T}^2\widetilde{Z}^2$ is called {\it exceptional} if $\omega(\mathfrak{s}(x^2,x^{-2}))=|x|$ for all $x \in F^{\times}$. The restriction of $\omega^{\psi} \otimes \delta_B^{1/4}$ to $\widetilde{T}^2\widetilde{Z}^2$ is an exceptional character.

\par
Let $\eta$ be a character of $F^{\times}$. For a ramified character $\eta$, let $\mathfrak{f}(\eta)$ be the conductor of $\eta$ defined to be the smallest integer $m$ such that $\eta$ is trivial on $1+\mathfrak{p}^m$. A character $\eta$ of $F^{\times}$ is said to be {\it quadratic} if $\eta^2=1$. 
We define a genuine character $\widetilde{\eta}$ of $\widetilde{Z}^2$ by
\[
  \widetilde{\eta}((1,\xi)\mathfrak{s}(z))=\xi\eta(a), \quad z=\begin{pmatrix} a & \\ & a \end{pmatrix} \in Z^2.
\]
We embed $GL_1$ into $GL_2$ via the map $a \mapsto \begin{pmatrix} a & \\ & 1 \end{pmatrix}$. Then we extend $\textbf{1}_{GL_1}$ to the representation $\widetilde{\textbf{1}}_{GL_1}\boxtimes \widetilde{\eta}$ of the semidirect product $(\widetilde{GL_1}\times \widetilde{Z}^2) \ltimes N^{\ast}$ by letting $\widetilde{Z}^2$ act by $\widetilde{\eta}$ and $N^{\ast}$ act trivially, where $\widetilde{\textbf{1}}_{GL_1}$ is the non-genuine character on $\widetilde{GL}_1$ (trivial extension) given by $(a,\xi) \mapsto \xi$. For $s \in \mathbb{C}$, we define a normalized induced representation (cf. \cite[p.132]{Takeda15})
\[
  I(s,\eta)=\mathrm{Ind}^{\widetilde{GL}_2}_{\widetilde{Z}^2\widetilde{P}}((\widetilde{\eta}   \boxtimes \widetilde{\textbf{1}}_{GL_1}  ) \otimes \delta^{s/4}_{B}).
\]
equipped with the natural action of $\widetilde{GL}_2$ on $ I(s,\eta)$ by a right translation $R$. Let us look at the transformation under $\widetilde{Z}^2$. We write $(1,\xi)\mathfrak{s}(z) \in \widetilde{Z}^2$
with $z=aI_2$ and $a \in (F^{\times})^2$. For $f_s \in I(s,\eta)$ we have
\begin{equation}
\label{central-induced}
f_s((1,\xi)\mathfrak{s}(z))=\xi \eta(a)  f_s(I_2).
\end{equation}
The two sides of the functional equation in \eqref{unnoraml-intertwining} involve slightly different induced representations. For $w \in W$, we denote $\mathfrak{s}(w)$ simply by the same symbol $w$ when there is no danger of confusion. We construct the induced representation occurring the left hand side of the functional equation. In our situation, it means that
\[
  J(-s,\eta)=\mathrm{Ind}^{\widetilde{GL}_2}_{\widetilde{Z}^2\;{^{w_2}\widetilde{A}}N^{\ast}}((\widetilde{\eta} \boxtimes {^{w_2}\widetilde{\textbf{1}}}_{GL_1}  ) \otimes \delta^{-s/4}_{B}),
\]
where the twisted representation ${^{w_2}\widetilde{\textbf{1}}}_{GL_2}$ of $\widetilde{\textbf{1}}_{GL_2}$, to be the representation of ${^{w_2}\widetilde{A}}=w_2\widetilde{A}w_2^{-1}$, is given by ${^{w_2}\widetilde{\textbf{1}}_{GL_1}(\widetilde{a})}=\widetilde{\textbf{1}}_{GL_1}(w_2^{-1}\widetilde{a}w_2)$ for $\widetilde{a} \in {^{w_2}\widetilde{A}}$.

\subsection{Rankin-Selberg integrals and good sections} 
\label{sec2:3}
We investigate the basic definition of $L$-functions and basic existence theorems. Everything stated in $\S$ \ref{sec2:3} without any specific reference is found in \cite[\S 3]{Ya17}. Let $\pi$ be an irreducible admissible generic representation of $GL_2$, $\omega_{\pi}$ its central character, and $\mathcal{W}(\pi,\psi)$ its Whittaker model. Let $\theta^{\psi}$ denote an exceptional representation of $\widetilde{GL}_2$. Then there exists a unique non-zero Whittaker functional $\lambda$ on $V_{\theta^{\psi}}$ such that 
\begin{equation}
\label{central-theta}
\lambda \left[\theta^{\psi}\left((1,\xi)\mathfrak{s}\begin{pmatrix} a & \\ & a \end{pmatrix}\mathfrak{s}\begin{pmatrix}1 & x \\ & 1 \end{pmatrix}\right)v\right]
=\xi\mu_{\psi}(a)^{-1}\psi^{-1}(x)\lambda(v)
\end{equation}
for $a \in F^{\times}$, $x \in F$ and $v \in V_{\theta^{\psi}}$ \cite[\S 2.1]{Gelbart-PS}. We let $\mathcal{W}(\theta^{\psi},\psi^{-1})$ denote the Whittaker model of $\theta^{\psi}$. We obtain the Whittaker model $\mathcal{W}(\theta^{\psi},\psi^{-1})$ by setting $(W_{\theta^{\psi}})_v(\widetilde{g})=\lambda(\theta^{\psi}(\widetilde{g})v)$ for $v \in V_{\theta^{\psi}}$ and $\widetilde{g} \in \widetilde{GL}_2$. We put $W_{\theta^{\psi}}=(W_{\theta^{\psi}})_v$.

\par
For $W \in \mathcal{W}(\pi,\psi)$, $W_{\theta^{\psi}} \in \mathcal{W}(\theta^{\psi},\psi^{-1})$, and $f_{2s-1} \in I(2s-1,\omega^{-1}_{\pi})$, we associate the Zeta integral \cite{TA14, Ya17}
\begin{equation}
\label{symsquare-RS}
  I(W,W_{\theta^{\psi}},f_{2s-1})=\int_{Z^2N \backslash GL_2} W(g)W_{\theta^{\psi}}(\mathfrak{s}(g)) f_{2s-1}(\mathfrak{s}(g)) dg.
\end{equation}
This integral is absolute convergent for $\mathrm{Re}(s)$ sufficiently large. By means of Properties \eqref{central-induced} and \eqref{central-theta}, the group $Z^2$ acts on the product $W_{\theta^{\psi}}(-)f_{2s-1}(-)$ as $\omega_{\pi}^{-1}$. Therefore the integral $I(W,W_{\theta^{\psi}},f_{2s-1})$ is well-defined in the sense that $Z^2$ acts trivially for the integrand.

\par
There is an intertwining operator
\[
  M(s,\eta) : I(s,\eta) \rightarrow J(-s,\eta)
\]
given by the formula
\[
  M(s,\eta)f_s(\widetilde{g})=\int_{F} f_s\left( \mathfrak{s} \begin{pmatrix} & 1 \\ 1& \end{pmatrix} \mathfrak{s} \begin{pmatrix} 1 & x \\ & 1 \end{pmatrix} \widetilde{g} \right) \; dx.
\]
This integral converges absolutely for $\mathrm{Re}(s)$ large and is defined by meromorphic continuation otherwise. The intertwining operator is mainly utilized later in \S \ref{sec5:2}.
 Now it is proven in \cite[Proposition 3.14]{Ya17} that there exists a rational function in $\mathbb{C}(q^{-s/2})$ enjoying the following functional equation
\begin{equation}
\label{unnoraml-intertwining}
  I(W,W_{\theta^{\psi}},M(2s-1,\omega_{\pi}^{-1})f_{2s-1})=\Gamma(s,\pi,\mathrm{Sym}^2,\psi) I(W,W_{\theta^{\psi}},f_{2s-1}).
\end{equation}
As we have seen in \cite{Ya17}, we normalize the intertwining operator by
\[
 \hat{N}(s,\eta,\psi)= \gamma(s,\eta^{-2},\psi)M(s,\eta).
\]
We want to understand an involution of group $g \mapsto {^{\iota}g}$ of $GL_2$ defined by ${^{\iota}g}:=w_2\; {^tg}^{-1} w_2$. An automorphism $^{\iota} : \widetilde{g} \mapsto {^{\iota}\widetilde{g}}$ is called a lift of the involution if ${^{\iota}\xi}=\xi$ and $p({^{\iota}\widetilde{g}})={^{\iota}p(\widetilde{g})}$ for all $\xi \in \{ \pm 1 \}$ and $\widetilde{g} \in \widetilde{GL}_2$. Kable \citelist{\cite{Kable99}\cite{Ya17}*{Proposition 1.3}} constructed a lift $\widetilde{g} \mapsto {^{\iota}\widetilde{g}}$ of $g \mapsto {^{\iota}g}$ to $\widetilde{GL}_2$ satisfying
\[
  ^{\iota}\mathfrak{s}(t(a,b))=\mathfrak{s}(^{\iota}t(a,b))(b,a)_F, \quad {^{\iota}\widetilde{z}}=\widetilde{z}^{-1}, \quad {^{\iota}(^{\iota}\widetilde{g})}=\widetilde{g}, \quad
  ^{\iota}\mathfrak{s}(n)=\mathfrak{s}(^{\iota}n)
\]
for all $t(a,b) \in T$, $\widetilde{z} \in \widetilde{Z}^2$, $\widetilde{g} \in \widetilde{GL}_2$ and $n \in N$. Furthermore if the residual characteristic of $F$ is odd and $f :K \rightarrow \widetilde{GL}_2$ is a homomorphism, then $f(^{\iota}k)={^{\iota}f}(k)$ for all $k \in K$. Following Yamana \cite{Ya17}, we then define a $\mathbb{C}$-linear map 
\[
   N(s,\eta,\psi) : I(s,\eta) \rightarrow I(-s,\eta^{-1})
\]
by setting $N(s,\eta,\psi)f_s(\widetilde{g})={^{\iota}[{\hat{N}}(s,\eta,\psi)f_s]}(\widetilde{g})=[\hat{N}(s,\eta,\psi)f_s]({^{\iota}\widetilde{g}})$. The normalized operator satisfies the functional equation.

\begin{proposition}
\label{intertwining-func}
Let $N(s,\eta,\psi)$ be a normalized operator as above. Then we have
\[
N(-s,\eta^{-1},\psi^{-1}) \circ N(s,\eta,\psi)=\mathrm{Id}.
\] 
\end{proposition}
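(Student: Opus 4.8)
The plan is to show that $N(-s,\eta^{-1},\psi^{-1})\circ N(s,\eta,\psi)$ is a $\widetilde{GL}_2$-equivariant endomorphism of $I(s,\eta)$, conclude by Schur's lemma that it is a scalar, and then pin that scalar down to $1$ by unwinding the involution $^{\iota}$ and performing the rank-one composition of intertwining operators, using the very normalization $\hat{N}(s,\eta,\psi)=\gamma(s,\eta^{-2},\psi)M(s,\eta)$ that was built for this purpose.

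First I would unwind the definitions. Writing $\mathcal{I}$ for the map $h\mapsto h\circ{}^{\iota}$, i.e.\ $(\mathcal{I}h)(\widetilde{g})=h({}^{\iota}\widetilde{g})$, one has $N(s,\eta,\psi)=\mathcal{I}\circ\hat{N}(s,\eta,\psi)\colon I(s,\eta)\to I(-s,\eta^{-1})$ and likewise $N(-s,\eta^{-1},\psi^{-1})=\mathcal{I}\circ\hat{N}(-s,\eta^{-1},\psi^{-1})\colon I(-s,\eta^{-1})\to I(s,\eta)$, so that
\[
  N(-s,\eta^{-1},\psi^{-1})\circ N(s,\eta,\psi)=\mathcal{I}\circ\hat{N}(-s,\eta^{-1},\psi^{-1})\circ\mathcal{I}\circ\hat{N}(s,\eta,\psi).
\]
Each factor is a morphism of $\widetilde{GL}_2$-representations, hence so is the composite; for $s$ outside a discrete set $I(s,\eta)$ is irreducible, so Schur's lemma gives a scalar $c(s,\eta,\psi)$, meromorphic in $q^{-s/2}$, and it suffices to prove $c(s,\eta,\psi)=1$ on a dense set.

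Next I would analyze the middle conjugation $\mathcal{I}\circ\hat{N}(-s,\eta^{-1},\psi^{-1})\circ\mathcal{I}$. Using that $^{\iota}$ is an automorphism of $\widetilde{GL}_2$ with ${}^{\iota}({}^{\iota}\widetilde{g})=\widetilde{g}$, $^{\iota}\mathfrak{s}(n(x))=\mathfrak{s}(n(-x))$, $^{\iota}w_2=w_2$ with $^{\iota}\mathfrak{s}(w_2)$ differing from $\mathfrak{s}(w_2)$ by an explicit element of $\{\pm1\}$, and that $^{\iota}$ acts on $\widetilde{Z}^2$ by inversion (harmless, since $Z^2$ acts by the relevant central character on both sides), a change of variables $x\mapsto-x$ in the defining integral for $M(-s,\eta^{-1})$ identifies $\mathcal{I}\circ M(-s,\eta^{-1})\circ\mathcal{I}$, up to that sign, with the intertwining operator $M^{\sharp}(s,\eta)\colon J(-s,\eta)\to I(s,\eta)$ attached to $w_2$ on the twisted induced space, i.e.\ the operator in the direction opposite to $M(s,\eta)$. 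Collecting the two normalizing factors $\gamma(s,\eta^{-2},\psi)$ and $\gamma(-s,\eta^{2},\psi^{-1})$ from the two copies of $\hat{N}$, this reduces the claim to evaluating
\[
  M^{\sharp}(s,\eta)\circ M(s,\eta)\in\mathrm{End}\big(I(s,\eta)\big).
\]
Since $w_2^2=I_2$, this is the standard rank-one situation: after carrying out the $n(x)$-integrations the composite collapses to a one-dimensional Tate-type integral over $F^{\times}$, and the resulting scalar is exactly the ratio of Tate $\gamma$-factors that is inverted by $\gamma(s,\eta^{-2},\psi)\gamma(-s,\eta^{2},\psi^{-1})$; together with the cancellation of the signs produced in the previous step this yields $c(s,\eta,\psi)=1$, and the meromorphic continuation finishes the proof.

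The main obstacle is the last step: performing the rank-one composition precisely on the metaplectic double cover and matching its value against the product of the two normalizing $\gamma$-factors, keeping exact track of every Weil index $\mu_{\psi}$, every Hilbert-symbol sign, and the sign arising from $^{\iota}\mathfrak{s}(w_2)$. If the rank-one Gindikin--Karpelevich computation and the meromorphic continuation of $M(s,\eta)$ are imported from Takeda and Yamana, the argument reduces to careful bookkeeping of constants; the one genuinely delicate point is verifying that all these signs and Weil indices cancel, so that the normalized operator is a true involution rather than an involution up to a root of unity.
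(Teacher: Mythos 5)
Your proposal is correct in outline and rests on the same underlying identity as the paper, but you execute it differently at the decisive step. You reduce everything to Schur's lemma (legitimate here, since $I(s,\eta)$ sits inside the genuine principal series $\mathrm{Ind}^{\widetilde{GL}_2}_{\widetilde{T}^e N^{\ast}}(\mu_s)$, which is irreducible off a discrete set of $s$) and then propose to evaluate the scalar by conjugating the second intertwining operator through the involution $^{\iota}$ and carrying out the rank-one Gindikin--Karpelevich computation on the double cover by hand. The paper instead short-circuits exactly this computation: it invokes the Plancherel measure identity $M(-s,\eta)\circ M(s,\eta)=\mu(s,\eta)^{-1}\cdot\mathrm{Id}$ together with the closed formula $\mu(s,\eta)^{-1}=\gamma(s,\eta^{-2},\psi)^{-1}\gamma(-s,\eta^{2},\psi^{-1})^{-1}$, imported from Gao--Shahidi--Szpruch (originally for $\widetilde{SL}_2$, transferred to $\widetilde{GL}_2$ by their Corollary 10.2), so that the two normalizing $\gamma$-factors cancel by fiat. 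What your route buys is self-containedness and an explicit treatment of the involution $^{\iota}$, which the paper passes over in the phrase ``if we incorporate this into normalized operators''; what it costs is precisely the bookkeeping you flag as the main obstacle --- every Hilbert-symbol sign, the Weil indices $\mu_{\psi}$, and the sign in $^{\iota}\mathfrak{s}(w_2)$ --- all of which is absorbed into the cited Plancherel formula in the paper's version. If you carry out your computation, you should expect it to reproduce their formula, and citing it is the efficient way to close the argument.
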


\begin{proof}
The induced representation $I(s,\eta)$ can be viewed as a subrepresentation of a genuine principal series representation $\mathrm{Ind}^{\widetilde{GL}_2}_{\widetilde{T}^e N^{\ast}}(\mu_s)$, where $\mu_s$ is an extension to $\widetilde{T}^e$ of the genuine character of $\widetilde{T}^2\widetilde{Z}^2$ defined by
\[
  \mu_s\left( \mathfrak{s} \begin{pmatrix}a & \\ & b \end{pmatrix} \right)=\eta(b)\mu_{\psi}(b)|a|^{\frac{s}{4}}|b|^{-\frac{s}{4}}=\eta(b)\mu_{\psi}(b)\delta_B^{\frac{s}{4}} \begin{pmatrix}a & \\ & b \end{pmatrix}
\]
for $t(a,b) \in T^2Z^2$. 
Shahidi \cite{Ganapathy-Lomeli,GaoShahidiSzpruch,Shahidi} defines the Plancherel measure $\mu(s,\eta)$ associated with $\eta$ by 
\begin{equation}
\label{plancherel}
M(-s,\eta) \circ M(s,\eta)=\mu(s,\eta)^{-1} \cdot \mathrm{Id}. 
\end{equation}
It is a priori a rational function in $\mathbb{C}(q^{-s/4})$. As described in \cite[(4.9),(4.11),(9.22)]{GaoShahidiSzpruch}, the formula we seek for $\mu(s,\eta)^{-1}$ is therefore
\[
\begin{split}
q^{\mathfrak{f}(\psi)-\mathfrak{f}(\eta^2)}\frac{L(s,\eta^{-2})L(-s,\eta^{2})}{L(1-s,\eta^2)L(1+s,\eta^{-2})}
&=\varepsilon(s,\eta^{-2},\psi)^{-1}\varepsilon(-s,\eta^2,\psi^{-1})^{-1}\frac{L(s,\eta^{-2})L(-s,\eta^{2})}{L(1-s,\eta^2)L(1+s,\eta^{-2})}\\
&= \gamma(s,\eta^{-2},\psi)^{-1}\gamma(-s,\eta^{2},\psi^{-1})^{-1}
\end{split}
\] 
and the rational function $\alpha_{\psi}(s,\eta)$ in \cite[Lemma 3.5]{Ya17} is given by 
\[
\varepsilon(s,\eta^{-2},\psi)^{-1}\varepsilon(-s,\eta^2,\psi^{-1})^{-1}=q^{\mathfrak{f}(\psi)-\mathfrak{f}(\eta^2)}.
\]
Indeed the formula is originally stated for $\widetilde{SL}_2$ \cite[(9.22)]{GaoShahidiSzpruch}. Nevertheless the very recent result \cite[Corollary 10.2]{GaoShahidiSzpruch} allows us to relate the Plancherel measure associated to a genuine representation of $\widetilde{GL}_2$ with that of $\widetilde{SL}_2$. If we incorporate this into normalized operators, \eqref{plancherel} can be rewritten as $N(-s,\eta^{-1},\psi^{-1}) \circ N(s,\eta,\psi)=\mathrm{Id}$.
\end{proof}

\par
In what follows we refer to Waldspurger \cite[\S 4]{Wal03} for a through treatment of the notion of sections. We observe that $\widetilde{K} \simeq K^{\ast} \times \{ \pm 1 \}$ is a compact open subgroup \cite[\S 5]{Kable99}. A $\widetilde{K}$-finite function $f : \mathbb{C} \times \widetilde{GL}_2 \rightarrow \mathbb{C}$ such that the mapping $\widetilde{g} \mapsto f(s,\widetilde{g})$ belonging to $I(s,\eta)$ for all $s$ is called a \textit{section}. A section $f_s \in I(s,\eta)$ is called a \textit{standard section} if its restriction to $\widetilde{K}$ is independent of $s$. Let $V_{std}(s,\eta)$ denote the space of standard sections. The space of \textit{holomorphic sections} is defined by $V_{hol}(s,\eta)=\mathbb{C}[q^{-s/4},q^{s/4}]\otimes_{\mathbb{C}}V_{std}(s,\eta)$. The elements of $V_{rat}(s,\eta)=\mathbb{C}(q^{-s/4})\otimes_{\mathbb{C}}V_{std}(s,\eta)$ are called \textit{rational sections}. In order to incorporate sections in normalized operator, we need to allow sections to vary arithmetically in $s$. To this end, Piatetski-Shapiro and Rallis introduce the family of good sections \cite{PSRA87}.

\begin{definition}
\label{GoodSection}
We define the space $V_{good}(2s-1,\eta)$ of good sections to comprise the following:
\begin{enumerate}[label=$(\mathrm{\roman*})$]
\item $V_{hol}(2s-1,\eta)$
\item $N(1-2s,\eta^{-1},\psi^{-1})\left[ V_{hol}(1-2s,\eta^{-1}) \right]$.
\end{enumerate}
\end{definition}

According to Proposition 3.11 $(4)$ of \cite{Ya17}, Definition \ref{GoodSection} agrees with that of Yamana when $\eta$ is unitary. The good section is closed under the normalized intertwining operator in the sense that $N(2s-1,\eta,\psi)\left[V_{good}(2s-1,\eta)\right] \subset V_{good}(1-2s,\eta^{-1})$. If $f_{2s-1} \in V_{good}(2s-1,\omega^{-1}_{\pi})$, then $I(W,W_{\theta^{\psi}},f_{2s-1})$ is a rational function of $q^{-s/2}$. Let $\mathcal{I}(\pi)$ be a subspace of $\mathbb{C}(q^{-s/2})$ generated by integrals $I(W,W_{\theta^{\psi}},f_{2s-1})$ for $W \in \mathcal{W}(\pi,\psi)$, $W_{\theta^{\psi}} \in \mathcal{W}(\theta^{\psi},\psi^{-1})$, and $f_{2s-1} \in V_{good}(2s-1,\omega^{-1}_{\pi})$. Each such integral can be written with a common denominator (cf. \cite[Proposition 3.8-(3)]{Ya17}) and $\mathcal{I}(\pi)$ is closed under multiplication by $q^{s/2}$. Then $\mathcal{I}(\pi)$ is a $\mathbb{C}[q^{-s/2},q^{s/2}]$-fractional ideal and in particular it contains $1$. Hence we can always find a normalized generator of the form $P(q^{-s/2})^{-1}$ where the polynomial $P(X) \in \mathbb{C}[X]$ satisfies $P(0)=1$.

\begin{definition}
\label{symsquare-dfn}
We define the \textit{symmetric square $L$-function} by $L(s,\pi,\mathrm{Sym}^2)=P(q^{-s/2})^{-1}$, the normalized generator of the fractional ideal $\mathcal{I}(\pi)$
formed by the family of integrals $I(W,W_{\theta^{\psi}},f_{2s-1})$ for $W \in \mathcal{W}(\pi,\psi)$, $W_{\theta^{\psi}} \in \mathcal{W}(\theta^{\psi},\psi^{-1})$, and $f_{2s-1} \in V_{good}(2s-1,\omega^{-1}_{\pi})$.
\end{definition}

Let $\pi^{\iota}$ denote the representation of $GL_2$ on the same space $V_{\pi}$ but with the action ${\pi^{\iota}}(g)=\pi({^{\iota}g})$. If $\pi$ is irreducible, then ${\pi^{\iota}}=\widetilde{\pi}$, the \textit{contragredient representation} (See \cite[\S 2.1]{Cogdell-PS}). Define $\widetilde{W} : GL_2 \rightarrow \mathbb{C}$ as $\widetilde{W}(g)=W({^{\iota}g})=W(w_2{^tg^{-1}}w_2)$. Then we have $\widetilde{W} \in \mathcal{W}(\widetilde{\pi},\psi^{-1})$. For $W_{\theta^{\psi}} \in \mathcal{W}(\theta^{\psi},\psi^{-1})$, we define a $\psi$-Whittaker function $\widetilde{W}_{\theta^{\psi}}$ by $\widetilde{W}_{\theta^{\psi}}(\widetilde{g})=W_{\theta^{\psi}}({^{\iota}\widetilde{g}})$ for $\widetilde{g} \in \widetilde{GL}_2$. We note that $\widetilde{W}_{\theta^{\psi}} \in \mathcal{W}(\theta^{\psi^{-1}},\psi)$ (See \cite[Lemma 1.9 (4)]{Ya17}). With the normalized operator, there is a rational function $\gamma(s,\pi,\mathrm{Sym}^2,\psi)$ in $\mathbb{C}(q^{-s/2})$ such that we have
\[
 I(\widetilde{W},\widetilde{W}_{\theta^{\psi}},N(2s-1,\omega^{-1}_{\pi},\psi)f_{2s-1})=\gamma(s,\pi,\mathrm{Sym}^2,\psi)I(W,W_{\theta^{\psi}},f_{2s-1}).
\]
The $\varepsilon$-factor is defined as the ratio
\begin{equation}
\label{epsilon}
  \varepsilon(s,\pi,\mathrm{Sym}^2,\psi)=\frac{\gamma(s,\pi,\mathrm{Sym}^2,\psi)L(s,\pi,\mathrm{Sym}^2)}{L(1-s,\widetilde{\pi},\mathrm{Sym}^2)}.
\end{equation}
With the $\varepsilon$-factor in hand, the functional equation can be written in the form
\begin{equation}
\label{functional-equation}
  \frac{I(\widetilde{W},\widetilde{W}_{\theta^{\psi}},N(2s-1,\omega^{-1}_{\pi},\psi)f_{2s-1})}{L(1-s,\widetilde{\pi},\mathrm{Sym}^2)}
  =\varepsilon(s,\pi,\mathrm{Sym}^2,\psi) \frac{I(W,W_{\theta^{\psi}},f_{2s-1})}{L(s,\pi,\mathrm{Sym}^2)}.
\end{equation}
Converting $I(\widetilde{W},\widetilde{W}_{\theta^{\psi}},N(2s-1,\omega^{-1}_{\pi},\psi)f_{2s-1})$ on the left hand side of \eqref{functional-equation} as in the proof of Proposition 3.14 of \cite{Ya17} , the function equation \eqref{functional-equation} becomes
\begin{equation}
\label{func-Nhat}
  \frac{I(W,W_{\theta^{\psi}},\hat{N}(2s-1,\omega^{-1}_{\pi},\psi)f_{2s-1})}{L(1-s,\widetilde{\pi},\mathrm{Sym}^2)}
  =\varepsilon(s,\pi,\mathrm{Sym}^2,\psi) \frac{I(W,W_{\theta^{\psi}},f_{2s-1})}{L(s,\pi,\mathrm{Sym}^2)}.
\end{equation}

\begin{proposition}
\label{func-epsilon}
The epsilon factor $\varepsilon(s,\pi,\mathrm{Sym}^2,\psi)$ satisfies the following functional equation;
\[
\varepsilon(1-s,\widetilde{\pi},\mathrm{Sym}^2,\psi^{-1}) \varepsilon(s,\pi,\mathrm{Sym}^2,\psi)=1. 
\]
Furthermore $\varepsilon(s,\pi,\mathrm{Sym}^2,\psi)$ is a unit in $\mathbb{C}[q^{\pm s/2}]$.
\end{proposition}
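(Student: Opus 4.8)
The plan is to deduce the two assertions from the functional equation \eqref{func-Nhat}, the analogous functional equation obtained by swapping $(\pi,\psi)$ with $(\widetilde{\pi},\psi^{-1})$, and the involutivity of the normalized operator from Proposition \ref{intertwining-func}.

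\emph{Step 1: the functional equation for $\varepsilon$.} First I would apply \eqref{func-Nhat} to the data $(\pi,\psi)$ and then again to the data $(\widetilde{\pi},\psi^{-1})$, using that $\widetilde{\widetilde{\pi}} \cong \pi$ and $\widetilde{\widetilde{W}} = W$, $\widetilde{\widetilde{W}}_{\theta^{\psi}} = W_{\theta^{\psi}}$. Composing the two functional equations, the left-hand side becomes $I(W,W_{\theta^{\psi}}, \hat{N}(1-2s,\omega_\pi,\psi^{-1})\circ \hat{N}(2s-1,\omega_\pi^{-1},\psi) f_{2s-1})$ divided by $L(s,\pi,\mathrm{Sym}^2)$. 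Here the content of Proposition \ref{intertwining-func}, after unwinding the definition $N(s,\eta,\psi)f_s(\widetilde g) = [\hat N(s,\eta,\psi)f_s]({}^\iota \widetilde g)$ and using ${}^\iota({}^\iota \widetilde g) = \widetilde g$ together with $\gamma(s,\eta^{-2},\psi)\gamma(-s,\eta^2,\psi^{-1}) = 1$ (Tate's local functional equation), gives $\hat{N}(1-2s,\omega_\pi,\psi^{-1})\circ \hat{N}(2s-1,\omega_\pi^{-1},\psi) = \mathrm{Id}$ on good sections, so the composite integral is just $I(W,W_{\theta^{\psi}}, f_{2s-1})$. Comparing the two sides yields
\[
\varepsilon(1-s,\widetilde\pi,\mathrm{Sym}^2,\psi^{-1})\,\varepsilon(s,\pi,\mathrm{Sym}^2,\psi)\,\frac{I(W,W_{\theta^{\psi}},f_{2s-1})}{L(s,\pi,\mathrm{Sym}^2)} = \frac{I(W,W_{\theta^{\psi}},f_{2s-1})}{L(s,\pi,\mathrm{Sym}^2)};
\]
since the fractional ideal $\mathcal{I}(\pi)$ contains $L(s,\pi,\mathrm{Sym}^2)$ (indeed contains $1$), one may choose $W,W_{\theta^{\psi}},f_{2s-1}$ so that the common factor is a nonzero rational function, and cancelling it gives the first assertion.

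\emph{Step 2: $\varepsilon$ is a unit.} By definition \eqref{epsilon}, $\varepsilon(s,\pi,\mathrm{Sym}^2,\psi)$ is a ratio of a rational function $\gamma$ by an element $L(1-s,\widetilde\pi,\mathrm{Sym}^2)/L(s,\pi,\mathrm{Sym}^2)$ of $\mathbb{C}(q^{-s/2})$, so it is at least rational in $q^{-s/2}$. Write $\varepsilon(s) = \varepsilon(s,\pi,\mathrm{Sym}^2,\psi)$ and $\varepsilon^\vee(s) = \varepsilon(s,\widetilde\pi,\mathrm{Sym}^2,\psi^{-1})$; by Step 1, $\varepsilon(s)\varepsilon^\vee(1-s) = 1$, so $\varepsilon(s)$ is a unit in the field $\mathbb{C}(q^{-s/2})$, but this alone does not yet force it to be a Laurent monomial. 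To upgrade this, I would argue that $\varepsilon(s)$ has no zeros or poles in $q^{-s/2} \in \mathbb{C}^\times$: the factors $L(s,\pi,\mathrm{Sym}^2)$ and $L(1-s,\widetilde\pi,\mathrm{Sym}^2)$ are, by Definition \ref{symsquare-dfn}, of the form $P(q^{-s/2})^{-1}$ with $P(0)=1$, hence have neither zeros nor poles at $q^{-s/2}=0$ nor at $q^{-s/2}=\infty$ beyond what is visible, while the functional equation \eqref{func-Nhat} together with the fact that $\mathcal{I}(\pi)$ and $\mathcal{I}(\widetilde\pi)$ are each generated by $L(s,\pi,\mathrm{Sym}^2)^{-1}$, $L(1-s,\widetilde\pi,\mathrm{Sym}^2)^{-1}$ respectively forces $\varepsilon(s)$ and $\varepsilon(1-s)^{-1}$ to each be a polynomial in $q^{-s/2}$ and $q^{s/2}$; being simultaneously polynomial in $q^{-s/2}$, polynomial in $q^{s/2}$, and nowhere zero on $\mathbb{C}^\times$, it must be a nonzero scalar multiple of a power of $q^{s/2}$, i.e.\ a unit in $\mathbb{C}[q^{\pm s/2}]$.

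\emph{Main obstacle.} The delicate point is Step 2: showing that $\varepsilon(s)$ is genuinely exponential rather than merely a rational unit. The argument that both $\varepsilon(s)$ and its inverse are polynomials in $q^{\pm s/2}$ rests on the precise statement that $L(s,\pi,\mathrm{Sym}^2)^{-1}$ generates $\mathcal{I}(\pi)$ as a $\mathbb{C}[q^{\pm s/2}]$-fractional ideal and that the normalized operator carries good sections to good sections (the closure property stated after Definition \ref{GoodSection}), so that the numerator integral in \eqref{func-Nhat}, divided by $L(1-s,\widetilde\pi,\mathrm{Sym}^2)$, is a polynomial multiple of the generator of $\mathcal{I}(\widetilde\pi)$; extracting from this that the \emph{ratio} defining $\varepsilon$ has no zeros or poles requires care in tracking which integrals realize the generators on each side. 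Once this bookkeeping is done correctly, the conclusion that a rational unit which is entire together with its inverse on $\mathbb{C}^\times$ must be a monomial is immediate, and the functional equation of Step 1 then also re-derives the first assertion cleanly. I expect Step 1 to be essentially formal given Proposition \ref{intertwining-func}, and the bulk of the work to lie in Step 2.
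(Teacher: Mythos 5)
Your proposal is correct and follows essentially the same route as the paper, which simply defers to \cite{JO20}*{Theorem 3.11}: iterating the functional equation and invoking Proposition \ref{intertwining-func} yields $\gamma(1-s,\widetilde{\pi},\mathrm{Sym}^2,\psi^{-1})\,\gamma(s,\pi,\mathrm{Sym}^2,\psi)=1$, and the generator property of $L(s,\pi,\mathrm{Sym}^2)^{-1}$ for $\mathcal{I}(\pi)$ together with the stability of good sections under the normalized operator shows that $\varepsilon$ and its inverse both lie in $\mathbb{C}[q^{\pm s/2}]$, hence $\varepsilon$ is a unit. One caveat: your parenthetical identity $\gamma(s,\eta^{-2},\psi)\gamma(-s,\eta^{2},\psi^{-1})=1$ is not Tate's local functional equation (which reads $\gamma(s,\chi,\psi)\gamma(1-s,\chi^{-1},\psi^{-1})=1$); that product is the Plancherel factor $\mu(s,\eta)$ appearing in \eqref{plancherel}, and the composition that equals the identity is $N\circ N$ (which carries the $\iota$-twists), not literally $\hat{N}\circ\hat{N}$ --- but since you also invoke Proposition \ref{intertwining-func} directly, this slip does not affect the validity of the argument.
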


\begin{proof}
The proof is akin to that of \cite[Theorem 3.11]{JO20}. We omit the complete details.
\end{proof}

Comparing \eqref{unnoraml-intertwining} with \eqref{func-Nhat}, the two factors $\gamma(s,\pi,\mathrm{Sym}^2,\psi)$ and $\Gamma(s,\pi,\mathrm{Sym}^2,\psi)$ are related by the following way.

\begin{proposition}
\label{division}
As functions in $\mathbb{C}(q^{-s/2})$, we have
\[
 \Gamma(s,\pi,\mathrm{Sym}^2,\psi)  =\frac{\gamma(s,\pi,\mathrm{Sym}^2,\psi) }{\gamma(2s-1,\omega^2_{\pi},\psi)}.
\]
\end{proposition}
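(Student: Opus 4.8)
The plan is to compare the two functional equations \eqref{unnoraml-intertwining} and \eqref{func-Nhat} after re-expressing the normalized intertwining operator $\hat{N}$ in terms of the unnormalized one $M$. Specializing the normalization $\hat{N}(s,\eta,\psi)=\gamma(s,\eta^{-2},\psi)M(s,\eta)$ to the character $\eta=\omega_{\pi}^{-1}$ and the point $2s-1$, and using $(\omega_{\pi}^{-1})^{-2}=\omega_{\pi}^{2}$, one has
\[
  \hat{N}(2s-1,\omega_{\pi}^{-1},\psi)=\gamma(2s-1,\omega_{\pi}^{2},\psi)\,M(2s-1,\omega_{\pi}^{-1}).
\]
Since $I(W,W_{\theta^{\psi}},-)$ is linear in its third argument and $\gamma(2s-1,\omega_{\pi}^{2},\psi)$ is a scalar in $\mathbb{C}(q^{-s/2})$, substituting this into the numerator on the left-hand side of \eqref{func-Nhat} and then invoking \eqref{unnoraml-intertwining} produces
\[
  \frac{\gamma(2s-1,\omega_{\pi}^{2},\psi)\,\Gamma(s,\pi,\mathrm{Sym}^2,\psi)\,I(W,W_{\theta^{\psi}},f_{2s-1})}{L(1-s,\widetilde{\pi},\mathrm{Sym}^2)}=\varepsilon(s,\pi,\mathrm{Sym}^2,\psi)\,\frac{I(W,W_{\theta^{\psi}},f_{2s-1})}{L(s,\pi,\mathrm{Sym}^2)}.
\]

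Next I would use the fact recorded in \S\ref{sec2:3} that the fractional ideal $\mathcal{I}(\pi)$ contains $1$, so that one may choose $W\in\mathcal{W}(\pi,\psi)$, $W_{\theta^{\psi}}\in\mathcal{W}(\theta^{\psi},\psi^{-1})$ and $f_{2s-1}\in V_{good}(2s-1,\omega_{\pi}^{-1})$ for which $I(W,W_{\theta^{\psi}},f_{2s-1})$ is not identically zero as a rational function of $q^{-s/2}$. Cancelling this common factor from both sides and then substituting the definition \eqref{epsilon} of the epsilon factor, namely $\varepsilon(s,\pi,\mathrm{Sym}^2,\psi)=\gamma(s,\pi,\mathrm{Sym}^2,\psi)L(s,\pi,\mathrm{Sym}^2)/L(1-s,\widetilde{\pi},\mathrm{Sym}^2)$, causes the $L$-factors to cancel entirely, leaving
\[
  \gamma(2s-1,\omega_{\pi}^{2},\psi)\,\Gamma(s,\pi,\mathrm{Sym}^2,\psi)=\gamma(s,\pi,\mathrm{Sym}^2,\psi).
\]
Dividing by $\gamma(2s-1,\omega_{\pi}^{2},\psi)$, a nonzero element of $\mathbb{C}(q^{-s/2})$, yields the asserted identity.

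The argument is essentially formal, so no serious analytic obstacle arises; both functional equations \eqref{unnoraml-intertwining} and \eqref{func-Nhat} are already available from \cite{Ya17} and the preceding discussion, and the remainder is algebra in $\mathbb{C}(q^{-s/2})$. The only points requiring care are bookkeeping ones: reading off correctly that the character entering the normalizing $\gamma$-factor of $\hat{N}$ is $\omega_{\pi}^{2}$ rather than $\omega_{\pi}^{-2}$, keeping track of the shifted variable $2s-1$ throughout, and legitimizing the cancellation of $I(W,W_{\theta^{\psi}},f_{2s-1})$ as an identity of rational functions via the existence theorems for the local integrals.
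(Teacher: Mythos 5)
Your proposal is correct and is exactly the argument the paper intends: Proposition \ref{division} is stated as an immediate consequence of comparing \eqref{unnoraml-intertwining} with \eqref{func-Nhat}, and your computation simply spells out that comparison, correctly identifying the normalizing factor $\gamma(2s-1,\omega_{\pi}^{2},\psi)$ in $\hat{N}(2s-1,\omega_{\pi}^{-1},\psi)$ and cancelling a nonvanishing integral together with the $L$-factors via \eqref{epsilon}. No gap.
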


We return to the integral $I(W,W_{\theta^{\psi}},f_{2s-1})$ in \eqref{symsquare-RS}. Since $W_{\theta^{\psi}}$ and  $f_{2s-1}$ are genuine, their product $\widetilde{g} \mapsto W_{\theta^{\psi}}(\widetilde{g}) f_{2s-1}(\widetilde{g})$ must factor through the natural map $\widetilde{GL}_2 \rightarrow GL_2$. We may therefore view the function $W(g)W_{\theta^{\psi}}(\mathfrak{s}(g)) f_{2s-1}(\mathfrak{s}(g))$ as a function on $GL_2$. For convenience, we will omit the section $\mathfrak{s}$  by letting $g$ denote $\mathfrak{s}(g)$ for $g \in GL_2$ throughout Section \ref{sec3} and Section \ref{sec4}.

\section{Exceptional Poles and Derivatives}
\label{sec3}

\subsection{Derivatives and Whittaker models}
\label{sec3:1}

We establish Whittaker models for derivatives of exceptional representations. A topological group is called an $\ell$-\textit{group} if it is a Hausdorff space and has a neighborhood base at the identity consisting of compact open subgroups. If $G$ is any $\ell$-group, we denote by $\mathrm{Rep}(G)$ the category of smooth complex $G$-modules. Let $P_1$ denote  the identity matrix $I_2$. Kable \cite{Kable01} manufactures four functors
\begin{align*} 
  \Phi^- : \mathrm{Rep}(\widetilde{P}) \rightarrow \mathrm{Rep}(\widetilde{P}_1)   & &   \Psi^- : \mathrm{Rep}(\widetilde{P}) \rightarrow \mathrm{Rep}(\widetilde{GL}_1) \\
   \Phi^+ : \mathrm{Rep}(\widetilde{P}_1) \rightarrow \mathrm{Rep}(\widetilde{P}) &   & \Psi^+ : \mathrm{Rep}(\widetilde{GL}_1) \rightarrow \mathrm{Rep}(\widetilde{P}) 
\end{align*}
which he attributes to Bernstein and Zelevinsky \cite{BernsteinZelevinsky}. For $\tau \in \mathrm{Rep}(\widetilde{P})$,  $\Phi^-(\tau)=\tau \slash \tau(N^{\ast},\psi)$ is the twisted Jacquet functor or the twisted localization functor, where $\tau(N^{\ast},\psi)=\langle \tau(\mathfrak{s}(n))v-\psi(n)v \; | \; n \in N, \; v \in \tau \rangle$ and $\Psi^-(\tau)=\tau \slash \tau(N^{\ast},\textbf{1})$ is the Jacquet functor, where $\tau(N^{\ast},\textbf{1})=\langle \tau(\mathfrak{s}(n))v-v \; | \; n \in N, \; v \in \tau \rangle$. It is crucial to note that actions of the groups $\widetilde{P}_1$ and $\widetilde{GL}_1$ on $\Phi^-(\tau)$ and $\Psi^-(\tau)$ are normalized by a suitable modulus character $|\mathrm{det}|^{-\frac{1}{2}}$. For $\textbf{1} \in \mathrm{Rep}(\widetilde{P}_1)$, we put $\Phi^+(\textbf{1})=\mathrm{c\text{-}ind}^{\widetilde{P}_2}_{\widetilde{P}_1N^{\ast}}((\textbf{1} \boxtimes \psi)\otimes |\mathrm{det}|^{\frac{1}{2}})$, where the induction is an unnormalized compactly induced induction. For $\sigma \in \mathrm{Rep}(\widetilde{GL}_1)$, the induction functor $\Psi^+$ is given by $\Psi^+(\sigma)=\mathrm{ind}_{\widetilde{GL}_1N^{\ast}}^{\widetilde{P}}((\sigma \boxtimes \textbf{1})\otimes |\mathrm{det}|^{\frac{1}{2}})$. We emphasize that all four functors take genuine representations into genuine representations.

\par
For our purpose, we reconstruct Whittaker models for the first derivative of exceptional representations \cite[\S 3]{Gelbart-PS} in the context of Cogdell and Piatetski-Shapiro \cite[\S 1]{Cogdell-PS}. Let us mention that what Gelbart and Piatetski-Shapiro denoted by $r_{\chi}$ with $\chi=\textbf{1}_{F^{\times}}$ a trivial character in \cite[Proposition 2.3.3]{Gelbart-PS} corresponds to what we mean by $\theta^{\psi}$ in this paper. Even if $(\theta^{\psi},V_{\theta^{\psi}})$ is irreducible, $\theta^{\psi}|_{\widetilde{P}}$ will not be irreducible and so we shall be forced to build a natural filtration by $\widetilde{P}$-submodules
\[
  \{ 0 \} \subset \tau_2 \subset \tau_1=\theta^{\psi}|_{\widetilde{P}}
\]
such that $\tau_2 \simeq \mathrm{c\text{-}ind}^{\widetilde{P}_2}_{\widetilde{P}_1N^{\ast}}(\textbf{1} \boxtimes \psi) \simeq \Phi^+(\tau_{(1)})$ and $\tau_1 \slash \tau_2 \simeq \Psi^+({\theta^{\psi}}^{(1)})$, where the representations $\tau_{(1)} \in \mathrm{Rep}(\widetilde{P}_1)$ and ${\theta^{\psi}}^{(1)} \in \mathrm{Rep}(\widetilde{GL}_1)$ are defined by
$\tau_{(1)}=\Phi^-(\theta^{\psi}|_{\widetilde{P}})$ and ${\theta^{\psi}}^{(1)}=\Psi^-(\theta^{\psi}|_{\widetilde{P}})$. The proof of these statements can be found in the work of Bernstein and Zelevinsky \citelist{\cite{BernsteinZelevinsky}\cite{Wang}*{(2.3)}}.

\par
We consider the Kirillov map
\begin{equation}
\label{Kirillov}
  v \mapsto (W_{\theta^{\psi}})_v \left( \mathfrak{s} \begin{pmatrix} a & \\ & 1 \end{pmatrix} \right)
\end{equation}
which takes $V_{\theta^{\psi}}$ to a space of complex valued functions on $F^{\times}$. Furthermore each map $a \mapsto (W_{\theta^{\psi}})_v\left( \mathfrak{s} \begin{pmatrix} a & \\ & 1 \end{pmatrix} \right)$ is locally constant on $F^{\times}$ and compactly supported in $F$. According to \cite[\S 3.1]{Gelbart-PS}, the Kirillov map \eqref{Kirillov} is injective, that is, $(W_{\theta^{\psi}})_v\left( \mathfrak{s} \begin{pmatrix} a & \\ & 1 \end{pmatrix} \right)= 0$ implies that $v=0$. Therefore the representation of $\widetilde{P}$ has a  realization on the space of functions on $F^{\times}$, which is called the \textit{Kirillov model}, $K(\theta^{\psi},\psi^{-1})$.

\begin{proposition}  \cite[\S 3.1 (Gelbart and Piatetski-Shapiro)]{Gelbart-PS}
\label{GPS}
Let $\theta^{\psi}_{(0)}=\theta^{\psi}|_{\widetilde{P}}$. Then in terms of Whittaker model $\theta^{\psi}_{(0)}$, we have
\[
  K(\theta^{\psi},\psi^{-1}):=\mathcal{W}(\theta^{\psi}_{(0)},\psi^{-1})=\left\{ W : \mathfrak{s} \begin{pmatrix} a & \\ & 1 \end{pmatrix} \mapsto W \left( \mathfrak{s} \begin{pmatrix} a & \\ & 1 \end{pmatrix} \right)\; \middle|\; W \in  \mathcal{W}(\theta^{\psi},\psi^{-1}), a \in F^{\times} \right\}
\]
endowed with the natural action of $\widetilde{P}$ by right translation.
\end{proposition}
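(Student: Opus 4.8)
The plan is to transport the full Whittaker model $\mathcal{W}(\theta^{\psi},\psi^{-1})$ down to the mirabolic $\widetilde{P}$ and to check that restriction to the subgroup $A^{\ast}=\mathfrak{s}(A)$ loses no information. Since $\theta^{\psi}$ is irreducible and, by \eqref{central-theta}, admits a nonzero Whittaker functional $\lambda$, the kernel of the $\widetilde{GL}_2$-equivariant map $v\mapsto (W_{\theta^{\psi}})_v$ is a proper $\widetilde{GL}_2$-submodule, hence zero; so this map identifies $\theta^{\psi}_{(0)}=\theta^{\psi}|_{\widetilde{P}}$ with $\mathcal{W}(\theta^{\psi},\psi^{-1})$ as $\widetilde{P}$-modules, the action on the target being right translation $(\widetilde{p}\cdot W)(\widetilde{g})=W(\widetilde{g}\widetilde{p})$. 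It then remains to realize this $\widetilde{P}$-module on functions on $F^{\times}$.

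Next I would record the elementary structure of $\widetilde{P}$ over the cover. One has $P=NA$ with $N\cap A=\{I_2\}$, and by \ref{BLS-1} of Lemma~\ref{BLS} the Kubota cocycle satisfies $\sigma_2(n,g)=1$ for $n\in N$, so $\mathfrak{s}(n(x))\mathfrak{s}(t(a,1))=\mathfrak{s}(n(x)t(a,1))$ and every element of $\widetilde{P}$ can be written $(1,\xi)\,\mathfrak{s}(n(x))\,\mathfrak{s}(t(a,1))$. Combining this with the genuineness of $W_{\theta^{\psi}}$ and the unipotent transformation law $W_{\theta^{\psi}}(\mathfrak{s}(n(x))\widetilde{g})=\psi^{-1}(x)W_{\theta^{\psi}}(\widetilde{g})$, which is the $a=1$, $\xi=1$ case of \eqref{central-theta}, one obtains the reconstruction formula
\[
  W_{\theta^{\psi}}\bigl((1,\xi)\,\mathfrak{s}(n(x))\,\mathfrak{s}(t(a,1))\bigr)=\xi\,\psi^{-1}(x)\,W_{\theta^{\psi}}\bigl(\mathfrak{s}(t(a,1))\bigr).
\]
Thus a Whittaker function of $\theta^{\psi}$ is determined on all of $\widetilde{P}$ by its Kirillov function $a\mapsto W_{\theta^{\psi}}(\mathfrak{s}(t(a,1)))$, while conversely the same formula promotes any such function --- locally constant on $F^{\times}$ and compactly supported in $F$, as recalled above --- to a function on $\widetilde{P}$. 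Consequently the Kirillov map \eqref{Kirillov} is a bijection onto its image, and it becomes $\widetilde{P}$-equivariant once the function space is equipped with the transported action $(\widetilde{p}\cdot W)(\mathfrak{s}(t(a,1)))=W(\mathfrak{s}(t(a,1))\widetilde{p})$, the right-hand side evaluated through the reconstruction formula; this is precisely the ``right translation'' action in the statement.

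The crux --- both the main obstacle in the argument and the only genuinely non-formal ingredient --- is the injectivity of \eqref{Kirillov}, namely that a Whittaker function of $\theta^{\psi}$ vanishing on all $\mathfrak{s}(t(a,1))$ vanishes identically, which I would simply quote from Gelbart and Piatetski-Shapiro \cite[\S 3.1]{Gelbart-PS} together with the local constancy and compact support of Kirillov functions that pin down the ambient function space. Granting these, injectivity plus the tautological surjectivity onto the set of all such restrictions exhibits the Kirillov map as a $\widetilde{P}$-module isomorphism $\theta^{\psi}_{(0)}\cong K(\theta^{\psi},\psi^{-1})$, whence $K(\theta^{\psi},\psi^{-1})=\mathcal{W}(\theta^{\psi}_{(0)},\psi^{-1})$ carrying the asserted action. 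Beyond the metaplectic cocycle bookkeeping in the reconstruction formula, the only point requiring attention is that the transported $\widetilde{P}$-action is well defined on the function space --- which is exactly what injectivity of \eqref{Kirillov} secures --- and everything else is formal.
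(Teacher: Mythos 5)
Your argument is correct and matches what the paper does: the paper gives no proof of Proposition \ref{GPS} beyond citing \cite[\S 3.1]{Gelbart-PS}, and the surrounding discussion assembles exactly the ingredients you use --- faithfulness of the Whittaker model for the irreducible $\theta^{\psi}$, the cocycle triviality $\sigma_2(n,g)=1$ giving the reconstruction formula on $\widetilde{P}=\widetilde{N}A^{\ast}$, and the injectivity of the Kirillov map quoted from Gelbart--Piatetski-Shapiro. Your careful remark that injectivity is what makes the transported right-translation action well defined on the function space is the right point to isolate, and nothing further is needed.
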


We analyze the behavior of the Kirillov function $ W_{\theta^{\psi}} \left( \mathfrak{s} \begin{pmatrix} a & \\ & 1 \end{pmatrix} \right)$ near $0$.
 Let $\mathcal{S}(F^{\times})$ be the space of locally constant functions $\varphi : F^{\times} \rightarrow \mathbb{C}$ with compact supports in $F^{\times}$. The representation $R_{\theta^{\psi}}$ of $\widetilde{B}$ on the space $\mathcal{S}(F^{\times})$ is given by
 \[
   R_{\theta^{\psi}} \begin{pmatrix} a & x \\ & 1 \end{pmatrix} \varphi(y)=\psi(xy)\varphi(ya) \quad \text{and} \quad
   R_{\theta^{\psi}} \begin{pmatrix} z &  \\ & z \end{pmatrix} \varphi(y)=(y,z)_F\mu_{\psi}(z)^{-1}\varphi(y).
 \]
Then $\mathcal{S}(F^{\times})$ is nothing but the model of $\mathcal{W}(\Phi^+(\tau_{(1)}),\psi^{-1})$ under the map $\varphi \in \mathcal{S}(F^{\times}) \mapsto W \in \mathcal{W}(\Phi^+(\tau_{(1)}),\psi^{-1})$, where $\varphi(y)=W \left( \mathfrak{s} \begin{pmatrix} y & \\ & 1 \end{pmatrix} \right)$.

\begin{proposition} \cite[\S 3.2 (Cogdell, Gelbart, and Piatetski-Shapiro)]{Gelbart-PS} 
\label{GPS2}
Let $\theta^{\psi}_{(0)}=\theta^{\psi}|_{\widetilde{P}}$ and $\theta^{\psi}_{(1)}=\Phi^-(\theta^{\psi}|_{\widetilde{P}})$. 
As a $\widetilde{P}$-modules, $\Phi^+(\theta^{\psi}_{(1)})$ has a model of space of functions
\[
\begin{split}
 \mathcal{W}(\Phi^+(\theta^{\psi}_{(1)}),\psi^{-1}) =&\left\{ W : \mathfrak{s} \begin{pmatrix} a & \\ & 1 \end{pmatrix} \mapsto W \left( \mathfrak{s} \begin{pmatrix} a & \\ & 1 \end{pmatrix} \right)\; \right|\; W \in  \mathcal{W}(\theta^{\psi},\psi^{-1}), a \in F^{\times} \mathrm{\;and\; there} \\
  &\left. \quad \mathrm{exists}\; N>0\; \mathrm{such\;that\;} W\left( \mathfrak{s} \begin{pmatrix} a & \\ & 1 \end{pmatrix}\right) = 0 \;\mathrm{whenever}\; |a|<q^{-N} \right\} \simeq \mathcal{S}(F^{\times})
 \end{split}
  \]
  that is isomorphic to $\mathcal{W}(\theta^{\psi}_{(0)},\psi^{-1})(N^{\ast},\mathbf{1})=\langle \theta^{\psi}(\mathfrak{s}(n))W-W \; | \; W \in \mathcal{W}(\theta^{\psi},\psi^{-1}), n \in N \rangle$.
\end{proposition}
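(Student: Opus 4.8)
The plan is to realise all three objects in the statement as a single subspace of the Kirillov model $K(\theta^{\psi},\psi^{-1})$ of Proposition~\ref{GPS} --- the subspace of Kirillov functions vanishing identically near $0$ --- and to note that, since every Kirillov function of $\theta^{\psi}$ is compactly supported in $F$ and therefore already vanishes near $\infty$, this subspace is precisely $\mathcal{S}(F^{\times})$ transported by $\varphi(y)=W\!\left(\mathfrak{s}\begin{pmatrix} y & \\ & 1\end{pmatrix}\right)$.

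First I would unwind the compact induction $\Phi^{+}(\theta^{\psi}_{(1)})=\mathrm{c\text{-}ind}^{\widetilde{P}_2}_{\widetilde{P}_1N^{\ast}}\big((\textbf{1}\boxtimes\psi)\otimes|\mathrm{det}|^{1/2}\big)$. Since $\mathfrak{s}$ splits $N$ and $A$ (Lemma~\ref{BLS}), the coset space $\widetilde{P}_1N^{\ast}\backslash\widetilde{P}_2$ is parametrised by $a\mapsto\mathfrak{s}\begin{pmatrix} a & \\ & 1\end{pmatrix}$, so the underlying space of this induction is exactly $\mathcal{S}(F^{\times})$ carrying the $\widetilde{P}$-action given by the displayed operators $R_{\theta^{\psi}}$; applying the Whittaker functional $\lambda$ of \eqref{central-theta} identifies it with the asserted space of functions $W$, the condition of compact support in $F^{\times}$ turning into the requirement that $W\!\left(\mathfrak{s}\begin{pmatrix} a & \\ & 1\end{pmatrix}\right)$ vanish whenever $|a|<q^{-N}$. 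Because $\tau_2\simeq\Phi^{+}(\theta^{\psi}_{(1)})$ is a $\widetilde{P}$-submodule of $\theta^{\psi}|_{\widetilde{P}}$ and both its model and the Kirillov model of $\theta^{\psi}|_{\widetilde{P}}$ are built from the same $\lambda$, this embedding is literally the inclusion of function spaces; this yields the displayed description together with the isomorphism with $\mathcal{S}(F^{\times})$.

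Next I would match this subspace with $\mathcal{W}(\theta^{\psi}_{(0)},\psi^{-1})(N^{\ast},\textbf{1})$. For the mirabolic of $GL_2$ one has $P=A\ltimes N$ with $A\cong GL_1$, so $\widetilde{GL}_1N^{\ast}\backslash\widetilde{P}$ is a single point and $\Psi^{+}$ is just inflation twisted by $|\mathrm{det}|^{1/2}$; consequently the canonical surjection $\theta^{\psi}|_{\widetilde{P}}\twoheadrightarrow\Psi^{+}({\theta^{\psi}}^{(1)})=\Psi^{+}\Psi^{-}(\theta^{\psi}|_{\widetilde{P}})$ in the Bernstein--Zelevinsky filtration is the $N^{\ast}$-coinvariant projection, and its kernel $\tau_2$ equals $(\theta^{\psi}|_{\widetilde{P}})(N^{\ast},\textbf{1})=\langle\theta^{\psi}(\mathfrak{s}(n))v-v\rangle$; passing to Whittaker functions gives the remaining isomorphism. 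As a cross-check one can also see this directly inside $\mathcal{S}(F^{\times})$: the generator $\theta^{\psi}(\mathfrak{s}(n(x)))W-W$ corresponds under $R_{\theta^{\psi}}$ to $(\psi(xy)-1)\varphi(y)$, which vanishes near $0$; conversely, given $\varphi_0\in\mathcal{S}(F^{\times})$ supported on an annulus, choosing $x$ with $|x|$ sufficiently large makes $\psi(xy)-1$ a non-vanishing locally constant function on the support of $\varphi_0$, whence $\varphi_0=(\psi(xy)-1)\cdot\big(\varphi_0/(\psi(xy)-1)\big)$ realises $\varphi_0$ as such a generator.

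The Bernstein--Zelevinsky filtration $\{0\}\subset\tau_2\subset\tau_1=\theta^{\psi}|_{\widetilde{P}}$ with $\tau_2\simeq\Phi^{+}(\theta^{\psi}_{(1)})$ and $\tau_1/\tau_2\simeq\Psi^{+}({\theta^{\psi}}^{(1)})$ may be quoted, and the metaplectic overhead is slight --- odd residual characteristic makes $\mathfrak{s}$ split $N$, $A$, $Z^2$ and trivialises the relevant cocycle values, and one need only keep track of the $\psi^{-1}$-normalisation and of genuineness of the four functors. The point requiring genuine care is coherence: one must verify that the abstract isomorphism $\tau_2\simeq\Phi^{+}(\theta^{\psi}_{(1)})\simeq\mathcal{S}(F^{\times})$, the coinvariant description $\tau_2=(\theta^{\psi}|_{\widetilde{P}})(N^{\ast},\textbf{1})$, and the ``vanishing near $0$'' subspace of $K(\theta^{\psi},\psi^{-1})$ all name the same subspace --- which in the end rests on the single observation that each of these identifications is implemented by the Whittaker functional $\lambda$ of \eqref{central-theta}.
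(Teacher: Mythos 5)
This proposition is not proved in the paper: it is quoted verbatim from \cite[\S 3.2]{Gelbart-PS} (reinterpreted \`a la Cogdell--Piatetski-Shapiro), and the only supporting material in the text is the Bernstein--Zelevinsky filtration $\{0\}\subset\tau_2\subset\tau_1=\theta^{\psi}|_{\widetilde{P}}$ recorded just above it. So there is no in-paper argument to compare against; what can be said is that your reconstruction follows exactly the standard route that the cited sources take (identify $\tau_2\simeq\Phi^{+}\Phi^{-}(\theta^{\psi}|_{\widetilde{P}})$ with the kernel of the canonical projection onto $\Psi^{+}\Psi^{-}(\theta^{\psi}|_{\widetilde{P}})$, hence with $\tau_1(N^{\ast},\mathbf{1})$, and read everything off in the Kirillov model via the unique Whittaker functional $\lambda$), and in outline it is correct. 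Two points deserve attention. First, to get $\Phi^{+}(\theta^{\psi}_{(1)})\simeq\mathcal{S}(F^{\times})$ as a plain function space rather than $\mathcal{S}(F^{\times})\otimes\theta^{\psi}_{(1)}$, you are implicitly using that $\theta^{\psi}_{(1)}=\Phi^{-}(\theta^{\psi}|_{\widetilde{P}})$ is one-dimensional, i.e.\ the uniqueness of the Whittaker functional \eqref{central-theta}; this should be said explicitly. Second, your ``cross-check'' for the reverse inclusion $\mathcal{S}(F^{\times})\subseteq\mathcal{W}(\theta^{\psi}_{(0)},\psi^{-1})(N^{\ast},\mathbf{1})$ is wrong as stated: for a fixed $x$ of large absolute value, $y\mapsto\psi(xy)-1$ need not be nonvanishing on a given annulus, because $\ker\psi$ is an unbounded subgroup of $F$ whenever the residue field is not prime, so the set $\{y:\psi(xy)=1\}=x^{-1}\ker\psi$ meets every annulus once $|x|$ is large. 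The correct elementary argument averages over a large compact subgroup $N_m$ (the Jacquet--Langlands criterion: $v\in\tau_1(N^{\ast},\mathbf{1})$ iff $\int_{N_m}\theta^{\psi}(\mathfrak{s}(n))v\,dn=0$ for some $m$, and for $\varphi$ supported away from $0$ the averaged Kirillov function is killed by the vanishing of $\int_{\mathfrak{p}^{-m}}\psi(xy)\,dx$ for $|y|$ bounded below). Since you present this step only as a consistency check and the inclusion already follows from the filtration argument, the error is dispensable, but it should be repaired or deleted.
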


As in \cite[\S 3.2]{Gelbart-PS}, $\mathcal{W}(\Phi^+(\theta^{\psi}_{(1)}),\psi^{-1})$ is the Whittaker model which Gelbart and Piatetski-Shapiro denote by $K_0(\pi)$ and $ K(\theta^{\psi},\psi^{-1}) \slash \mathcal{S}(F^{\times}) \simeq  \mathcal{W}(\theta^{\psi}_{(0)},\psi^{-1}) \slash  \mathcal{W}(\theta^{\psi}_{(0)},\psi^{-1})(N^{\ast},\mathbf{1})$ is the Jacquet module which they denote by $J(\pi)$. Before we turn to the Rankin-Selberg integrals, we illustrate the connection between the Whittaker model for $\theta^{\psi}$ and the first derivative ${\theta^{\psi}}^{(1)}$. For every $b \in F^{\times}$, we define a quadratic character $\chi_b$ of $F^{\times}$ by $\chi_b(a)=(b,a)_F$. Let $\nu(g)=|\mathrm{det}(g)|$ denote the unramified determinant character of $GL_1$ or $GL_2$. In virtue of Kable \cite[Theorem 5.2]{Kable01}, let us write 
\begin{equation}
\label{first-decomp}
{\theta^{\psi}}^{(1)} \otimes \nu^{\frac{1}{4}} \simeq \underset{{b \in (F^{\times})^2 \backslash F^{\times}}}{\bigoplus} \chi_b.
\end{equation}
The character $\chi_b$ is an irreducible subrepresentation of the representation ${\theta^{\psi}}^{(1)}$ with the normalized quotient model $\mathcal{W}(\theta^{\psi}_{(0)},\psi^{-1}) \slash  \mathcal{W}(\theta^{\psi}_{(0)},\psi^{-1})(N^{\ast},\mathbf{1})$. We denote $\theta^{\psi}_{(0),b}$ by the inverse image of $\chi_b$ in $\theta^{\psi}_{(0)}$ for the canonical normalized projection from $\theta^{\psi}_{(0)}$ onto ${\theta^{\psi}}^{(1)}$  and by $\mathcal{W}(\theta^{\psi}_{(0),b},\psi^{-1})$ the corresponding subspace of $\mathcal{W}(\theta^{\psi}_{(0)},\psi^{-1})$. Let $\mathcal{S}(F)$ denote the space of complex-valued locally constant functions with compact supports in $F$. We reformulate \cite[Proposition 3.4]{Gelbart-PS} and \cite[Remark 3.3.7]{Gelbart-PS} in the framework of the Cogdell and Piatetski-Shapiro interpretation of derivatives \cite[Corollary to Proposition 1.7]{Cogdell-PS}.

\begin{theorem}[Cogdell, Gelbart, and Piatetski-Shapiro]
\label{connection}
With notations as above, we have the following: for every $W_{\theta^{\psi}_{(0),b}} \in \mathcal{W}(\theta^{\psi}_{(0),b},\psi^{-1})$ and for all $\varphi_{\circ}$ in $ \mathcal{S}(F)$ locally constant and supported in a sufficient small neighborhood of 0, there exists a character $\chi_b$ and a constant $c \in \mathbb{C}$ such that
\[
  W_{\theta^{\psi}_{(0),b}} \left( \mathfrak{s}\begin{pmatrix} a& \\ & 1 \end{pmatrix} \right) \varphi_{\circ}(a)=
  \begin{cases}
  c|a|^{\frac{1}{4}}  \chi_b(a) \varphi_{\circ}(a), & a \in (F^{\times})^2\cdot b \\
  0, & a \not \in (F^{\times})^2\cdot b.
\end{cases}
\]
Conversely, for any character $\chi_b$ there exist $W_{\theta^{\psi}_{(0),b}} \in \mathcal{W}(\theta^{\psi}_{(0),b},\psi^{-1})$ and $\varphi_{\circ} \in \mathcal{S}(F)$ non-vanishing at zero such that
\[
   \begin{rcases}
 \chi_b(a) \varphi_{\circ}(a), &a \in (F^{\times})^2\cdot b  \\
  0, & a \not \in (F^{\times})^2\cdot b
\end{rcases}
= |a|^{-\frac{1}{4}}W_{\theta^{\psi}_{(0),b}}  \left( \mathfrak{s}\begin{pmatrix} a& \\ & 1 \end{pmatrix} \right)\varphi_{\circ}(a).
\]
\end{theorem}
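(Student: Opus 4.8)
The statement is a reformulation of \citelist{\cite{Gelbart-PS}*{Proposition 3.4} \cite{Gelbart-PS}*{Remark 3.3.7}} in the Bernstein--Zelevinsky language of \cite[\S 1]{Cogdell-PS}, so the plan is to transport those results along the dictionary provided by Propositions \ref{GPS}--\ref{GPS2} and the decomposition \eqref{first-decomp}. First I would record the short exact sequence of $\widetilde{P}$-modules
\[
 0 \longrightarrow \mathcal{S}(F^{\times}) \longrightarrow K(\theta^{\psi},\psi^{-1}) \overset{\mathrm{pr}}{\longrightarrow} \mathcal{W}(\theta^{\psi}_{(0)},\psi^{-1})\big/\mathcal{W}(\theta^{\psi}_{(0)},\psi^{-1})(N^{\ast},\mathbf{1}) \longrightarrow 0
\]
furnished by Proposition \ref{GPS2}: the submodule $\mathcal{S}(F^{\times})$ consists exactly of the Kirillov functions vanishing on a neighbourhood of $0$, and the quotient is a model of the first-derivative piece $\tau_1/\tau_2 \simeq \Psi^{+}({\theta^{\psi}}^{(1)})$ of $\theta^{\psi}|_{\widetilde{P}}$, on which the right action of $\mathfrak{s}\,\mathrm{diag}(a,1)$, $a\in F^{\times}$, realizes ${\theta^{\psi}}^{(1)}$ as in \eqref{first-decomp}. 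By construction $\mathcal{W}(\theta^{\psi}_{(0),b},\psi^{-1})=\mathrm{pr}^{-1}$ of the summand of \eqref{first-decomp} labelled by $b$.

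For the direct assertion, fix $W=W_{\theta^{\psi}_{(0),b}}$ and take $N>0$ large enough that $\mathrm{pr}(W)$ is represented by a function on $\{\,a:|a|<q^{-N}\,\}$; since the elements of $\mathcal{S}(F^{\times})$ vanish near $0$, the germ at $0$ of $a\mapsto W(\mathfrak{s}\,\mathrm{diag}(a,1))$ is determined by $\mathrm{pr}(W)$, which lies in the line of ${\theta^{\psi}}^{(1)}$ attached to $b$. Undoing the $|\mathrm{det}|^{-1/2}$ normalization built into $\Psi^{-}$ together with the $\nu^{1/4}$ shift in \eqref{first-decomp} forces that germ to equal $c\,|a|^{1/4}\chi_b(a)$ on its support; the support is the single square class $(F^{\times})^{2}\cdot b$, which is the square-class-by-square-class description of the Kirillov germ space of $\theta^{\psi}$ in \cite[Remark 3.3.7]{Gelbart-PS} --- in the metaplectic setting it is governed by the interplay of the central character of $\theta^{\psi}$ on $\widetilde{Z}^{2}$ and the formula $R_{\theta^{\psi}}(\mathrm{diag}(z,z))\varphi(y)=(y,z)_F\,\mu_{\psi}(z)^{-1}\varphi(y)$ with the decomposition \eqref{first-decomp}. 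Multiplying by a $\varphi_{\circ}$ supported in $\{|a|<q^{-N}\}$, with the radius allowed to depend on $W$, yields the first identity. The converse runs the same identifications backwards: $\mathrm{pr}$ is onto the $b$-summand, so the prescribed germ $\chi_b(a)\mathbf{1}_{(F^{\times})^{2}b}(a)$ (equivalently $|a|^{-1/4}$ times a Kirillov value) is hit by some $W_{\theta^{\psi}_{(0),b}}$, and one then chooses $\varphi_{\circ}$ non-vanishing at $0$ and supported in that neighbourhood.

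The only genuinely delicate point is the normalization bookkeeping: one must keep simultaneous track of the twists $|\mathrm{det}|^{\pm 1/2}$ attached to $\Phi^{\pm}$ and $\Psi^{\pm}$, the $\delta_B^{1/4}$ in the construction of $\theta^{\psi}$, the $\nu^{1/4}$ of \eqref{first-decomp}, and the Weil factor $\mu_{\psi}$ in the central action, so that the exponent comes out exactly $\tfrac14$ and the twisting character exactly $\chi_b$ supported on $(F^{\times})^{2}\cdot b$. This is also the reason the case distinction in the statement is not formal --- it rests on Kable's explicit description \eqref{first-decomp} of ${\theta^{\psi}}^{(1)}$, not merely on the soft filtration of $\theta^{\psi}|_{\widetilde{P}}$.
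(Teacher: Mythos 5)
The paper offers no proof of Theorem \ref{connection} at all: it is presented as a restatement of \cite[Proposition 3.4 and Remark 3.3.7]{Gelbart-PS} transported into the Cogdell--Piatetski-Shapiro derivative formalism, with the hard content (the square-class-by-square-class description of the Kirillov germ space of $\theta^{\psi}$ and the $|a|^{1/4}$ normalization) deferred entirely to \cite{Gelbart-PS} and to \eqref{first-decomp}. Your sketch is exactly the natural unpacking of that citation, using the same ingredients (Propositions \ref{GPS}--\ref{GPS2}, the filtration of $\theta^{\psi}|_{\widetilde{P}}$, Kable's decomposition \eqref{first-decomp}, and the central action formula for $R_{\theta^{\psi}}$), and it defers the same delicate points to the same sources, so it is correct and essentially identical in approach.
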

The basic properties of Section \ref{sec3:1} remain unchanged in the usual $GL_2$-setting. The best reference for the theory of derivatives and Whittaker functions on $GL_n$ is \cite{Cogdell-PS}.

\subsection{Exceptional poles and distinctions}
\label{sec3:2}
In \S \ref{sec3:2} we investigate the exceptional and regular Zeta integral. For each $W \in \mathcal{W}(\pi,\psi)$, $W_{\theta^{\psi}} \in \mathcal{W}(\theta^{\psi},\psi^{-1})$, and $f_{2s-1} \in V_{hol}(2s-1,\omega^{-1}_{\pi})$, we define the regular integral
\[
  I_{reg}(W,W_{\theta^{\psi}},f_{2s-1})=\int_{Z^2N \backslash GL_2}  W(g)W_{\theta^{\psi}}(g) f_{2s-1}(g) dg
\]
which converges for $\mathrm{Re}(s)$ large and extends to $\mathbb{C}$ as a function of $\mathbb{C}(q^{-s/2})$. We will use the following property that holomorphic sections are $GL_2$-stable to define a $\mathbb{C}[q^{\pm s/2}]$-fractional ideal.

\begin{lemma}
For $f_{2s-1} \in V_{hol}(2s-1,\eta)$ and $g \in GL_2$, $R(\mathfrak{s}(g))f_{2s-1}$ is a holomorphic section.
\end{lemma}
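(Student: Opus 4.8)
The plan is to unwind the definitions of holomorphic and standard sections and reduce everything to the behavior under right translation of a \emph{standard} section, since $V_{hol}(2s-1,\eta)=\mathbb{C}[q^{-s/2},q^{s/2}]\otimes_{\mathbb{C}}V_{std}(2s-1,\eta)$ and the scalar ring $\mathbb{C}[q^{-s/2},q^{s/2}]$ is unaffected by right translation by a fixed $g$. So it suffices to show: if $f_{2s-1}$ is a standard section, then $R(\mathfrak{s}(g))f_{2s-1}$ lies in $V_{hol}(2s-1,\eta)$. The key observation is that a standard section is determined by its restriction to $\widetilde{K}$, which is independent of $s$, together with the covariance law under $\widetilde{Z}^2\widetilde{P}$ which introduces the only $s$-dependence through the factor $\delta_B^{(2s-1)/4}$.

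First I would fix $g\in GL_2$ and, using the Iwasawa decomposition $\widetilde{GL}_2=\widetilde{Z}^2\widetilde{P}\cdot\widetilde{K}$ (valid since $K=GL_2(\mathcal{O})$ splits and $\widetilde{K}\simeq K^{\ast}\times\{\pm 1\}$), write, for each $k\in K$, the product $\mathfrak{s}(k)\mathfrak{s}(g)$ in the form $\widetilde{p}_k\,\mathfrak{s}(k')$ with $\widetilde{p}_k\in\widetilde{Z}^2\widetilde{P}$ and $k'\in K$. The point is that $\widetilde{p}_k$ and $k'$ depend on $k$ and $g$ but \emph{not} on $s$, because the decomposition and the cocycle $\sigma_2$ carry no $s$. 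Then
\[
  \bigl(R(\mathfrak{s}(g))f_{2s-1}\bigr)\bigl(\mathfrak{s}(k)\bigr)
  = f_{2s-1}\bigl(\widetilde{p}_k\,\mathfrak{s}(k')\bigr)
  = \bigl((\widetilde{\eta}\boxtimes\widetilde{\mathbf{1}}_{GL_1})\otimes\delta_B^{(2s-1)/4}\bigr)(\widetilde{p}_k)\cdot f_{2s-1}\bigl(\mathfrak{s}(k')\bigr),
\]
where I have used the defining covariance of $I(2s-1,\eta)$. By the standard-section hypothesis the value $f_{2s-1}(\mathfrak{s}(k'))$ is independent of $s$, and the modular factor $\delta_B^{(2s-1)/4}(\widetilde{p}_k)$ is of the form $c_k\cdot q^{(2s-1)\ell_k/4}$ for an integer $\ell_k$ and a constant $c_k$ independent of $s$ — in particular it lies in $\mathbb{C}[q^{-s/2},q^{s/2}]$ (here one uses that $\delta_B$ takes values in $q^{\mathbb{Z}}$ and that the shift by $-1$ only multiplies by a fixed scalar). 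Since $K$ is compact and $R(\mathfrak{s}(g))f_{2s-1}$ is $\widetilde{K}$-finite, only finitely many cosets $k'$ occur, so the restriction of $R(\mathfrak{s}(g))f_{2s-1}$ to $\widetilde{K}$ is a finite $\mathbb{C}[q^{-s/2},q^{s/2}]$-linear combination of restrictions of standard sections; extending these finitely many restrictions to sections in $V_{std}(2s-1,\eta)$ exhibits $R(\mathfrak{s}(g))f_{2s-1}$ as an element of $V_{hol}(2s-1,\eta)$.

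I expect the only mild obstacle to be bookkeeping around the metaplectic cover: one must check that the rewriting $\mathfrak{s}(k)\mathfrak{s}(g)=\widetilde{p}_k\,\mathfrak{s}(k')$ genuinely produces, for each fixed $k$, a \emph{single} element $\widetilde p_k$ of $\widetilde{Z}^2\widetilde{P}$ and that the induced covariance formula applies — this is where Lemma \ref{BLS} and the splitting of $K$ in $\widetilde{GL}_2$ (via $s_2$ trivial, so that $\tau_2=\sigma_2$ and $\mathfrak{s}|_K$ is a homomorphism up to the controlled cocycle) are invoked. One also wants the $\widetilde{K}$-finiteness of $R(\mathfrak{s}(g))f_{2s-1}$, which is immediate since $f_{2s-1}$ is $\widetilde{K}$-finite and right translation preserves $\widetilde{K}$-finiteness (the stabilizer of $R(\mathfrak{s}(g))f_{2s-1}$ contains $\mathfrak{s}(g)^{-1}K_m^{\ast}\mathfrak{s}(g)\cap\widetilde{K}$ for suitable $m$). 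None of this is deep; the substance of the lemma is simply that the $s$-dependence of a holomorphic section is concentrated in the $\mathbb{C}[q^{-s/2},q^{s/2}]$-coefficients and in the Iwasawa modular factor, both of which are manifestly unaffected in type by a fixed right translation.
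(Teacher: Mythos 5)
Your reduction to standard sections and the covariance computation on $\widetilde{K}$ are fine as far as they go, but the proof rests on a decomposition that is false: $\widetilde{GL}_2$ is \emph{not} equal to $\widetilde{Z}^2\widetilde{P}\cdot\widetilde{K}$. The inducing subgroup involves only $Z^2$, the squares in the center, not all of $Z$, so $Z^2P$ is a proper (finite-index) subgroup of $B$ and $Z^2PK$ is a proper open subset of $GL_2$. Concretely, $t(1,\varpi)=\mathrm{diag}(1,\varpi)\notin Z^2PK$: writing $t(1,\varpi)=zpk$ with $z=cI_2$, $c\in(F^{\times})^2$, forces the $(2,2)$-entry of $k$ to be $c^{-1}\varpi$, which cannot be a unit since $v(c)$ is even. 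In fact $Z^2P\backslash GL_2/K$ has more than one element (it is controlled by $(F^{\times})^2\backslash F^{\times}$, i.e.\ by the parity of a valuation). Consequently a section of $I(2s-1,\eta)$ is \emph{not} determined by its restriction to $\widetilde{K}$, and verifying that $R(\mathfrak{s}(g))f_{2s-1}$ restricted to $\widetilde{K}$ is a $\mathbb{C}[q^{\pm s/2}]$-combination of $s$-independent functions does not show that it is a holomorphic section: you have said nothing about its values on the other double cosets, and your final step (``extending these finitely many restrictions to standard sections'') cannot be carried out compatibly without controlling those values.

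The repair is exactly the point the paper's own one-line proof emphasizes: choose representatives $\gamma_1,\dots,\gamma_r$ for the finitely many double cosets in $Z^2P\backslash GL_2/K$ --- finiteness coming from the finiteness of $(F^{\times})^2\backslash F^{\times}$ --- and run your computation on each $\mathfrak{s}(\gamma_j)\widetilde{K}$, writing $\mathfrak{s}(\gamma_j)\mathfrak{s}(k)\mathfrak{s}(g)=\widetilde{p}\,\mathfrak{s}(\gamma_{j'})\mathfrak{s}(k')$ and extracting the $s$-dependence through $\delta_B^{(2s+1)/4}(\widetilde{p})\in\mathbb{C}[q^{\pm s/2}]$ as you did on the identity coset. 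This is precisely the modification of \cite[Lemma 4.1]{JO20} and \cite[Claim 2.1]{Kaplan13} that the paper invokes. Two smaller points: the covariance exponent for $I(2s-1,\eta)$ is $\delta_B^{(2s+1)/4}$, not $\delta_B^{(2s-1)/4}$, as in the paper's formula \eqref{char}; and the literal definition ``restriction to $\widetilde{K}$ independent of $s$'' must be read as prescribing $s$-independent data on a full set of coset representatives, since otherwise a section could carry arbitrary $s$-dependence on the coset missed by $\widetilde{K}$ and the lemma itself would fail.
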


\begin{proof}
The holomorphic sections are slightly different from those treated in \cite[Lemma 4.1]{JO20}. Observing that $(F^{\times})^2 \backslash F^{\times}$ is finite, the argument \citelist{\cite{JO20}*{Lemma 4.1}\cite{Kaplan13}*{Claim 2.1}} can easily be modified to deal with our corresponding holomorphic sections. 
\end{proof}

The $\mathbb{C}$-vector space
\[
 \mathcal{I}_{reg}(\pi)=\langle I_{reg}(W,W_{\theta^{\psi}},f_{2s-1}) \;|\; W \in \mathcal{W}(\pi,\psi), W_{\theta^{\psi}} \in \mathcal{W}(\theta^{\psi}), 
 f_{2s-1} \in V_{hol}(2s-1,\omega^{-1}_{\pi}) \rangle
\]
spanned by the integrals $I_{reg}(W,W_{\theta^{\psi}},f_{2s-1})$ is a $\mathbb{C}[q^{\pm s/2}]$-fractional ideal of $\mathbb{C}(q^{-s/2})$. For $W \in \mathcal{W}(\pi,\psi)$, $W_{\theta^{\psi}} \in \mathcal{W}(\theta^{\psi},\psi^{-1})$ and $s \in \mathbb{C}$, we define the simple zeta integral
\[
  I_{(0)}(s;W,W_{\theta^{\psi}})=\int_{F^{\times}}  W \begin{pmatrix} a & \\ & 1 \end{pmatrix} W_{\theta^{\psi}} \begin{pmatrix} a & \\ & 1 \end{pmatrix}  |a|^{\frac{s}{2}-\frac{3}{4}} d^{\times} a.
\]
$I_{(0)}(s,W,W_{\theta^{\psi}})$ converges to an element of $\mathbb{C}(q^{-s/2})$ for $\mathrm{Re}(s)$ large enough. Let $\mathcal{I}_{(0)}(\pi)$ denote a $\mathbb{C}$-vector space of $\mathbb{C}(q^{-s/2})$ spanned by $I_{(0)}(s,W,W_{\theta^{\psi}})$ as $W$ is taken over $\mathcal{W}(\pi,\psi)$ and $W_{\theta^{\psi}}$ runs through $\mathcal{W}(\theta^{\psi},\psi^{-1})$. Then $\mathcal{I}_{(0)}(\pi)$ is a $\mathbb{C}[q^{\pm s/2}]$-fractional ideal which is related to $\mathcal{I}_{reg}(\pi)$ by Proposition below. For any open compact set $K^{\circ}$ of $K$ and any element $k$ of $K$, we first define the test function $\mathbbm{1}_{kK^{\circ},2s-1} \in I(2s-1,\eta)$ by
\begin{equation}
\label{char}
\begin{split}
& \mathbbm{1}_{kK^{\circ},2s-1}(\widetilde{g}) 
=
 \begin{cases}
  \xi\eta(z)  \delta_B(p)^{(2s+1)/4} & \text{if}\;\;  \widetilde{g}=(1,\xi)\mathfrak{s}(z) \mathfrak{s}(p) \mathfrak{s}(k) \mathfrak{s}(k^{\circ}), z \in Z^2, p \in P, k^{\circ} \in K^{\circ} \\
  0 &  \text{if} \;\;  \widetilde{g} \notin \widetilde{Z}^2\widetilde{P} \mathfrak{s}(k) \mathfrak{s}(K^{\circ}).
 \end{cases}\\
 \end{split}
\end{equation}

\begin{proposition}
As $\mathbb{C}[q^{s/2},q^{-s/2}]$-fractional ideals, we have $\mathcal{I}_{(0)}(\pi)=\mathcal{I}_{reg}(\pi)$. Furthermore we obtain the inclusion 
$\mathcal{I}_{(0)}(\pi) \subset  \mathcal{I}(\pi)$ of $\mathbb{C}[q^{s/2},q^{-s/2}]$-fractional ideals.
\end{proposition}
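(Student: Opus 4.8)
The plan is to prove the two claims separately, in the order stated. For the equality $\mathcal{I}_{(0)}(\pi)=\mathcal{I}_{reg}(\pi)$, the strategy is the standard ``unfolding down to the mirabolic'' argument adapted to the metaplectic setting. First I would write the regular integral $I_{reg}(W,W_{\theta^{\psi}},f_{2s-1})$ over $Z^2N\backslash GL_2$ and use the Iwasawa decomposition $GL_2 = Z^2 P K$ (recalling $P \cong A \ltimes N$) to express it as an integral over $A$ and $K$, with the $N$-part and $Z^2$-part accounted for by the equivariance properties \eqref{central-induced}, \eqref{central-theta}, together with the fact that $W$, $W_{\theta^{\psi}}$ are $\psi$- resp. $\psi^{-1}$-Whittaker on $N$. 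This collapses the integral to
\[
  \int_{K}\int_{F^{\times}} W\!\begin{pmatrix} a & \\ & 1\end{pmatrix}\!R(k)W\cdots \; W_{\theta^{\psi}}\cdots \; f_{2s-1}\!\begin{pmatrix} a & \\ & 1\end{pmatrix}\!k\, \delta_B^{-1}\!\begin{pmatrix}a&\\&1\end{pmatrix} d^{\times}a\, dk,
\]
where after tracking the modulus characters one recovers the exponent $|a|^{s/2-3/4}$ of the simple integral (the $\delta_B^{(2s+1)/4}$ from the section definition \eqref{char}, the $\delta_B^{-1}$ Jacobian from $Z^2N\backslash GL_2$, and the $|a|^{1/2}$ normalization built into the exceptional representation should combine to this). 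The inclusion $\mathcal{I}_{(0)}(\pi)\subseteq \mathcal{I}_{reg}(\pi)$ then follows by choosing $f_{2s-1}=\mathbbm{1}_{K^{\circ},2s-1}$ supported near the identity coset so that the $K$-integral becomes a single term, while the reverse inclusion $\mathcal{I}_{reg}(\pi)\subseteq \mathcal{I}_{(0)}(\pi)$ follows because right-translating $W$ and $W_{\theta^{\psi}}$ by $k\in K$ keeps them in their Whittaker models, so each $k$-fiber of the unfolded $K$-integral is itself of the form $I_{(0)}(s;R(k)W,R(\mathfrak{s}(k))W_{\theta^{\psi}})$, and $K$ is compact (a finite sum of such, up to the $\mathbb{C}[q^{\pm s/2}]$-module structure).

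For the second assertion, $\mathcal{I}_{(0)}(\pi)\subseteq \mathcal{I}(\pi)$, I would argue that every simple integral $I_{(0)}(s;W,W_{\theta^{\psi}})$ arises as a genuine Rankin-Selberg integral $I(W',W_{\theta^{\psi}}',f_{2s-1})$ for a suitable choice of data with $f_{2s-1}\in V_{hol}(2s-1,\omega_{\pi}^{-1})\subseteq V_{good}(2s-1,\omega_{\pi}^{-1})$. Concretely, using the support-near-the-identity test section $\mathbbm{1}_{K^{\circ},2s-1}$ of \eqref{char} and the unfolding just performed in reverse, the integral $I(W,W_{\theta^{\psi}},\mathbbm{1}_{K^{\circ},2s-1})$ equals (up to a nonzero constant measuring $\mathrm{vol}(K^{\circ})$ and a translate by the support) a simple integral $I_{(0)}(s;W,W_{\theta^{\psi}})$; since one can also scale $W$ by $R(k)$ to reach arbitrary elements of $\mathcal{I}_{(0)}(\pi)$, and $V_{hol}\subseteq V_{good}$, this places $\mathcal{I}_{(0)}(\pi)$ inside $\mathcal{I}(\pi)$. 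One must check that the resulting identity is an honest equality of rational functions, not merely an asymptotic one; this is handled by the usual meromorphic-continuation principle (both sides agree for $\mathrm{Re}(s)$ large where all integrals converge absolutely, hence everywhere in $\mathbb{C}(q^{-s/2})$).

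The main obstacle I anticipate is bookkeeping in the metaplectic setting: one must be careful that the cocycle $\sigma_2$ and the genuine characters do not obstruct the Iwasawa unfolding. Specifically, although $\mathfrak{s}$ is \emph{not} a homomorphism on all of $GL_2$, it does split on $N$, $A$, $Z^2$ and on $K$ (with the odd-residue-characteristic choice $s_2\equiv 1$), by Lemma \ref{BLS} and the discussion in \S\ref{sec2:1}; the factorizations $g = z\cdot p\cdot k$ that appear have $z\in Z^2$, $p\in P$, $k\in K$, and one needs $\mathfrak{s}(zpk)$ to differ from $\mathfrak{s}(z)\mathfrak{s}(p)\mathfrak{s}(k)$ only by a controlled sign that is absorbed by the genuineness of $W_{\theta^{\psi}}$ and $f_{2s-1}$ — this is exactly why the integrand descends to $GL_2$, as recorded in the paragraph following Proposition \ref{division}. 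Granting that descent, the cocycle issues disappear and the computation reduces to the classical $GL_2$ unfolding, with the only genuinely new ingredient being that $\omega^{\psi}\otimes\delta_B^{1/4}$ restricted to the center is the exceptional character (giving the $|a|^{1/2}$ that shifts the exponent in $I_{(0)}$). A secondary, more technical point is verifying that $I_{(0)}(s;W,W_{\theta^{\psi}})$ lies in $\mathbb{C}(q^{-s/2})$ rather than only $\mathbb{C}(q^{-s/4})$; this follows from the constraint that $W_{\theta^{\psi}}\!\begin{pmatrix}a&\\&1\end{pmatrix}$ is supported on $a$ with bounded valuation and transforms under $(F^{\times})^2$ through $\mu_{\psi}$, so the Mellin transform naturally organizes into powers of $q^{-s/2}$ after summing the finitely many cosets in $(F^{\times})^2\backslash F^{\times}$, exactly as in Theorem \ref{connection}.
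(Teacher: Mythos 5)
Your proposal is correct and follows essentially the same route as the paper: Iwasawa unfolding to reduce $I_{reg}$ to a finite sum of simple integrals $I_{(0)}$ (giving $\mathcal{I}_{reg}(\pi)\subset\mathcal{I}_{(0)}(\pi)$), the test section $\mathbbm{1}_{K^{\circ},2s-1}$ from \eqref{char} for the reverse inclusion, and $V_{hol}\subset V_{good}$ for the containment $\mathcal{I}_{(0)}(\pi)\subset\mathcal{I}(\pi)$. The one detail you elide is that after splitting the central integral over the finite set $(F^{\times})^2\backslash F^{\times}$ each fiber acquires a quadratic character $\chi_b^{-1}(a)$, which the paper absorbs into $\mathcal{W}(\theta^{\psi},\psi^{-1})$ via $\theta^{\psi}\otimes\chi_b^{-1}\simeq\theta^{\psi}$ so that the fibers really are of the form $I_{(0)}$.
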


\begin{proof}
Let $f_{2s-1} \in V_{hol}(2s-1,\omega^{-1}_{\pi})$ be given. We write $f_{2s-1}=\sum_{i=1}^mP_i(q^{s/2},q^{-s/2})f_{2s-1}^{(i)}$ with $P_i(X) \in \mathbb{C}[X]$
and $f^{(i)}_{2s-1} \in V_{std}(2s-1,\omega_{\pi}^{-1})$. We let $K_{\circ} \subset K$ be a compact open subgroup which stabilizes each $f^{(i)}_{2s-1}$  as well as  $W$ and $W_{\theta^{\psi}}$. Write $K=\cup_j k_jK_{\circ}$. Since the integrand is left $N$-invariant and right $K_{\circ}$-invariant, we reach
\[
  I_{reg}(W,W_{\theta^{\psi}},f_{2s-1})
 =c_1\sum_{i,j} P_i(q^{\pm s/2}) \int_{Z^2 \backslash T} [\pi(k_j)W](t) [\theta^{\psi}(k_j)W_{\theta^{\psi}}](t) [R(k_j)f^{(i)}_{2s-1}](t) \delta_{B}(t)^{-1} dt
\]
by the Iwasawa decomposition. Here $c_1$ is a volume of $K_{\circ}$. $Z^2$ has a finite index in $Z$. Taking the product $T=ZA$ into account, the integral $I_{reg}(W,W_{\theta^{\psi}},f_{2s-1})$ can be decomposed as a finite sum of the form
\[
\begin{split}
  I_{reg}(W,W_{\theta^{\psi}},f_{2s-1})
  &= c_1\sum_{i,j} \sum_{(F^{\times})^2 \backslash F^{\times}} P_i(q^{s/2},q^{-s/2}) \omega_{\pi}(b) \mu_{\psi}(b)^{-1} f^{(i)}_{2s-1}(t(b,b)k_j) 
     \\
    &\quad \times\int_{F^{\times}} [\pi(k_j)W] \begin{pmatrix} a & \\ & 1 \end{pmatrix} [\theta^{\psi}(k_j)W_{\theta^{\psi}}] \begin{pmatrix} a & \\ & 1 \end{pmatrix} \chi^{-1}_b(a)
    |a|^{\frac{s}{2}-\frac{3}{4}}d^{\times} a
    \\
  &=\sum_{i,j} \sum_{(F^{\times})^2 \backslash F^{\times}} Q_{i}(q^{s/2},q^{-s/2};b,k_j)  I_{(0)}(s,\pi(k_j)W,\theta^{\psi}(k_j)W_{\theta^{\psi}})
\end{split}
\]
with $Q_{i}(X;b,k_j) \in \mathbb{C}[X]$. The second equality follows from \cite[Lemma 1.9 (3)]{Ya17}) that $\theta^{\psi} \otimes \chi^{-1}_b \simeq \theta^{\psi}$ for a quadratic character $\chi^{-1}_b$. This confirms that $\mathcal{I}_{reg}(\pi) \subset  \mathcal{I}_{(0)}(\pi)$.

\par
For the reverse inclusion, we choose $K^{\prime}$ for $W \in \mathcal{W}(\pi,\psi)$ and $W_{\theta^{\psi}} \in \mathcal{W}(\theta^{\psi},\psi^{-1})$ to be right invariant under $K^{\prime}$. We then take $f_{2s-1}$ a characteristic function of $ \mathbbm{1}_{K^{\prime},2s-1}$ in \eqref{char}. The integral $I_{reg}(W,W_{\theta^{\psi}},f_{2s-1})$ reduces to
\[
  I_{reg}(W,W_{\theta^{\psi}},f_{2s-1})=c_2\int_{F^{\times}}  W \begin{pmatrix} a & \\ & 1 \end{pmatrix} W_{\theta^{\psi}} \begin{pmatrix} a & \\ & 1 \end{pmatrix} |a|^{\frac{s}{2}-\frac{3}{4}} d^{\times} a=c_2I_{(0)}(s,W,W_{\theta^{\psi}}),
\]
where $c_2$ is a volume of $K^{\prime}$. Therefore we obtain the desired inclusion $\mathcal{I}_{reg}(\pi) \supset  \mathcal{I}_{(0)}(\pi)$.
\end{proof}

According to Theorem 7.1 of \cite{BuGi} or Proposition 3.8 (4) of \cite{Ya17}, we have the following non-vanishing result.

\begin{proposition}
There exist $W \in \mathcal{W}(\pi,\psi)$ and $W_{\theta^{\psi}} \in \mathcal{W}(\theta^{\psi},\psi^{-1})$ such that $I_{(0)}(s,W,W_{\theta^{\psi}})=1$.
\end{proposition}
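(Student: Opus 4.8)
The plan is to construct $W$ and $W_{\theta^{\psi}}$ whose product Kirillov function on the diagonal torus $t(a,1)$ is supported very near $a=1$ (or more precisely in a coset of $(F^{\times})^2$), so that the Mellin-type integral $I_{(0)}(s,W,W_{\theta^{\psi}})$ collapses to a constant, which can then be normalized to $1$ by scaling. The Kirillov models tell us exactly what functions $a \mapsto W\bigl(\begin{smallmatrix} a & \\ & 1\end{smallmatrix}\bigr)$ can occur: for the generic representation $\pi$ of $GL_2$ the standard theory of Kirillov models gives that every locally constant compactly supported function on $F^{\times}$ arises as a restriction of some $W \in \mathcal{W}(\pi,\psi)$; in particular one may take $W$ so that $W\bigl(\begin{smallmatrix} a & \\ & 1\end{smallmatrix}\bigr)$ is the characteristic function of a small neighborhood $1+\mathfrak{p}^m$ of $1$. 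For the exceptional representation side, Proposition \ref{GPS2} and Theorem \ref{connection} describe the asymptotic behavior of $W_{\theta^{\psi}}\bigl(\mathfrak{s}\begin{smallmatrix} a & \\ & 1\end{smallmatrix}\bigr)$ near $0$, and $K(\theta^{\psi},\psi^{-1})$ contains $\mathcal{S}(F^{\times})$ as the submodule $\mathcal{W}(\Phi^+(\theta^{\psi}_{(1)}),\psi^{-1})$; hence we may likewise choose $W_{\theta^{\psi}}$ with prescribed locally constant compactly supported behavior away from $0$.

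First I would fix a positive integer $m$ large enough that $1+\mathfrak{p}^m \subset (F^{\times})^2$ (possible since the residual characteristic is odd, so $1+\mathfrak{p} \subset (F^{\times})^2$ already for $m=1$ when $p$ is odd, up to a standard check) and such that the additive character $\psi$ and all relevant vectors are invariant under the corresponding congruence subgroup. Then I would pick $W \in \mathcal{W}(\pi,\psi)$ with $W\bigl(\begin{smallmatrix} a & \\ & 1\end{smallmatrix}\bigr) = \mathbbm{1}_{1+\mathfrak{p}^m}(a)$ using surjectivity of the Kirillov map for $\pi$, and pick $W_{\theta^{\psi}} \in \mathcal{W}(\theta^{\psi},\psi^{-1})$, via the submodule $\mathcal{S}(F^{\times}) \cong \mathcal{W}(\Phi^+(\theta^{\psi}_{(1)}),\psi^{-1})$ of Proposition \ref{GPS2}, whose Kirillov function also equals $\mathbbm{1}_{1+\mathfrak{p}^m}(a)$ (or any function nonvanishing and locally constant at $a=1$). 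With these choices the integrand of $I_{(0)}(s,W,W_{\theta^{\psi}})$ is supported on $1+\mathfrak{p}^m$, where $|a|^{s/2-3/4}=1$, so the integral equals $\mathrm{vol}(1+\mathfrak{p}^m, d^{\times}a) \neq 0$, a nonzero constant independent of $s$. Rescaling $W$ (or $W_{\theta^{\psi}}$) by the reciprocal of this volume produces $I_{(0)}(s,W,W_{\theta^{\psi}})=1$. Alternatively, and perhaps cleaner to cite, one invokes Theorem 7.1 of \cite{BuGi} or Proposition 3.8 (4) of \cite{Ya17} directly, which is precisely the asserted nonvanishing of the simple integral; the constant is then removed by homogeneity of the integral in $W$.

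The main obstacle I anticipate is the metaplectic bookkeeping on the $\theta^{\psi}$ side: one must be careful that the section $\mathfrak{s}$, the cocycle $\sigma_2$, and the genuine central character data of $\theta^{\psi}$ (encoded in \eqref{central-theta} and the formulas for $R_{\theta^{\psi}}$) are consistent with the claim that $\mathcal{S}(F^{\times})$ sits inside the Kirillov model $K(\theta^{\psi},\psi^{-1})$ with the stated $\widetilde{P}$-action, and that the resulting Whittaker function restricted to $\mathfrak{s}\bigl(\begin{smallmatrix} a & \\ & 1\end{smallmatrix}\bigr)$ really is an arbitrary element of $\mathcal{S}(F^{\times})$ rather than being constrained by a quadratic twist. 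This is exactly the content of Proposition \ref{GPS2} and Theorem \ref{connection}, so the resolution is to quote those statements; but one should double-check that the support-near-$1$ function we want lies in the span of the $W_{\theta^{\psi}_{(0),b}}$ for the appropriate coset $b$, namely $b \in (F^{\times})^2$ (so $\chi_b = \mathbf{1}$), which is legitimate since $1 \in (F^{\times})^2$. Given the cited results, the remaining steps are routine, so I would keep the argument short, deferring all structural input to \cite{BuGi} and \cite{Ya17} and merely observing the normalization.
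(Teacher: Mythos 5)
Your proposal is correct. Note first that the paper offers no argument of its own for this proposition: the sentence immediately preceding it simply invokes Theorem 7.1 of \cite{BuGi} or Proposition 3.8 (4) of \cite{Ya17}, which is exactly your second, ``cleaner to cite'' option. Your primary route is a valid self-contained substitute and is essentially the argument underlying those citations: realize $\mathbbm{1}_{1+\mathfrak{p}^m}$ as a Kirillov function of $\pi$ (standard for an irreducible generic $\pi$, since $\mathcal{S}(F^{\times})$ embeds in the Kirillov model as the submodule compactly induced from $\psi$ on $N$), realize it likewise in $K(\theta^{\psi},\psi^{-1})$ via the submodule $\mathcal{W}(\Phi^{+}(\theta^{\psi}_{(1)}),\psi^{-1})\simeq\mathcal{S}(F^{\times})$ of Proposition \ref{GPS2}, and observe that the integral collapses to $\mathrm{vol}(1+\mathfrak{p}^m,d^{\times}a)$, a nonzero constant independent of $s$ because $|a|^{s/2-3/4}\equiv 1$ on the support; then rescale. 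Two minor remarks: the condition $1+\mathfrak{p}^m\subset(F^{\times})^{2}$ is not actually needed, because your $W_{\theta^{\psi}}$ is taken from the submodule isomorphic to the \emph{full} space $\mathcal{S}(F^{\times})$ --- the coset and quadratic-twist constraints of Theorem \ref{connection} govern only the asymptotics near $0$, i.e.\ the quotient $K(\theta^{\psi},\psi^{-1})/\mathcal{S}(F^{\times})$, not the submodule --- and all that matters is that the common support lie in $\{|a|=1\}$. With that, your anticipated metaplectic bookkeeping concern is already resolved by Proposition \ref{GPS2}, and the normalization step is immediate.
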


As a consequence, $ \mathcal{I}_{(0)}(\pi)$ contains $1$. We can find a polynomial $P(X) \in \mathbb{C}[X]$ such that $P(0)=1$ and $1/P(q^{-s/2})$ generates the fractional ideal $\mathcal{I}_{reg}(\pi)=\mathcal{I}_{(0)}(\pi)$. We denote by the \textit{regular} $L$-function
\[
  L_{reg}(s,\pi,\mathrm{Sym}^2)=L_{(0)}(s,\pi,\mathrm{Sym}^2)=\frac{1}{P(q^{-s/2})}.
\]
We assume that 
$
\dfrac{I(W,W_{\theta^{\psi}},f_{2s-1})}{ L_{reg}(s,\pi,\mathrm{Sym}^2)} 
$
has a pole for some $W \in \mathcal{W}(\pi,\psi)$, $W_{\theta^{\psi}} \in \mathcal{W}(\theta^{\psi},\psi^{-1})$, and $f_{2s-1} \in V_{good}(2s-1,\omega_{\pi})$. Such poles are called {\it exceptional poles}. Then the poles of the ratio $L(s,\pi,\mathrm{Sym}^2)/L_{reg}(s,\pi,\mathrm{Sym}^2)$ are exactly exceptional poles of $L(s,\pi,\mathrm{Sym}^2)$. We now show that poles of this fraction are all simple.

\begin{proposition}
\label{Exceptional pole}
The ratio $\dfrac{L(s,\pi,\mathrm{Sym}^2)}{L_{reg}(s,\pi,\mathrm{Sym}^2)}$ has simple poles and in particular $s=s_0$ is the pole of $L(2s,\omega_{\pi}^2)$ if $s=s_0$ is an exceptional pole of $L(s,\pi,\mathrm{Sym}^2)$.
\end{proposition}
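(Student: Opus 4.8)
The plan is to establish a sharper containment of $\mathbb{C}[q^{\pm s/2}]$-fractional ideals, namely
$\mathcal{I}(\pi)\subseteq L(2s,\omega_{\pi}^{2})\,L_{reg}(s,\pi,\mathrm{Sym}^{2})\,\mathbb{C}[q^{\pm s/2}]$. Since $L(2s,\omega_{\pi}^{2})$ is the Tate $L$-factor of $\omega_{\pi}^{2}$ --- equal to $1$ when $\omega_{\pi}^{2}$ is ramified and to $(1-\omega_{\pi}^{2}(\varpi)q^{-2s})^{-1}$ otherwise, hence in either case having at worst simple poles --- this containment forces $L(s,\pi,\mathrm{Sym}^{2})/L_{reg}(s,\pi,\mathrm{Sym}^{2})$ to divide $L(2s,\omega_{\pi}^{2})$ up to a unit of $\mathbb{C}[q^{\pm s/2}]$; consequently that ratio has at worst simple poles and every exceptional pole $s=s_{0}$ --- i.e. every pole of that ratio --- is a pole of $L(2s,\omega_{\pi}^{2})$, which is precisely the assertion.

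For the holomorphic good sections the containment is automatic from the preceding propositions: $I_{reg}(W,W_{\theta^{\psi}},f_{2s-1})\in L_{reg}(s,\pi,\mathrm{Sym}^{2})\,\mathbb{C}[q^{\pm s/2}]$ for $f_{2s-1}\in V_{hol}(2s-1,\omega_{\pi}^{-1})$, and $L(2s,\omega_{\pi}^{2})^{-1}\in\mathbb{C}[q^{\pm s/2}]$ gives $\mathbb{C}[q^{\pm s/2}]\subseteq L(2s,\omega_{\pi}^{2})\,\mathbb{C}[q^{\pm s/2}]$. So it remains to treat a good section of the second type in Definition \ref{GoodSection}, namely $f_{2s-1}=N(1-2s,\omega_{\pi},\psi^{-1})h$ with $h\in V_{hol}(1-2s,\omega_{\pi})$. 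I would unwind $N(1-2s,\omega_{\pi},\psi^{-1})={}^{\iota}\circ\hat{N}(1-2s,\omega_{\pi},\psi^{-1})$ with $\hat{N}(1-2s,\omega_{\pi},\psi^{-1})=\gamma(1-2s,\omega_{\pi}^{-2},\psi^{-1})\,M(1-2s,\omega_{\pi})$, and then analyze the action of $\hat{N}$ on a holomorphic section.

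The heart of the proof is the Gindikin--Karpelevich computation for the genuine principal series $\mathrm{Ind}^{\widetilde{GL}_{2}}_{\widetilde{T}^{e}N^{\ast}}(\mu_{1-2s})$ containing $I(1-2s,\omega_{\pi})$: it shows that $\hat{N}(1-2s,\omega_{\pi},\psi^{-1})$ carries $V_{hol}(1-2s,\omega_{\pi})$ into $L(2s,\omega_{\pi}^{2})\,L(2-2s,\omega_{\pi}^{-2})^{-1}\,\mathbb{C}[q^{\pm s/2}]$ times holomorphic sections of $J(2s-1,\omega_{\pi})$ --- the normalizing factor $\gamma(1-2s,\omega_{\pi}^{-2},\psi^{-1})=\varepsilon(1-2s,\omega_{\pi}^{-2},\psi^{-1})\,L(2s,\omega_{\pi}^{2})\,L(1-2s,\omega_{\pi}^{-2})^{-1}$ cancelling the factor $L(1-2s,\omega_{\pi}^{-2})$ produced by $M(1-2s,\omega_{\pi})$; this is the ``$M$-half'' of the Plancherel identity recorded in the proof of Proposition \ref{intertwining-func} (cf. \cite{GaoShahidiSzpruch}). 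Applying ${}^{\iota}$, which is independent of $s$ and hence preserves holomorphicity while carrying $J(2s-1,\omega_{\pi})$ to $I(2s-1,\omega_{\pi}^{-1})$, and using $L(2-2s,\omega_{\pi}^{-2})^{-1}=1-\omega_{\pi}^{-2}(\varpi)q^{-2}q^{2s}\in\mathbb{C}[q^{\pm s/2}]$ (it is $1$ when $\omega_{\pi}^{2}$ is ramified), I obtain $f_{2s-1}\in L(2s,\omega_{\pi}^{2})\,\mathbb{C}[q^{\pm s/2}]\,V_{hol}(2s-1,\omega_{\pi}^{-1})$, whence $I(W,W_{\theta^{\psi}},f_{2s-1})\in L(2s,\omega_{\pi}^{2})\,\mathbb{C}[q^{\pm s/2}]\cdot\mathcal{I}_{reg}(\pi)=L(2s,\omega_{\pi}^{2})\,L_{reg}(s,\pi,\mathrm{Sym}^{2})\,\mathbb{C}[q^{\pm s/2}]$. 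Since $\mathcal{I}(\pi)$ is generated by the integrals attached to the two families in Definition \ref{GoodSection} and both land in $L(2s,\omega_{\pi}^{2})\,L_{reg}(s,\pi,\mathrm{Sym}^{2})\,\mathbb{C}[q^{\pm s/2}]$, the containment --- and hence the proposition --- follows.

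The step I expect to be the main obstacle is making the Gindikin--Karpelevich identity precise on the metaplectic double cover --- verifying that $\hat{N}(1-2s,\omega_{\pi},\psi^{-1})$ (equivalently $M(1-2s,\omega_{\pi})$) introduces exactly $L(2s,\omega_{\pi}^{2})\,L(2-2s,\omega_{\pi}^{-2})^{-1}$ up to a factor in $\mathbb{C}[q^{\pm s/2}]$ and no spurious poles on standard sections across all $\widetilde{K}$-types. This rests on the odd-residue-characteristic splitting of $K$ in $\widetilde{GL}_{2}$ and the explicit $2$-cocycle computations of Section \ref{sec2} together with the Plancherel measure formula cited in the proof of Proposition \ref{intertwining-func}; the complementary observation that $1/L(2-2s,\omega_{\pi}^{-2})\in\mathbb{C}[q^{\pm s/2}]$, needed so that this denominator contributes no poles, then follows at once from the shape of Tate factors. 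The remainder is formal bookkeeping with fractional ideals.
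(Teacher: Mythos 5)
Your proposal is correct and follows essentially the same route as the paper: both arguments reduce to showing that a good section of the second type lies in $L(2s,\omega_{\pi}^{2})\,\mathbb{C}[q^{\pm s/2}]\cdot V_{hol}$, so that $\mathcal{I}(\pi)\subseteq L(2s,\omega_{\pi}^{2})\,L_{reg}(s,\pi,\mathrm{Sym}^{2})\,\mathbb{C}[q^{\pm s/2}]$ and the ratio divides the Tate factor, which has simple poles. The one step you flag as the main obstacle --- that $L(1-2s,\omega_{\pi}^{-2})^{-1}M(1-2s,\omega_{\pi})$ is holomorphic on holomorphic sections, so that the normalizing $\gamma$-factor contributes exactly the numerator $L(2s,\omega_{\pi}^{2})$ --- is precisely Lemma \ref{Takeda} (Takeda, Yamana), which the paper cites rather than re-deriving via Gindikin--Karpelevich.
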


Before proceeding the proof, we would like to mention the analytic property of the intertwining operator $M(2s-1,\omega^{-1}_{\pi})$ in \citelist{\cite{Takeda15}*{Lemma 4.3}\cite{Ya17}*{Lemma 3.2}}.

\begin{lemma}[Takeda]
\label{Takeda}
The operator 
\[
  \frac{1}{L(2s-1,\omega^2_{\pi})}M(2s-1,\omega^{-1}_{\pi}) : I(2s-1,\omega^{-1}_{\pi}) \rightarrow J(1-2s,\omega^{-1}_{\pi})
\]
is holomorphic for all $s \in \mathbb{C}$.
\end{lemma}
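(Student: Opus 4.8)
This lemma is due to Takeda \cite{Takeda15}; I sketch how I would recover it. The plan is the standard unfolding of a metaplectic intertwining integral on a generating set of sections, keeping careful track of the Kubota cocycle and the Weil-index factor $\mu_{\psi}$. Write $\eta=\omega_{\pi}^{-1}$, so that $\eta^{-2}=\omega_{\pi}^{2}$. Since $M(2s-1,\eta)$ commutes with multiplication by powers of $q^{s/4}$ and every element of $V_{hol}(2s-1,\eta)$ is a $\mathbb{C}[q^{\pm s/4}]$-combination of standard sections, it is enough to prove that $\frac{1}{L(2s-1,\eta^{-2})}M(2s-1,\eta)f_{2s-1}$ has no pole in $s$ for $f_{2s-1}\in V_{std}(2s-1,\eta)$. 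By the Iwasawa decomposition a section of $J(1-2s,\eta)$ is holomorphic precisely when its restriction to $\widetilde{K}$ is, so I fix $\widetilde{k}\in\widetilde{K}$ and analyse $M(2s-1,\eta)f_{2s-1}(\widetilde{k})=\int_{F}f_{2s-1}\bigl(w_{2}\,\mathfrak{s}(n(x))\,\widetilde{k}\bigr)\,dx$.

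Next I would split $F=\mathcal{O}\sqcup(F\setminus\mathcal{O})$. For $x\in\mathcal{O}$ one has $w_{2}n(x)\in K$, so $w_{2}\,\mathfrak{s}(n(x))\,\widetilde{k}$ stays in a compact part of $\widetilde{K}$ and, the section being standard, the integrand is independent of $s$; this piece contributes a constant. For $v(x)=-n$ with $n\geq 1$ I use the factorization $w_{2}n(x)=t(-x^{-1},x)\,n(-x)\,\overline{n}(x^{-1})$ with $\overline{n}(x^{-1})\in K$, which lifts to $w_{2}\,\mathfrak{s}(n(x))=\mathfrak{s}(t(-x^{-1},x))\,\mathfrak{s}(n(-x))\,\mathfrak{s}(\overline{n}(x^{-1}))\cdot\zeta(x)$ where the sign $\zeta(x)\in\{\pm1\}$ is computed from $\sigma_{2}$ by Lemma \ref{BLS}. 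Viewing $f_{2s-1}$ inside the principal series $\mathrm{Ind}^{\widetilde{GL}_{2}}_{\widetilde{T}^{e}N^{\ast}}(\mu_{2s-1})$, with the explicit character $\mu_{2s-1}(\mathfrak{s}(t(a,b)))=\eta(b)\mu_{\psi}(b)\delta_{B}(t(a,b))^{(2s-1)/4}$ from the proof of Proposition \ref{intertwining-func}, the torus part $t(-x^{-1},x)$ produces a factor $\eta(x)\mu_{\psi}(x)\zeta(x)|x|^{-(2s+1)/2}$, the $N^{\ast}$-part contributes nothing, and $f_{2s-1}(\mathfrak{s}(\overline{n}(x^{-1}))\widetilde{k})$ is an $s$-independent value of $f_{2s-1}|_{\widetilde{K}}$ which stabilizes once $v(x)$ is sufficiently negative. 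Summing first over $x$ with $v(x)=-n$ fixed (an $\mathcal{O}^{\times}$-sum modulo a congruence subgroup stabilizing $f_{2s-1}$ and $\widetilde{k}$) and then over $n\geq1$ turns the $|x|>1$ contribution into a geometric series in $q^{-(2s-1)}$.

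The decisive point — and where the double cover intervenes — is that $\mu_{\psi}$ is not a character of $F^{\times}$: one has $\mu_{\psi}(ab)=\mu_{\psi}(a)\mu_{\psi}(b)(a,b)_{F}$ and $\mu_{\psi}(a)^{2}=(a,a)_{F}$, so after performing the $\mathcal{O}^{\times}$-sum the character governing the geometric series is $\eta^{-2}$, the metaplectic cover effectively squaring the twisting character while the stray quadratic symbol $(a,a)_{F}$ is absorbed into $\zeta(x)$ through Lemma \ref{BLS}. In particular the $\mathcal{O}^{\times}$-sum is a nonzero constant for every $n$ when $\eta^{-2}$ is unramified, and it vanishes for $n$ large when $\eta^{-2}$ is ramified. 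Hence the $|x|>1$ part equals $A(q^{\pm s/2})\bigl(1-\eta^{-2}(\varpi)q^{-(2s-1)}\bigr)^{-1}$ with $A\in\mathbb{C}[q^{\pm s/2}]$ in the unramified case, and a Laurent polynomial in the ramified case. Since $L(2s-1,\eta^{-2})$ equals $\bigl(1-\eta^{-2}(\varpi)q^{-(2s-1)}\bigr)^{-1}$ in the first case and $1$ in the second, multiplication by $L(2s-1,\eta^{-2})^{-1}$ removes the only possible pole, uniformly in $\widetilde{k}$, and this yields the claim.

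I expect the main obstacle to be exactly this metaplectic bookkeeping: propagating $w_{2}$ and $t(-x^{-1},x)$ through the cocycle $\sigma_{2}$ and the Weil index $\mu_{\psi}$ carefully enough to be sure the series really has denominator $1-\eta^{-2}(\varpi)q^{-(2s-1)}$ — rather than, say, $1-\eta^{-1}(\varpi)q^{-(2s-1)}$ — and to pin down the conductor at which the $\mathcal{O}^{\times}$-sum truncates in the ramified case. Everything else is the routine unfolding of an intertwining integral; alternatively, for the unramified vector one can cross-check the outcome against the metaplectic Gindikin--Karpelevich formula, whose numerator is again the $\eta^{-2}$-factor.
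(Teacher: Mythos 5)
The paper itself offers no proof of this lemma: it is quoted directly from \cite{Takeda15}*{Lemma 4.3} and \cite{Ya17}*{Lemma 3.2}, so there is no internal argument to compare with. Your sketch is essentially the computation those references carry out (the metaplectic Gindikin--Karpelevich/rank-one analysis): reduce to standard sections, evaluate on $\widetilde{K}$, split the integral at $|x|=1$, use $w_2n(x)=t(-x^{-1},x)n(-x)\overline{n}(x^{-1})$, and sum over shells. The torus factor $\eta(x)\mu_{\psi}(x)|x|^{-(2s+1)/2}$ and the final denominator $1-\eta^{-2}(\varpi)q^{-(2s-1)}$ are both right, so the strategy does yield the lemma.

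One step of the sketch is, however, internally inconsistent, and it sits exactly at the point you flag as delicate. You assert that when $\eta^{-2}$ is unramified the $\mathcal{O}^{\times}$-sum is a nonzero constant \emph{for every} $n$. If that were true, the shell sums would form a geometric series whose ratio per shell is $\eta^{-1}(\varpi)\mu_{\psi}(\varpi^{-1})(\pm1)\,q^{-(2s-1)/2}$, and the resulting pole would \emph{not} be cancelled by $L(2s-1,\eta^{-2})^{-1}$: the square of that ratio is $\eta^{-2}(\varpi)(\varpi,-1)_F\,q^{-(2s-1)}$, which differs from the $L$-factor's reciprocal root by the sign $(\varpi,-1)_F$ whenever $q\equiv3\pmod 4$. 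What actually happens is that $\mu_{\psi}(\varpi^{-n}u)$ contributes $(\varpi,u)_F^{\,n}$ on units, and for odd $n$ (with $p\neq2$) this is the \emph{nontrivial} quadratic residue character of $\mathcal{O}^{\times}$, so the unit integral kills every other shell; only the shells with $n$ in a single congruence class mod $2$ survive, the step-two multiplier is $\eta^{-2}(\varpi)\mu_{\psi}(\varpi^{-2})q^{-(2s-1)}=\eta^{-2}(\varpi)q^{-(2s-1)}$ (using $\mu_{\psi}(ab^2)=\mu_{\psi}(a)$), and this is precisely why the only possible pole is that of $L(2s-1,\omega_{\pi}^2)$. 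With that correction, and the routine verification via Lemma \ref{BLS} that the cocycle sign $\zeta(x)$ contributes at worst a further quadratic character of $u$ (which can only shift which congruence class of shells survives, not the step-two ratio), your argument closes.
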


\begin{proof}[Proof of Proposition \ref{Exceptional pole}]
For any $f_{2s-1} \in V_{good}(2s-1,\omega^{-1}_{\pi})$, Lemma \ref{Takeda} is exploited to take $P_i(q^{\pm s/2}) \in \mathbb{C}[q^{\pm s/2}]$ and $f^{(i)}_{2s-1} \in V_{std}(2s-1,\omega^{-1}_{\pi})$ such that $I(W,W_{\theta^{\psi}},f_{2s-1})$ is a finite sum of the form 
\[
I(W,W_{\theta^{\psi}},f_{2s-1})=L(2s,\omega_{\pi}^2)\sum_{i=1}^mP_i(q^{ \pm s/2})I(W,W_{\theta^{\psi}},f^{(i)}_{2s-1}).
\]
Indeed the fraction $\dfrac{I(W,W_{\theta^{\psi}},f^{(i)}_{2s-1})}{L_{reg}(s,\pi,\mathrm{Sym}^2)}$ is entire. This implies that $L(s,\pi,\mathrm{Sym}^2)^{-1}$ divides a product $L(2s,\omega_{\pi}^2)^{-1}L_{reg}(s,\pi,\mathrm{Sym}^2)^{-1}$ in $\mathbb{C}[q^{\pm s/2}]$. Thus the poles of the ratio
$\dfrac{L(s,\pi,\mathrm{Sym}^2)}{L_{reg}(s,\pi,\mathrm{Sym}^2)}$ are found among the poles of $L(2s,\omega_{\pi}^2)$.
\end{proof}

In the spirit of \cite{Schmidt-Tran}, we define \textit{exceptional} $L$-functions in such a way that exceptional poles can be regular but $L(s,\pi,\mathrm{Sym}^2) \neq L_{reg}(s,\pi,\mathrm{Sym}^2)$ precisely if there exist exceptional poles.

\begin{definition}
We let 
\[
 L_{ex}(s,\pi,\mathrm{Sym}^2)=\prod_{s_0} (1-q^{s_0/2}q^{-s/2})^{-1}
\]
where $s_0$ is taken over all exceptional poles of $L_{ex}(s,\pi,\mathrm{Sym}^2)$. 
\end{definition}

With this said, we can factor $L(s,\pi,\mathrm{Sym}^2)$ as
\begin{equation}
\label{product}
L(s,\pi,\mathrm{Sym}^2)=L_{ex}(s,\pi,\mathrm{Sym}^2)L_{reg}(s,\pi,\mathrm{Sym}^2). 
\end{equation}
When it is clear what $\omega_{\pi}$ we are working with, we will abuse the notation by letting $\omega_{\pi}=\omega_{\pi} \circ \mathrm{det}$. Now we would like to characterize the occurrence of exceptional poles. To this end, we need the following lemma \cite[Lemma 1.15]{Ya17}.

\begin{lemma}[Yamana]
\label{image-Yamana}
Let $\omega_{\pi}$ be a quadratic character of $F^{\times}$. The representation $I(1,\omega^{-1}_{\pi})$ has the unique irreducible quotient, which is isomorphic to $\theta^{\psi^{-1}}\otimes \omega_{\pi}$. Furthermore the quotient map
$I(1,\omega^{-1}_{\pi}) \rightarrow \theta^{\psi^{-1}} \otimes \omega_{\pi}$ is realized as the intertwining operator $M(1,\omega^{-1}_{\pi})$.
\end{lemma}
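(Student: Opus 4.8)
The idea is to identify $I(1,\omega_{\pi}^{-1})$, at the special value $s=1$, with the degenerate principal series $\mathrm{Ind}^{\widetilde{GL}_2}_{\widetilde{T}^e N^{\ast}}(\omega^{\psi^{-1}}\otimes\delta_B^{1/4})$ whose unique irreducible quotient is by definition $\theta^{\psi^{-1}}$. The point where this coincidence occurs is pinned down by $\omega_{\pi}$ being quadratic: once $\omega_{\pi}^2=1$, the inducing datum of $I(1,\omega_{\pi}^{-1})$ collapses on the center of $\widetilde{T}$ to an exceptional character of the kind used to build $\theta^{\psi^{-1}}$.

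Concretely, I would first pass from the mirabolic-type subgroup $\widetilde{Z}^2\widetilde{P}=\widetilde{Z}^2\widetilde{A}N^{\ast}$ to the full Borel $\widetilde{B}=\widetilde{T}N^{\ast}$ by induction in stages. The inner step is $\mathrm{Ind}^{\widetilde{T}}_{\widetilde{Z}^2\widetilde{A}}$ of the genuine character $(\widetilde{\omega_{\pi}^{-1}}\boxtimes\widetilde{\textbf{1}}_{GL_1})\otimes\delta_B^{1/4}$ (with $N^{\ast}$ acting trivially). Since $\widetilde{Z}^2\widetilde{A}$ is abelian and has index $[F^{\times}:(F^{\times})^2]$ in $\widetilde{T}$ — the same index as the maximal abelian subgroup $\widetilde{T}^e$, so that the Hilbert-symbol pairing on $\widetilde{T}/(\widetilde{T}^2\widetilde{Z}^2)$ forces $\widetilde{Z}^2\widetilde{A}$ to be itself maximal abelian — this inner induction is an irreducible genuine representation of $\widetilde{T}$, hence (Stone--von Neumann) the unique one with its central character. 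Comparing with the explicit character $\mu_s$ from the proof of Proposition \ref{intertwining-func} at $s=1$, and using that $\omega_{\pi}$ and $\mu_{\psi}$ are both trivial on $(F^{\times})^2$, one sees the restriction of this central character to $\widetilde{T}^2\widetilde{Z}^2$ is simply $\xi\,\delta_B^{1/4}$, which is also the restriction of $\omega^{\psi^{-1}}\otimes\delta_B^{1/4}$ there. Therefore $I(1,\omega_{\pi}^{-1})\simeq\mathrm{Ind}^{\widetilde{GL}_2}_{\widetilde{B}}(\rho)\simeq\mathrm{Ind}^{\widetilde{GL}_2}_{\widetilde{T}^e N^{\ast}}(\omega^{\psi^{-1}}\otimes\delta_B^{1/4})$, and the first assertion falls out of the definition of $\theta^{\psi^{-1}}$, together with the fact (recalled in \S\ref{sec3:2}) that a quadratic twist preserves the exceptional representation, so $\theta^{\psi^{-1}}\otimes\omega_{\pi}\simeq\theta^{\psi^{-1}}$.

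For the statement about the intertwining operator I would run the same identification on the target: $J(-1,\omega_{\pi}^{-1})$ is induced from the $w_2$-conjugate datum, and the identical computation gives $J(-1,\omega_{\pi}^{-1})\simeq\mathrm{Ind}^{\widetilde{GL}_2}_{\widetilde{T}^e N^{\ast}}(\omega^{\psi^{-1}}\otimes\delta_B^{-1/4})$, whose unique irreducible \emph{sub}representation is $\theta^{\psi^{-1}}$. By Takeda's Lemma \ref{Takeda} the operator $M(2s-1,\omega_{\pi}^{-1})$ is holomorphic at $s=1$ (there $L(2s-1,\omega_{\pi}^2)=L(1,\textbf{1})$ is finite and nonzero), and a standard non-vanishing argument for standard intertwining operators — evaluating $M$ on a suitable section, or combining the Plancherel identity \eqref{plancherel} with the fact that $M(s,\omega_{\pi}^{-1})$ is an isomorphism for generic $s$ — shows $M(1,\omega_{\pi}^{-1})\neq 0$. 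Its image is then a nonzero subrepresentation of $J(-1,\omega_{\pi}^{-1})$ and simultaneously a quotient of $I(1,\omega_{\pi}^{-1})$; since each of these is a genuine principal series of $\widetilde{GL}_2$, hence of length at most two, and is reducible here with $\theta^{\psi^{-1}}$ as the cosocle of $I(1,\omega_{\pi}^{-1})$ and the socle of $J(-1,\omega_{\pi}^{-1})$, the image must be exactly $\theta^{\psi^{-1}}$. That is, $M(1,\omega_{\pi}^{-1})$ realizes the quotient map $I(1,\omega_{\pi}^{-1})\twoheadrightarrow\theta^{\psi^{-1}}\otimes\omega_{\pi}$.

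The step I expect to be the real work is the representation-theoretic bookkeeping on $\widetilde{T}$: one must carry the Kubota cocycle, the Weil indices $\mu_{\psi}$, and the modulus normalizations correctly through the induction in stages, and then check that every discrepancy between the datum of $I(1,\omega_{\pi}^{-1})$ and that of $\theta^{\psi^{-1}}$ is a quadratic character, hence dies on $(F^{\times})^2$, so that the two central characters literally agree. The only other point needing care is the non-vanishing of $M(1,\omega_{\pi}^{-1})$ at this degenerate value, for which Takeda's Lemma \ref{Takeda} and the Plancherel identity \eqref{plancherel} are the essential inputs.
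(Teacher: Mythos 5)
The paper does not prove this lemma at all: it is imported verbatim as \cite[Lemma 1.15]{Ya17}, so there is no in-paper argument to compare yours against. Judged on its own terms, your proof is a reasonable direct $GL_2$ argument and its skeleton is sound: $\widetilde{Z}^2\widetilde{A}$ and $\widetilde{T}^e$ are both maximal abelian in $\widetilde{T}$ (both index $[F^{\times}:(F^{\times})^2]$ over the center $\widetilde{T}^2\widetilde{Z}^2$, both isotropic for the Hilbert-symbol commutator pairing), so by Stone--von Neumann the two inner inductions to $\widetilde{T}$ agree once the central characters match, and the matching is exactly where quadraticity of $\omega_{\pi}$ enters ($\eta(b)=\mu_{\psi}(b)=1$ for $b\in(F^{\times})^2$). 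This is consistent with the paper's own remark in the proof of Proposition \ref{intertwining-func} that $I(s,\eta)$ sits inside $\mathrm{Ind}^{\widetilde{GL}_2}_{\widetilde{T}^eN^{\ast}}(\mu_s)$; a count of dimensions of the irreducible genuine $\widetilde{T}$-modules upgrades that inclusion to an isomorphism. Yamana's own route for general $n$ is essentially the Langlands-quotient characterization of the image of the intertwining operator at the edge of reducibility, going back to Kazhdan--Patterson; your first half replaces the part of that argument that is genuinely $GL_2$-specific.

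Two soft spots you should repair. First, the Plancherel identity \eqref{plancherel} does \emph{not} give non-vanishing of $M(1,\omega_{\pi}^{-1})$: with $\omega_{\pi}^2=\mathbf{1}$ one has $L(0,\mathbf{1})=\infty$, so $\mu(1,\omega_{\pi}^{-1})^{-1}=0$ and the identity only tells you $M(-1)\circ M(1)=0$ (which is useful for locating the image inside $\ker M(-1)$, but not for non-vanishing). Non-vanishing must come from evaluating $M$ on an explicit section supported near the big cell, exactly as in Proposition \ref{eval-intertwining}, which produces the nonzero constant $\mathrm{vol}(\overline{N}_i)$ independently of $s$. Second, your final step "the image must be exactly $\theta^{\psi^{-1}}$" tacitly excludes the possibility that the image has length $2$, i.e.\ that $M(1)$ is an isomorphism $I(1,\omega_{\pi}^{-1})\simeq J(-1,\omega_{\pi}^{-1})$; ruling this out requires knowing that the two Jordan--H\"older constituents of the length-two principal series are non-isomorphic (one is the degenerate exceptional piece, the other is generic), or alternatively invoking $M(-1)\circ M(1)=0$ together with $M(-1)\neq 0$ to see the image is a proper nonzero submodule of $J(-1,\omega_{\pi}^{-1})$, hence its socle. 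Neither point is fatal, but both should be made explicit.
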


It is worthwhile noting from \citelist{\cite{Gelbart-PS}*{\S 1.3}\cite{Ya17}*{Remark 1.7}} that $\theta^{{\psi}^{-1}}$ is independent of $\psi$ hence we may suppress the superscript $\psi$. 
We say that an irreducible admissible representation $(\pi,V_{\pi})$ of $GL_2$ is {\it $\theta$-distinguished} if $\pi \otimes \theta^{\psi} \otimes \theta^{\psi^{-1}}$ admits a nonzero $GL_2$-invariant trilinear form. A $\theta$-distinguished representation is characterized in terms of poles of symmetric square $L$-functions for self-dual representations at $s=0$.

\begin{proposition}\cite[Corollary 3.9]{Ya17}
\label{char-Yamana}
We assume that $\mathrm{dim}_{\mathbb{C}} \mathrm{Hom}_{GL_2}(\pi \otimes\theta^{\psi} \otimes I(1,\omega^{-1}_{\pi}), \mathbb{C} ) \leq 1$ and $\omega_{\pi}$ is a quadratic character. Then the following conditions are equivalent:
\begin{enumerate}[label=$(\roman*)$]
\item $\pi \otimes \omega_{\pi}$ is $\theta$-distinguished.
\item $\mathrm{Hom}_{GL_2}(\pi \otimes\theta^{\psi} \otimes \theta^{\psi^{-1}} \otimes \omega_{\pi}, \mathbb{C} )\simeq \mathrm{Hom}_{GL_2}(\pi \otimes\theta^{\psi} \otimes I(1,\omega^{-1}_{\pi}), \mathbb{C} ) \neq 0$
\item There is a non-zero functional $\Lambda : \pi \otimes\theta^{\psi} \otimes I(1,\omega^{-1}_{\pi}) \rightarrow \mathbb{C}$ which factors through the quotient map $ \pi \otimes\theta^{\psi} \otimes I(1,\omega^{-1}_{\pi})  \rightarrow \pi \otimes\theta^{\psi} \otimes \theta^{\psi^{-1}}\otimes \omega_{\pi}$.
\end{enumerate}
\end{proposition}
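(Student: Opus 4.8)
The plan is to make condition $(i)$ the hub and deduce $(i)\Leftrightarrow(iii)$ and $(i)\Leftrightarrow(ii)$ from the short exact sequence provided by Lemma \ref{image-Yamana} together with the left exactness of $\mathrm{Hom}$; no input beyond Yamana's lemma should be needed. Since $\omega_{\pi}$ is assumed quadratic, Lemma \ref{image-Yamana} yields a short exact sequence of $\widetilde{GL}_2$-modules $0\to\mathcal{K}\to I(1,\omega_{\pi}^{-1})\xrightarrow{M(1,\omega_{\pi}^{-1})}\theta^{\psi^{-1}}\otimes\omega_{\pi}\to 0$, where $\mathcal{K}$ is the kernel of the intertwining operator. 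Tensoring with the exact functor $\pi\otimes\theta^{\psi}\otimes(-)$ and then applying the contravariant left-exact functor $\mathrm{Hom}_{GL_2}(-,\mathbb{C})$, I would obtain the exact sequence $0\to\mathrm{Hom}_{GL_2}(\pi\otimes\theta^{\psi}\otimes\theta^{\psi^{-1}}\otimes\omega_{\pi},\mathbb{C})\xrightarrow{\iota}\mathrm{Hom}_{GL_2}(\pi\otimes\theta^{\psi}\otimes I(1,\omega_{\pi}^{-1}),\mathbb{C})\to\mathrm{Hom}_{GL_2}(\pi\otimes\theta^{\psi}\otimes\mathcal{K},\mathbb{C})$, in which $\iota$ is precomposition with the surjection $\mathrm{id}\otimes\mathrm{id}\otimes M(1,\omega_{\pi}^{-1})$. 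In particular $\iota$ is injective, and its image is precisely the set of functionals on $\pi\otimes\theta^{\psi}\otimes I(1,\omega_{\pi}^{-1})$ that factor through the quotient map of Lemma \ref{image-Yamana}.

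Next I would settle $(i)\Leftrightarrow(iii)$, which is purely formal and does not use the multiplicity hypothesis. Unwinding the definition of $\theta$-distinction, and using the isomorphism of $GL_2$-modules $(\pi\otimes\omega_{\pi})\otimes\theta^{\psi}\otimes\theta^{\psi^{-1}}\cong\pi\otimes\theta^{\psi}\otimes\theta^{\psi^{-1}}\otimes\omega_{\pi}$ (here $\omega_{\pi}=\omega_{\pi}\circ\det$, which makes sense on $GL_2$ because $\theta^{\psi}\otimes\theta^{\psi^{-1}}$ is non-genuine), condition $(i)$ says exactly that the leftmost Hom-space above is nonzero. On the other hand, a nonzero functional $\Lambda$ on $\pi\otimes\theta^{\psi}\otimes I(1,\omega_{\pi}^{-1})$ that factors through the quotient is, by surjectivity of the quotient map, the same datum as a nonzero element of $\mathrm{Hom}_{GL_2}(\pi\otimes\theta^{\psi}\otimes\theta^{\psi^{-1}}\otimes\omega_{\pi},\mathbb{C})$; thus $(iii)$ is also equivalent to the nonvanishing of that same space, giving $(i)\Leftrightarrow(iii)$.

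For $(i)\Leftrightarrow(ii)$ I would bring in the hypothesis $\dim_{\mathbb{C}}\mathrm{Hom}_{GL_2}(\pi\otimes\theta^{\psi}\otimes I(1,\omega_{\pi}^{-1}),\mathbb{C})\le 1$. If $(i)$ holds, then the source of $\iota$ is a nonzero subspace of a space of dimension at most one, so $\iota$ is an isomorphism and both spaces are one-dimensional; this is exactly the isomorphism and the nonvanishing asserted in $(ii)$. Conversely $(ii)$ forces the leftmost space to be nonzero, i.e.\ $(i)$. Combining the two equivalences proves the proposition. The step where care is genuinely required is the bookkeeping hidden in the first paragraph: one must verify that $\pi\otimes\theta^{\psi}\otimes\theta^{\psi^{-1}}$ descends to a $GL_2$-module, that twisting by the quadratic character $\omega_{\pi}\circ\det$ is central-character compatible so the relevant Hom-spaces can be nonzero, and that the module $I(1,\omega_{\pi}^{-1})$ in the statement is the one to which Lemma \ref{image-Yamana} applies --- all arranged in \S\ref{sec2:2} and Lemma \ref{image-Yamana} itself. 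So the ``hard part'' for this proposition --- the structure of $I(1,\omega_{\pi}^{-1})$ and the realization of its unique irreducible quotient by $M(1,\omega_{\pi}^{-1})$ --- is already packaged in Lemma \ref{image-Yamana}, and what remains is the homological formalism above. If one instead wished to drop the multiplicity-one hypothesis, the genuinely difficult step would be to prove directly that $\mathrm{Hom}_{GL_2}(\pi\otimes\theta^{\psi}\otimes\mathcal{K},\mathbb{C})$ vanishes, which would require analyzing $\mathcal{K}$ through the Bernstein--Zelevinsky derivative filtration of \S\ref{sec3:1}.
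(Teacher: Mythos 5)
The paper does not actually prove this proposition: it is quoted verbatim from Yamana \cite[Corollary 3.9]{Ya17}, so there is no in-paper argument to compare against. Your proof is correct and is essentially the standard (and, as far as I can tell, Yamana's own) argument: tensoring the short exact sequence $0\to\mathcal{K}\to I(1,\omega_{\pi}^{-1})\to\theta^{\psi^{-1}}\otimes\omega_{\pi}\to 0$ from Lemma \ref{image-Yamana} with $\pi\otimes\theta^{\psi}$ (exact over $\mathbb{C}$) and applying the left-exact contravariant functor $\mathrm{Hom}_{GL_2}(-,\mathbb{C})$ identifies the space of invariant functionals factoring through the quotient with $\mathrm{Hom}_{GL_2}(\pi\otimes\theta^{\psi}\otimes\theta^{\psi^{-1}}\otimes\omega_{\pi},\mathbb{C})$, which yields $(i)\Leftrightarrow(iii)$ with no multiplicity hypothesis, while the assumption $\dim\le 1$ forces the injection to be an isomorphism whenever its source is nonzero, giving $(i)\Leftrightarrow(ii)$. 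You also correctly flag the only genuine bookkeeping point, namely that the triple products of genuine representations descend to $GL_2$ so that the $\mathrm{Hom}_{GL_2}$ spaces make sense.
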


According to \cite[Theorem 2.14]{Ya17}, one dimensionality of the space $\mathrm{Hom}_{GL_2}(\pi \otimes\theta^{\psi} \otimes I(1,\omega^{-1}_{\pi}), \mathbb{C} )$ is available for irreducible unitary representations of $GL_2$. Though this might be hold for all irreducible admissible representations of $GL_2$, the author does not know if the same technique in the proof of \cite[Theorem 2.14]{Ya17} applies to these representations at this moment. To overcome this,
we exploit the deformation of representations in $\S$\ref{sec4}. We finally come to give a characterization of exceptional poles.

\begin{theorem}
\label{main-characterization}
Let $\pi$ be an irreducible admissible generic representation of $GL_2$. Suppose that $\mathrm{dim}_{\mathbb{C}} \mathrm{Hom}_{GL_2}(\pi \otimes\theta^{\psi} \otimes I(1,\omega^{-1}_{\pi}), \mathbb{C} ) \leq 1$. If $L_{ex}(s,\pi,\mathrm{Sym}^2)$ has a pole at $s=0$, then $\pi$ is $\theta$-distinguished.
\end{theorem}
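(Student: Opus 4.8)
The plan is to unwind the definition of an exceptional pole at $s=0$ into a statement about a trilinear form, and then match that trilinear form against the $\theta$-distinction criterion in Proposition~\ref{char-Yamana}. Concretely, suppose $L_{ex}(s,\pi,\mathrm{Sym}^2)$ has a pole at $s=0$. By the factorization \eqref{product} together with the identification of exceptional poles as the poles of $L(s,\pi,\mathrm{Sym}^2)/L_{reg}(s,\pi,\mathrm{Sym}^2)$, there exist $W\in\mathcal{W}(\pi,\psi)$, $W_{\theta^{\psi}}\in\mathcal{W}(\theta^{\psi},\psi^{-1})$, and a good section $f_{2s-1}\in V_{good}(2s-1,\omega^{-1}_{\pi})$ so that the normalized integral $I(W,W_{\theta^{\psi}},f_{2s-1})/L_{reg}(s,\pi,\mathrm{Sym}^2)$ has a pole at $s=0$. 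First I would observe, via Proposition~\ref{Exceptional pole}, that this forces $s=0$ to be a pole of $L(2s,\omega_{\pi}^2)$, hence $\omega_{\pi}^2$ is unramified; combined with the self-duality type constraints implicit in the existence of such a pole, one reduces to the case where $\omega_{\pi}$ is a quadratic character, so that Lemma~\ref{image-Yamana} and Proposition~\ref{char-Yamana} become available.

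Next I would extract the residue. Multiplying the family of integrals $I(W,W_{\theta^{\psi}},f_{2s-1})/L_{reg}(s,\pi,\mathrm{Sym}^2)$ by $(1-q^{-s/2})$ (or the appropriate Euler factor) and evaluating at $s=0$ produces a nonzero trilinear functional on $\mathcal{W}(\pi,\psi)\otimes\mathcal{W}(\theta^{\psi},\psi^{-1})\otimes V_{good}(2s-1,\omega_{\pi}^{-1})|_{s=0}$. The key point is that at $s=0$ the relevant section space specializes, through the good-section formalism and the normalized operator $N(2s-1,\omega_{\pi}^{-1},\psi)$, to the induced representation $I(1,\omega_{\pi}^{-1})$; the normalizing factor $L_{reg}$ in the denominator guarantees that the residual functional does not already factor through the \emph{regular} part, so the pole is genuinely exceptional. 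The upshot is a nonzero element of $\mathrm{Hom}_{GL_2}\!\left(\pi\otimes\theta^{\psi}\otimes I(1,\omega_{\pi}^{-1}),\mathbb{C}\right)$, which under the hypothesis $\dim_{\mathbb{C}}\mathrm{Hom}_{GL_2}(\pi\otimes\theta^{\psi}\otimes I(1,\omega_{\pi}^{-1}),\mathbb{C})\le 1$ spans that space. By Proposition~\ref{char-Yamana}, condition (ii) then holds, which is exactly the assertion that $\pi\otimes\omega_{\pi}$ is $\theta$-distinguished; and since $\omega_{\pi}$ is quadratic, $\pi\otimes\omega_{\pi}\cong\pi\otimes\omega_{\pi}^{-1}$ and one checks (using $\theta^{\psi}\otimes\chi\cong\theta^{\psi}$ for quadratic $\chi$, cf.\ \cite[Lemma 1.9]{Ya17}) that this is equivalent to $\pi$ itself being $\theta$-distinguished.

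The delicate bridge is showing that the residue at $s=0$ really lands in $\mathrm{Hom}_{GL_2}(\pi\otimes\theta^{\psi}\otimes I(1,\omega_{\pi}^{-1}),\mathbb{C})$ rather than in some larger or a priori different Hom-space: one needs the $GL_2$-equivariance of the residual trilinear form, which comes from the $GL_2$-invariance of the Zeta integral $I(W,W_{\theta^{\psi}},f_{2s-1})$ under the diagonal right-translation action, together with the fact that the quotient-of-sections at $s=0$ is precisely $I(1,\omega_{\pi}^{-1})$ as a $GL_2$-module. This is where the interplay between good sections, Lemma~\ref{Takeda}, and Lemma~\ref{image-Yamana} is essential, and it is the step I expect to require the most care. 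Everything after that — invoking Proposition~\ref{char-Yamana} and disposing of the quadratic twist — is formal. I would also remark at the end that the hypothesis $\dim_{\mathbb{C}}\mathrm{Hom}_{GL_2}(\pi\otimes\theta^{\psi}\otimes I(1,\omega_{\pi}^{-1}),\mathbb{C})\le 1$ is only needed to invoke Proposition~\ref{char-Yamana}, and will be removed for general $\pi$ by the deformation argument of Section~\ref{sec4}; see the discussion preceding \cite[Theorem 2.14]{Ya17}.
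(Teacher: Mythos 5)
Your overall strategy matches the paper's in outline, but there is a genuine gap at the point where you invoke Proposition \ref{char-Yamana}. You produce a nonzero element of $\mathrm{Hom}_{GL_2}(\pi\otimes\theta^{\psi}\otimes I(1,\omega_{\pi}^{-1}),\mathbb{C})$ and then assert that condition $(ii)$ holds. But condition $(ii)$ is not the nonvanishing of that Hom-space; it is the conjunction of nonvanishing with the isomorphism onto $\mathrm{Hom}_{GL_2}(\pi\otimes\theta^{\psi}\otimes\theta^{\psi^{-1}}\otimes\omega_{\pi},\mathbb{C})$, i.e.\ with the statement that the functional descends to the quotient. Nonvanishing of the Hom-space on the induced representation $I(1,\omega_{\pi}^{-1})$ alone carries essentially no information: the zeta integrals themselves furnish a $GL_2$-invariant trilinear form on $\pi\otimes\theta^{\psi}\otimes I(1,\omega_{\pi}^{-1})$ for more or less arbitrary $\pi$, whether or not $\pi$ is $\theta$-distinguished, so your inference would prove too much. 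What must actually be established is condition $(iii)$: the functional arising from the exceptional pole \emph{factors through} the quotient map $I(1,\omega_{\pi}^{-1})\rightarrow\theta^{\psi^{-1}}\otimes\omega_{\pi}$, equivalently vanishes on the kernel of $M(1,\omega_{\pi}^{-1})$.

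The mechanism for this is the one your remark about the normalizing factor $L_{reg}$ gestures at but does not carry out. Because the exceptional pole at $s=0$ has strictly higher order than any pole of the holomorphic-section family, the evaluated functional $T_0$ vanishes identically on $V_{hol}(2s-1,\omega_{\pi}^{-1})$; hence a nonzero value is attained only on a good section of the form $h_{2s-1}=N(1-2s,\omega_{\pi},\psi^{-1})f_{1-2s}$ with $f_{1-2s}$ holomorphic. Rewriting the functional in the variable $f_{1}\in I(1,\omega_{\pi}^{-1})$, it is by construction ``apply $\hat{N}$ (essentially $M(1,\omega_{\pi}^{-1})$, up to the nonzero constant $c_3$ coming from the residues) and then integrate,'' and Lemma \ref{image-Yamana} identifies $M(1,\omega_{\pi}^{-1})$ with the quotient map onto $\theta^{\psi^{-1}}\otimes\omega_{\pi}$. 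That is how the factoring is obtained; it is not a consequence of the one-dimensionality hypothesis, which serves only to pass among the three conditions of Proposition \ref{char-Yamana}. Your reduction to $\omega_{\pi}$ quadratic (a pole of $L(2s,\omega_{\pi}^{2})$ at $s=0$ forces $\omega_{\pi}^{2}=\mathbf{1}$, not merely unramifiedness) and the final removal of the twist via $\theta^{\psi^{-1}}\otimes\omega_{\pi}\simeq\theta^{\psi^{-1}}$ are fine.
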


\begin{proof}
The proof is inspired by that of \cite[Theorem 4.7]{JO20}. We provide the complete detail to make thing concrete. The fraction
\[
 \frac{I(W,W_{\theta^{\psi}},f_{2s-1})}{L(s,\pi,\mathrm{Sym}^2)}=\frac{I(W,W_{\theta^{\psi}},f_{2s-1})}{L_{ex}(s,\pi,\mathrm{Sym}^2)L_{reg}(s,\pi,\mathrm{Sym}^2)}
\]
is entire in $s$, so we can evaluate at $s=0$. For $s=0$, the Zeta integral defines an intertwining map $T_0 : \mathcal{W}(\pi,\psi) \times \mathcal{W}(\theta^{\psi},\psi^{-1}) \times V_{good}(2s-1,\omega^{-1}_{\pi}) \rightarrow \mathbb{C}$,
\[
  (W,W_{\theta^{\psi}},f_{2s-1}) \mapsto  \left.\frac{I(W,W_{\theta^{\psi}},f_{2s-1})}{L(s,\pi,\mathrm{Sym}^2)}\right|_{s=0}.
\]
Suppose that $L_{ex}(s,\pi,\mathrm{Sym}^2)$ has a pole at $s=0$. Then the family $I(W,W_{\theta^{\psi}},f_{2s-1})$ for good sections $f_{2s-1} \in V_{good}(2s-1,\omega^{-1}_{\pi})$ has a higher order pole at $s=0$ than the family $I(W,W_{\theta^{\psi}},f_{2s-1})$ for holomorphic sections $f_{2s-1} \in V_{hol}(2s-1,\omega^{-1}_{\pi})$ so $T_0(W,W_{\theta^{\psi}},f_{2s-1})$ is identically zero for $f_{2s-1} \in V_{hol}(2s-1,\omega^{-1}_{\pi})$. It follows from Definition \ref{symsquare-dfn} of $L$-functions that $T_0$ is a non-zero element. Hence we choose an element $h_{2s-1} \in V_{good}(2s-1,\omega^{-1}_{\pi})$ which is not in holomorphic sections and that 
\[
 \left.\frac{I(W,W_{\theta^{\psi}},h_{2s-1})}{L(s,\pi,\mathrm{Sym}^2)}\right|_{s=0} \quad \text{is non zero.}
\]
Then $h_{2s-1}=N(1-2s,\omega_{\pi},\psi^{-1})f_{1-2s}$ for some $f_{1-2s} \in V_{hol}(1-2s,\omega_{\pi})$. Since the exceptional $L$-function $L_{ex}(s,\pi,\mathrm{Sym}^2)^{-1}$ divides $L(2s,\omega_{\pi}^2)^{-1}$ in $\mathbb{C}[q^{\pm s/2}]$, $\omega_{\pi}$ is the non-trivial quadratic chacter of $F^{\times}$. Thus we obtain
\[
\begin{split}
  0 \neq  \left.\frac{I(W,W_{\theta^{\psi}},h_{2s-1})}{L(s,\pi,\mathrm{Sym}^2)}\right|_{s=0}
  &=\left.\frac{I(W,W_{\theta^{\psi}},N(1-2s,\omega_{\pi},\psi^{-1})f_{1-2s})}{L(s,\pi,\mathrm{Sym}^2)}\right|_{s=0}\\
  &=c_3\left.\frac{\varepsilon(1-2s,\textbf{1}_{F^{\times}},\psi^{-1})}{L(1-2s,\textbf{1}_{F^{\times}})}\cdot \frac{I(W,W_{\theta^{\psi}},{^{\iota}[{\hat{N}}(1-2s,\omega_{\pi},\psi^{-1})f_{1-2s}]})}{L_{reg}(s,\pi,\mathrm{Sym}^2)} \right|_{s=0}
\end{split}
\]
with a non-zero constant $c_3=\dfrac{\mathrm{Res}_{s=0}L(2s,\textbf{1}_{F^{\times}})}{\mathrm{Res}_{s=0}L_{ex}(s,\pi,\mathrm{Sym}^2)} \neq 0$. Consequently, Lemma \ref{image-Yamana} implies that the above non-trivial functional
\[
(W,W_{\theta^{\psi}},f_1) \mapsto 
\left.c_3\frac{\varepsilon(1-2s,\textbf{1}_{F^{\times}},\psi^{-1})}{L(1-2s,\textbf{1}_{F^{\times}})}\cdot \frac{I(W,W_{\theta^{\psi}},{^{\iota}[{\hat{N}}(1-2s,\omega_{\pi},\psi^{-1})f_{1-2s}]})}{L_{reg}(s,\pi,\mathrm{Sym}^2)} \right|_{s=0}
\]
factors through the quotient $ \pi \otimes\theta^{\psi} \otimes I(1,\omega^{-1}_{\pi})  \rightarrow \pi \otimes\theta^{\psi} \otimes \theta^{\psi^{-1}}\otimes \omega_{\pi}$. Appealing to \cite[Lemma 1.9 (3)]{Ya17}, one gets $\theta^{\psi^{-1}} \otimes \omega_{\pi} \simeq \theta^{\psi^{-1}}$. We conclude from Proposition \ref{char-Yamana} that $\pi$ is $\theta$-distinguished.
\end{proof}

For an exceptional pole at $s=s_0$, Theorem \ref{main-characterization} becomes the following.

\begin{corollary}
\label{general-characterization}
Let $\pi$ be an irreducible admissible generic representation of $GL_2$. If $s=s_0$ is a pole of $L_{ex}(s,\pi,\mathrm{Sym}^2)$ and $\mathrm{dim}_{\mathbb{C}} \mathrm{Hom}_{GL_2}(\pi\nu^{\frac{s_0}{2}} \otimes\theta^{\psi} \otimes I(1,\omega^{-1}_{\pi\nu^{s_0/2}}), \mathbb{C} ) \leq 1$, then $\pi\nu^{\frac{s_0}{2}}$ is $\theta$-distinguished.
\end{corollary}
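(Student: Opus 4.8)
\emph{Proof strategy.} The plan is to deduce the statement from Theorem \ref{main-characterization} by twisting with the unramified character $\nu^{s_0/2}$. Write $\pi'=\pi\nu^{s_0/2}$, so that $\omega_{\pi'}=\omega_{\pi}\nu^{s_0}$; since $\nu$ is unramified, $\omega_{\pi'}$ and $\omega_{\pi}$ restrict to the same character of $\mathcal{O}^{\times}$. The hypothesis $\mathrm{dim}_{\mathbb{C}}\,\mathrm{Hom}_{GL_2}(\pi'\otimes\theta^{\psi}\otimes I(1,\omega_{\pi'}^{-1}),\mathbb{C})\le 1$ is exactly what is needed to run Theorem \ref{main-characterization} for $\pi'$, so it suffices to prove that $L_{ex}(s,\pi',\mathrm{Sym}^2)$ has a pole at $s=0$.

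The first step is a change-of-variables identity for the Zeta integrals. Given $W_0\in\mathcal{W}(\pi,\psi)$, put $W=W_0\nu^{s_0/2}\in\mathcal{W}(\pi',\psi)$; the assignment $W_0\mapsto W$ is a bijection of Whittaker models, while $\mathcal{W}(\theta^{\psi},\psi^{-1})$ is untouched. Because $\nu$ is unramified, the restrictions to $\widetilde{K}$ of $I(2s-1,\omega_{\pi'}^{-1})$ and of $I(2(s+s_0)-1,\omega_{\pi}^{-1})$ coincide, which furnishes a bijection, natural in $s$, between their standard sections, hence between their holomorphic and rational sections, and---using that the normalizing factor transforms as $\gamma(v,\omega_{\pi'}^{-2},\psi)=\gamma(v+2s_0,\omega_{\pi}^{-2},\psi)$, so that $N$ is compatible with the shift---between their spaces of good sections; write $f_{2s-1}\leftrightarrow g_{2(s+s_0)-1}$ for this correspondence. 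Unwinding the Iwasawa decomposition, as in the computation identifying $\mathcal{I}_{reg}(\pi)$ with $\mathcal{I}_{(0)}(\pi)$, the extra factor $|\mathrm{det}\,g|^{s_0/2}$ introduced by the twist is absorbed into the shift $s\mapsto s+s_0$ inside the section, and one obtains
\[
  I(W,W_{\theta^{\psi}},f_{2s-1})=I(W_0,W_{\theta^{\psi}},g_{2(s+s_0)-1}),\qquad I_{(0)}(s;W,W_{\theta^{\psi}})=I_{(0)}(s+s_0;W_0,W_{\theta^{\psi}}).
\]

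It follows that the substitution $q^{-s/2}\mapsto q^{-s_0/2}q^{-s/2}$ carries the fractional ideals $\mathcal{I}(\pi)$, $\mathcal{I}_{reg}(\pi)$ and $\mathcal{I}_{(0)}(\pi)$ onto $\mathcal{I}(\pi')$, $\mathcal{I}_{reg}(\pi')$ and $\mathcal{I}_{(0)}(\pi')$ respectively; comparing normalized generators yields
\[
  L(s,\pi',\mathrm{Sym}^2)=L(s+s_0,\pi,\mathrm{Sym}^2),\qquad L_{reg}(s,\pi',\mathrm{Sym}^2)=L_{reg}(s+s_0,\pi,\mathrm{Sym}^2).
\]
Hence, by the factorization \eqref{product}, $L_{ex}(s,\pi',\mathrm{Sym}^2)=L_{ex}(s+s_0,\pi,\mathrm{Sym}^2)$, so the exceptional poles of $L(s,\pi',\mathrm{Sym}^2)$ are those of $L(s,\pi,\mathrm{Sym}^2)$ translated by $-s_0$. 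Since $s=s_0$ is a pole of $L_{ex}(s,\pi,\mathrm{Sym}^2)$, the function $L_{ex}(s,\pi',\mathrm{Sym}^2)$ has a pole at $s=0$, and Theorem \ref{main-characterization} applied to $\pi'$ shows that $\pi'=\pi\nu^{s_0/2}$ is $\theta$-distinguished.

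The only delicate point---and the main obstacle---is the section correspondence $f_{2s-1}\leftrightarrow g_{2(s+s_0)-1}$: one must check that the unramified twist together with the shift $s\mapsto s+s_0$ identifies not merely standard and holomorphic sections but the full spaces of good sections, i.e.\ that it intertwines the normalized operators $N(\cdot,\omega_{\pi'}^{-1},\psi)$ and $N(\cdot,\omega_{\pi}^{-1},\psi)$. Once this compatibility is established, the remaining steps are routine manipulations of the integrals and of the fractional ideals.
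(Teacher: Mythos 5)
Your proposal is correct and follows essentially the same route as the paper: twist by $\nu^{s_0/2}$, identify $L_{ex}(s,\pi\nu^{s_0/2},\mathrm{Sym}^2)$ with $L_{ex}(s+s_0,\pi,\mathrm{Sym}^2)$ via the compatibility of holomorphic and good sections with the unramified shift, and then apply Theorem \ref{main-characterization} at $s=0$. The ``delicate point'' you flag (that the shift respects good sections, i.e.\ is compatible with the normalized intertwining operators) is exactly what the paper disposes of by citing \cite[Remark 3.13]{Ya17}, so your sketch of that verification is consistent with the intended argument.
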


\begin{proof}
We know from \cite[Remark 3.13]{Ya17} that $V_{good}(2s-1,\omega^{-1}_{\pi\nu^{s_0/2}}) \otimes \nu^{\frac{s_0}{2}} \simeq V_{good}(2(s+s_0)-1,\omega^{-1}_{\pi})$ and $V_{hol}(2s-1,\omega^{-1}_{\pi\nu^{s_0/2}}) \otimes \nu^{\frac{s_0}{2}} \simeq V_{hol}(2(s+s_0)-1,\omega^{-1}_{\pi})$. Therefore $L_{ex}(s,\pi,\mathrm{Sym}^2)$ has a pole at $s=s_0$ if and only if $L_{ex}(s,\pi\nu^{\frac{s_0}{2}},\mathrm{Sym}^2)=L_{ex}(s+s_0,\pi,\mathrm{Sym}^2)$ has a pole at $s=0$. The result right away follows from Theorem \ref{main-characterization}. 
\end{proof}

We shall see later (Proposition \ref{converse}) a kind of the converse of Corollary \ref{general-characterization}.

\subsection{Regular $L$-functions and factorizations}
We examine the regular pole at $s=s_0$ of the family $\mathcal{I}_{(0)}(\pi)$. Let $d_{s_0}$ be its maximal order in this family. The integral $ I_{(0)}(s,W,W_{\theta^{\psi}})$ has the Laurent expansion, which in case we write
\[
 I_{(0)}(s;W,W_{\theta^{\psi}})=\frac{B_{(0),s_0}(W,W_{\theta^{\psi}})}{(q^{s/2}-q^{s_0/2})^{d_{s_0}}}+\text{higher order terms}
\]
where $W$ and $W_{\theta^{\psi}}$ belong to the space $\mathcal{W}(\pi_{(0)},\psi)$ and $\mathcal{W}(\theta_{(0)}^{\psi},\psi^{-1})$ respectively (cf. Proposition \ref{GPS}). The function $W$ of $\mathcal{W}(\Phi^+(\pi_{(1)}),\psi)$ realized as a function $W \begin{pmatrix} a & \\ & 1 \end{pmatrix}$ on $F^{\times}$ has a multiplicative support in $a$ (cf. Proposition \ref{GPS2}). Then the non-trivial bilinear form $B_{(0),s_0}$ vanishes on $\mathcal{W}(\Phi^+(\pi_{(0)}),\psi)$. As a representation of $P$, $\pi_{(0)} \slash \Phi^+(\pi_{(1)})$ is isomorphic to $\Psi^+(\pi^{(1)})$. Hence $B_{(0),s_0}$ defines a non-trivial bilinear form on $\mathcal{W}(\Psi^+(\pi^{(1)}),\psi) \times \mathcal{W}(\Phi^+(\theta^{\psi}_{(0)}),\psi^{-1})$ which is quasi-invariant with respect to the action of $\widetilde{P}$. But by \cite[\S 4, Proposition 4.3]{Kable99} (cf. \cite[Proposition 3.7]{BernsteinZelevinsky}), there is no non-trivial quasi-invariant pairing between $\Psi^+(\pi^{(1)})$ and $\Phi^+(\theta^{\psi}_{(0)})$. Therefore we may view $B_{(0),s_0}$ as a non-zero bilinear form on the space $\mathcal{W}(\Psi^+(\pi^{(1)}),\psi) \times \mathcal{W}(\Psi^+({\theta^{\psi}}^{(1)}),\psi^{-1})$. Having the description of the bilinear from $B_{(0),s_0}$ in hand, we are in a position to show the main factorization.

\begin{proposition}
\label{regular-factorization}
Let $\pi$ be an irreducible admissible generic representation of $GL_2$ such that all of its derivatives are completely reducible. Then
\[
  L_{reg}(s,\pi,\mathrm{Sym}^2)^{-1}=\underset{i}{l.c.m.}\{ L(s,\pi_i^{(1)} \times \pi_i^{(1)} )^{-1} \}
\]
where the lease common multiple is with respect to divisibility in $\mathbb{C}[q^{s/2},q^{-s/2}]$ and is taken over all irreducible constituents $\pi^{(1)}_i$ of $\pi^{(1)}$.
\end{proposition}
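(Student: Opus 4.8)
The strategy is to read off the fractional ideal $\mathcal{I}_{(0)}(\pi)=\mathcal{I}_{reg}(\pi)$, and hence the normalized generator $L_{reg}(s,\pi,\mathrm{Sym}^2)^{-1}=P(q^{-s/2})$, from the behaviour of the simple integrals $I_{(0)}(s;W,W_{\theta^{\psi}})$ near $a=0$, in the spirit of Cogdell and Piatetski-Shapiro \cite{Cogdell-PS} and of \cite{JO-3,JO20-2}. The key input is the germ expansion of Kirillov functions. Since $\pi^{(1)}$ is completely reducible, write $\pi^{(1)}=\bigoplus_i\pi_i^{(1)}$ with each $\pi_i^{(1)}$ a character $\rho_i$ of $F^{\times}$. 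The $GL_2$-analogue of Proposition~\ref{GPS2} and Theorem~\ref{connection} (the Cogdell--Gelbart--Piatetski-Shapiro description of the filtration $\mathcal{S}(F^{\times})\subset\mathcal{W}(\pi_{(0)},\psi)$ with quotient $\mathcal{W}(\Psi^{+}(\pi^{(1)}),\psi)=\bigoplus_i\mathcal{W}(\Psi^{+}(\rho_i),\psi)$, see \cite{Cogdell-PS}) gives, for $W\in\mathcal{W}(\pi,\psi)$ and $|a|$ small, an expansion $W(\mathrm{diag}(a,1))=\sum_i\lambda_i(W)\,\rho_i(a)|a|^{1/2}$, and conversely for each $i$ there is a $W$ for which this germ is exactly $\rho_i(a)|a|^{1/2}$. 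On the other side, \eqref{first-decomp} together with Theorem~\ref{connection} gives, for $W_{\theta^{\psi}}\in\mathcal{W}(\theta^{\psi},\psi^{-1})$ and $|a|$ small, $W_{\theta^{\psi}}(\mathrm{diag}(a,1))=\sum_b c_b\,|a|^{1/4}\chi_b(a)$ (sum over $b\in(F^{\times})^2\backslash F^{\times}$), and each single summand is attained by some $W_{\theta^{\psi}}\in\mathcal{W}(\theta^{\psi}_{(0),b},\psi^{-1})$ with $c_b\neq 0$.

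Inserting these into $I_{(0)}(s;W,W_{\theta^{\psi}})$ and splitting the $a$-integral at a small $|a|=q^{-N}$, the outer part $|a|\ge q^{-N}$ is a Laurent polynomial in $q^{s/2}$ (the Whittaker functions are compactly supported in $F$), while the inner part is a finite sum of integrals $\int_{|a|<q^{-N}}(\rho_i\chi_b)(a)|a|^{s/2}\phi_{i,b}(a)\,d^{\times}a$ with $\phi_{i,b}$ locally constant; by Tate's local theory each lies in $\mathbb{C}[q^{\pm s/2}]\cdot L(\tfrac{s}{2},\rho_i\chi_b)$ and the family spans that cyclic module. Hence $\mathcal{I}_{(0)}(\pi)\subset\sum_{i,b}\mathbb{C}[q^{\pm s/2}]\,L(\tfrac{s}{2},\rho_i\chi_b)$; as a finite sum of such cyclic fractional ideals is generated by the l.c.m. of the denominators, $L_{reg}(s,\pi,\mathrm{Sym}^2)^{-1}$ divides $\underset{i,b}{l.c.m.}\{L(\tfrac{s}{2},\rho_i\chi_b)^{-1}\}$. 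For the reverse divisibility one fixes a pole $s_0$ of some $L(\tfrac{s}{2},\rho_i\chi_b)$, takes $W$ realising the germ $\rho_i(a)|a|^{1/2}$ and $W_{\theta^{\psi}}\in\mathcal{W}(\theta^{\psi}_{(0),b},\psi^{-1})$ realising $c_b|a|^{1/4}\chi_b(a)$; then $I_{(0)}(s;W,W_{\theta^{\psi}})$ is an explicit geometric series in $q^{-s/2}$ with a simple pole at $s_0$ plus an entire term, so $s_0$ is a pole of $L_{reg}(s,\pi,\mathrm{Sym}^2)$, and since $GL_1$ Tate $L$-functions have only simple poles this is enough. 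Therefore $L_{reg}(s,\pi,\mathrm{Sym}^2)^{-1}=\underset{i,b}{l.c.m.}\{L(\tfrac{s}{2},\rho_i\chi_b)^{-1}\}$.

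It then remains to identify, for each $i$, $\underset{b}{l.c.m.}\{L(\tfrac{s}{2},\rho_i\chi_b)^{-1}\}$ with $L(s,\pi_i^{(1)}\times\pi_i^{(1)})^{-1}=L(s,\rho_i^{2})^{-1}$. Since the residual characteristic is odd, the non-degeneracy of the Hilbert symbol (property $(4)$) together with a count shows that $b\mapsto\chi_b$ identifies $(F^{\times})^2\backslash F^{\times}$ with the full group of quadratic characters of $F^{\times}$; also the unique non-trivial unramified quadratic character sends $\varpi$ to $-1$. If $\rho_i^{2}$ is ramified then $\rho_i\chi_b$ is ramified for every $b$ and both sides equal $1$. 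If $\rho_i^{2}$ is unramified, write $\rho_i=\rho_i^{\mathrm{ur}}\rho_i^{\mathrm{ram}}$ with $\rho_i^{\mathrm{ur}}$ unramified and $\rho_i^{\mathrm{ram}}$ trivial on $\varpi$ (hence automatically quadratic); then $\rho_i\chi_b$ is unramified for exactly the two $b$ with $\chi_b|_{\mathcal{O}^{\times}}=\rho_i^{\mathrm{ram}}|_{\mathcal{O}^{\times}}$, and for these $(\rho_i\chi_b)(\varpi)=\pm\rho_i^{\mathrm{ur}}(\varpi)$, so $\underset{b}{l.c.m.}\{L(\tfrac{s}{2},\rho_i\chi_b)^{-1}\}=(1-\rho_i^{\mathrm{ur}}(\varpi)q^{-s/2})(1+\rho_i^{\mathrm{ur}}(\varpi)q^{-s/2})=1-\rho_i^{2}(\varpi)q^{-s}=L(s,\rho_i^{2})^{-1}$. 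Taking the l.c.m. over $i$ finishes the proof.

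The main obstacle I expect is the first step: establishing the germ-realisation statement for $\pi$ with enough precision both to bound $\mathcal{I}_{(0)}(\pi)$ and to construct Whittaker pairs attaining each prescribed pole. This is exactly where complete reducibility of $\pi^{(1)}$ enters: it rules out logarithmic factors in the germ, guarantees that $L_{reg}$ has only simple poles, and lets one isolate a single constituent $\rho_i$. A more conceptual route to the divisibility $L_{reg}(s,\pi,\mathrm{Sym}^2)^{-1}\mid\underset{i}{l.c.m.}\{L(s,\pi_i^{(1)}\times\pi_i^{(1)})^{-1}\}$ is the one sketched just before the statement: at a pole $s_0$ of maximal order $d_{s_0}$ in the family, the leading Laurent coefficient $B_{(0),s_0}$ descends to a non-zero $\widetilde{P}$-quasi-invariant form on $\mathcal{W}(\Psi^{+}(\pi^{(1)}),\psi)\times\mathcal{W}(\Psi^{+}({\theta^{\psi}}^{(1)}),\psi^{-1})$, hence is non-zero on a single pair of summands $\mathcal{W}(\Psi^{+}(\rho_i),\psi)\times\mathcal{W}(\Psi^{+}(\chi_b\nu^{-1/4}),\psi^{-1})$, and the one-dimensionality of such forms forces $s_0$ to be a pole of order $\ge d_{s_0}$ of the attached $GL_1\times GL_1$ Rankin--Selberg integral, i.e. of $L(\tfrac{s}{2},\rho_i\chi_b)$.
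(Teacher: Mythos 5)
Your overall architecture is the same as the paper's: decompose $\pi^{(1)}=\oplus_i\rho_i$ and ${\theta^{\psi}}^{(1)}$ into characters, use the Cogdell--Gelbart--Piatetski-Shapiro germ expansion (Proposition~\ref{GPS2}, Theorem~\ref{connection} and \cite[Corollary to Proposition 1.7]{Cogdell-PS}) to reduce the pole analysis of $I_{(0)}(s;W,W_{\theta^{\psi}})$ to the part of the $a$-integral near $0$, and then match poles in both directions against Tate integrals. The converse direction (realising a prescribed pole by choosing $W_i$ and $W_{b_j}$ with the right germs) is also exactly what the paper does.

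There is, however, one concretely wrong intermediate claim. The near-zero contribution for a fixed pair $(i,b)$ is an integral over a \emph{single square class} $(F^{\times})^2\cdot b$, because the germ of $W_{\theta^{\psi}_{(0),b}}$ is supported there. Such an integral does \emph{not} lie in $\mathbb{C}[q^{\pm s/2}]\cdot L(\tfrac{s}{2},\rho_i\chi_b)$, and "Tate's local theory" does not apply to it directly: the indicator of a square class is not locally constant at $0$, hence not a Schwartz--Bruhat function. Substituting $a=z^2b$ (which is what the paper does) and using $\chi_b(z^2)=1$ turns it into a unit of $\mathbb{C}[q^{\pm s/2}]$ times $\int\rho_i(z)^2\varphi(z^2b)|z|^s\,d^{\times}z$, i.e.\ a Tate integral for $\rho_i^2$ at $s$; its denominator is $L(s,\rho_i^2)^{-1}=(1-\rho_i^{\mathrm{ur}}(\varpi)q^{-s/2})(1+\rho_i^{\mathrm{ur}}(\varpi)q^{-s/2})$, which has strictly more poles than any single $L(\tfrac{s}{2},\rho_i\chi_b)$ (already for $\rho_i=\chi_b=\mathbf{1}$ the square-class integral is a geometric series in $q^{-s}$, with poles at $q^{-s/2}=\pm1$). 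Your error is ultimately self-cancelling -- since $\sum_b\mathbb{C}[q^{\pm s/2}]L(\tfrac{s}{2},\rho_i\chi_b)=\mathbb{C}[q^{\pm s/2}]L(s,\rho_i^2)$ as fractional ideals, your upper bound on $\mathcal{I}_{(0)}(\pi)$ and your final l.c.m.\ identity are both correct -- but the per-term containment and the assertion that the family "spans that cyclic module" for fixed $(i,b)$ are false as stated. The paper's substitution $a=z^2b_j$ lands directly on $L(s,\pi_i^{(1)}\times\pi_i^{(1)})$ and makes your extra combinatorial step identifying $\underset{b}{l.c.m.}\{L(\tfrac{s}{2},\rho_i\chi_b)^{-1}\}$ with $L(s,\rho_i^2)^{-1}$ unnecessary.
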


\begin{proof}
The proof proceeds as in those of \citelist{\cite{Cogdell-PS}*{Proposition 2.3}\cite{Matringe}*{Proposition 4.14}}. 
We take a pole $s=s_0$ of the regular $L$-function $L_{reg}(s,\pi,\mathrm{Sym}^2)$ and let $d_{s_0}$ be its order in $L_{reg}(s,\pi,\mathrm{Sym}^2)$. Then it occurs as a pole of order $d_{s_0}$ of the simple Zeta integral
\[
  I_{(0)}(s;W,W_{\theta^{\psi}})=\int_{F^{\times}} W \begin{pmatrix} a & \\ & 1 \end{pmatrix} W_{\theta^{\psi}} \begin{pmatrix} a & \\ & 1 \end{pmatrix} |a|^{\frac{s}{2}-\frac{3}{4}} \;d^{\times} a
\]
for some $W$ in $\mathcal{W}(\pi,\psi)$ and $ W_{\theta^{\psi}}$ in $\mathcal{W}(\theta^{\psi},\psi^{-1})$. Moreover for any $\varphi \in \mathcal{S}(F)$ with $\varphi(0)=1$ and $W_{\theta^{\psi}}$ in  $\mathcal{W}(\theta^{\psi},\psi^{-1})$, the integral
\[
  I^1_{(0)}(s;W,W_{\theta^{\psi}}):=\int_{F^{\times}} W \begin{pmatrix} a & \\ & 1 \end{pmatrix} W_{\theta^{\psi}} \begin{pmatrix} a & \\ & 1 \end{pmatrix}(1-\varphi(a)) |a|^{\frac{s}{2}-\frac{3}{4}} \;d^{\times} a.
\]
is always entire. Therefore $s=s_0$ is a pole of order $d_{s_0}$ of the integral
\[
  I^0_{(0)}(s;W,W_{\theta^{\psi}}):=\int_{F^{\times}} W \begin{pmatrix} a & \\ & 1 \end{pmatrix} W_{\theta^{\psi}} \begin{pmatrix} a & \\ & 1 \end{pmatrix}\varphi(a) |a|^{\frac{s}{2}-\frac{3}{4}} \;d^{\times} a.
\]
after writing $I_{(0)}(s;W,W_{\theta^{\psi}})=I^0_{(0)}(s;W,W_{\theta^{\psi}})+I^1_{(0)}(s;W,W_{\theta^{\psi}})$. Let $b_1,b_2,\dotsm, b_s$ be the distinct representatives of the left coset $(F^{\times})^2 \backslash F^{\times}$. As we have indicated in  \eqref{first-decomp}, we decompose $\pi^{(1)}$ and ${\theta^{\psi}}^{(1)}$ into direct sums of simple factors $\pi^{(1)}\simeq \oplus_i \pi^{(1)}_i$ and ${\theta^{\psi}}^{(1)}   \simeq \oplus_j (\chi_{b_j} \otimes \nu^{-\frac{1}{4}})$. Then $W$ and $W_{\theta^{\psi}}$ can be expressed as $W=W_1\oplus \dotsm \oplus W_t$ and $W_{\theta^{\psi}}=W_{b_1} \oplus \dotsm \oplus W_{b_s}$ with each $W_i$ and $W_{b_j}$ projecting on some characters $\pi^{(1)}_i$ and $\chi_{b_j} \otimes \nu^{-\frac{1}{4}}$ respectively. There exist $i$ and $j$ such that $s=s_0$ is a pole of order $d_{s_0}$ of the integral
\[
  I^0_{(0)}(s;W_i,W_{b_j})=\int_{F^{\times}} W_i \begin{pmatrix} a & \\ & 1 \end{pmatrix} W_{b_j} \begin{pmatrix} a & \\ & 1 \end{pmatrix} \varphi(a) |a|^{\frac{s}{2}-\frac{3}{4}} \;d^{\times} a.
\]
We take $\varphi$ to be a characteristic function of a small neighborhood of $0$. Upon applying \cite[Corollary to Proposition 1.7]{Cogdell-PS} accompanied with Theorem \ref{connection}, we see that
\[
\begin{split}
 &I^0_{(0)}(s;W_i,W_{b_j})\\
 &=|b_j|^{\frac{s}{2}}\int_{F^{\times}} \pi^{(1)}_i(z^2b_j) \chi_{b_j}(z^2b_j) \varphi(z^2b_j) |z|^s \;d^{\times}z=\alpha q^{-\beta s/2}\int_{F^{\times}} \pi^{(1)}_i(z)^2 \varphi(z^2b_j) |z|^s \;d^{\times}z,
\end{split}
\]
where $\alpha q^{-\beta s/2}$ is a unit in $\mathbb{C}[q^{-s/2}]$ with $\alpha \in \mathbb{C}$ and $\beta \in \mathbb{Z}$. The last equality follows from $\chi_{b_j}^2 = \textbf{1}_{F^{\times}}$. However as we further replace $\varphi(z^2b_j)$ by $\varphi(z)$, the integral simplifies to a multiple of a standard Tate integral
$ I^0_{(0)}(s,W_i,W_{b_j})=\alpha q^{-\beta s/2}I(s;\pi^{(1)}_i\times \pi^{(1)}_i,\varphi)$, where
\[
 I(s;\pi^{(1)}_i\times \pi^{(1)}_i,\varphi):=\int_{F^{\times}} \pi^{(1)}_i(z) \pi^{(1)}_i(z) \varphi(z) |z|^s \;d^{\times}z. 
 \]
 Hence $I(s;\pi^{(1)}_i\times \pi^{(1)}_i,\varphi)$ has a pole of order $d_{s_0}$ at $s=s_0$ and $L(s,\pi_i^{(1)} \times \pi_i^{(1)} )$ as well.
 
 \par
 Now let $s=s_0$ be a pole of an order $d_{s_0}$ of $L(s,\pi_i^{(1)} \times \pi_i^{(1)} )$ for some $i$. It is evident that there exist a character $\pi^{(1)}$ and $\phi$ in $\mathcal{S}(F)$ which does not vanish at zero such that the Tate integral $I(s;\pi^{(1)}_i\times \pi^{(1)}_i,\phi)$ contributes to the pole of the order $d_{s_0}$ at $s=s_0$. We observe that $\varphi$ may in fact be taken to be a characteristic function of a neighborhood of $0$ small enough because $I(s;\pi^{(1)}_i\times \pi^{(1)}_i,\phi-\varphi)$ is entire. The choice of $\varphi$ allows us to choose $\varphi(z^2)$ in place of $\varphi(z)$. We pick the identity $b_j=1$ as a coset representative of $(F^{\times})^2 \cdot1$.
Thanks to  \cite[Corollary to Proposition 1.7]{Cogdell-PS} along with Theorem \ref{connection}, we choose $W_i$ and $W_{b_j}$ mapping to characters $\pi^{(1)}_i$ and $\chi_{b_j}$ via the natural normalized projection map such that
 \[
 \begin{split}
   &I(s;\pi^{(1)}_i\times \pi^{(1)}_i,\varphi)
   =\int_{F^{\times}} \pi^{(1)}_i(z^2) \chi_{b_j}(z^2) \varphi(z^2) |z|^s\; d^{\times}z\\
   &=\int_{(F^{\times})^2} W_i \begin{pmatrix} z & \\ & 1 \end{pmatrix} W_{b_j} \begin{pmatrix} z & \\ & 1 \end{pmatrix} \varphi(z) |z|^{\frac{s}{2}-\frac{3}{4}}\; d^{\times} z
   =\int_{F^{\times}} W_i \begin{pmatrix} a & \\ & 1 \end{pmatrix} W_{b_j} \begin{pmatrix} a & \\ & 1 \end{pmatrix} \varphi(a) |a|^{\frac{s}{2}-\frac{3}{4}} \;d^{\times} a.\\
\end{split}
 \]
 With help of \cite[Lemma 2.5]{JO20-2}, this integral $I(s;\pi^{(1)}_i\times \pi^{(1)}_i,\varphi)$ equals to $I_{reg}(s;W_i^{\prime},W_{b_j})$ for some $W_i^{\prime} \in \mathcal{W}(\pi,\psi)$ and hence $s=s_0$ is a pole of order $d_{s_0}$ of $L_{reg}(s,\pi,\mathrm{Sym}^2)$.
 \end{proof}

We know from \citelist{\cite{JO20-2}*{\S 4}\cite{Cogdell-PS}} that $L(s,\chi,\wedge^2)=1$ and $L(s,\chi \times \chi)=L_{ex}(s,\chi \times \chi)$ for any character $\chi$. The main reference for the unexplained notation $L_{ex}(s,\chi \times \chi)$ is \cite[\S 2.2]{Cogdell-PS}. In order to obtain visibly consistent expressions of \cite[Theorem 2.1]{Cogdell-PS} or \cite{JO20-2}, it is preferable to write either $L_{ex}(s,\chi,\mathrm{Sym}^2)$ or $L(s,\chi,\mathrm{Sym}^2)$ for $L(s,\chi \times \chi)$ when it is clear from the context. Combining Proposition \ref{regular-factorization} with \eqref{product} yields the following theorem.

\begin{theorem}
\label{main-factorization}
Let $\pi$ be an irreducible admissible generic representation of $GL_2$ such that all of its derivatives are completely reducible and $\pi^{(0)}=\pi$. Then
\[
  L(s,\pi,\mathrm{Sym}^2)^{-1}=\underset{i,j}{l.c.m.}\{ L(s, \pi_i^{(j)}, \mathrm{Sym}^2 )^{-1} \}
\]
where the least common multiple is with respect to divisibility in $\mathbb{C}[q^{s/2},q^{-s/2}]$ and is taken over all $j$ with $0 \leq j \leq 1$ and for all constituents $\pi_i^{(1)}$ of $\pi^{(1)}$.
\end{theorem}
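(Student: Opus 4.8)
The plan is to combine the two facts that are already in hand: the factorization \eqref{product}, $L(s,\pi,\mathrm{Sym}^2)=L_{ex}(s,\pi,\mathrm{Sym}^2)L_{reg}(s,\pi,\mathrm{Sym}^2)$, with the regular factorization of Proposition \ref{regular-factorization}, and then to account for the exceptional part by noting that it is carried, in toto, by the $j=0$ (zeroth-derivative) term on the right-hand side.

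First I would record the structural input for $GL_2$: exactly as in the decomposition \eqref{first-decomp} for $\theta^{\psi}$, the first derivative $\pi^{(1)}$ of an irreducible admissible generic representation of $GL_2$ is a representation of $GL_1$, hence under the standing hypothesis of complete reducibility a finite direct sum $\pi^{(1)}\simeq\bigoplus_i\pi^{(1)}_i$ of characters $\pi^{(1)}_i$ of $F^{\times}$. For each such character $\chi$ we use the conventions recalled just before the theorem, namely $L(s,\chi,\wedge^2)=1$ and $L(s,\chi,\mathrm{Sym}^2)=L(s,\chi\times\chi)$. With this dictionary Proposition \ref{regular-factorization} reads
\[
  L_{reg}(s,\pi,\mathrm{Sym}^2)^{-1}=\underset{i}{l.c.m.}\{\,L(s,\pi^{(1)}_i,\mathrm{Sym}^2)^{-1}\,\},
\]
which is precisely the contribution of the index $j=1$ to the right-hand side of the theorem.

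Next I would treat $j=0$: since $\pi^{(0)}=\pi$ is irreducible it has the single constituent $\pi$ itself, contributing the term $L(s,\pi^{(0)},\mathrm{Sym}^2)^{-1}=L(s,\pi,\mathrm{Sym}^2)^{-1}$. It remains to compute the least common multiple over $0\le j\le 1$. By \eqref{product} we have $L(s,\pi,\mathrm{Sym}^2)^{-1}=L_{ex}(s,\pi,\mathrm{Sym}^2)^{-1}L_{reg}(s,\pi,\mathrm{Sym}^2)^{-1}$ inside $\mathbb{C}[q^{s/2},q^{-s/2}]$, so $L_{reg}(s,\pi,\mathrm{Sym}^2)^{-1}$, and hence every one of its divisors $L(s,\pi^{(1)}_i,\mathrm{Sym}^2)^{-1}$, divides $L(s,\pi,\mathrm{Sym}^2)^{-1}$. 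Therefore the least common multiple of the full family $\{L(s,\pi^{(j)}_i,\mathrm{Sym}^2)^{-1}\}_{i,j}$ with $0\le j\le 1$ collapses to its single largest member $L(s,\pi^{(0)},\mathrm{Sym}^2)^{-1}=L(s,\pi,\mathrm{Sym}^2)^{-1}$, which is the assertion. One should note only that the least common multiple is unambiguous here because every $L$-factor in sight is a normalized generator $P(q^{-s/2})^{-1}$ with $P(0)=1$, so divisibility in the Laurent ring $\mathbb{C}[q^{s/2},q^{-s/2}]$ pins the answer down completely.

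The only genuine content beyond Proposition \ref{regular-factorization} is thus the structural fact that $\pi^{(1)}$ is a sum of characters together with the compatibility of the $L$-factor conventions; the ``exceptional'' piece $L_{ex}(s,\pi,\mathrm{Sym}^2)$ is simply absorbed into the zeroth-derivative term. Accordingly I expect no real obstacle at this stage: the substantive work has already been done in establishing \eqref{product}, Proposition \ref{regular-factorization}, and (upstream) the characterization of exceptional poles in Theorem \ref{main-characterization} and Corollary \ref{general-characterization}. The present statement is the clean repackaging that mirrors the Bernstein--Zelevinsky derivative formula of Cogdell and Piatetski-Shapiro for Rankin--Selberg $L$-functions.
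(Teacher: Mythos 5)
Your proposal is correct and is exactly the argument the paper intends: the text gives no separate proof beyond the remark that the theorem follows by combining the factorization \eqref{product} with Proposition \ref{regular-factorization}, which is precisely what you carry out. Your observation that the $j=0$ term absorbs the exceptional part so that the l.c.m.\ collapses to $L(s,\pi,\mathrm{Sym}^2)^{-1}$ is the right (and only) point to make explicit.
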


\section{Deformation and Specialization}
\label{sec4}

\subsection{Principal series representations and $L$-functions}
\label{sec4:1}

In \S \ref{sec4:1} we employ a deformation method proposed by Cogdell and Piatetski-Shapiro \cite[\S 3]{Cogdell-PS}. The advantage of this approach is to obtain a certain class of induced representations for which we can explicitly determine the poles of $L$-functions by means of shifting the location of those poles. For an admissible representation $\tau$ (which is not necessarily irreducible), we say that $\tau$ is of \textit{Whittaker type} if $\mathrm{Hom}_{GL_2}(\tau,\mathrm{Ind}_{N}^{GL_2}(\psi))$ is of dimension $1$ \cite{Cogdell-PS,JPSS,Matringe}. Let $\pi=\mathrm{Ind}^{GL_2}_B(\chi_1 \boxtimes \chi_2)$ be a normalized induced representation, where each $\chi_i$  is a character of $F^{\times}$. Let $\mathcal{D}_{\pi}$ denote the complex manifold $( \mathbb{C} \slash \frac{2 \pi i}{\log(q)} \mathbb{Z} )^2$. We denote by $\mathcal{D}_s$ the rescaled complex manifold $( \mathbb{C} \slash \frac{4 \pi i}{\log(q)} \mathbb{Z} )$. The isomorphism $\mathcal{D}_{\pi} \rightarrow (\mathbb{C}^{\times})^2$ is defined by $u=(u_1,u_2) \mapsto q^u:=(q^{u_1},q^{u_2})$. For each $u \in \mathcal{D}_{\pi}$ we set $\pi_u=\mathrm{Ind}^{GL_2}_B(\chi_1\nu^{u_1} \boxtimes \chi_2\nu^{u_2})$. Then $\pi_u$ is nothing but a representation of Whittaker type for every $u \in \mathcal{D}_{\pi}$.

\begin{definition}
\label{General}
We say that $u=(u_1,u_2) \in \mathcal{D}_{\pi}$ is in \textit{general position} if it satisfies the following conditions:
\begin{enumerate}[label=$(\mathrm{\arabic*})$] 
\item\label{General-1} A representation $\pi_u=\mathrm{Ind}^{GL_2}_B(\chi_1\nu^{u_1} \boxtimes \chi_2\nu^{u_2})$ is irreducible.
\item\label{General-2} The two characters $ \pi_u^{(1,0)}=\chi_2\nu^{u_2}$ and $\pi_u^{(0,1)}=\chi_1\nu^{u_1}$ are distinct.
\item\label{General-3} $L(s,\chi_1\nu^{u_1}, \mathrm{Sym}^2)$ and  $L(s,\chi_2\nu^{u_2}, \mathrm{Sym}^2)$ do not have any common poles.
\item\label{General-4} If $i \in \{ 1,2 \}$, then $L(s,\chi_i\nu^{u_i}, \mathrm{Sym}^2)$ and $L(s,\chi_1\nu^{u_1}\times \chi_2\nu^{u_2})$ do not have any common poles.
\item\label{General-5} If $s=e$ is a pole of $L(2s,\omega_{\pi}^2)$, then
the dimension of the space 
\[
\mathrm{Hom}_{GL_2}(\sigma\otimes\theta^{\psi} \otimes I(1,\omega^{-1}_{\sigma}), \mathbb{C} ) \;\; \text{with} \;\; \sigma=\mathrm{Ind}^{GL_2}_B(\chi_1\nu^{(u_1-u_2+e)/2} \boxtimes \chi_2\nu^{(-u_1+u_2+e)/2})
\]
is at most $1$.
\end{enumerate}
\end{definition}

The condition \ref{General-1} and \ref{General-2} assert that outside the hyperplane defining general position, all of the derivatives are completely reducible such that each constitute is irreducible and generic. As a consequence Theorem \ref{main-factorization} is applicable to the deformed representation $\pi_u$ in general position. The purpose of \ref{General-5} is to make it feasible to exploit the following result about distinguished representations which play an important role in our study of exceptional poles.

\begin{theorem}[\cite{Kaplan17}, Kaplan] \label{Kaplan} Let $\pi$ be an irreducible admissible generic representation of $GL_2$ and $\chi$ a character of $F^{\times}$.
\begin{enumerate}[label=$(\roman*)$]
\item $[\mathrm{Theorem\; 4.4} ]$ The induced representation $\mathrm{Ind}_B^{GL_2}(\chi \boxtimes \chi^{-1})$ is $\theta$-distinguished.
\item $[\mathrm{Corollary\; 4.19} ]$  If $\mathrm{Hom}_{GL_2}(\pi \otimes \theta^{\psi} \otimes \theta^{\psi^{-1}},\mathbb{C})\neq 0$, then $\pi \simeq \widetilde{\pi}$.
\end{enumerate}

\end{theorem}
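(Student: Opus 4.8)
The plan is to handle the two parts through the same circle of ideas, namely by translating $\theta$-distinction into a statement about the $GL_2$-module $\Theta:=\theta^{\psi}\otimes\theta^{\psi^{-1}}$ (the tensor product descends to $GL_2$ because the two genuine central characters cancel) and then exploiting the explicit $\widetilde P$-module structure of $\theta^{\psi}$ furnished by Kable's computation of the derivatives of exceptional representations, together with the Kirillov picture recalled in \S\ref{sec3:1} (Propositions \ref{GPS}--\ref{GPS2}) and the geometric lemma.

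For part $(i)$, write $\pi=\mathrm{Ind}_B^{GL_2}(\chi\boxtimes\chi^{-1})$, so that $\omega_{\pi}=\mathbf 1$ is quadratic. By Frobenius reciprocity, $\theta$-distinction of $\pi$ is equivalent to a nonzero $GL_2$-map $\Theta\to\pi^{\vee}\cong\mathrm{Ind}_B^{GL_2}(\chi^{-1}\boxtimes\chi)$, hence by adjunction to a nonzero $T$-equivariant functional, against the character $\chi^{-1}\boxtimes\chi$, on a Jacquet module of $\Theta$ along a Borel subgroup. I would construct this functional by the open-orbit method: realize $\pi^{\vee}$ in its induced model and $\theta^{\psi},\theta^{\psi^{-1}}$ in their Whittaker/Kirillov models, write the candidate trilinear form as a regularized integral supported on the open $GL_2$-orbit in the pertinent flag variety, and then verify that it does not vanish for any $\chi$ -- the crucial point being that its restriction to the torus becomes a Tate integral $\int_{F^{\times}}\varphi_1(a)\varphi_2(a)\chi(a)|a|^{z}\,d^{\times}a$ in Schwartz data coming from the Kirillov models, which realizes an arbitrary character. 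A second, more computational route reverses the argument of Theorem \ref{main-characterization}: by Lemma \ref{Takeda} and Proposition \ref{Exceptional pole}, the factor $L(2s,\omega_{\pi}^{2})=L(2s,\mathbf 1)$ supplies an exceptional pole at $s=0$; one exhibits Whittaker functions and a good section attaining it, and Lemma \ref{image-Yamana} then identifies the residue with a nonzero $\theta$-trilinear form through the surjection $I(1,\mathbf 1)\twoheadrightarrow\theta^{\psi^{-1}}$.

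For part $(ii)$, suppose $\mathrm{Hom}_{GL_2}(\pi\otimes\theta^{\psi}\otimes\theta^{\psi^{-1}},\mathbb C)\neq 0$. The conceptually cleanest deduction is via $L$-functions: the converse of Theorem \ref{main-characterization} (Proposition \ref{converse}) forces a pole of $L(s,\pi,\mathrm{Sym}^{2})$ at $s=0$; since $L(s,\pi\times\pi)=L(s,\omega_{\pi})L(s,\pi,\mathrm{Sym}^{2})$ by \eqref{intro-factorization} and the Tate factor $L(s,\omega_{\pi})$ has no zeros, $L(s,\pi\times\pi)$ then has a pole at $s=0$, whence $\pi\simeq\widetilde\pi$ by Jacquet--Piatetski-Shapiro--Shalika. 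As that factorization is itself an aim of this paper, a genuinely self-contained proof should instead work with $\Theta$ directly: a $\theta$-trilinear form is the same datum as a nonzero $GL_2$-map $\Theta\to\widetilde\pi$, and since $(\theta^{\psi})^{\iota}\simeq\theta^{\psi^{-1}}$ and $\pi^{\iota}\simeq\widetilde\pi$, composing with Kable's involution yields in addition a nonzero map $\Theta\to\pi$; one then uses Kable's derivative data to enumerate the possible irreducible quotients of $\Theta$ -- the lifts attached to parameters of the form $\eta\oplus\eta^{-1}$ together with the relevant Steinberg and supercuspidal pieces, each of which is $\iota$-stable -- and concludes $\widetilde\pi\simeq\pi$.

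The decisive difficulty, common to both parts, is that $\Theta=\theta^{\psi}\otimes\theta^{\psi^{-1}}$ is a smooth $GL_2$-module that is a priori neither admissible nor of finite length, so one must pin down enough of its Jacquet modules and $\widetilde P$-socle filtration to (a) guarantee that the functional constructed in $(i)$ is genuinely nonzero, uniformly in $\chi$, and (b) exclude a non-self-dual irreducible quotient in $(ii)$. This is precisely where Kable's explicit derivative calculations for exceptional representations and the geometric lemma become indispensable; carrying out that structural analysis -- as opposed to the formal reductions sketched above -- is the technical heart of the proof.
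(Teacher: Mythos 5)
The paper does not prove this statement at all: it is imported verbatim from Kaplan's work \cite{Kaplan17} (Theorem~4.4 and Corollary~4.19 there), and Section~\ref{sec4} treats it as an external input. Any proof you supply therefore has to be independent of the later results of this paper, and that is where your proposal runs into trouble: several of your reductions are circular within the paper's logical structure. For part (ii), your ``cleanest deduction'' invokes Proposition~\ref{converse} and the factorization \eqref{intro-factorization}; but Proposition~\ref{converse} is proved \emph{using} both parts of Theorem~\ref{Kaplan}, and the factorization is a corollary of the main theorem, which in turn rests on Proposition~\ref{converse}. You partly acknowledge this, but the alternative you offer does not escape it (see below). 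For part (i), your ``second, more computational route'' asserts that the factor $L(2s,\mathbf{1})$ ``supplies an exceptional pole at $s=0$.'' That is a non sequitur: Proposition~\ref{Exceptional pole} only says that exceptional poles must be located among the poles of $L(2s,\omega_{\pi}^{2})$ --- a necessary condition, not an existence statement. Actually exhibiting $W$, $W_{\theta^{\psi}}$ and a good section attaining the pole is equivalent to the distinction you are trying to prove, so this route begs the question.

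Your remaining, non-circular route --- analyzing $\Theta=\theta^{\psi}\otimes\theta^{\psi^{-1}}$ directly via Kable's derivative computations, the Kirillov filtration of Propositions~\ref{GPS}--\ref{GPS2}, and the geometric lemma --- is the right idea in spirit (it is essentially what Kaplan does), but you explicitly leave the decisive step undone: you never compute enough of the Jacquet modules or the $\widetilde{P}$-filtration of $\Theta$ to show the open-orbit functional in (i) is nonzero for every $\chi$, nor to enumerate the irreducible quotients of $\Theta$ and verify each is $\iota$-stable in (ii). Since you yourself identify this as ``the technical heart of the proof,'' what you have written is a plan rather than a proof. The appropriate course here is either to cite \cite{Kaplan17} as the paper does, or to carry out that structural analysis in full; as it stands the proposal establishes neither part.
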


We now confirm that off a finite number of the hyperplanes in $u$, the deformed representation $\pi_u$ is in general position.

\begin{proposition}
\label{hyperplane}
Let $\pi$ be as above, the element $u$ in $\mathcal{D}_{\pi}$ that is not in general position belongs to a finite number of affine hyperplanes. 
\end{proposition}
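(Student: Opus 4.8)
The plan is to treat the five defining conditions \ref{General-1}--\ref{General-5} of general position one at a time, showing in each case that the set of $u\in\mathcal D_\pi$ violating that condition is contained in a finite union of affine hyperplanes; the proposition then follows upon taking the union over the five. Conditions \ref{General-1} and \ref{General-2} are classical. With $\mu=\chi_1\chi_2^{-1}$, the representation $\pi_u=\mathrm{Ind}^{GL_2}_B(\chi_1\nu^{u_1}\boxtimes\chi_2\nu^{u_2})$ is reducible only when $\mu\,\nu^{u_1-u_2}\in\{\nu,\nu^{-1}\}$, and $\pi_u^{(1,0)}=\chi_2\nu^{u_2}$ equals $\pi_u^{(0,1)}=\chi_1\nu^{u_1}$ only when $\mu\,\nu^{u_1-u_2}=\mathbf 1_{F^{\times}}$; each is impossible when $\mu$ is ramified, and otherwise pins down $u_1-u_2$, so the offending locus is empty or a finite union of hyperplanes. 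For \ref{General-3} and \ref{General-4} I would use that, for a character $\alpha$ of $F^{\times}$ regarded as a representation of $GL_1$, one has $L(s,\alpha,\mathrm{Sym}^2)=L(s,\alpha\times\alpha)$, and that both $L(s,\alpha\times\alpha)$ and $L(s,\alpha\times\beta)$ are Tate $L$-functions (of $\alpha^2$, resp.\ $\alpha\beta$); such a Tate $L$-function equals $1$ when its character is ramified and otherwise has a single pole in $q^{-s/2}$ whose location is an affine-linear function of the unramified exponent of that character. Applying this with $\alpha=\chi_i\nu^{u_i}$ and with $\alpha\beta=\chi_1\chi_2\nu^{u_1+u_2}$, the requirements in \ref{General-3} and \ref{General-4} that two such factors share a pole translate into finite lists of affine-linear equations in $u$, hence into finite unions of hyperplanes (empty unless the relevant square, resp.\ product, is unramified).

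Condition \ref{General-5} is the substantive one. First, $L(2s,\omega_{\pi_u}^2)$ has a pole only when $\omega_{\pi_u}^2=\chi_1^2\chi_2^2\,\nu^{2(u_1+u_2)}$ is unramified---a condition on the fixed data alone---and in that case the set of poles $s=e$ is finite, each $e$ being $u_1+u_2$ plus a constant. A direct computation of central characters gives $\omega_\sigma=\chi_1\chi_2\,\nu^{e}$, so both $\sigma=\mathrm{Ind}^{GL_2}_B\!\big(\chi_1\nu^{(u_1-u_2+e)/2}\boxtimes\chi_2\nu^{(-u_1+u_2+e)/2}\big)$ and the datum $I(1,\omega_\sigma^{-1})$ depend affinely on $u$. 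It therefore suffices to show, for the family $\sigma_v:=\mathrm{Ind}^{GL_2}_B(\chi_1\nu^{v_1}\boxtimes\chi_2\nu^{v_2})$ over $v\in\mathcal D_\pi$, that the locus $\{\,v:\ \dim_{\mathbb C}\mathrm{Hom}_{GL_2}(\sigma_v\otimes\theta^\psi\otimes I(1,\omega_{\sigma_v}^{-1}),\mathbb C)\ge 2\,\}$ is contained in finitely many hyperplanes; one then pulls this locus back along the affine map $u\mapsto v=\big((u_1-u_2+e)/2,\,(-u_1+u_2+e)/2\big)$ and takes the union over the finitely many $e$.

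To bound the bad locus of the family $\sigma_v$, I would use the projection formula $\sigma_v\otimes V\cong\mathrm{Ind}^{GL_2}_B\big((\chi_1\nu^{v_1}\boxtimes\chi_2\nu^{v_2})\otimes V|_B\big)$ together with Frobenius reciprocity to identify $\mathrm{Hom}_{GL_2}(\sigma_v\otimes\theta^\psi\otimes I(1,\omega_{\sigma_v}^{-1}),\mathbb C)$ with the space of functionals on $\theta^\psi\otimes I(1,\omega_{\sigma_v}^{-1})$ that, restricted to $B$, transform by a fixed twist of the character $\chi_1\nu^{v_1}\boxtimes\chi_2\nu^{v_2}$; restricting further to $P$ and feeding in the Bernstein--Zelevinsky derivative filtrations of $\theta^\psi$ (the two-step filtration $0\subset\tau_2\subset\tau_1=\theta^\psi|_{\widetilde{P}}$ of \S\ref{sec3:1}) and of $I(1,\omega_{\sigma_v}^{-1})$, and eliminating cross terms by the absence of quasi-invariant pairings between incompatible derivative types (\cite[Proposition 3.7]{BernsteinZelevinsky}, \cite[Proposition 4.3]{Kable99})---exactly as in the proof of Proposition \ref{regular-factorization}---this reduces the Hom-space to a finite sum of pieces whose behaviour is governed by the characters occurring in the relevant derivatives. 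Since $\theta^\psi$ is fixed and $I(1,\omega_{\sigma_v}^{-1})$ depends on $v$ only through $\omega_{\sigma_v}=\chi_1\chi_2\nu^{v_1+v_2}$, those characters are fixed characters twisted by $\nu$ to exponents affine-linear in $v_1+v_2$; a jump of $\dim_{\mathbb C}\mathrm{Hom}$ to $\ge 2$ is therefore forced by a coincidence between $\chi_1\nu^{v_1}\boxtimes\chi_2\nu^{v_2}$ and one of finitely many such characters, i.e.\ by one of finitely many affine conditions on $(v_1,v_2)$, each contained in a hyperplane.

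The routine part is \ref{General-1}--\ref{General-4}, which needs only the reducibility criterion for principal series of $GL_2$ and the shape of Tate $L$-functions. The obstacle I anticipate is \ref{General-5}: carrying out the Bernstein--Zelevinsky bookkeeping for $\theta^\psi\otimes I(1,\omega_{\sigma_v}^{-1})$ over the metaplectic cover---keeping the genuine versus non-genuine structure straight and identifying the derivative modules of the metaplectic induced representation $I(1,\omega_{\sigma_v}^{-1})$ with enough precision---and, above all, pinning down that the generic dimension of the Hom-space is at most $1$; this last point is consistent with \cite[Theorem 2.14]{Ya17}, which supplies it in the unitary range, but requires a separate argument for the non-unitary inducing data occurring here. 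Granted all this, the affine-linearity of $\sigma$ and of $\omega_\sigma$ in $u$ makes the transfer of the bad locus back to $\mathcal D_\pi$ automatic.
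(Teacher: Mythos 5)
Your proposal is correct and follows essentially the same route as the paper, whose entire proof consists of citing Matringe's Proposition 5.1 for conditions (1)--(4) and asserting that condition (5) ``can be checked along the line of'' Yamana's Theorem 2.14; your write-up simply supplies the details those citations stand for (reducibility and character-coincidence loci, poles of Tate $L$-functions as affine-linear conditions, and a Bernstein--Zelevinsky filtration argument for the Hom-space). The one difficulty you flag---pinning down the generic multiplicity-one bound for the non-unitary inducing data in condition (5)---is precisely the point the paper itself does not prove but defers to Yamana's technique, so your identification of that as the substantive step accurately reflects where the real content lies.
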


\begin{proof}
The conditions \ref{General-1}, \ref{General-2}, \ref{General-3}, and \ref{General-4} are explained in \cite[Proposition 5.1]{Matringe}. As described in \cite[Proposition 5.1]{Matringe}, \ref{General-5} can be checked along the line of \cite[Theorem 2.14]{Ya17}.
\end{proof}

The removed affine hyperplanes defining general position do not depend on $s \in \mathbb{C}$. Before going into the computation, let us recall what is known as Hartogs theorem in the view of the (complex) algebraic geometry.

\begin{theorem}[\citelist{\cite{Lojasiewicz}*{Chapter IV, \S 4, Theorem 4.6.7}}, Hartogs theorem\label{Hartogs}]
Let $\mathcal{M}$ be an $n$-dimensional complex manifold with $n \geq 2$. If $\mathcal{N} \subseteq \mathcal{M}$ is an analytic subset of codimension 2 or more, then every holomorphic function on $\mathcal{M}-\mathcal{N}$ extends to a holomorphic function on $\mathcal{M}$.
\end{theorem}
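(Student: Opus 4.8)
The statement is local on $\mathcal{M}$, so the plan is to fix a point $a\in\mathcal{N}$, use a holomorphic chart to identify a neighbourhood of $a$ with a polydisc $\Delta=\Delta_1\times\cdots\times\Delta_n\subset\mathbb{C}^n$ centred at the origin, and identify $\mathcal{N}$ near $a$ with an analytic subset $A\subseteq\Delta$ of complex dimension $\le n-2$; write $f\in\mathcal{O}(\Delta\setminus A)$. Since an analytic set of complex codimension $\ge 2$ is nowhere dense and does not locally disconnect, $\Delta\setminus A$ is connected and dense in $\Delta$, so any holomorphic extension of $f$ across $A$ is automatically unique; only \emph{existence} near $a$ has to be produced, and gluing the local extensions is then harmless.

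Because $\dim_{\mathbb{C}}A\le n-2$, after a generic $\mathbb{C}$-linear change of the coordinates $z=(z_1,z_2,w)$ with $w\in\mathbb{C}^{n-2}$ one may arrange that the projection $A\to\mathbb{C}^{n-2}$, $(z_1,z_2,w)\mapsto w$, is finite near $a$; shrinking $\Delta$ one obtains a bidisc $U=\Delta_1\times\Delta_2$ of radii $(R_1,R_2)$ and a polydisc $V\subset\mathbb{C}^{n-2}$ such that $A\cap(\overline{U}\times V)$ is disjoint from $\partial U\times V$ and the map $A\cap(\overline{U}\times V)\to V$ is proper with finite fibres. Fix $V'\Subset V$. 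Then $A\cap(\overline{U}\times\overline{V'})$ is compact, hence its image under the projection to $U$ is a compact set $K\subset U$; choosing $0<\rho<\rho'<1$ with $K$ contained in the concentric bidisc of radii $\rho(R_1,R_2)$, the fibre $A_w:=\{(z_1,z_2):(z_1,z_2,w)\in A\}$ is, for every $w\in\overline{V'}$, a finite subset of $\{|z_1|<\rho R_1,\ |z_2|<\rho R_2\}$ disjoint from the torus $T:=\{|z_1|=\rho' R_1\}\times\{|z_2|=\rho' R_2\}$.

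For each $w\in\overline{V'}$ the slice $(z_1,z_2)\mapsto f(z_1,z_2,w)$ is then holomorphic on the bidisc of radii $\rho'(R_1,R_2)$ minus the finite set $A_w$, hence (two complex variables, connected complement of a finite set) extends holomorphically across $A_w$ by the classical Hartogs extension phenomenon. Writing that extension by Cauchy's integral along the \emph{fixed} torus $T$, and noting $T\times\overline{V'}\subset\Delta\setminus A$, one is led to set
\[
 F(\zeta_1,\zeta_2,w):=\frac{1}{(2\pi i)^2}\int_{|z_1|=\rho' R_1}\int_{|z_2|=\rho' R_2}\frac{f(z_1,z_2,w)}{(z_1-\zeta_1)(z_2-\zeta_2)}\,dz_2\,dz_1,
\]
which is holomorphic in $(\zeta_1,\zeta_2,w)$ on $\{|\zeta_1|<\rho' R_1,\ |\zeta_2|<\rho' R_2\}\times V'$ (the integrand is holomorphic in $\zeta$ and in $w$ and continuous in $(z_1,z_2)$ on the compact set $T\times\overline{V'}$) and agrees with $f$ off $A$. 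This $F$ is the desired extension near $a$; alternatively, the same formula shows at once that $f$ is locally bounded near $A$, after which one may simply invoke Riemann's first removable-singularity theorem for analytic sets of codimension $\ge 1$.

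The only genuine obstacle is the geometric normalization in the second step: arranging, via a generic linear coordinate change and shrinking, that $A$ near $a$ projects with finite fibres onto a coordinate $\mathbb{C}^{n-2}$ and that the fibres $A_w$ stay inside a fixed sub-bidisc \emph{uniformly} in $w$ over the compact parameter set $\overline{V'}$. This is precisely where $\operatorname{codim}_{\mathbb{C}}\mathcal{N}\ge 2$ is used, and it is supplied by the local parametrization theorem for analytic sets (or by a direct genericity argument for the coordinate change together with properness). Everything after that normalization is the one- and two-variable Cauchy calculus indicated above.
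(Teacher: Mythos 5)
The paper offers no proof of this statement---it is quoted directly from \L ojasiewicz as a classical fact---so there is nothing internal to compare against; what matters is whether your argument stands on its own, and it does. Your sketch is the standard proof of the second Riemann removable singularity theorem: localize, put $A$ in finite, proper position over a coordinate $\mathbb{C}^{n-2}$ via the local parametrization theorem, observe that the fibres $A_w$ then lie uniformly inside a fixed sub-bidisc as $w$ ranges over the compact set $\overline{V'}$, and extend by the iterated Cauchy integral over the fixed torus $T\subset\Delta\setminus A$, which is jointly holomorphic in $(\zeta,w)$. Two points deserve a word if this is written out in full. First, removing the finite set $A_w$ in two variables is formally an instance of the theorem being proved, so it must be presented as the elementary base case with its own direct argument: one-variable Cauchy in $z_2$ along $|z_2|=\rho'R_2$, holomorphy of the result on the full $\zeta_2$-disc because the circle $|z_1|=\rho'R_1$ misses $A_w$, and then the identity theorem on the annulus $\rho R_2<|\zeta_2|<\rho'R_2$. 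Your explicit formula for $F$ supplies exactly this, so there is no circularity, but the phrase ``classical Hartogs extension phenomenon'' should be unpacked to make that visible. Second, the assertion that $F$ agrees with $f$ off $A$ is not immediate from the formula at points lying over $w$ with $A_w\neq\emptyset$; it follows from agreement on a Hartogs-figure subregion together with connectedness of the complement of an analytic set, which you invoke for uniqueness and should invoke here as well. (If $\dim A<n-2$ the generic projection to $\mathbb{C}^{n-2}$ is finite but not surjective; this is harmless.) With these glosses the argument is complete and is essentially the one found in the cited reference.
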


The following proposition is the starting point of computing $L$-functions for principal series representations and is needed for demonstrating the agreement of arithmetic and analytic $L$-functions which we shall proceed to provide.

\begin{proposition}
\label{converse}
Let $\pi=\mathrm{Ind}^{GL_2}_B(\chi_1 \boxtimes \chi_2)$ be a principal series representation of $GL_2$. Let $u=(u_1,u_2) \in \mathcal{D}_{\pi}$ be in general position and $\pi_u=\mathrm{Ind}^{GL_2}_B(\chi_1\nu^{u_1} \boxtimes \chi_2\nu^{u_2})$ the deformed representation. Then we have
\[
  L_{ex}(s,\mathrm{Ind}^{GL_2}_B(\chi_1\nu^{u_1} \boxtimes \chi_2\nu^{u_2}),\mathrm{Sym^2})=L_{ex}(s,\chi_1\nu^{u_1}  \times \chi_2\nu^{u_2}  ).
\]
\end{proposition}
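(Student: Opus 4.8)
The plan is to prove the identity by matching poles. Abbreviate $\mu_i=\chi_i\nu^{u_i}$. On the right, $L_{ex}(s,\mu_1\times\mu_2)=L(s,\mu_1\times\mu_2)=L(s,\mu_1\mu_2)$: characters have trivial Bernstein--Zelevinsky derivatives, so there is no regular part, and the Rankin--Selberg $L$-function of two characters is the Tate $L$-function $L(s,\mu_1\mu_2)$, which is $1$ or has a single simple pole. On the left, $L_{ex}(s,\pi_u,\mathrm{Sym}^2)$ is by Proposition \ref{Exceptional pole} a product of simple factors $(1-q^{s_0/2}q^{-s/2})^{-1}$ indexed by its exceptional poles. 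As both sides are products of simple factors indexed by their poles, it suffices to show that $s=s_0$ is an exceptional pole of $L(s,\pi_u,\mathrm{Sym}^2)$ if and only if $\mu_1\mu_2=\nu^{-s_0}$ (the condition for $s_0$ to be a pole of $L(s,\mu_1\mu_2)$).

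For the ``only if'' implication I would run the distinction characterization. If $s=s_0$ is an exceptional pole, Proposition \ref{Exceptional pole} makes it a pole of $L(2s,\omega_{\pi_u}^2)$; equivalently $s=s_0+u_1+u_2$ is a pole of $L(2s,\omega_{\pi}^2)$, so general position hypothesis \ref{General-5} supplies the one-dimensionality required to apply Corollary \ref{general-characterization} at $s_0$. That corollary shows $\pi_u\nu^{s_0/2}=\mathrm{Ind}^{GL_2}_B(\mu_1\nu^{s_0/2}\boxtimes\mu_2\nu^{s_0/2})$ is $\theta$-distinguished, so by Theorem \ref{Kaplan}(ii) it is self-dual, forcing the multiset identity $\{\mu_1\nu^{s_0/2},\mu_2\nu^{s_0/2}\}=\{\mu_1^{-1}\nu^{-s_0/2},\mu_2^{-1}\nu^{-s_0/2}\}$. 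One alternative gives $\mu_1\mu_2=\nu^{-s_0}$, as wanted; the other gives $\mu_i^2=\nu^{-s_0}$ for both $i$, making $s=s_0$ a common pole of $L(s,\chi_1\nu^{u_1},\mathrm{Sym}^2)$ and $L(s,\chi_2\nu^{u_2},\mathrm{Sym}^2)$, which is excluded by general position hypothesis \ref{General-3}.

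For the ``if'' implication, assume $\mu_1\mu_2=\nu^{-s_0}$. Then $\mu_2\nu^{s_0/2}=(\mu_1\nu^{s_0/2})^{-1}$, so $\pi_u\nu^{s_0/2}\simeq\mathrm{Ind}^{GL_2}_B(\mu\boxtimes\mu^{-1})$ with $\mu=\mu_1\nu^{s_0/2}$ and trivial central character, and Theorem \ref{Kaplan}(i) shows it is $\theta$-distinguished. It then remains to convert this distinction into an exceptional pole at $s=s_0$ of $L(s,\pi_u,\mathrm{Sym}^2)$, equivalently at $s=0$ of $L(s,\pi_u\nu^{s_0/2},\mathrm{Sym}^2)$ --- the converse of Corollary \ref{general-characterization} promised earlier --- which I would prove by reversing the computation in the proof of Theorem \ref{main-characterization}. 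A good section $h_{2s-1}=N(1-2s,\textbf{1}_{F^{\times}},\psi^{-1})f_{1-2s}$ with $f_{1-2s}\in V_{hol}(1-2s,\textbf{1}_{F^{\times}})$ equals ${}^{\iota}\bigl[\gamma(1-2s,\textbf{1}_{F^{\times}},\psi^{-1})\,M(1-2s,\textbf{1}_{F^{\times}})f_{1-2s}\bigr]$; by Lemma \ref{Takeda} the operator $M(1-2s,\textbf{1}_{F^{\times}})$ is holomorphic near $s=0$ and at $s=0$ is the surjective quotient map $I(1,\textbf{1}_{F^{\times}})\twoheadrightarrow\theta^{\psi^{-1}}$ of Lemma \ref{image-Yamana}, while the Tate factor $\gamma(1-2s,\textbf{1}_{F^{\times}},\psi^{-1})$ has a simple pole at $s=0$. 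Hence $h_{2s-1}$ has a simple pole at $s=0$ whose leading term is pulled back from $\theta^{\psi^{-1}}$, and $\mathrm{Res}_{s=0}I(W,W_{\theta^{\psi}},h_{2s-1})$ is, up to a nonzero scalar, a $GL_2$-invariant trilinear pairing on $\pi_u\nu^{s_0/2}\otimes\theta^{\psi}\otimes\theta^{\psi^{-1}}$ evaluated on $\bigl(W,W_{\theta^{\psi}},M(1,\textbf{1}_{F^{\times}})f_1\bigr)$.

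The last step is to check that this residue is genuinely nonzero for suitable data: since $M(1,\textbf{1}_{F^{\times}})$ is onto and the $\theta$-distinction pairing is nontrivial (Theorem \ref{Kaplan}(i)), a nonzero residue should be realizable, and then, because Proposition \ref{regular-factorization} together with general position hypothesis \ref{General-4} gives $L_{reg}(s,\pi_u,\mathrm{Sym}^2)=L(s,\mu_1^2)L(s,\mu_2^2)$, which is holomorphic and nonvanishing at $s=s_0$, this pole is not accounted for by holomorphic sections and so is exceptional. Combining the two implications, $L_{ex}(s,\pi_u,\mathrm{Sym}^2)$ and $L_{ex}(s,\mu_1\times\mu_2)$ have the same poles, hence coincide. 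I expect the main obstacle to be exactly this non-vanishing: one must show the $\theta$-distinction form of $\pi_u\nu^{s_0/2}$ is actually \emph{detected} by the Rankin--Selberg integral of a good section --- pinning down the residue pairing via the one-dimensionality from general position hypothesis \ref{General-5} and Proposition \ref{char-Yamana}, not merely bounding it --- while the remaining steps are routine bookkeeping with Tate $L$- and $\gamma$-factors.
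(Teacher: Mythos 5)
Your first direction is sound and matches the paper's argument up to one variation: after using Proposition \ref{Exceptional pole}, condition \ref{General-5}, Theorem \ref{main-characterization} (via Corollary \ref{general-characterization}) and Theorem \ref{Kaplan}(ii) to conclude that $\pi_u\nu^{s_0/2}$ is self-contragredient, you rule out the ``diagonal'' alternative $(\chi_i\nu^{u_i})^2=\nu^{-s_0}$ for both $i$ by observing that it would force a common pole of $L(s,\chi_1\nu^{u_1},\mathrm{Sym}^2)$ and $L(s,\chi_2\nu^{u_2},\mathrm{Sym}^2)$ at $s=s_0$, contradicting condition \ref{General-3}. The paper instead notes that this locus has codimension two in $u$ and invokes Bernstein's rationality in $(q^{-s/2},q^{-u})$ together with Hartogs' theorem (Theorem \ref{Hartogs}). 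Both exclusions are legitimate; yours is arguably more elementary.

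The genuine gap is in the converse direction, at exactly the point you flag. Knowing that $\pi_u\nu^{s_0/2}$ is $\theta$-distinguished (Theorem \ref{Kaplan}(i)) only gives the \emph{existence} of a nonzero invariant trilinear form on $\pi_u\nu^{s_0/2}\otimes\theta^{\psi}\otimes\theta^{\psi^{-1}}$; it does not show that the particular form obtained as $\mathrm{Res}_{s=0}\,I(W,W_{\theta^{\psi}},h_{2s-1})$ is nonzero. That non-vanishing is literally equivalent to the exceptional pole you are trying to produce, so the direct residue computation is circular, and the one-dimensionality from \ref{General-5} does not rescue it: uniqueness of the invariant form says nothing about whether your residue form is the zero multiple of it. The paper closes this by arguing contrapositively (following Yamana's Theorem 3.7): assuming $L_{ex}(s,\pi_u\nu^{s_0/2},\mathrm{Sym}^2)$ is holomorphic at $s=0$ --- and first checking via Theorem \ref{main-factorization} and conditions \ref{General-3}--\ref{General-4} that the full $L$-function is then holomorphic there --- it invokes Lemma 3.16 of Yamana to choose a good section with $M(1,\omega^{-1}_{\pi_u\nu^{s_0/2}})f_1=0$ while $\lim_{s\to 1}I(W,W_{\theta^{\psi}},\hat{N}(2s-1,\omega^{-1}_{\pi_u\nu^{s_0/2}})f_{2s-1})\neq 0$; the functional equation \eqref{func-Nhat} then produces a nonzero invariant form on $\pi_u\nu^{s_0/2}\otimes\theta^{\psi}\otimes I(1,\omega^{-1}_{\pi_u\nu^{s_0/2}})$ that does \emph{not} factor through the quotient onto $\theta^{\psi^{-1}}\otimes\omega_{\pi_u\nu^{s_0/2}}$, contradicting Proposition \ref{char-Yamana} combined with the distinction already established. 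You need this non-vanishing input (or an equivalent one) to complete your argument.
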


\begin{proof}
First we suppose that $L_{ex}(s,\mathrm{Ind}^{GL_2}_B(\chi_1\nu^{u_1} \boxtimes \chi_2\nu^{u_2}),\mathrm{Sym^2})$ has a pole at $s=s_0$. As explained in the proof of Corollary \ref{general-characterization}, this is amount to saying that $L_{ex}(s,\nu^{\frac{s_0}{2}}\mathrm{Ind}^{GL_2}_B(\chi_1\nu^{u_1} \boxtimes \chi_2\nu^{u_2}),\mathrm{Sym^2})$ has a pole at $s=0$. Proposition \ref{Exceptional pole} says that the pole $s=0$ is simple and that it appears amongst that of $L(2s,\omega_{\pi_u\nu^{s_0/2}}^2)$. Unraveling the unramified twist, $L(2s,\omega^2_{\pi})$ has a poles at $s=e$ with $e=u_1+u_2+s_0$ and $\sigma$ in Definition \ref{General}-\ref{General-5} becomes $\pi_u\nu^{s_0/2}$. We apply Theorem \ref{main-characterization} to $\pi_u\nu^{s_0/2}$ and deduce from Theorem \ref{Kaplan} that $\pi_u\nu^{s_0/2}$ is self-contragredient. In other words
\[
 \mathrm{Ind}^{GL_2}_B(\chi_1\nu^{u_1} \boxtimes \chi_2\nu^{u_2})^{\sim} \simeq  \mathrm{Ind}^{GL_2}_B(\chi_1\nu^{u_1} \boxtimes \chi_2\nu^{u_2}) \nu^{s_0}.
\] 
The only way this is possible is that 
\begin{enumerate}[label=$(\roman*)$]
\item\label{relation-1}  $(\chi_1\nu^{u_1})^{\sim} \simeq \chi_1\nu^{u_1+s_0}$ \; \;and\;\; $(\chi_2\nu^{u_2})^{\sim} \simeq \chi_2\nu^{u_2+s_0}$
\item\label{relation-2}   $(\chi_1\nu^{u_1})^{\sim} \simeq \chi_2\nu^{u_2+s_0}$.
\end{enumerate}
In \ref{relation-1} the locus is defined by two independent equations $q^{-(2u_1+s)}\chi_1^2(\varpi)=1$ and $q^{-(2u_2+s)}\chi_2^2(\varpi)=1$, after evaluating both sides at $\varpi$, and hence will be of codimension $2$. Due to Bernstein's Theorem \cite[\S 3]{Cogdell-PS}, viewing $I(W_u,W_{\theta^{\psi}},f_{2s-1})$ as rational functions in $\mathbb{C}(q^{-s/2},q^{-u})$, every singularities of the integral $I(W_u,W_{\theta^{\psi}},f_{2s-1})$ must be accounted for the form \ref{relation-2} $(\chi_1\nu^{u_1})^{\sim} \simeq \chi_2\nu^{u_2+s_0}$ by Hartogs theorem, Theorem \ref{Hartogs}. As a result $L_{ex}(s,\chi_1\nu^{u_1} \times \chi_2\nu^{u_2})$ has a pole at $s=s_0$.

\par
Next we assume that $L_{ex}(s,\chi_1\nu^{u_1}  \times \chi_2\nu^{u_2})$ has a pole at $s=s_0$.  Equivalently $L_{ex}(s,\chi_1\nu^{u_1+\frac{s_0}{2}}  \times \chi_2\nu^{u_2+\frac{s_0}{2}})$ possesses a pole at $s=0$. Then we have $(\chi_1\nu^{u_1+\frac{s_0}{2}})^{\sim} \simeq \chi_2\nu^{u_2+\frac{s_0}{2}}$ and the principal series representation $\pi_u\nu^{\frac{s_0}{2}}=\mathrm{Ind}^{GL_2}_B(\chi_1\nu^{u_1+\frac{s_0}{2}} \boxtimes \chi_2\nu^{u_2+\frac{s_0}{2}})$ is self-contragredient. In terms of central characters, this says that $\omega^2_{\pi_u\nu^{s_0/2}}$ is trivial. Now Theorem \ref{Kaplan} assures that $\pi_u\nu^{s_0/2}$ is $\theta$-distinguished. At this point, we essentially repeat the argument of \cite[Theorem 3.7-(1)]{Ya17} for completeness. We further assume that $L_{ex}(s,\pi_u\nu^{\frac{s_0}{2}},\mathrm{Sym}^2)$ is holomorphic at $s=0$. Since $L$-functions $L(s,\chi_1\nu^{u_1+\frac{s_0}{2}}  \times \chi_2\nu^{u_2+\frac{s_0}{2}})^{-1}$, $L(s,\chi_1\nu^{u_1+\frac{s_0}{2}},\mathrm{Sym}^2 )^{-1}$  and $L(s,\chi_2\nu^{u_2+\frac{s_0}{2}},\mathrm{Sym}^2 )^{-1}$ are relatively prime, Theorem \ref{main-factorization} implies that $L(s,\pi_u\nu^{\frac{s_0}{2}},\mathrm{Sym}^2)$ is holomorphic at $s=0$. Remembering a quadratic character $\omega_{\pi_u\nu^{s_0/2}}$, Lemma 3.16 in \cite{Ya17} enables us to take $W \in \mathcal{W}(\pi_u\nu^{\frac{s_0}{2}},\psi)$, $W_{\theta^{\psi}} \in \mathcal{W}(\theta^{\psi},\psi^{-1})$ and $f_{2s-1} \in V_{good}(2s-1,\omega_{\pi_u\nu^{s_0/2}}^{-1})$ such that
\begin{equation}
\label{choices}
  M(1,\omega_{\pi_u\nu^{s_0/2}}^{-1})f_1=0 \quad \text{and} \quad \lim_{s \rightarrow 1} I(W,W_{\theta^{\psi}},\hat{N}(2s-1,\omega^{-1}_{\pi_u\nu^{s_0/2}})f_{2s-1}) \neq 0.
\end{equation}
Evaluating the both side of the functional equation \eqref{func-Nhat} at $s=1$, we arrive at
\[
  0 \neq \frac{I(W,W_{\theta^{\psi}},\hat{N}(1,\omega^{-1}_{\pi_u\nu^{s_0/2}})f_{1})}{L(0,\pi_u\nu^{s_0/2},\mathrm{Sym}^2)}
  =\varepsilon(1,\pi,\mathrm{Sym}^2,\psi) \frac{I(W,W_{\theta^{\psi}},f_{1})}{L(1,\pi_u\nu^{s_0/2},\mathrm{Sym}^2)}.
\]
But the first condition of \eqref{choices} enforces that the linear functional
\[
 (W,W_{\theta^{\psi}},f_1) \mapsto \varepsilon(1,\pi,\mathrm{Sym}^2,\psi) \frac{I(W,W_{\theta^{\psi}},f_{1})}{L(1,\pi_u\nu^{s_0/2},\mathrm{Sym}^2)}
\]
cannot factor through the quotient map $ \pi_u\nu^{s_0/2} \otimes\theta^{\psi} \otimes I(1,\omega^{-1}_{\pi_u\nu^{s_0/2}})  \rightarrow \pi_u\nu^{s_0/2} \otimes\theta^{\psi} \otimes \theta^{\psi^{-1}}\otimes \omega_{\pi_u\nu^{s_0/2}}$ and hence $\pi_u\nu^{s_0/2}$ is not $\theta$-distinguished making the use of Proposition \ref{char-Yamana}. This is a contradiction and we conclude that $L_{ex}(s,\pi_u\nu^{\frac{s_0}{2}},\mathrm{Sym}^2)$ has a pole at $s=0$, which is equivalent to saying that $L_{ex}(s,\pi_u,\mathrm{Sym}^2)$ has a pole at $s=s_0$.
\end{proof}

We shift our gear to the principal series representation $\pi=\mathrm{Ind}^{GL_2}_B(\chi_1 \boxtimes \chi_2)$ of $GL_2$. The representation is a possibly reducible representation. Nonetheless all the theories in Section \ref{sec3:2} still go through and all the constructions of $I(W,W_{\theta^{\psi}},f_{2s-1})$, $L(s,\pi,\mathrm{Sym}^2)$, $L_{ex}(s,\pi,\mathrm{Sym}^2)$ and $L_{reg}(s,\pi,\mathrm{Sym}^2)$ are completely carried over without any adjustments.

\begin{theorem}
\label{deform-multiplicativity}
Let $\pi=\mathrm{Ind}^{GL_2}_B(\chi_1 \boxtimes \chi_2)$ be a principal series representation of $GL_2$. Let $u=(u_1,u_2) \in \mathcal{D}_{\pi}$ be in general position and $\pi_u=\mathrm{Ind}^{GL_2}_B(\chi_1\nu^{u_1} \boxtimes \chi_2\nu^{u_2})$ the deformed representation. Then we have the following:
\begin{enumerate}[label=$(\roman*)$]
\item \label{deform-multiplicativity-1}
$\displaystyle
 L(s,\mathrm{Ind}^{GL_2}_B(\chi_1\nu^{u_1} \boxtimes \chi_2\nu^{u_2}),\mathrm{Sym^2})=L(s+u_1+u_2,\chi_1  \times \chi_2  )\prod_{1 \leq i \leq 2} L(s+2u_i,\chi_i,\mathrm{Sym}^2).
$
\item \label{deform-multiplicativity-2}
$L(s,\mathrm{Ind}^{GL_2}_B(\chi_1 \boxtimes \chi_2),\mathrm{Sym^2})^{-1}$ divides $\displaystyle L(s,\chi_1  \times \chi_2  )^{-1}\prod_{1 \leq i \leq 2} L(s,\chi_i,\mathrm{Sym}^2)^{-1}$ in $\mathbb{C}[q^{\pm s/2}]$, that is, there is a $Q(X) \in \mathbb{C}[X]$ such that
\[
 L(s,\mathrm{Ind}^{GL_2}_B(\chi_1 \boxtimes \chi_2),\mathrm{Sym^2})=Q(q^{-s/2}) L(s,\chi_1  \times \chi_2  )\prod_{1 \leq i \leq 2} L(s,\chi_i,\mathrm{Sym}^2).
\]
\end{enumerate}
\end{theorem}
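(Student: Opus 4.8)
The plan is to prove \ref{deform-multiplicativity-1} for $u$ in general position by computing the two pieces of the factorization $L(s,\pi_u,\mathrm{Sym}^2)=L_{ex}(s,\pi_u,\mathrm{Sym}^2)L_{reg}(s,\pi_u,\mathrm{Sym}^2)$ of \eqref{product} separately, and then to derive \ref{deform-multiplicativity-2} by a specialization argument letting $u\to 0$.

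First, for \ref{deform-multiplicativity-1}: since $u$ is in general position, $\pi_u$ is irreducible, generic, and all of its derivatives are completely reducible (the remark after Definition \ref{General}), so Proposition \ref{regular-factorization} applies, and its hypotheses hold with the constituents of $\pi_u^{(1)}$ equal to the two distinct characters $\pi_u^{(1,0)}=\chi_2\nu^{u_2}$ and $\pi_u^{(0,1)}=\chi_1\nu^{u_1}$ (condition \ref{General-2}). Hence $L_{reg}(s,\pi_u,\mathrm{Sym}^2)^{-1}$ is the least common multiple of $L(s,\chi_i\nu^{u_i}\times\chi_i\nu^{u_i})^{-1}$, $i=1,2$; condition \ref{General-3} makes these coprime, so the l.c.m.\ is their product, and using $L(s,\chi_i\nu^{u_i}\times\chi_i\nu^{u_i})=L(s+2u_i,\chi_i\times\chi_i)=L(s+2u_i,\chi_i,\mathrm{Sym}^2)$ I obtain $L_{reg}(s,\pi_u,\mathrm{Sym}^2)=\prod_{i=1,2}L(s+2u_i,\chi_i,\mathrm{Sym}^2)$. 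For the exceptional part, Proposition \ref{converse} gives $L_{ex}(s,\pi_u,\mathrm{Sym}^2)=L_{ex}(s,\chi_1\nu^{u_1}\times\chi_2\nu^{u_2})$, and since the Rankin--Selberg $L$-function of a pair of characters is purely exceptional (\cite[\S 2.2]{Cogdell-PS}) this equals $L(s,\chi_1\nu^{u_1}\times\chi_2\nu^{u_2})=L(s+u_1+u_2,\chi_1\times\chi_2)$. Multiplying the two pieces yields \ref{deform-multiplicativity-1}; conditions \ref{General-3}--\ref{General-4} moreover ensure that the three factors on the right are pairwise coprime.

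Next, for \ref{deform-multiplicativity-2}: fix $W\in\mathcal{W}(\pi,\psi)$, $W_{\theta^{\psi}}\in\mathcal{W}(\theta^{\psi},\psi^{-1})$, and $f_{2s-1}\in V_{good}(2s-1,\omega_{\pi}^{-1})$, and extend them to holomorphic families $W_u\in\mathcal{W}(\pi_u,\psi)$ and $f_{2s-1,u}\in V_{good}(2s-1,\omega_{\pi_u}^{-1})$ specializing to $(W,f_{2s-1})$ at $u=0$, in the manner of \cite[\S 3]{Cogdell-PS} (compare \cite{JO20}). By Bernstein's rationality theorem $\Xi(s,u):=I(W_u,W_{\theta^{\psi}},f_{2s-1,u})$ is rational in $q^{-s/2}$ and $q^{-u}=(q^{-u_1},q^{-u_2})$, hence meromorphic on $\mathcal{D}_s\times\mathcal{D}_{\pi}$, and because for $\mathrm{Re}(s)$ large it is represented by an integral converging locally uniformly in $u$, no component of its polar divisor is independent of $q^{-s/2}$. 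Write $\widetilde{P}_u(q^{-s/2}):=L(s+u_1+u_2,\chi_1\times\chi_2)^{-1}\prod_{i}L(s+2u_i,\chi_i,\mathrm{Sym}^2)^{-1}$, a polynomial in $q^{-s/2}$ depending holomorphically on $u$. By \ref{deform-multiplicativity-1} and Definition \ref{symsquare-dfn}, $\widetilde{P}_u(q^{-s/2})\Xi(s,u)$ is holomorphic in $s$ for every $u$ avoiding the finitely many affine hyperplanes of Proposition \ref{hyperplane}; thus its polar divisor is contained both in that of $\Xi$ and in the vertical locus over those hyperplanes, so by the Hartogs-type argument of \cite[\S 3]{Cogdell-PS}, \cite[Proposition 5.1]{Matringe}, and \cite{JO20} (in the guise of Theorem \ref{Hartogs}) it must be empty. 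Specializing at $u=0$ gives $\widetilde{P}_0(q^{-s/2})I(W,W_{\theta^{\psi}},f_{2s-1})\in\mathbb{C}[q^{\pm s/2}]$ for all data, which by Definition \ref{symsquare-dfn} is exactly the divisibility of $L(s,\pi,\mathrm{Sym}^2)^{-1}$ by $L(s,\chi_1\times\chi_2)^{-1}\prod_{i}L(s,\chi_i,\mathrm{Sym}^2)^{-1}$ in $\mathbb{C}[q^{\pm s/2}]$; taking $Q(q^{-s/2})$ to be the quotient gives \ref{deform-multiplicativity-2}.

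The hard part will be the descent in \ref{deform-multiplicativity-2}: one must rule out that the clean factorization of \ref{deform-multiplicativity-1} degenerates at $u=0$ through poles of the three factors colliding or through cancellation in $\Xi(s,u)$, and the technical heart is showing that $\Xi(s,u)$ has no polar component independent of $q^{-s/2}$, so that the vertical polar locus forced at generic $u$ is empty. A subsidiary delicate point is pinning down the normalization making the constituents of $\pi_u^{(1)}$ literally $\chi_1\nu^{u_1}$ and $\chi_2\nu^{u_2}$, so that condition \ref{General-3} and the unramified-twist identities line up with Proposition \ref{regular-factorization}.
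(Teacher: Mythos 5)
Your proposal is correct and follows essentially the same route as the paper: part \ref{deform-multiplicativity-1} is obtained by combining Proposition \ref{converse} with the factorization of Theorem \ref{main-factorization} (equivalently, computing $L_{ex}$ and $L_{reg}$ separately under the coprimality guaranteed by general position), and part \ref{deform-multiplicativity-2} by Bernstein's continuation principle plus the standard argument that the polar divisor of the normalized integral, being confined to hyperplanes independent of $q^{-s/2}$, must be empty, followed by specialization at $u=0$. The paper merely cites \cite[Proposition 4.1]{Cogdell-PS} for the holomorphy step that you spell out.
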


\begin{proof}
The first item is a direct consequence of Proposition \ref{converse} together with Theorem \ref{main-factorization}. Regarding the second item, it follows from the standard Bernstein's argument of the continuation principle \cite[\S 3]{Cogdell-PS} that $I(W_u,W_{\theta^{\psi}},f_{2s-1})$ defines a rational function in $\mathbb{C}(q^{\pm s/2},q^{\pm u})$ as we vary in $u$. According to \ref{deform-multiplicativity-1}, the fraction
\begin{equation}
\label{deform-ratio}
  \frac{I(W_u,W_{\theta^{\psi}},f_{2s-1})}{L(s+u_1+u_2,\chi_1  \times \chi_2  )\prod_{1 \leq i \leq 2} L(s+2u_i,\chi_i,\mathrm{Sym}^2)}
\end{equation}
has no poles on Zariski open set of $u$ in general position. Our remaining task is to the ration \eqref{deform-ratio} in fact lies in $\mathbb{C}[q^{\pm s/2},q^{\pm u}]$.
But the proof of holomorphy continues as in \cite[Proposition 4.1]{Cogdell-PS}, (See \citelist{\cite{JO-3}*{Proposition 5.3} \cite{Matringe}*{Lemma 5.1}} for further refinements of basic ideas). If we now specialize at $u=0$ we conclude that
\[
  \frac{I(W,W_{\theta^{\psi}},f_{2s-1})}{L(s,\chi_1  \times \chi_2  )\prod_{1 \leq i \leq 2} L(s,\chi_i,\mathrm{Sym}^2)}
\]
has no poles for all $W \in \mathcal{W}(\pi,\psi)$, $W_{\theta^{\psi}} \in \mathcal{W}(\theta^{\psi},\psi^{-1})$ and $f_{2s-1} \in V_{good}(2s-1,\omega_{\pi}^{-1})$.
\end{proof}

As elucidated in the Langlands-Shahidi method \cite{CoShTs,Ganapathy-Lomeli,Shahidi}, the unit appearing in Proposition \ref{Weak-multiplicativity}-\ref{Weak-multiplicativity-2} will be presumably $1$. This is so-called the \textit{multiplicativity} of $\gamma$-factors. However manifesting the multiplicativity property requires manipulating integrals in a delicate manner. Apparently the best we can do at this point is to obtain the weak multiplicativity which is enough for the applications therein.

\begin{proposition}
\label{Weak-multiplicativity}
Let $\pi=\mathrm{Ind}^{GL_2}_B(\chi_1 \boxtimes \chi_2)$ be a principal series representation of $GL_2$. Let $u=(u_1,u_2) \in \mathcal{D}_{\pi}$ be in general position and $\pi_u=\mathrm{Ind}^{GL_2}_B(\chi_1\nu^{u_1} \boxtimes \chi_2\nu^{u_2})$ the deformed representation. Then we have the following:
\begin{enumerate}[label=$(\roman*)$]
\item\label{Weak-multiplicativity=1}  $\gamma(s,\pi_u,\mathrm{Sym}^2,\psi)$ and $\displaystyle \gamma(s+u_1+u_2,\chi_1 \times \chi_2,\psi) \prod_{1 \leq i \leq 2} \gamma(s+2u_i,\chi_i,\mathrm{Sym}^2,\psi)$
are equal up to a unit in $\mathbb{C}[q^{\pm s/2},q^{\pm u}]$.
\item\label{Weak-multiplicativity-2} $\gamma(s,\pi,\mathrm{Sym}^2,\psi)$ and $\displaystyle \gamma(s,\chi_1 \times \chi_2,\psi) \prod_{1 \leq i \leq 2} \gamma(s,\chi_i,\mathrm{Sym}^2,\psi)$ are equal up to a unit in $\mathbb{C}[q^{\pm s/2}]$.
\end{enumerate}
\end{proposition}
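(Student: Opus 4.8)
The plan is to sidestep the genuine multiplicativity of the Rankin--Selberg integrals --- which, as remarked in the text, would require a delicate manipulation of the integrals --- and instead deduce everything from the $L$-function factorization of Theorem \ref{deform-multiplicativity} together with the fact that $\varepsilon$-factors are units. Throughout I abbreviate
\[
 \Gamma_u(s)=\gamma(s+u_1+u_2,\chi_1\times\chi_2,\psi)\prod_{1\leq i\leq 2}\gamma(s+2u_i,\chi_i,\mathrm{Sym}^2,\psi)
\]
for the right-hand side of \ref{Weak-multiplicativity=1}. By Bernstein's continuation principle the integrals $I(W_u,W_{\theta^{\psi}},f_{2s-1})$, and hence $\gamma(s,\pi_u,\mathrm{Sym}^2,\psi)$, are rational functions in $q^{-s/2}$ and $q^{-u}$; the function $\Gamma_u(s)$ is manifestly rational in these variables as well.

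To prove \ref{Weak-multiplicativity=1} I would first rewrite each of the four $\gamma$-factors in sight as an $\varepsilon$-factor times a quotient of $L$-functions via \eqref{epsilon}, using $L(s,\chi_1\times\chi_2)=L(s,\chi_1\chi_2)$ and $L(s,\chi_i,\mathrm{Sym}^2)=L(s,\chi_i^2)$. Since $\widetilde{\pi}_u=\mathrm{Ind}^{GL_2}_B(\chi_1^{-1}\nu^{-u_1}\boxtimes\chi_2^{-1}\nu^{-u_2})$, Theorem \ref{deform-multiplicativity}\ref{deform-multiplicativity-1} applied to $\pi_u$ identifies $L(s,\pi_u,\mathrm{Sym}^2)$ with the product of the three denominators of the $\gamma$-factors composing $\Gamma_u(s)$, while the same theorem applied to $\widetilde{\pi}_u$ (legitimate for $u$ outside a finite union of hyperplanes large enough that Proposition \ref{hyperplane} applies to $\widetilde{\pi}_u$ too) identifies $L(1-s,\widetilde{\pi}_u,\mathrm{Sym}^2)$ with the product of the three numerators. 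Hence the two $L$-quotients cancel and we obtain the clean identity
\[
 \frac{\gamma(s,\pi_u,\mathrm{Sym}^2,\psi)}{\Gamma_u(s)}=\frac{\varepsilon(s,\pi_u,\mathrm{Sym}^2,\psi)}{\varepsilon(s+u_1+u_2,\chi_1\times\chi_2,\psi)\prod_{1\leq i\leq 2}\varepsilon(s+2u_i,\chi_i,\mathrm{Sym}^2,\psi)}.
\]
The three $\varepsilon$-factors in the denominator are Tate local constants, so after the indicated shifts they are Laurent monomials in $q^{-s/2}$ and $q^{-u}$, hence units in $\mathbb{C}[q^{\pm s/2},q^{\pm u}]$.

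It then remains to show that $\varepsilon(s,\pi_u,\mathrm{Sym}^2,\psi)$ is itself a unit in $\mathbb{C}[q^{\pm s/2},q^{\pm u}]$: for $u$ in general position $\pi_u$ is irreducible, so Proposition \ref{func-epsilon} gives that it is a unit in $\mathbb{C}[q^{\pm s/2}]$ for each such $u$, and combining this with its rationality in $q^{-u}$ --- together with the validity of Proposition \ref{func-epsilon} for the reducible members of the family --- forces it to have neither zeros nor poles as a function of $q^{-u}$, i.e. to be a Laurent monomial jointly in $q^{-s/2}$ and $q^{-u}$. This proves \ref{Weak-multiplicativity=1}; once established off a finite union of hyperplanes it is an equality of rational functions and so persists for all $u$ in general position. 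For \ref{Weak-multiplicativity-2} I would specialize at $u=0$: as recalled just before Theorem \ref{deform-multiplicativity}, the entire apparatus --- the integrals, the functional equation \eqref{func-Nhat}, the $\gamma$- and $\varepsilon$-factors --- carries over verbatim to the possibly reducible $\pi=\mathrm{Ind}^{GL_2}_B(\chi_1\boxtimes\chi_2)$, and by the continuation principle $\gamma(s,\pi_u,\mathrm{Sym}^2,\psi)$ is holomorphic at $u=0$ with value $\gamma(s,\pi,\mathrm{Sym}^2,\psi)$, while $\Gamma_u(s)|_{u=0}=\gamma(s,\chi_1\times\chi_2,\psi)\prod_{1\leq i\leq 2}\gamma(s,\chi_i,\mathrm{Sym}^2,\psi)$; setting $u=0$ in the identity above then yields \ref{Weak-multiplicativity-2}, the unit being the restriction to $u=0$ of the Laurent monomial just produced.

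The hard part is precisely the pair of continuation-principle points isolated in the previous paragraph: one must genuinely know that the deformed symmetric square $\varepsilon$-factor is a Laurent monomial in both $q^{-s/2}$ and $q^{-u}$ --- and not merely, for each fixed $u$, a unit in $\mathbb{C}[q^{\pm s/2}]$ --- and that $\gamma(s,\pi_u,\mathrm{Sym}^2,\psi)$ specializes to $\gamma(s,\pi,\mathrm{Sym}^2,\psi)$ at $u=0$. Both reduce to verifying that the constructions of $\S\,\ref{sec2:3}$ and Proposition \ref{func-epsilon} behave well under the one-parameter degenerations of the family $\pi_u$, i.e. to extending the rationality and unit statements to the reducible members; everything else in the argument is formal.
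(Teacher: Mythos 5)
Your proposal is correct and follows essentially the same route as the paper, which simply cites the standard argument of Cogdell--Piatetski-Shapiro \cite[Proposition 4.3]{Cogdell-PS}: write each $\gamma$-factor as $\varepsilon\cdot L(1-s,\widetilde{\phantom{\pi}})/L(s,\cdot)$, cancel the $L$-quotients via Theorem \ref{deform-multiplicativity} applied to $\pi_u$ and $\widetilde{\pi}_u$, and invoke Proposition \ref{func-epsilon} to see the leftover ratio of $\varepsilon$-factors is a unit, then specialize at $u=0$. The ``hard part'' you isolate at the end --- that the deformed $\varepsilon$-factor is a unit jointly in $q^{\pm s/2},q^{\pm u}$ and that $\gamma(s,\pi_u,\mathrm{Sym}^2,\psi)$ specializes correctly --- is precisely what the paper's additional citation of the non-vanishing statement (Proposition \ref{constant}) together with Bernstein's continuation principle is meant to supply, so your account matches the intended proof.
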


\begin{proof}
The proof is standard  \citelist{\cite{JO-3}*{Proposition 5.4} \cite{Matringe}*{Proposition 5.5}} and is due to Cogdell and Piatetski-Shapiro \cite[Proposition 4.3]{Cogdell-PS} by applying Proposition \ref{func-epsilon}, Theorem \ref{deform-multiplicativity}, and Proposition \ref{constant} to our setting.
\end{proof}

To proceed further we adopt the terminology from \cite{Cogdell-PS,Matringe}. We say that an admissible representation of $GL_2$ is of \textit{Langlands type} if $\Pi$ is of the form $\mathrm{Ind}^{GL_2}_{B}(\pi_1\nu^{u_1} \boxtimes \pi_2\nu^{u_2})$, where each $\pi_i$ is a square integrable representation of $GL_{n_i}$, $n_1+n_2=2$, each $u_i$ is real and they are ordered so that $u_1 \geq u_2$. Let $\pi$ be an irreducible admissible representation of $GL_2$. A representation of the form $\chi \circ \mathrm{det}$ with $\chi$ a character of $F^{\times}$ is an example of an irreducible and admissible but not generic representation of $GL_2$. Regardless of being generic, $\pi$ can be realized as the unique Langlands quotient of Langlands type $\Pi=\mathrm{Ind}^{GL_2}_{B}(\pi_1\nu^{u_1} \boxtimes \pi_2\nu^{u_2})$ which is of Whittaker type. The $L$-function $L(s,\pi,\mathrm{Sym}^2)$ is defined to be
\[
 L(s,\pi,\mathrm{Sym}^2):=L(s,\Pi,\mathrm{Sym}^2).
\]

\begin{lemma}
\label{JPSS-lemma}
Let $\pi=\mathrm{Ind}^{GL_2}_{B}(\pi_1 \boxtimes \pi_2)$ be a principal series representation of $GL_2$ with each $\pi_i$ a character of $F^{\times}$ (possibly a representation of Langlands type). Then $L(s,\pi_2,\mathrm{Sym}^2)^{-1}$ divides $L(s,\pi,\mathrm{Sym}^2)^{-1}$ in $\mathbb{C}[q^{\pm s/2}]$, that is, $L(s,\pi_2,\mathrm{Sym}^2)=Q(q^{-s/2})L(s,\pi,\mathrm{Sym}^2)$ for some $Q(X) \in \mathbb{C}[X]$.
\end{lemma}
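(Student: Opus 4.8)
The plan is to reduce the claim to a divisibility statement for the regular $L$-function and then realize the relevant pole by an explicit holomorphic-section Rankin--Selberg integral, in the spirit of the second half of the proof of Proposition \ref{regular-factorization}. By \eqref{product} we have $L(s,\pi,\mathrm{Sym}^2)=L_{ex}(s,\pi,\mathrm{Sym}^2)L_{reg}(s,\pi,\mathrm{Sym}^2)$, so it suffices to prove that $L(s,\pi_2,\mathrm{Sym}^2)^{-1}=L(s,\pi_2\times\pi_2)^{-1}$ divides $L_{reg}(s,\pi,\mathrm{Sym}^2)^{-1}$ in $\mathbb{C}[q^{\pm s/2}]$. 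As recorded just before Theorem \ref{deform-multiplicativity}, the integrals $I(W,W_{\theta^{\psi}},f_{2s-1})$, the $L$-functions $L(s,\pi,\mathrm{Sym}^2)$ and $L_{reg}(s,\pi,\mathrm{Sym}^2)$, and the bilinear forms of \S\ref{sec3:2} are all available for the (possibly reducible) principal series $\pi=\mathrm{Ind}^{GL_2}_B(\pi_1\boxtimes\pi_2)$ of Whittaker type, and the character $\pi_2$ occurs among the constituents of the first derivative $\pi^{(1)}$, namely as $\pi^{(1,0)}$ (cf.\ Definition \ref{General}-\ref{General-2}).

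First I would fix a pole $s=s_0$ of $L(s,\pi_2\times\pi_2)$ of order $d_{s_0}$ and choose a standard Tate integral $I(s;\pi_2\times\pi_2,\varphi)=\int_{F^{\times}}\pi_2(z)\pi_2(z)\varphi(z)|z|^s\,d^{\times}z$ attaining this pole for some $\varphi\in\mathcal{S}(F)$ with $\varphi(0)\neq 0$; here $\varphi$ may be taken to be the characteristic function of a sufficiently small neighbourhood of $0$, since replacing it by another such Schwartz function alters the integral by an entire function. Using the Cogdell--Piatetski-Shapiro description of derivatives \cite[Corollary to Proposition 1.7]{Cogdell-PS} on the $\pi$-side and Theorem \ref{connection} on the $\theta^{\psi}$-side --- exactly as in the proof of Proposition \ref{regular-factorization} --- I would then choose $W\in\mathcal{W}(\pi,\psi)$ whose Kirillov function $a\mapsto W\!\left(\mathfrak{s}\begin{pmatrix}a&\\&1\end{pmatrix}\right)$ realizes the character $\pi_2$ near $0$ and $W_{\theta^{\psi}}\in\mathcal{W}(\theta^{\psi},\psi^{-1})$ projecting onto the summand $\chi_b$ of ${\theta^{\psi}}^{(1)}$ with $b=1$ (so $\chi_b=\textbf{1}_{F^{\times}}$). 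With $\theta^{\psi}\otimes\chi_b\simeq\theta^{\psi}$ from \cite[Lemma 1.9 (3)]{Ya17}, the identity $\chi_b^2=\textbf{1}_{F^{\times}}$, the substitution of $\varphi(z)$ for $\varphi(z^2)$, and \cite[Lemma 2.5]{JO20-2} to realize the Kirillov datum by a genuine $W$, the simple integral $I_{(0)}(s,W,W_{\theta^{\psi}})$ equals $I(s;\pi_2\times\pi_2,\varphi)$ up to a unit in $\mathbb{C}[q^{\pm s/2}]$, hence has a pole of order $d_{s_0}$ at $s_0$.

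Next I would feed this into the Rankin--Selberg integral: taking $W$ and $W_{\theta^{\psi}}$ right-invariant under a small congruence subgroup $K^{\prime}$ and setting $f_{2s-1}=\mathbbm{1}_{K^{\prime},2s-1}$ as in \eqref{char}, the reduction carried out in the proof of the identity $\mathcal{I}_{(0)}(\pi)=\mathcal{I}_{reg}(\pi)$ gives $I(W,W_{\theta^{\psi}},f_{2s-1})=c_2\,I_{(0)}(s,W,W_{\theta^{\psi}})$ with $c_2=\mathrm{vol}(K^{\prime})>0$; since $f_{2s-1}$ is holomorphic, this integral lies in $\mathcal{I}_{reg}(\pi)$ and has a pole of order $d_{s_0}$ at $s_0$. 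As $L_{reg}(s,\pi,\mathrm{Sym}^2)^{-1}$ generates the fractional ideal $\mathcal{I}_{reg}(\pi)$, the factor $(1-q^{s_0/2}q^{-s/2})^{d_{s_0}}$ must divide it; letting $s_0$ run over the poles of $L(s,\pi_2\times\pi_2)$ yields $L(s,\pi_2,\mathrm{Sym}^2)^{-1}\mid L_{reg}(s,\pi,\mathrm{Sym}^2)^{-1}\mid L(s,\pi,\mathrm{Sym}^2)^{-1}$. Writing both $L$-functions as reciprocals of polynomials in $q^{-s/2}$ with constant term $1$ then forces $L(s,\pi_2,\mathrm{Sym}^2)/L(s,\pi,\mathrm{Sym}^2)$ to be a genuine polynomial $Q(q^{-s/2})$ with $Q(0)=1$, which is the assertion.

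The step I expect to be the main obstacle is bookkeeping rather than anything conceptual: one has to check that none of the passages --- Rankin--Selberg integral to simple integral $I_{(0)}$, then to the Tate integral, including the substitution of $\varphi(z)$ for $\varphi(z^2)$ --- introduces or cancels a pole at $s=s_0$, i.e.\ that the scalar relating $I_{(0)}(s,W,W_{\theta^{\psi}})$ and $I(s;\pi_2\times\pi_2,\varphi)$ is genuinely a unit in $\mathbb{C}[q^{\pm s/2}]$. One must also invoke the derivative and Kirillov results of \S\ref{sec3:1} and of \cite{Cogdell-PS} only in the form valid for a representation of Whittaker type, since $\pi$ need not be irreducible; what is used is precisely that $\pi_2$ appears as a constituent of $\pi^{(1)}$ and that the associated normalized projection is surjective on the relevant Whittaker space.
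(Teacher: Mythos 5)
Your overall strategy --- reduce to divisibility of $L_{reg}(s,\pi,\mathrm{Sym}^2)^{-1}$ by $L(s,\pi_2\times\pi_2)^{-1}$, realize a pole of the Tate integral $I(s;\pi_2\times\pi_2,\varphi)$ inside $\mathcal{I}_{(0)}(\pi)=\mathcal{I}_{reg}(\pi)$, and then pass to $\mathcal{I}(\pi)$ via the test section $\mathbbm{1}_{K^{\prime},2s-1}$ of \eqref{char} --- is exactly the paper's, which says the divisibility ``follows exactly in the same way as we argued in the proof of Proposition \ref{regular-factorization}.'' The gap is in the one step where the paper deliberately deviates from that proof. You produce the required Whittaker function $W$ from the derivative machinery: a direct-sum decomposition of $\pi^{(1)}$ and the realization of a chosen constituent's Kirillov asymptotics near $0$ via \cite[Corollary to Proposition 1.7]{Cogdell-PS}. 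That tool (and Proposition \ref{regular-factorization} itself) is only available under the hypothesis that the derivatives are completely reducible. For the representations this lemma must cover --- reducible principal series and Langlands-type inductions, e.g.\ $\pi_1=\pi_2$, which is precisely the degenerate locus excluded by ``general position'' and the reason Lemma \ref{JPSS-lemma} is needed in the specialization step of Theorem \ref{Langlands} --- the first derivative need not be semisimple, only filtered; one cannot then conclude from the derivative theory alone that the particular constituent $\pi_2$ is realized exactly (rather than only up to contributions from the other filtration steps) by the Kirillov function of some $W\in\mathcal{W}(\pi,\psi)$ near $0$. Your closing remark that one ``must invoke the derivative and Kirillov results only in the form valid for a representation of Whittaker type'' is exactly the unproved point.

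The paper's proof replaces this step by the explicit input of Jacquet--Piatetski-Shapiro--Shalika \cite[Proposition 9.1]{JPSS}: for every $\varphi\in\mathcal{S}(F)$ there exists $W\in\mathcal{W}(\pi,\psi)$ with $W(t(a,1))=\pi_2(a)\varphi(a)|a|^{1/2}$ on the nose, valid for any induced representation of Whittaker type with $\pi_2$ the last inducing character. With \eqref{Whittaker-induced} in place of the derivative argument, the rest of your computation (the choice of $W_{\theta^{\psi}}$ projecting to $\chi_b$ with $b=1$ via Theorem \ref{connection}, the replacement of $\varphi(z^2)$ by $\varphi(z)$ up to a unit, and the passage to $I(W,W_{\theta^{\psi}},\mathbbm{1}_{K^{\prime},2s-1})$) goes through verbatim. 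The fix is therefore local, but as written your argument rests on a lemma whose hypotheses fail in exactly the cases the statement is meant to handle.
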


\begin{proof}
Jacquet, Piatetski-Shapiro, and Shalika established in \cite[Proposition 9.1]{JPSS} that for a character $\pi_2$ and a Schwartz-Bruhat function $\varphi \in \mathcal{S}(F)$, there exists $W \in \mathcal{W}(\pi,\psi)$ such that
\begin{equation}
\label{Whittaker-induced}
  W \begin{pmatrix} a & \\ & 1 \end{pmatrix}=\pi_2(a)\varphi(a)|a|^{\frac{1}{2}}.
\end{equation}
The divisibility of $L$-functions follows exactly in the same way as we argued in the proof of Proposition \ref{regular-factorization}, appealing to the property above \eqref{Whittaker-induced} instead of  \cite[Corollary to Proposition 1.7]{Cogdell-PS}.
\end{proof}

We denote by $P \sim Q$ that the ratio is a unit in $\mathbb{C}[q^{s/2},q^{-s/2}]$ for two rational functions $P(q^{-s/2})$ and $Q(q^{-s/2})$ in $\mathbb{C}(q^{-s/2})$.
Unlike the Langlands-Shahidi method \cite{CoShTs,Ganapathy-Lomeli} which takes the following formalism \eqref{Langlands-prod} to be the definition, proving the inductive formula \eqref{Langlands-prod} is the crucial subject in the theory of Rankin-Selberg integrals.

\begin{theorem}
\label{Langlands}
Let $\pi=\mathrm{Ind}^{GL_2}_B(\pi_1\nu^{u_1} \boxtimes \pi_2\nu^{u_2})$ be a representation of Langlands type of $GL_2$. Then we have
\begin{equation}
\label{Langlands-prod}
 L(s,\pi,\mathrm{Sym}^2)=L(s+u_1+u_2,\pi_1  \times \pi_2  )\prod_{1 \leq i \leq 2} L(s+2u_i,\pi_i,\mathrm{Sym}^2).
\end{equation}
\end{theorem}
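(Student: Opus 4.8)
The plan is to follow the deformation--specialization method of Cogdell and Piatetski-Shapiro \cite{Cogdell-PS}, with Theorem \ref{deform-multiplicativity} as the generic input. Since $\pi$ is induced from the Borel, $\pi_1,\pi_2$ are unitary characters of $F^{\times}$ and $u_1\ge u_2$ are real; writing $\chi_i=\pi_i\nu^{u_i}$ and using the defining relation $L(s,\pi,\mathrm{Sym}^2):=L(s,\Pi,\mathrm{Sym}^2)$ together with the unramified--twist identities $L(s+2u_i,\pi_i,\mathrm{Sym}^2)=L(s,\chi_i,\mathrm{Sym}^2)$ and $L(s+u_1+u_2,\pi_1\times\pi_2)=L(s,\chi_1\times\chi_2)$, the identity \eqref{Langlands-prod} becomes
\[
  L(s,\mathrm{Ind}^{GL_2}_B(\chi_1\boxtimes\chi_2),\mathrm{Sym}^2)=L(s,\chi_1\times\chi_2)\,L(s,\chi_1,\mathrm{Sym}^2)\,L(s,\chi_2,\mathrm{Sym}^2)
\]
for characters $\chi_1,\chi_2$ with real exponents $u_1\ge u_2$, where the three right-hand factors are the Tate factors $L(s,\chi_1\chi_2)$, $L(s,\chi_1^2)$, $L(s,\chi_2^2)$, each with at most a simple pole. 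If $(u_1,u_2)$ already lies in general position (Definition \ref{General}) this is Theorem \ref{deform-multiplicativity}\ref{deform-multiplicativity-1}, so I assume it does not; by Proposition \ref{hyperplane} this confines $(u_1,u_2)$ to finitely many hyperplanes.

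The ``upper bound'' is in hand: applying Theorem \ref{deform-multiplicativity}\ref{deform-multiplicativity-2} to $\chi_1,\chi_2$ produces a polynomial $Q$ with $Q(0)=1$ and $L(s,\pi,\mathrm{Sym}^2)=Q(q^{-s/2})\,L(s,\chi_1\chi_2)\,L(s,\chi_1^2)\,L(s,\chi_2^2)$, so everything reduces to showing $Q\equiv1$, i.e. that each pole of the product persists to full order in $L(s,\pi,\mathrm{Sym}^2)$. I would first dispose of the case $u_1>u_2$: there the three Tate factors have poles on the three \emph{distinct} vertical lines $\mathrm{Re}(s)\in\{-2u_1,-u_1-u_2,-2u_2\}$, hence are pairwise coprime in the principal ideal domain $\mathbb{C}[q^{\pm s/2}]$, and it is enough to divide $L(s,\pi,\mathrm{Sym}^2)$ by each factor separately. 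The factor $L(s,\chi_2,\mathrm{Sym}^2)=L(s,\chi_2^2)$ divides it by Lemma \ref{JPSS-lemma}. For $L(s,\chi_1,\mathrm{Sym}^2)=L(s,\chi_1^2)$ I would prove the companion of Lemma \ref{JPSS-lemma} attached to the \emph{first} inducing character: the Kirillov restriction $a\mapsto W\begin{pmatrix}a&\\&1\end{pmatrix}$ of a Whittaker vector of $\pi$ has near $a=0$ an asymptotic expansion with leading exponents $\chi_1\nu^{1/2}$ and $\chi_2\nu^{1/2}$, and the ordering $u_1\ge u_2$ is exactly what keeps the $\chi_1$-term from being absorbed, so that it contributes a simple pole to the simple integral $I_{(0)}(s;W,W_{\theta^{\psi}})$, hence to $L_{reg}(s,\pi,\mathrm{Sym}^2)$; this is modelled on \cite[Proposition 9.1]{JPSS} and \cite[Proposition 4.1]{Cogdell-PS}. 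For the remaining factor $L(s,\chi_1\chi_2)$, whose pole $s_0$ satisfies $\chi_1\chi_2=\nu^{-s_0}$ and, when $u_1>u_2$, differs from the poles of $L(s,\chi_i^2)$: supposing for contradiction that $L(s,\pi,\mathrm{Sym}^2)$ were holomorphic at $s_0$, this is precisely the hypothesis needed to run the argument in the second half of the proof of Proposition \ref{converse} (Lemma~3.16 of \cite{Ya17} together with the functional equation \eqref{func-Nhat}), which would force the self-dual representation $\pi\nu^{s_0/2}=\mathrm{Ind}^{GL_2}_B(\mu\boxtimes\mu^{-1})$, $\mu=\chi_1\nu^{s_0/2}$, not to be $\theta$-distinguished; but it \emph{is}, by Theorem \ref{Kaplan}\,$(i)$ --- a contradiction. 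Combined with the upper bound, this gives $Q\equiv1$ when $u_1>u_2$.

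Finally the case $u_1=u_2$: an unramified twist puts us in the situation $\chi_1,\chi_2$ unitary, where the three Tate poles may coincide (they do when $\chi_1=\chi_2$, in which case $\pi$ no longer has completely reducible derivatives and Theorem \ref{main-factorization} does not apply), so the coprimality shortcut is unavailable. Here I would deform along a generic one-parameter family, say $(\chi_1\nu^{t},\chi_2)$, which is in general position for all but finitely many $t$ by Proposition \ref{hyperplane}; by the Bernstein continuation principle $I(W_t,W_{\theta^{\psi}},f_{2s-1})$ is rational in $(q^{\pm s/2},q^{\pm t})$ and, exactly as in the proof of Theorem \ref{deform-multiplicativity}\ref{deform-multiplicativity-2}, its quotient by $L(s+t,\chi_1\chi_2)\,L(s+2t,\chi_1^2)\,L(s,\chi_2^2)$ is a Laurent polynomial, so the case already settled at generic $t$, together with the upper bound, forbids any drop in pole order upon specializing to $t=0$. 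I expect the genuine obstacle throughout to be precisely this non-cancellation under specialization --- the companion of Lemma \ref{JPSS-lemma} for $\chi_1$, and in the coincident-pole case the production of the full higher-order pole at $s_0$: one must exhibit Whittaker vectors and good sections whose contribution to the Rankin--Selberg integral cancels neither against the exceptional-representation factor nor against the other inducing character, and it is the Langlands ordering $u_1\ge u_2$ that makes this possible, just as in the exterior-square and Asai analogues of \cite{Cogdell-PS,Matringe,JO-3}.
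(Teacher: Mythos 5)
Your overall framework (reduce to $Q\equiv 1$ via the divisibility of Theorem \ref{deform-multiplicativity}-(ii), then force each pole of the product to persist) is the right one, and is consistent with the paper, which at this point simply defers to \cite{Cogdell-PS}*{Theorem 4.1} and \cite{Matringe}*{Theorem 5.1}. But the lower-bound step --- the actual content of the theorem --- has concrete gaps as you have set it up. First, your ``companion of Lemma \ref{JPSS-lemma}'' for the \emph{first} inducing character is asserted, not proved, and it does not follow from \cite{JPSS}*{Proposition 9.1}, which produces only the \emph{last} inducing character in the Kirillov asymptotics; nor does passing to the contragredient help, since $\widetilde{W}(t(a,1))=\omega_{\pi}^{-1}(a)W(t(a,1))$ converts the last character of $\widetilde{\Pi}$ back into $\chi_2$. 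One can extract the $\chi_1$-asymptotic from the derivative $\Pi^{(1)}$ via \cite{Cogdell-PS}*{Corollary to Proposition 1.7} even when $\Pi$ is reducible, but that argument must actually be written down; as stated it is the missing lemma, not a routine variant. Second, your treatment of the factor $L(s,\chi_1\chi_2)$ runs the second half of the proof of Proposition \ref{converse} at the specialized point. That argument needs Proposition \ref{char-Yamana}, hence the hypothesis $\dim\mathrm{Hom}_{GL_2}(\pi\nu^{s_0/2}\otimes\theta^{\psi}\otimes I(1,\cdot),\mathbb{C})\le 1$. At a pole of $L(s,\chi_1\chi_2)$ with $u_1>u_2$ the relevant representation is $\mathrm{Ind}_B^{GL_2}(\mu\boxtimes\mu^{-1})$ with $\mathrm{Re}$-exponent $(u_1-u_2)/2>0$, i.e.\ generally non-unitary, so \cite{Ya17}*{Theorem 2.14} does not apply; condition \ref{General}-(5) of general position is exactly what guarantees this bound, and it is precisely what you do not have after specializing. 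You are therefore using the distinguishedness machinery outside the regime the deformation was introduced to secure.

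Third, in the case $u_1=u_2$ your conclusion that the generic-$t$ identity ``forbids any drop in pole order upon specializing to $t=0$'' is not justified: the quotient of $I(W_t,W_{\theta^{\psi}},f_{2s-1})$ by the product of $L$-factors is a Laurent polynomial in $(q^{\pm s/2},q^{\pm t})$, and such a polynomial can perfectly well vanish identically in $s$ on the hyperplane $t=0$; ruling this out is the whole difficulty. The route the paper actually intends (following \cite{Cogdell-PS} and \cite{Matringe}) avoids all three problems: one combines the divisibility $L(s,\Pi,\mathrm{Sym}^2)^{-1}\mid\prod L_i(s)^{-1}$ with the same statement for $\widetilde{\Pi}$, the weak multiplicativity of $\gamma$-factors (Proposition \ref{Weak-multiplicativity}, which you never invoke), and the relation $\gamma\sim\varepsilon\,L(1-s,\widetilde{\Pi},\mathrm{Sym}^2)/L(s,\Pi,\mathrm{Sym}^2)$; writing $L(s,\Pi,\mathrm{Sym}^2)=\prod L_i(s)/Q$ and likewise for the dual, multiplicativity forces $Q$ and $\widetilde{Q}$ to have the same zeros, while the Langlands ordering $u_1\ge u_2$ and the unitarity of $\pi_1,\pi_2$ separate the possible locations of these zeros and force $Q$ to be a unit. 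I would rework the specialization step along those lines rather than through Lemma \ref{JPSS-lemma}-type asymptotics for $\chi_1$ and the $\theta$-distinction criterion.
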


\begin{proof}
The proof is identical with those employed in \citelist{\cite{Cogdell-PS}*{Theorem 4.1}\cite{Matringe}*{Theorem 5.1}}.
\end{proof}

As a result, Theorem \ref{Langlands} in the case of irreducible unramified representations yields:

\begin{corollary}
\label{unramified}
Let $\pi=\mathrm{Ind}^{GL_2}_B(\mu_1 \boxtimes \mu_2)$ be an irreducible unramified representation of $GL_2$. Then we have
\[
 L(s,\pi,\mathrm{Sym}^2)=\prod_{1 \leq i \leq j \leq 2} \frac{1}{1-\mu_i(\varpi)\mu_j(\varpi)q^{-s}}.
\]
\end{corollary}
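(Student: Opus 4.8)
The plan is to deduce this directly from the inductive formula of Theorem~\ref{Langlands} together with the $GL_1$ normalizations already recorded in the excerpt. Since $\pi=\mathrm{Ind}^{GL_2}_B(\mu_1 \boxtimes \mu_2)$ is irreducible we have $\mathrm{Ind}^{GL_2}_B(\mu_1 \boxtimes \mu_2)\simeq \mathrm{Ind}^{GL_2}_B(\mu_2 \boxtimes \mu_1)$, so after relabelling we may assume that $\pi$ is already of Langlands type: writing $\mu_i=\mu_i^{\circ}\nu^{u_i}$ with $\mu_i^{\circ}$ a unitary unramified character of $F^{\times}$ and $u_1\geq u_2$ real, the standard module attached to $\pi$ is $\mathrm{Ind}^{GL_2}_B(\mu_1^{\circ}\nu^{u_1}\boxtimes \mu_2^{\circ}\nu^{u_2})$ itself. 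Theorem~\ref{Langlands} then yields
\[
  L(s,\pi,\mathrm{Sym}^2)=L(s+u_1+u_2,\mu_1^{\circ} \times \mu_2^{\circ})\prod_{1 \leq i \leq 2} L(s+2u_i,\mu_i^{\circ},\mathrm{Sym}^2).
\]

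Next I would evaluate each factor on the right using the $GL_1$ conventions. For a character $\chi$ of $F^{\times}$ one has $L(s,\chi,\mathrm{Sym}^2)=L(s,\chi\times\chi)$, and the Rankin--Selberg $L$-function for $GL_1\times GL_1$ is the Tate $L$-function; hence $L(s,\chi,\mathrm{Sym}^2)=L(s,\chi^2)$ and likewise $L(s,\mu_1^{\circ}\times\mu_2^{\circ})=L(s,\mu_1^{\circ}\mu_2^{\circ})$. For an unramified character $\eta$ of $F^{\times}$ the Tate $L$-function equals $(1-\eta(\varpi)q^{-s})^{-1}$. Using $\mu_i(\varpi)=\mu_i^{\circ}(\varpi)q^{-u_i}$ to absorb the shifts we obtain
\[
  L(s+2u_i,(\mu_i^{\circ})^2)=\frac{1}{1-\mu_i(\varpi)^2q^{-s}}, \qquad L(s+u_1+u_2,\mu_1^{\circ}\mu_2^{\circ})=\frac{1}{1-\mu_1(\varpi)\mu_2(\varpi)q^{-s}}.
\]
Substituting back and observing that the three pairs $(i,j)\in\{(1,1),(1,2),(2,2)\}$ index exactly these three factors gives
\[
  L(s,\pi,\mathrm{Sym}^2)=\prod_{1 \leq i \leq j \leq 2}\frac{1}{1-\mu_i(\varpi)\mu_j(\varpi)q^{-s}},
\]
as claimed.

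There is essentially no serious obstacle here; the only points requiring care are purely bookkeeping: checking that irreducibility lets us regard $\pi$ as its own Langlands-type standard module so that Theorem~\ref{Langlands} applies, correctly invoking the normalizations $L(s,\chi,\mathrm{Sym}^2)=L(s,\chi^2)$ and $L(s,\chi_1\times\chi_2)=L(s,\chi_1\chi_2)$ for $GL_1$, and tracking the unramified twists $\nu^{u_i}$ through the shifts so that the Satake parameters match up. All the genuine content is already contained in Theorem~\ref{Langlands}.
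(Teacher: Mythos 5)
Your argument is correct and is exactly the route the paper takes: the paper states this corollary as an immediate consequence of Theorem~\ref{Langlands} applied to the irreducible unramified principal series, and your write-up simply supplies the bookkeeping (reordering to Langlands type, the $GL_1$ identifications $L(s,\chi,\mathrm{Sym}^2)=L(s,\chi\times\chi)=L(s,\chi^2)$, and absorbing the shifts $\nu^{u_i}$ into the Satake parameters) that the paper leaves implicit. No discrepancy to report.
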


Corollary \ref{unramified} provides an affirmative answer to Yamana's question \cite[\S 3H]{Ya17} for the unramified $GL_2$-case at least.

\subsection{The Langlands correspondence and the equality of $L$-functions} We denote by $W^{\prime}_F$ the Weil-Deligne group of $F$ (see for example \cite{Tate}). 
We then denote by $L(s,\mathrm{Sym}^2(\rho))$ the Artin $L$-function attached to the $2$-dimensional (complex) Frobenius semi-simple representation $\rho$ of  $W^{\prime}_F$, where $\mathrm{Sym}^2(\rho)$ is the symmetric square of $\rho$.

\begin{theorem}
Let $\pi \mapsto \rho(\pi)$ be the local Langlands correspondence from the set of isomorphism classes of $2$-dimensional complex representations of $W_F^{\prime}$ to that of the isomorphism classes of irreducible admissible representations of $GL_2(F)$. Then we have the following equality of $L$-functions:
\[
  L(s,\pi,\mathrm{Sym}^2)=L(s,\mathrm{Sym^2(\rho(\pi))}).
\]
\end{theorem}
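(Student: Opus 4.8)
The plan is to reduce the general case to the two cases where both sides of the identity are already understood — discrete series and principal series — and to glue them together via the inductive formula of Theorem \ref{Langlands}.

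First I would recall the two structural facts that make this possible. On the Galois side, every $2$-dimensional Frobenius semi-simple representation $\rho$ of $W_F'$ is either irreducible or a sum $\mu_1 \oplus \mu_2$ of characters (possibly with a Deligne nilpotent piece, i.e.\ a twist of the special representation); in all cases $\mathrm{Sym}^2(\rho)$ decomposes accordingly, and the Artin $L$-factor of $\mathrm{Sym}^2(\rho)$ is computed explicitly: for $\rho = \mu_1 \oplus \mu_2$ one has $\mathrm{Sym}^2(\rho) = \mu_1^2 \oplus \mu_1\mu_2 \oplus \mu_2^2$, so $L(s,\mathrm{Sym}^2(\rho)) = L(s,\mu_1^2)L(s,\mu_1\mu_2)L(s,\mu_2^2)$, and one reads off $L(s,\mu_i^2) = L(s,\mu_i,\mathrm{Sym}^2)$ and $L(s,\mu_1\mu_2) = L(s,\mu_1 \times \mu_2)$ from the definitions for $GL_1$. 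On the automorphic side, under the local Langlands correspondence a representation $\pi$ of Langlands type $\mathrm{Ind}^{GL_2}_B(\pi_1\nu^{u_1}\boxtimes\pi_2\nu^{u_2})$ corresponds to $\rho(\pi) = \rho(\pi_1)|\cdot|^{u_1} \oplus \rho(\pi_2)|\cdot|^{u_2}$, and a supercuspidal $\pi$ corresponds to an irreducible $\rho(\pi)$.

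Next I would dispose of the building blocks. When $\pi$ is supercuspidal (equivalently $\rho(\pi)$ irreducible), this is exactly the discrete-series case, already settled by Yamana \cite{Ya17} (and consistent with \cite{Henniart}); for $GL_2$ the supercuspidal symmetric square $L$-factor and its Artin counterpart are both trivial unless $\rho(\pi)$ is dihedral induced from a quadratic extension, in which case both equal $L(s,\chi_{E/F})$ for the corresponding quadratic character — and the integral-representation side gives the same answer through the exceptional-pole analysis of Section \ref{sec3} together with Theorem \ref{main-characterization} and Theorem \ref{Kaplan}. For $\pi = \chi\circ\det$ a character twist, and more generally for the twist of the Steinberg, $\pi$ is the Langlands quotient of a principal series $\mathrm{Ind}^{GL_2}_B(\chi_1\nu^{u_1}\boxtimes\chi_2\nu^{u_2})$, and by the convention $L(s,\pi,\mathrm{Sym}^2) := L(s,\Pi,\mathrm{Sym}^2)$ the computation is governed by Theorem \ref{Langlands}. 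Applying \eqref{Langlands-prod} with $\pi_1,\pi_2$ characters gives
\[
  L(s,\pi,\mathrm{Sym}^2) = L(s+u_1+u_2,\chi_1\times\chi_2)\prod_{1\le i\le 2} L(s+2u_i,\chi_i,\mathrm{Sym}^2),
\]
and since $L(s,\chi_i,\mathrm{Sym}^2) = L(s,\chi_i^2)$ and $L(s,\chi_1\times\chi_2) = L(s,\chi_1\chi_2)$ for $GL_1$ (by definition, cf.\ \cite{JPSS,Tate}), the right side is precisely $L(s,\mathrm{Sym}^2(\rho(\pi)))$ by the Galois-side computation above, because $\rho(\pi_i\nu^{u_i}) = \rho(\chi_i)|\cdot|^{u_i}$ and local Langlands for $GL_1$ is class field theory. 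For an irreducible principal series with $\pi_1,\pi_2$ characters the same formula applies directly without passing to a quotient. This exhausts all irreducible admissible representations of $GL_2(F)$.

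Finally I would assemble the proof: given $\pi$ irreducible admissible, write it as the Langlands quotient of $\Pi = \mathrm{Ind}^{GL_2}_B(\pi_1\nu^{u_1}\boxtimes\pi_2\nu^{u_2})$ of Langlands type (with $\pi_i$ either a character or a supercuspidal of $GL_2$, the latter forcing $\Pi = \pi$ supercuspidal); invoke Theorem \ref{Langlands} to factor $L(s,\pi,\mathrm{Sym}^2)$ into the Rankin--Selberg factor $L(s+u_1+u_2,\pi_1\times\pi_2)$ and the symmetric-square factors $L(s+2u_i,\pi_i,\mathrm{Sym}^2)$; match $L(s,\pi_1\times\pi_2)$ with $L(s,\rho(\pi_1)\otimes\rho(\pi_2))$ by the known equality of Rankin--Selberg and Artin $L$-factors for $GL_m\times GL_n$ with $m,n\le 2$ (\cite{JPSS}); match each $L(s,\pi_i,\mathrm{Sym}^2)$ with $L(s,\mathrm{Sym}^2(\rho(\pi_i)))$ using either the $GL_1$ case (when $\pi_i$ is a character) or Yamana's discrete-series theorem (when $\pi_i$ is supercuspidal); and then recombine, using $\mathrm{Sym}^2(\rho(\pi)) \cong \mathrm{Sym}^2(\rho(\pi_1)) \oplus (\rho(\pi_1)\otimes\rho(\pi_2)) \oplus \mathrm{Sym}^2(\rho(\pi_2))$ together with additivity of Artin $L$-factors and their compatibility with twists. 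The main obstacle I anticipate is not any single estimate but rather the bookkeeping at the boundary of general position: Theorem \ref{Langlands} is proved by deformation, and one must be sure that the specialization $u\to 0$ in Theorem \ref{deform-multiplicativity} — which only asserts divisibility a priori — actually yields equality, i.e.\ that no spurious cancellation of poles occurs for the reducible principal series relevant to character and Steinberg twists. This is exactly where Proposition \ref{converse}, the exceptional-pole characterization (Theorem \ref{main-characterization}), and Kaplan's distinction results (Theorem \ref{Kaplan}) have to be marshalled to pin down $L_{ex}$ precisely; the regular part is handled by Proposition \ref{regular-factorization} and Theorem \ref{main-factorization}.
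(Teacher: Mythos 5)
Your overall strategy coincides with the paper's: dispose of the discrete-series case by citing Yamana \cite{Ya17}, and handle the Langlands-type principal series via the inductive formula of Theorem \ref{Langlands} combined with the Galois-side decomposition $\mathrm{Sym}^2(\rho_1\oplus\rho_2)\cong \mathrm{Sym}^2(\rho_1)\oplus(\rho_1\otimes\rho_2)\oplus\mathrm{Sym}^2(\rho_2)$ and the $GL_1$ identifications $L(s,\chi,\mathrm{Sym}^2)=L(s,\chi^2)$, $L(s,\chi_1\times\chi_2)=L(s,\chi_1\chi_2)$. This is exactly the paper's (very short) argument, which defers the bookkeeping to Matringe.

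There is, however, one concrete misstep. You assert that a twist of the Steinberg representation ``is the Langlands quotient of a principal series $\mathrm{Ind}^{GL_2}_B(\chi_1\nu^{u_1}\boxtimes\chi_2\nu^{u_2})$'' and propose to compute its $L$-factor through the convention $L(s,\pi,\mathrm{Sym}^2):=L(s,\Pi,\mathrm{Sym}^2)$. This is false: the Langlands quotient of $\mathrm{Ind}^{GL_2}_B(\chi\nu^{1/2}\boxtimes\chi\nu^{-1/2})$ is $\chi\circ\det$, while $\mathrm{St}\otimes\chi$ is the subrepresentation. Run literally, your recipe would output $L(s,\mathrm{Sym}^2(\chi|\cdot|^{1/2}\oplus\chi|\cdot|^{-1/2}))$, a product of three Euler factors, whereas the correct Artin factor for the parameter $\chi\otimes\mathrm{sp}(2)$ is that of $\chi^2\otimes\mathrm{sp}(3)$, where the monodromy operator kills all but one factor — so the answers genuinely differ. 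The repair is the one the paper makes explicitly: $\mathrm{St}\otimes\chi$ is square-integrable, hence falls under the discrete-series case already settled in \cite{Ya17} (the ``Langlands type'' representations with two $GL_1$ blocks that remain are the irreducible principal series and the induced representations whose quotient is $\chi\circ\det$, and for these Theorem \ref{Langlands} applies as you describe). With that relabeling your argument closes up and agrees with the paper's.
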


\begin{proof}
The case of $\pi$ a discrete series representation (also called a special or Steinberg representation) was established in \cite{Ya17}, so it remains to
consider the case when $\pi$ is the induced representation of Langlands type. The proof in this case is a consequence of Theorem \ref{Langlands} accompanied with the local Langlands correspondence. We refer the reader to the beginning of \cite[Section  5.3]{Matringe} for further details.
\end{proof}

We turn attention to the factorization
\[
 L(s,\rho(\pi) \otimes \rho(\pi))=L(s,\mathrm{\wedge^2(\rho(\pi))})L(s,\mathrm{Sym^2(\rho(\pi))})
\]
according to Langlands formalism. We derive from \cite[Theorem 5.14]{JO-3} and \cite[Proposition 4.1]{JO20-2} pertaining $L(s,\pi,\wedge^2)=L(s,\omega_{\pi})$  that the analogous factorization is available in the realm of integral representations. 

\begin{corollary}
Let $\pi$ be an irreducible admissible representation of $GL_2(F)$. Then we obtain
\[
 L(s,\pi \times \pi)=L(s,\pi,\wedge^2)L(s,\pi,\mathrm{Sym}^2)=L(s,\omega_{\pi})L(s,\pi,\mathrm{Sym}^2),
\]
where $L(s,\pi,\wedge^2)$ is the Jacquet-Shalika $L$-function defined in \cite{JO20-2}.
\end{corollary}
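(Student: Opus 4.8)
The plan is to obtain the factorization on the Galois side and transport it through the local Langlands correspondence, invoking the comparison theorems already established. On the side of Frobenius semi-simple representations of $W_F^{\prime}$ one has the tautological decomposition $\rho(\pi)\otimes\rho(\pi)\cong\wedge^2(\rho(\pi))\oplus\mathrm{Sym}^2(\rho(\pi))$, and since the Artin $L$-function of a direct sum is the product of the $L$-functions of the summands, this gives the displayed factorization $L(s,\rho(\pi)\otimes\rho(\pi))=L(s,\wedge^2(\rho(\pi)))L(s,\mathrm{Sym}^2(\rho(\pi)))$. Because $\rho(\pi)$ is two-dimensional, $\wedge^2(\rho(\pi))=\det\rho(\pi)$ is a character of $W_F^{\prime}$, and the normalization of the local Langlands correspondence for $GL_2$ (compatibility with determinants via local class field theory) identifies it with $\omega_\pi$; hence $L(s,\wedge^2(\rho(\pi)))=L(s,\omega_\pi)$, the Tate $L$-function.

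Next I would assemble the three available identifications. First, the preceding theorem yields $L(s,\pi,\mathrm{Sym}^2)=L(s,\mathrm{Sym}^2(\rho(\pi)))$. Second, the equality $L(s,\pi,\wedge^2)=L(s,\omega_\pi)$ of the Jacquet--Shalika exterior square $L$-function on $GL_2$ with the Tate $L$-function of the central character is the content of \cite[Theorem 5.14]{JO-3} and \cite[Proposition 4.1]{JO20-2}. Third, one needs the matching $L(s,\pi\times\pi)=L(s,\rho(\pi)\otimes\rho(\pi))$ of the Rankin--Selberg $L$-function of \cite{JPSS} with the Artin tensor-square $L$-function under the local Langlands correspondence; this is classical for $GL_2$, and for a possibly non-generic $\pi$ one takes $L(s,\pi\times\pi):=L(s,\Pi\times\Pi)$ for the Langlands-type induced representation $\Pi$, exactly as $L(s,\pi,\mathrm{Sym}^2)$ was defined above, so that the inductivity of both sides along $\mathrm{Ind}^{GL_2}_B(\pi_1\nu^{u_1}\boxtimes\pi_2\nu^{u_2})$ reduces the claim to characters and to the discrete series case already in hand.

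Putting these together,
\[
 L(s,\pi\times\pi)=L(s,\rho(\pi)\otimes\rho(\pi))=L(s,\wedge^2(\rho(\pi)))\,L(s,\mathrm{Sym}^2(\rho(\pi)))=L(s,\omega_\pi)\,L(s,\pi,\mathrm{Sym}^2)=L(s,\pi,\wedge^2)\,L(s,\pi,\mathrm{Sym}^2),
\]
which is the asserted identity. The argument is essentially bookkeeping once the preceding theorem is available; the only step requiring any care is the comparison $L(s,\pi\times\pi)=L(s,\rho(\pi)\otimes\rho(\pi))$ for non-generic $\pi$, together with its compatibility with the Langlands-type construction, but this runs exactly parallel to the discussion at the beginning of \cite[Section 5.3]{Matringe} already cited for the symmetric square, so I expect no genuine obstacle.
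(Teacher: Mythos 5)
Your proposal is correct and follows essentially the same route as the paper, which likewise obtains the identity by combining the Galois-side decomposition $\rho(\pi)\otimes\rho(\pi)\cong\wedge^2(\rho(\pi))\oplus\mathrm{Sym}^2(\rho(\pi))$ with the equality $L(s,\pi,\mathrm{Sym}^2)=L(s,\mathrm{Sym}^2(\rho(\pi)))$ just proved and the cited identifications $L(s,\pi,\wedge^2)=L(s,\omega_{\pi})$ and the compatibility of the Rankin--Selberg $L$-function with the local Langlands correspondence. Your extra care about the non-generic case via the Langlands-type induced representation is consistent with the paper's conventions and introduces no gap.
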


\section{Stability of $\gamma$-factors}
\label{sec5}

\subsection{Howe vectors and Bessel functions}
\label{sec5:1}
In \S \ref{sec5:1}, we extend the basic theory of Howe vectors \cite{ChaiZhang,Zhang} to metaplectic groups $\widetilde{GL}_2$. Each of the constructions in turn had its origin in the work of R. Howe \cite{Howe}. We assume that $\psi$ is unramified, that is, $\psi$ is trivial on $\mathcal{O}$ while $\psi(\varpi^{-1}\mathcal{O}) \neq 1$. We put $d_m=t(\varpi^{-m},\varpi^m)$
and let $J_m=d_mK_md_m^{-1}$. Then $J_m$ is given by
\[
  J_m=\begin{pmatrix}  1+\mathfrak{p}^m & \mathfrak{p}^{-m} \\ \mathfrak{p}^{3m} &  1+\mathfrak{p}^m  
  \end{pmatrix}.
\]
For $k=(k_{ij}) \in K_m$, we define a character $\tau_m$ of $K_m$ by
\[
 \tau_m(k)=\psi(\varpi^{-2m}k_{12}).
\]
As $\psi$ is unramified, we can check that $\tau_m$ is indeed a character on $K_m$. We define a character $\psi_m$ on $J_m$ by
\[
  \psi_m(j)=\tau_m(d_m^{-1}jd_m) \;\; \text{for} \;\; j \in J_m.
\]
We can also see that $\psi$ and $\psi_m$ agree on $N_m$ where $N_m=N \cap J_m$. For the rest of Section \ref{sec5} we let $\pi$ be an irreducible admissible representation of $GL_2$. If $\pi$ is generic $\pi \simeq \mathcal{W}(\pi,\psi)$ while if $\pi$ is not generic then we set $\mathcal{W}(\Pi,\psi)$ to be a Whittaker model for $\pi$ where $\Pi$ is the induced representation of Langlands type having $\pi$ as its unique irreducible quotient. We fix a Whittaker function $W \in \mathcal{W}(\pi,\psi)$ such that $W(I_2)=1$. For $m \geq 1$,
we define a function $W_m$ on $GL_2$ by
\[
  W_m(g)=\frac{1}{\mathrm{vol}(N_m)} \int_{N_m} W(gn) \psi^{-1}(n) \; dn.
\]
 The construction of Howe Whittaker functions can be adapted to the metaplectic setting with $GL_2$ replaced by $\widetilde{GL}_2$. We take $W_{\theta^{\psi}} \in \mathcal{W}(\theta^{\psi},\psi^{-1})$ to be $W_{\theta^{\psi}}(I_2)=1$. In like manner we define a function $W_{\theta^{\psi},m}$ on $\widetilde{GL}_2$ for $m \geq 1$ by
\[
  W_{\theta^{\psi},m}(\widetilde{g})=\frac{1}{\mathrm{vol}(N_m)} \int_{N_m} W_{\theta^{\psi}}(\widetilde{g}\mathfrak{s}(n)) \psi(n) \; dn.
\]
$ W_{\theta^{\psi},m}$ is simply $W_1$ in the notation of \cite[Lemma 4.1.1]{Gelbart-PS}. Many of results in \S \ref{sec5:1} resemble those in the setting of the general linear group $GL_2$ \citelist{\cite{Soudry}*{Lemma 4.1}\cite{Zhang}*{\S 5.1}}, henceforth we only remark on the nature of differences or omit the proof most of the time.

\begin{lemma}
\label{Howe}
We choose $W_{\theta^{\psi}} \in \mathcal{W}(\theta^{\psi},\psi^{-1})$ so that $W_{\theta^{\psi}}(I_2)=1$. Let $L$ be a positive integer such that $\theta^{\psi}(\mathfrak{s}(K_L))W_{\theta^{\psi}}=W_{\theta^{\psi}}$. Then we have
\begin{enumerate}[label=$(\mathrm{\arabic*})$] 
\item\label{Howe-1} $W_{\theta^{\psi},m}(I_2)=1$.
\item\label{Howe-2} If $m \geq L$ then $W_{\theta^{\psi},m}(\widetilde{g}\mathfrak{s}(j))=\psi_m^{-1}(j)W_{\theta^{\psi},m}(\widetilde{g})$ for all $j \in J_m$.
\item\label{Howe-3} If $m \ge k$, then
\[
  W_{\theta^{\psi}.m}(\widetilde{g})=\frac{1}{\mathrm{vol}(N_m)} \int_{N_m} W_{\theta^{\psi},k}(\widetilde{g}\mathfrak{s}(n)) \psi(n) dn.
\] 
\end{enumerate}
\end{lemma}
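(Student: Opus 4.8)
The plan is to follow the standard construction of Howe vectors on $GL_2$ (\cite{Soudry}, \cite{Zhang}; compare also \cite[Lemma 4.1.1]{Gelbart-PS}) and to carry the section $\mathfrak{s}$ and the cocycle $\sigma_2$ through it. The metaplectic overhead is mild: every group element that gets multiplied or conjugated past another in the argument lies in $N$ or in $K$, and on those $\sigma_2$ is trivial --- on $N\times GL_2$ by item \ref{BLS-1} of Lemma \ref{BLS}, and on $K\times K$ because $s_2\equiv 1$ in odd residual characteristic --- so $\mathfrak{s}$ is multiplicative wherever it needs to be, $\mathfrak{s}(N)$ and $\mathfrak{s}(K)$ are genuine subgroups, and the genuineness of $\theta^{\psi}$ never obstructs anything since $\psi$ and $\psi_m$ are inflated from $GL_2$. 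With this understood, \ref{Howe-1} is immediate: as $\mathfrak{s}$ splits $N$ we have $W_{\theta^{\psi}}(\mathfrak{s}(n))=\psi^{-1}(n)W_{\theta^{\psi}}(I_2)=\psi^{-1}(n)$, so the defining average equals $\mathrm{vol}(N_m)^{-1}\int_{N_m}\psi^{-1}(n)\psi(n)\,dn=1$.

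For \ref{Howe-3} I would expand the right-hand side using $\mathfrak{s}(n)\mathfrak{s}(n')=\mathfrak{s}(nn')$, apply Fubini, and --- for each fixed $n'\in N_k$, which is legitimate because $k\le m$ forces $N_k\subseteq N_m$ --- substitute $\widetilde{n}=nn'$ in the $N_m$-integral. Translation invariance of Haar measure on $N$ and the identity $\psi(n)=\psi(\widetilde{n})\psi(n')^{-1}$ render the inner integral independent of $n'$, and the remaining $\int_{N_k}dn'$ cancels the factor $\mathrm{vol}(N_k)^{-1}$, leaving exactly the defining integral for $W_{\theta^{\psi},m}(\widetilde{g})$.

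Item \ref{Howe-2} is the heart of the lemma. First I would record the two facts about the character $\psi_m$ that make the reduction work: $\psi_m$ restricts to $\psi$ on $N_m$ (already noted in the text), and $\psi_m$ is trivial on $J_m\cap\overline{B}$ --- conjugating a lower-triangular element of $J_m$ by $d_m$ gives an element of $K_m$ with vanishing $(1,2)$-entry, which $\tau_m$ annihilates. Next I would invoke the factorization $J_m=N_m\,(J_m\cap\overline{B})$: every $j\in J_m$ has unit $(2,2)$-entry, so splitting off the upper unipotent part leaves a lower-triangular factor which one checks lies in $J_m\cap\overline{B}$. Since $\sigma_2$ is trivial on $N\times GL_2$, $\mathfrak{s}$ respects this factorization, so it suffices to prove the transformation law separately for a factor $n_0\in N_m$ and a factor $\overline{b}\in J_m\cap\overline{B}$, and then recombine using $\psi_m(n_0\overline{b})=\psi_m(n_0)\psi_m(\overline{b})$ with $\psi_m(\overline{b})=1$. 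The $N_m$-case is handled by translating the integration variable inside the defining average, exactly as in \ref{Howe-1}, together with $\psi_m|_{N_m}=\psi|_{N_m}$.

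For $\overline{b}\in J_m\cap\overline{B}\subseteq K_m\subseteq K_L$, I would write $\overline{b}\,n=n'(n)\,\overline{b}'(n)$ with $n'(n)\in N_m$ and $\overline{b}'(n)\in J_m\cap\overline{B}\subseteq K_L$; since $\mathfrak{s}$ is a homomorphism on $K$ and $W_{\theta^{\psi}}$ is right $\mathfrak{s}(K_L)$-invariant, the $\overline{b}'$-factor drops out of the integrand, and the map $n\mapsto n'(n)$ is a measure-preserving bijection of $N_m$ whose Jacobian is a unit, so the average over $n$ turns into the average over $n'$ and yields $W_{\theta^{\psi},m}(\widetilde{g})$. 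The one genuinely new verification --- and the step I expect to be the main obstacle --- is that $\psi(n)=\psi(n'(n))$ under this substitution: writing $\overline{b}$ in lower-triangular form with diagonal entries $(a,d)$ and lower-left entry $c$ and $n=n(x)$, the discrepancy is $\psi$ evaluated at $x(cx+d-a)/(cx+d)$, and this argument lies in $\mathcal{O}$ precisely because the congruences cut out by $J_m$ force $cx+d-a\in\mathfrak{p}^m$ while $x\in\mathfrak{p}^{-m}$; as $\psi$ is unramified it is killed. Assembling the two cases then gives \ref{Howe-2}.
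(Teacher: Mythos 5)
Your proof is correct and follows exactly the standard route the paper itself points to (Soudry's Lemma 4.1, Zhang \S 5.1, and \cite[Lemma 4.1.1]{Gelbart-PS}); the paper omits the argument, and you have filled it in faithfully, including the only genuinely metaplectic points — triviality of $\sigma_2$ on $N\times GL_2$ and on $K\times K$ — and the one nontrivial estimate, namely that in $\overline{b}\,n(x)=n(x')\,\overline{b}'$ one has $x-x'=x(cx+d-a)/(cx+d)\in\mathcal{O}$ with unit Jacobian, so the unramified $\psi$ does not see the substitution. No gaps.
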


We now fix $W \in \mathcal{W}(\pi,\psi)$ and $W_{\theta^{\psi}} \in \mathcal{W}(\theta^{\psi},\psi^{-1})$ satisfying $W(I_2)=W_{\theta^{\psi}}(I_2)=1$. Let $L$ be an integer such that $W$ and $W_{\theta^{\psi}}$ are right invariant under $K_L$. The vector $W_{\theta^{\psi},m}$ is called \textit{Howe vectors} of $\theta^{\psi}$ if $m \geq L$. A parallel notation $W_m$ is applied to $W$.

\begin{lemma}
\label{excep-a-support}
Let $m \geq L$ and $W_{\theta^{\psi},m}$ be as in Lemma \ref{Howe}. Then we have
\begin{enumerate}[label=$(\mathrm{\arabic*})$] 
\item\label{excep-a-support-1} $W_{\theta^{\psi},m}(\mathfrak{s}(t(a,1))) \neq 0$ if and only if $a \in 1+\mathfrak{p}^m$.
\item\label{excep-a-support-2} If $W_{\theta^{\psi},m}(\mathfrak{s}(t(a,1))\mathfrak{s}(w_2)) \neq 0$, then $a \in \mathfrak{p}^{-3m}$.
\end{enumerate}
\end{lemma}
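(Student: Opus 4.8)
The plan is to play off the two invariance properties of the Howe vector $W_{\theta^{\psi},m}$: on the left it transforms under $N$ by the character $\psi^{-1}$ (this equivariance passes from $W_{\theta^{\psi}}$ through the right averaging defining $W_{\theta^{\psi},m}$), while on the right it transforms under $J_m$ by $\psi_m^{-1}$ thanks to Lemma~\ref{Howe}\ref{Howe-2} (which is where $m\geq L$ enters). Conjugation by $d_m=t(\varpi^{-m},\varpi^m)$ turns $n(x)$ into $n(\varpi^{2m}x)$, turns $\overline{n}(z)$ into $\overline{n}(\varpi^{-2m}z)$, and fixes the diagonal torus, so $\psi_m(n(x))=\psi(x)$ for $n(x)\in J_m$ (equivalently $x\in\mathfrak{p}^{-m}$), while $\psi_m$ is trivial on $\overline{N}\cap J_m$ and on $T\cap J_m$. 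Evaluating $W_{\theta^{\psi},m}$ at a suitable product in two ways and tracking the cocycles via Lemma~\ref{BLS}, the non-vanishing hypothesis will force $\psi$ to be trivial on a fractional ideal; since $\psi$ is unramified of conductor $\mathcal{O}$, this pins $a$ down.

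For \ref{excep-a-support-1}, the implication $\Leftarrow$ is immediate: if $a\in 1+\mathfrak{p}^m$ then $t(a,1)\in J_m$ and $\psi_m(t(a,1))=\tau_m(t(a,1))=1$ (the off-diagonal entry vanishes), so Lemma~\ref{Howe}\ref{Howe-2} together with \ref{Howe-1} gives $W_{\theta^{\psi},m}(\mathfrak{s}(t(a,1)))=W_{\theta^{\psi},m}(I_2)=1$. For $\Rightarrow$, I would fix $x\in\mathfrak{p}^{-m}$, so $n(x)\in J_m$, and compute $W_{\theta^{\psi},m}(\mathfrak{s}(t(a,1))\mathfrak{s}(n(x)))$ twice. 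Lemma~\ref{Howe}\ref{Howe-2} gives $\psi_m^{-1}(n(x))W_{\theta^{\psi},m}(\mathfrak{s}(t(a,1)))=\psi^{-1}(x)W_{\theta^{\psi},m}(\mathfrak{s}(t(a,1)))$; on the other hand, since $\sigma_2$ is trivial whenever one argument lies in $N$ (Lemma~\ref{BLS}\ref{BLS-1}), one has $\mathfrak{s}(t(a,1))\mathfrak{s}(n(x))=\mathfrak{s}(n(ax))\mathfrak{s}(t(a,1))$, and the left $N$-equivariance yields $\psi^{-1}(ax)W_{\theta^{\psi},m}(\mathfrak{s}(t(a,1)))$. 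If $W_{\theta^{\psi},m}(\mathfrak{s}(t(a,1)))\neq 0$ this forces $\psi((1-a)x)=1$ for all $x\in\mathfrak{p}^{-m}$, i.e. $(1-a)\mathfrak{p}^{-m}\subseteq\mathcal{O}$, i.e. $a\in 1+\mathfrak{p}^m$.

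For \ref{excep-a-support-2}, I would set $g=t(a,1)w_2$ and, for $y$ in the fractional ideal $a\mathfrak{p}^{3m}$, use the elementary identity $n(y)\,t(a,1)\,w_2=t(a,1)\,w_2\,\overline{n}(y/a)$ together with $\overline{n}(y/a)\in J_m$. Lifting to $\widetilde{GL}_2$, one must track a cocycle discrepancy: applying the formula $\sigma_2(g,g')=\sigma_2(t,wn_2g')\,\sigma_2(w,n_2g')$ for $g=t(a,1)w_2$ and then parts \ref{BLS-2}, \ref{BLS-3}, \ref{BLS-4} of Lemma~\ref{BLS}, the factor $\sigma_2(t(a,1)w_2,\overline{n}(y/a))$ collapses to the Hilbert symbol $(-y/a,-y/a)_F$, which lies in $\{\pm1\}$. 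Since $\psi_m$ is trivial on $\overline{n}(y/a)$, comparing the left $N$-action with Lemma~\ref{Howe}\ref{Howe-2} forces, whenever $W_{\theta^{\psi},m}(\mathfrak{s}(g))\neq 0$,
\[
\psi^{-1}(y)=(-y/a,-y/a)_F\qquad\text{for all }y\in a\mathfrak{p}^{3m}.
\]
If $a\notin\mathfrak{p}^{-3m}$, then $a\mathfrak{p}^{3m}\not\subseteq\mathcal{O}$, hence $a\mathfrak{p}^{3m}\supseteq\mathfrak{p}^{-1}$, on which $\psi$ descends to a non-trivial additive character of the residue field $\mathcal{O}/\mathfrak{p}$ and therefore takes a value which is a primitive $p$-th root of unity; since $p$ is odd this value is not $\pm1$, contradicting the displayed identity. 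Hence $a\in\mathfrak{p}^{-3m}$.

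The step I expect to be the main obstacle is the metaplectic bookkeeping in \ref{excep-a-support-2}: verifying that the discrepancy between $\mathfrak{s}(n(y))\mathfrak{s}(t(a,1))\mathfrak{s}(w_2)$ and $\mathfrak{s}(t(a,1))\mathfrak{s}(w_2)\mathfrak{s}(\overline{n}(y/a))$ is exactly the scalar $(-y/a,-y/a)_F$, and in particular is $\{\pm1\}$-valued — it is precisely this $\{\pm1\}$-valuedness that makes the ``primitive $p$-th root of unity'' contradiction work, so the odd residual characteristic of $F$ is essential here. The remaining parts are routine translations of the $GL_2$ computations of \cite{Soudry,Zhang}, using $|2|=1$ so that $\psi$ and the Hilbert symbol are unramified.
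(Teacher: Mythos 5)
Your proposal is correct, and it is essentially the standard argument the paper is invoking: the paper omits the proof of this lemma, referring to the analogous computations in Soudry, Zhang, and Gelbart--Piatetski-Shapiro, and your two-sided evaluation (left $\psi^{-1}$-equivariance against right $(J_m,\psi_m)$-equivariance via Lemma \ref{Howe}\ref{Howe-2}, using $n(y)t(a,1)w_2=t(a,1)w_2\overline{n}(y/a)$ and the conductor of $\psi$) is exactly that argument. The cocycle bookkeeping also checks out: $\sigma_2(t(a,1)w_2,\overline{n}(z))=\sigma_2(t(a,1),\mathfrak{t}(w_2\overline{n}(z)))\,\sigma_2(w_2,\overline{n}(z))=(a,1)_F\,(-z,-z)_F=(-z,-z)_F\in\{\pm1\}$, so the primitive $p$-th-root-of-unity contradiction (using $p\neq 2$) goes through.
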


In virtue of Lemma \ref{Howe}-\ref{Howe-2} the Whittaker function $W_{\theta^{\psi},m}$ satisfies 
\[
  W_{\theta^{\psi},m}(\mathfrak{s}(n_1)\widetilde{g}\mathfrak{s}(n_2))=\psi^{-1}(n_1)\psi^{-1}(n_2)W_{\theta^{\psi},m}(\widetilde{g}) \;\; \text{for all} \;\; n_1 \in N, n_2 \in N_m \;\; \text{and} \;\; \widetilde{g} \in \widetilde{GL}_2.
\]
The transformation implies that $W_{\theta^{\psi},m}$ really behaves partially like a Bessel function and $W_{\theta^{\psi},m}$ is said to be a \textit{partial Bessel function}. In particular we are interested in the full Bessel function attached to the representation $\theta^{\psi}$ for the further analysis. For $W_{\theta^{\psi}} \in \mathcal{W}(\theta^{\psi},\psi^{-1})$ the integral
\[
  \ell(W,x)=\int_{N_m} W_{\theta^{\psi}} \left( \mathfrak{s} \begin{pmatrix} x & \\ & 1 \end{pmatrix} \mathfrak{s}(w_2) \mathfrak{s}(n) \right) \psi(n) dn
\]
converges in the sense of stabilizing for large $m$ depending on $x$. See \cite[Lemma 4.1]{Gelbart-PS}. It defines a Whittaker functional on $V_{\theta^{\psi}}$ for fixed $x \in F$, as does the functional $v \mapsto  W_{\theta^{\psi},v}(I_2)$. It follows from the uniqueness of Whittaker functional \cite[\S 2.1]{Gelbart-PS} that there is the constant proportionality as a function of $x$ when $v$ varies, that is,
\[
  j_{\theta^{\psi}}(x)W_{\theta^{\psi},v}(I_2)=\int_{N_m} W_{\theta^{\psi},v} \left( \mathfrak{s} \begin{pmatrix} x & \\ & 1 \end{pmatrix} \mathfrak{s}(w_2) \mathfrak{s}(n) \right) \psi(n) dn.
\]
 The function $ j_{\theta^{\psi}}(x)$ is called the \textit{Bessel function}. The basic behavior of Bessel functions is carried out as a part of Ph.D. Thesis by Wang \cite[p. 37]{Wang} supervised by Rallis. The proof is in turn based on the work of Gelbart and Piatetski-Shapiro \cite[Proposition 4.4.2]{Gelbart-PS} and Soudry \cite[Lemma 4.1]{Soudry}.

\begin{proposition}
\label{Bessel}
Let $L \geq 1$ and $W_{\theta^{\psi}} \in \mathcal{W}(\theta^{\psi},\psi^{-1})$ be as in Lemma \ref{Howe}.
\begin{enumerate}[label=$(\mathrm{\arabic*})$] 
\item\label{Bessel-1} (Asymptotics for Bessel function on a certain range) For every $m \geq L$, there is a positive constant $C_m=q^{9m}$ such that if $|x| > C_m$, then
\[
  j_{\theta^{\psi}}(x)=\int_{xu^{-2} \in 1+\mathfrak{p}^{3m}} (-x,u)_F \mu_{\psi}(u)^{-1}\psi \left( xu^{-1}+u\right) du.
\]
\item\label{Bessel-2} (Bessel functions) Let $M$ be a positive integer such that $q^{6M} \geq C_L$. If $i \geq M$, then
\[
  \mathrm{vol}(N_{3i}) W_{\theta^{\psi},3i}(\mathfrak{s}(t(x,1))\mathfrak{s}(w_2))=j_{\theta^{\psi}}(x) \quad \text{for all} \quad |x| \leq q^{6i}.
\]
\item\label{Bessel-3} (Local uniform smoothness) Let $i \geq M$. For $x \in \mathfrak{p}^{-6i}$ and $a \in 1+\mathfrak{p}^{3i}$,
\[
  j_{\theta^{\psi}}(xa)=j_{\theta^{\psi}}(x).
\]
 \end{enumerate}
\end{proposition}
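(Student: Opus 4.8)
All three assertions can be deduced from a single computation: the explicit Bruhat analysis of the function $u\mapsto W_{\theta^{\psi},3m}\bigl(\mathfrak{s}(t(x,1))\mathfrak{s}(w_2)\mathfrak{s}(n(u))\bigr)$, together with the support properties of the Howe vectors of $\theta^{\psi}$ recorded in Lemmas~\ref{Howe} and~\ref{excep-a-support}. The genuinely metaplectic input is confined to the Kubota $2$-cocycle $\sigma_2$, for which Lemma~\ref{BLS} supplies the needed identities, and to the central transformation law \eqref{central-theta} of the Whittaker functional combined with the action of $\widetilde{B}$ on the Kirillov model of $\theta^{\psi}$ recalled around Proposition~\ref{GPS2}; once these are in place the argument follows the pattern of \cite{Soudry,Zhang,ChaiZhang} and of Wang's thesis \cite{Wang}. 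Part \ref{Bessel-1} is the substantive one, part \ref{Bessel-2} is a stabilization refinement of it, and part \ref{Bessel-3} is then formal.

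For \ref{Bessel-1}, since the proportionality constant $j_{\theta^{\psi}}$ is independent of the vector, I would take the test vector in the Bessel identity to be the Howe vector $W_{\theta^{\psi},3m}$, which is allowed because $W_{\theta^{\psi},3m}(I_2)=1$ by Lemma~\ref{Howe}-\ref{Howe-1}, so that $j_{\theta^{\psi}}(x)=\int_{N_{m'}}W_{\theta^{\psi},3m}\bigl(\mathfrak{s}(t(x,1))\mathfrak{s}(w_2)\mathfrak{s}(n(u))\bigr)\psi(u)\,du$ for $m'$ in the stable range. Using the matrix identity $t(x,1)w_2n(u)=n(xu^{-1})\,t(-xu^{-1},u)\,\overline{n}(u^{-1})$ for $u\neq0$, lifting it through $\mathfrak{s}$, bookkeeping $\sigma_2$ by Lemma~\ref{BLS}, and evaluating $t(-xu^{-1},u)=t(-xu^{-2},1)t(u,u)$ via \eqref{central-theta} and the Kirillov action, the integrand reduces to a product of the Weil index $\mu_{\psi}(u)^{-1}$, a Hilbert symbol $(-x,u)_F$, an oscillatory factor, and $W_{\theta^{\psi},3m}\bigl(\mathfrak{s}(t(-xu^{-2},1))\bigr)$. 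Lemma~\ref{excep-a-support}-\ref{excep-a-support-1} forces this last factor to vanish unless $xu^{-2}\in1+\mathfrak{p}^{3m}$, on which locus it equals $W_{\theta^{\psi},3m}(I_2)=1$; and the hypothesis $|x|>C_m=q^{9m}$ is used, via Lemma~\ref{excep-a-support}-\ref{excep-a-support-2} and the $J_{3m}$-equivariance of Lemma~\ref{Howe}-\ref{Howe-2}, to show that the portion of the integral over bounded $u$ (where $n(u)\in N_{3m}$ and $W_{\theta^{\psi},3m}\bigl(\mathfrak{s}(t(x,1))\mathfrak{s}(w_2)\bigr)$ itself already vanishes) contributes nothing. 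Collecting the surviving piece, after normalizing signs by a change of variable, gives the claimed integral $\int_{xu^{-2}\in1+\mathfrak{p}^{3m}}(-x,u)_F\,\mu_{\psi}(u)^{-1}\,\psi(xu^{-1}+u)\,du$; carrying the cocycle and the Weil index cleanly through this (possibly iterated) Bruhat reduction, and checking that $q^{9m}$ is really the correct threshold, is the delicate point, and is where the fine structure of the Kirillov model of $\theta^{\psi}$ due to Gelbart and Piatetski-Shapiro \cite{Gelbart-PS} is indispensable.

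For \ref{Bessel-2}, I would expand $\mathrm{vol}(N_{3i})\,W_{\theta^{\psi},3i}\bigl(\mathfrak{s}(t(x,1))\mathfrak{s}(w_2)\bigr)=\int_{N_{3i}}W_{\theta^{\psi}}\bigl(t(x,1)w_2n(u)\bigr)\psi(u)\,du$ and compare it with the stabilized Bessel integral over $N_{m'}$ with $m'\gg 3i$. Running the same Bruhat decomposition and invoking the $w_2$-support bound Lemma~\ref{excep-a-support}-\ref{excep-a-support-2} together with the level-raising identity Lemma~\ref{Howe}-\ref{Howe-3}, one verifies that for $|x|\le q^{6i}$ the integrand already vanishes for $u\notin\mathfrak{p}^{-3i}$, so that enlarging the domain from $N_{3i}$ to $N_{m'}$ does not change the value; the normalization $q^{6M}\ge C_L$ is precisely what makes this vanishing range and the stable range overlap once $i\ge M$. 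Hence the $N_{3i}$-integral already computes $j_{\theta^{\psi}}(x)$.

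For \ref{Bessel-3}, note first that $|xa|=|x|\le q^{6i}$ since $a$ is a unit, so \ref{Bessel-2} applies to both $x$ and $xa$. From $t(xa,1)=t(x,1)t(a,1)$ and the matrix identity $t(a,1)w_2=w_2\,t(1,a)$, Lemma~\ref{BLS} gives $\sigma_2(t(x,1),t(a,1))=\sigma_2(t(a,1),w_2)=1$ and $\sigma_2(w_2,t(1,a))=(a,-a)_F=1$, whence $\mathfrak{s}(t(xa,1))\mathfrak{s}(w_2)=\mathfrak{s}(t(x,1))\mathfrak{s}(w_2)\,\mathfrak{s}(t(1,a))$. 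Since $a\in1+\mathfrak{p}^{3i}$ one has $t(1,a)\in J_{3i}$ with $\psi_{3i}(t(1,a))=1$, its $(1,2)$-entry being $0$, so Lemma~\ref{Howe}-\ref{Howe-2} yields $W_{\theta^{\psi},3i}\bigl(\mathfrak{s}(t(x,1))\mathfrak{s}(w_2)\mathfrak{s}(t(1,a))\bigr)=W_{\theta^{\psi},3i}\bigl(\mathfrak{s}(t(x,1))\mathfrak{s}(w_2)\bigr)$; multiplying by $\mathrm{vol}(N_{3i})$ and applying \ref{Bessel-2} to both sides gives $j_{\theta^{\psi}}(xa)=j_{\theta^{\psi}}(x)$.
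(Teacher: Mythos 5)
The paper does not actually prove Proposition \ref{Bessel}: it is quoted from Wang's thesis \cite[p.~37]{Wang}, whose argument in turn follows \cite[Proposition 4.4.2]{Gelbart-PS} and \cite[Lemma 4.1]{Soudry}. Your sketch reconstructs exactly that standard route (Bruhat decomposition of $t(x,1)w_2n(u)$, support properties of the Howe vectors, cocycle bookkeeping via Lemma \ref{BLS}), so there is no divergence of method to report. Your part \ref{Bessel-3} is complete and correct as written: the identity $\mathfrak{s}(t(xa,1))\mathfrak{s}(w_2)=\mathfrak{s}(t(x,1))\mathfrak{s}(w_2)\mathfrak{s}(t(1,a))$, the cocycle values $(x,1)_F=(a,1)_F=(a,-a)_F=1$, and the $(J_{3i},\psi_{3i})$-equivariance of Lemma \ref{Howe}-\ref{Howe-2} do combine to give $j_{\theta^{\psi}}(xa)=j_{\theta^{\psi}}(x)$ once \ref{Bessel-2} is available.

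For parts \ref{Bessel-1} and \ref{Bessel-2}, however, the step you yourself flag as ``the delicate point'' is not merely delicate — as written it does not close, and it is where the specific constants of the proposition live. After the (correct) factorization $t(x,1)w_2n(u)=n(xu^{-1})\,t(-xu^{-1},u)\,\overline{n}(u^{-1})$, the factor $\overline{n}(u^{-1})$ cannot be discarded by the right $(J_{3m},\psi_{3m})$-equivariance of $W_{\theta^{\psi},3m}$: membership $\overline{n}(u^{-1})\in J_{3m}$ requires $|u|\geq q^{9m}$, whereas on the locus $xu^{-2}\in 1+\mathfrak{p}^{3m}$ one only has $|u|=|x|^{1/2}>q^{9m/2}$. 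One must instead descend to a lower level via Lemma \ref{Howe}-\ref{Howe-3} (or use the right $K_L$-invariance of the original vector) and then re-commute the resulting $N$-average past $\overline{n}(u^{-1})$, and it is precisely this interchange that forces the threshold $C_m=q^{9m}$ and the exponent $3m$ in the support condition; the sign discrepancy between your $\psi(u-xu^{-1})$ (coming from $\psi(u)$ times the left $N$-equivariance $\psi^{-1}(xu^{-1})$) and the stated $\psi(u+xu^{-1})$ also has to be resolved by an explicit change of variable that you would then need to track through $(-x,u)_F$ and $\mu_{\psi}(u)^{-1}$. The same issue recurs in \ref{Bessel-2}, where the claim that the integrand vanishes for $u\notin\mathfrak{p}^{-3i}$ when $|x|\leq q^{6i}$ is asserted rather than derived. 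Since these quantitative statements are the entire content of \ref{Bessel-1} and \ref{Bessel-2}, the proposal should either carry them out or do what the paper does and cite \cite{Wang}, \cite{Gelbart-PS}, and \cite{Soudry} for them.
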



\subsection{Intertwining operators}
\label{sec5:2}

In \S \ref{sec5:2}, we are concerned with computing the image of the test function \eqref{char} under the intertwining operator. We denote by $\chi$ a character of $F^{\times}$. Let $X$ be an open compact subgroup of $N$. For $x \in X$ and $i > 0$, we set 
\[
  A(x,i)=\{ \overline{n} \in \overline{N} \; | \; \overline{n}x \in Z^2P\overline{N}_i\}.
\]

\begin{lemma}
\label{compact-decomp}
For any positive integer $c$ there exists an integer $i_1=i_1(X,c)$ such that for all $i \geq i_1$, $x \in X$ and $\overline{n} \in A(x,i)$ we have
\begin{equation}
\label{eq-decomposition}
  \overline{n}x=t(\alpha,\alpha)t(\beta,1)n\overline{n}_0
\end{equation}
with $\alpha, \beta \in 1+\mathfrak{p}^c$, $n \in N$ and $\overline{n}_0 \in \overline{N}_i$. Furthermore there is an integer $i_2=i_2(X)$ such that for all $i \geq i_2$
we have $A(x,i)=\overline{N}_i$.
\end{lemma}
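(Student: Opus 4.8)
The plan is to translate everything into the coordinates $x=n(y)$, $\overline n=\overline n(z)$ and to read membership in $Z^2P\overline N_i$ directly off matrix entries (I write $\overline N_i=\{\overline n(w)\mid w\in\mathfrak p^{\,i}\}$; for any of the standard normalizations only the constants change). Since $X$ is a compact open subgroup of $N$, there is an integer $r=r(X)\ge 0$ with $\{\,y\in F\mid n(y)\in X\,\}\subseteq\mathfrak p^{-r}$; fix such an $r$. First I would record two elementary facts: (a) for $g\in GL_2$ one has $g\in B\overline N$ exactly when the $(2,2)$-entry $g_{22}$ is nonzero, and in that case the factorization $g=b\,\overline n(w)$ with $b\in B$ is unique, with $w=g_{21}/g_{22}$ and $b=g\,\overline n(-w)$; (b) $Z^2P$ is precisely the set of upper-triangular matrices whose $(2,2)$-entry lies in $(F^\times)^2$ (immediate from $Z^2P=Z^2\cdot AN$). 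Since $Z^2P\overline N_i\subseteq B\overline N$, combining (a) and (b) shows that $\overline n(z)\in A(x,i)$ if and only if $1+zy\neq 0$, the element $w:=z/(1+zy)$ lies in $\mathfrak p^{\,i}$, and $1+zy\in(F^\times)^2$; in that case $b=\left(\begin{smallmatrix}(1+zy)^{-1}&y\\0&1+zy\end{smallmatrix}\right)$, using $\det(\overline n(z)x)=1$.

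Next I would extract the size estimate forcing the torus part to be close to $1$. From $w=z/(1+zy)$ one gets $z(1-wy)=w$; if $i>r$ then $|wy|\le q^{\,r-i}<1$, so $|1-wy|=1$ and hence $|z|=|w|\le q^{-i}$, whence $|zy|\le q^{\,r-i}$. Therefore, given $c\ge 1$, the choice $i_1(X,c):=r(X)+c$ guarantees $zy\in\mathfrak p^{\,c}$ for all $i\ge i_1$, all $x\in X$ and all $\overline n(z)\in A(x,i)$. Granting $zy\in\mathfrak p^{\,c}$, put $\alpha:=1+zy$ and $\beta:=(1+zy)^{-2}$; since $1+\mathfrak p^{\,c}$ is a subgroup of $\mathcal O^\times$, both lie in $1+\mathfrak p^{\,c}$. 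A one-line matrix computation gives $b=t(\alpha,\alpha)\,t(\beta,1)\,n\bigl(y(1+zy)\bigr)$, so with $n:=n\bigl(y(1+zy)\bigr)\in N$ and $\overline n_0:=\overline n(w)\in\overline N_i$ we obtain the identity $\overline n(z)x=t(\alpha,\alpha)t(\beta,1)\,n\,\overline n_0$ of \eqref{eq-decomposition}.

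For the final assertion I claim $i_2(X):=r(X)+1$ works. The inclusion $A(x,i)\subseteq\overline N_i$ for $i\ge i_2$ is exactly the bound $|z|\le q^{-i}$ found above. For the reverse inclusion, let $z\in\mathfrak p^{\,i}$ with $i\ge i_2$; then $zy\in\mathfrak p^{\,i-r}\subseteq\mathfrak p$, so $1+zy\in 1+\mathfrak p$, and $1+\mathfrak p\subseteq(F^\times)^2$ because the residue characteristic is odd — Hensel's lemma applied to $T^2-(1+zy)$, whose derivative at $T=1$ is the unit $2$. Also $w=z/(1+zy)\in\mathfrak p^{\,i}$, so $\overline n(z)x=b\,\overline n(w)\in Z^2P\overline N_i$, i.e.\ $\overline n(z)\in A(x,i)$. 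Hence $A(x,i)=\overline N_i$ for $i\ge i_2(X)$.

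The argument is elementary, and the only two points needing attention are, first, recognizing the open-cell factorization $\overline n(z)x=b\,\overline n(w)$ and reading the two conditions defining $Z^2P\overline N_i$ off the entries of $b$, and second, keeping $i_1$ and $i_2$ uniform in $x\in X$, which is handled by the single integer $r(X)$. The hypothesis that $F$ has odd residual characteristic enters exactly once, in the identity $1+\mathfrak p\subseteq(F^\times)^2$; the other constants can all be made explicit as above.
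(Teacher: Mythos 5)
Your argument is correct and is essentially the paper's: both proofs read the decomposition of $\overline{n}x$ in the open cell $B\overline{N}$ directly off the matrix entries, extract $\alpha\beta=(1+zy)^{\pm1}$ from the $(2,2)$-entry and the determinant, and choose $i_1$ using only the bound $r(X)$ coming from compactness of $X$ (your $\mathfrak{p}^{\,i}$ versus the paper's $\mathfrak{p}^{\,3i}$ normalization of $\overline{N}_i$ only shifts constants, as you note). In fact your write-up is slightly more complete than the paper's sketch: the published proof only establishes $A(x,i)\subseteq\overline{N}_i$, whereas you also verify the reverse inclusion, correctly isolating the one place where odd residual characteristic is used, namely $1+\mathfrak{p}\subseteq(F^{\times})^2$ via Hensel's lemma, which is needed so that the $(2,2)$-entry $1+zy$ lands in $(F^{\times})^2$ and hence $b\in Z^2P$.
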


\begin{proof}
We mimic a train of the standard argument from \cite[Lemma 3.7]{ChaiZhang}. Since $\overline{n}(y)n(x) \in Z^2P\overline{N}_i$, $\overline{n}(y)n(x)$ takes a form
\[
 \overline{n}(y)n(x) =\begin{pmatrix} \alpha\beta & \upsilon \\ & \alpha \end{pmatrix}\overline{n}(\overline{y})
\]
for $\alpha \in (F^{\times})^2$, $|x| < M$ and $\overline{y} \in \mathfrak{p}^{3i}$.
For convenience, we rewrite it as
\begin{equation}
\label{matrix-mul}
 \begin{pmatrix} 1-x\overline{y} & x \\ -\overline{y} & 1 \end{pmatrix}=n(x)\overline{n}(-\overline{y})=\overline{n}(-y) \begin{pmatrix} \alpha\beta & \upsilon \\ & \alpha \end{pmatrix}
 =\begin{pmatrix} \alpha\beta & \upsilon \\ -\alpha\beta y & -\upsilon y+\alpha \end{pmatrix}.
\end{equation}
Looking at the left entries in \eqref{matrix-mul}, we have $1-x\overline{y}=\alpha\beta$ and $\overline{y}=\alpha\beta y$. It is quickly seen from the condition $|x| < M$ that for any positive integer $c$ we can pick $i_1(X,c)$ such that $\alpha\beta=1-x\overline{y} \in 1+\mathfrak{p}^c$. Taking the determinant on the both sides of \eqref{matrix-mul}, we obtain $\alpha^2\beta=1$, which tells us that $\alpha \in 1+\mathfrak{p}^c$. Hence $\beta$ must be in $1+\mathfrak{p}^c$. Now we put $i_2(X):=i_1(X,1)$ and get $y \in \mathfrak{p}^{3i}$ from $y=(\alpha\beta)^{-1}\overline{y}$.
\end{proof}

Let us rewrite \eqref{char} in the current setting for later use. Given a positive integer $i$ and a complex number $s \in \mathbb{C}$, we define the following function $f^i_{2s-1} \in I(2s-1,\omega_{\pi \otimes \chi}^{-1})$ on $\widetilde{GL}_2$ by
\[
f_{2s-1}^i(\widetilde{g})=
 \begin{cases}
  \xi\omega^{-1}_{\pi \otimes \chi}(a)  \delta_B(p)^{(2s+1)/4} & \text{if}\;\;  \widetilde{g}=(1,\xi)\mathfrak{s}(t(a,a)) \mathfrak{s}(p) \mathfrak{s}(\overline{n})\;\; \text{with}\;\; a \in (F^{\times})^2, p \in P, \overline{n} \in \overline{N}_i, \\
  0 &  \text{otherwise}.
 \end{cases}\\
\]
The section is the $\widetilde{GL}_2$-analogue to that of \cite[Lemma 3.8]{ChaiZhang}. We let $\widetilde{f}^i_{1-2s}$ denote $M(2s-1,\omega^{-1}_{\pi \otimes \chi})f^i_{2s-1}$ the image of $f^i_{2s-1}$ under the intertwining operator $M(2s-1,\omega^{-1}_{\pi \otimes \chi})$. We now evaluate $f^i_{2s-1}$.

\begin{proposition}
\label{eval-intertwining}
Let $X$ be a compact open subset of $N$. Then there exists an integer $I(X,\omega_{\pi\otimes\chi})$ such that for all $i \geq I(X,\omega_{\pi\otimes\chi})$ and $x \in X$, we have
\[
  \widetilde{f}^i_{1-2s}(\mathfrak{s}(w_2)\mathfrak{s}(n(x)))=\mathrm{vol}(\overline{N}_i).
  \]
\end{proposition}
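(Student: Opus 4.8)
The plan is to unfold the definition of the intertwining operator and show that, for $i$ large relative to $X$ and to the conductor of $\omega_{\pi\otimes\chi}$, the resulting $N$-integral collapses to the integral of the constant function $1$ over a single cell. First I would use the integral formula for $M(2s-1,\omega_{\pi\otimes\chi}^{-1})$ together with the cocycle identities of Lemma \ref{BLS}: since $\sigma_2(w_2,n(\cdot))=1$ (Lemma \ref{BLS}\,\ref{BLS-1}) one has $\mathfrak{s}(w_2)\mathfrak{s}(n(y))=\mathfrak{s}(w_2n(y))$, and a direct computation with the Kubota $2$-cocycle (using the Hilbert-symbol identity $(a,a)_F=(a,-1)_F$ and the symmetry $(a,b)_F(b,a)_F=1$) shows $\sigma_2(w_2n(y),w_2n(x))=1$; since $w_2n(y)\,w_2n(x)=\overline n(y)n(x)$ this gives
\[
\widetilde{f}^i_{1-2s}(\mathfrak{s}(w_2)\mathfrak{s}(n(x)))=\int_F f^i_{2s-1}\bigl(\mathfrak{s}(\overline n(y)n(x))\bigr)\,dy .
\]

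Next I would read off the support of the integrand: it vanishes unless $\overline n(y)n(x)\in Z^2P\overline N_i$, i.e. unless $\overline n(y)\in A(n(x),i)$ in the notation of Lemma \ref{compact-decomp}. I would choose $c\geq1$ so that $\omega_{\pi\otimes\chi}$ is trivial on $1+\mathfrak p^{c}$ — recall $1+\mathfrak p^{c}\subseteq(F^{\times})^{2}$ since the residue characteristic is odd — and set $I(X,\omega_{\pi\otimes\chi})=\max\{i_1(X,c),i_2(X)\}$. For $i\geq I(X,\omega_{\pi\otimes\chi})$, Lemma \ref{compact-decomp} gives $A(n(x),i)=\overline N_i$, which is compact, so the integral runs over $\{y:\overline n(y)\in\overline N_i\}$ and in particular converges absolutely for every $s$. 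For such $y$ and $x\in X$, the same lemma supplies $\overline n(y)n(x)=t(\alpha,\alpha)\,t(\beta,1)n\,\overline n_0$ with $\alpha,\beta\in1+\mathfrak p^{c}$, $n\in N$, $\overline n_0\in\overline N_i$. Writing $\mathfrak{s}(\overline n(y)n(x))$ as $(1,\xi')\,\mathfrak{s}(t(\alpha,\alpha))\mathfrak{s}(t(\beta,1)n)\mathfrak{s}(\overline n_0)$ and applying the defining formula for $f^i_{2s-1}$,
\[
f^i_{2s-1}\bigl(\mathfrak{s}(\overline n(y)n(x))\bigr)=\xi'\cdot\omega_{\pi\otimes\chi}^{-1}(\alpha)\,\delta_B\bigl(t(\beta,1)n\bigr)^{(2s+1)/4}=\xi',
\]
since $\omega_{\pi\otimes\chi}^{-1}(\alpha)=1$ and $\delta_B(t(\beta,1)n)=|\beta|=1$. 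It then remains to check $\xi'=1$: unwinding $\xi'$ via parts \ref{BLS-2}--\ref{BLS-4} of Lemma \ref{BLS} reduces it to Hilbert symbols of the shape $(\alpha\beta,\ast)_F$ with $\alpha\beta\in1+\mathfrak p\subseteq(F^{\times})^{2}$, hence trivial. Consequently the integrand is identically $1$ on its support and
\[
\widetilde{f}^i_{1-2s}(\mathfrak{s}(w_2)\mathfrak{s}(n(x)))=\int_{\{y:\overline n(y)\in\overline N_i\}}dy=\mathrm{vol}(\overline N_i),
\]
the Haar measure on $\overline N$ being normalized through $y\mapsto\overline n(y)$ compatibly with $dy$ on $F$.

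The two appeals to Lemma \ref{compact-decomp} are routine; the part that requires care — and where the metaplectic situation genuinely departs from its linear-group model \cite[Lemma 3.8]{ChaiZhang} — is the cocycle bookkeeping, namely verifying that $\sigma_2(w_2n(y),w_2n(x))=1$ and that the sign $\xi'$ incurred when splitting $\mathfrak{s}(\overline n(y)n(x))$ into a product of sections is trivial throughout the relevant range of $y$. I expect that, together with pinning down the measure normalization so that the constant emerges exactly as $\mathrm{vol}(\overline N_i)$ rather than a scalar multiple of it, to be the only real work in the proof.
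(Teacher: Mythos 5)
Your proof is correct and follows essentially the same route as the paper's: unfold the intertwining integral, identify the support via Lemma \ref{compact-decomp} with $c$ chosen so that $\omega_{\pi\otimes\chi}$ is trivial on $1+\mathfrak{p}^c$, and evaluate the section to $1$ on the cell $\overline{N}_i$. If anything you are more careful than the paper, which simply asserts $\overline{N}^{\ast}N^{\ast}=\mathfrak{s}(\overline{N}N)$ and suppresses the cocycle signs that you verify explicitly via $(a,-a)_F=1$, antisymmetry, and $\alpha,\beta\in 1+\mathfrak{p}^c\subseteq (F^{\times})^2$.
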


\begin{proof}
We choose a positive integer $c$ such that  $\omega_{\pi \otimes \chi}$ is trivial on $1+\mathfrak{p}^c$. Then we choose $I(X,\omega_{\pi\otimes\chi})$ to be the maximum of $i_1(X,c)$ and $i_2(X)$. Since
\[
 \widetilde{f}^i_{1-2s}(\mathfrak{s}(w_2)\mathfrak{s}(n(x)))=\int_F f_{2s-1}(\mathfrak{s}(w_2)\mathfrak{s}(n(y)\mathfrak{s}(w_2)\mathfrak{s}(n(x))) dx,
\]
the value of $\widetilde{f}^i_{1-2s}$ is completely determined by the support of the function $f_{2s-1}(\mathfrak{s}(\overline{n}(y)n(x)))$. In fact $\overline{N}^{\ast}N^{\ast}=\mathfrak{s}(\overline{N}N)$ because $\sigma_2(\overline{n}(y),n(x))=1$ for all $x, y \in F$. Then elements $\overline{n}(y)n(x)$ in $Z^2P\overline{N}$ contribute to the support. By writing $\overline{n}(y)n(x)=t(\alpha,\alpha)t(\beta,1)n\overline{n}_0$ as in \eqref{eq-decomposition}, Lemma \ref{compact-decomp} implies that
\[
f_{2s-1}(\mathfrak{s}(w_2)\mathfrak{s}(n(y)\mathfrak{s}(w_2)\mathfrak{s}(n(x)))=\omega^{-1}_{\pi\otimes\chi}(\alpha)\chi^{-2}(\alpha)\delta^{\frac{2s+1}{4}}_B(t(\beta,1))f(\mathfrak{s}(\overline{n}_0))= 1.
\]
But again by Lemma \ref{compact-decomp}, this only happens when $n(x) \in A(x,i)=\overline{N}_i$. Therefore we have $ \widetilde{f}^i_{1-2s}(\mathfrak{s}(w_2)\mathfrak{s}(n(x)))=\mathrm{vol}(\overline{N}_i)$ as claimed.
\end{proof}

\subsection{Dependence on $\psi$}
\label{sec5:3}

In order to move to twisted Rankin-Selberg integrals, we need some preparation. For $W \in \mathcal{W}(\pi,\psi)$, $W_{\theta^{\psi}} \in \mathcal{W}(\theta^{\psi},\psi^{-1})$, $f_{2s-1} \in I(2s-1,\omega^{-1}_{\pi\otimes\chi})$ and a complex number $s$, we define the integral
\[
  I(W,W_{\theta^{\psi}},f_{2s-1},\chi)=\int_{Z^2N \backslash GL_2} W(g)W_{\theta^{\psi}}(\mathfrak{s}(g)) f_{2s-1}(\mathfrak{s}(g)) \chi(\mathrm{det}(g)) dg.
\]
By the usage of \eqref{epsilon}, we reframe the functional equation of the $\gamma$-factor \eqref{func-Nhat} by
\[
I(W,W_{\theta^{\psi}},\hat{N}(2s-1,\omega^{-1}_{\pi})f_{2s-1},\chi) =\gamma(s,\pi \otimes \chi,\mathrm{Sym}^2,\psi)I(W,W_{\theta^{\psi}},f_{2s-1},\chi).
\]
The main theme of \S \ref{sec5:3} is to form so-called the dependence on $\psi$, analogous to \cite{Gelbart-PS}*{Lemma 5.1.3} and \cite[Lemma 2.8]{Zhang}, in the setting of $\gamma(s,\pi \otimes \chi,\mathrm{Sym}^2,\psi)$.

\begin{proposition}
For any $a \in F^{\times}$, let $\mu_{\psi_a}(a)=\mu_{\psi}(a)(a,a)_F$. Then
\[
  \gamma(s,\pi \otimes \chi,\mathrm{Sym}^2,\psi_a)=\mu_{\psi_a}(a)^{-1}\omega_{\pi}(a)^3\chi(a)^6|a|^{3\left(s-\frac{1}{2}\right)}\gamma(s,\pi \otimes \chi,\mathrm{Sym}^2,\psi).
\]
\end{proposition}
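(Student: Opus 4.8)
The plan is to derive the identity by comparing the functional equation \eqref{func-Nhat} (in the $\chi$-twisted form recorded at the top of \S\ref{sec5:3}) for the character $\psi$ with the one for $\psi_a$, transporting one Rankin--Selberg datum into another by conjugating with torus elements. On the $\mathcal W(\pi,\psi)$-side the right gadget is $d=t(a,1)$, which satisfies $dn(x)d^{-1}=n(ax)$, so $W\mapsto[g\mapsto W(dg)]$ carries $\mathcal W(\pi,\psi)$ into $\mathcal W(\pi,\psi_a)$; because the $\gamma$-factor is built through the involution ${}^{\iota}$ (recall $\hat N(s,\eta,\psi)=\gamma(s,\eta^{-2},\psi)M(s,\eta)$ and $N(s,\eta,\psi)={}^{\iota}\hat N(s,\eta,\psi)$), the transport on the ``dual'' side of the functional equation is governed by ${}^{\iota}d=t(1,a^{-1})=(a^{-1}I_2)\cdot t(a,1)$, which differs from $d$ by a central element and therefore contributes extra values of the central character. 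Since $\mu_{\psi_a}=\mu_\psi\cdot\chi_a'$ for the quadratic character $\chi_a'\colon b\mapsto(a,b)_F$, and $\theta^{\psi}$ is invariant under twisting by a quadratic character by \cite[Lemma 1.9 (3)]{Ya17}, we have $\theta^{\psi_a}\simeq\theta^{\psi}$, so that $W_{\theta^\psi}\mapsto[\widetilde g\mapsto W_{\theta^\psi}(\mathfrak s(d)\widetilde g)]$ furnishes a legitimate datum in $\mathcal W(\theta^{\psi_a},\psi_a^{-1})$; the normalization \eqref{central-theta} of the Whittaker functional, which pins the $\widetilde Z$-equivariance to $\mu_\psi^{-1}$, forces the transported vector to carry $\mu_{\psi_a}^{-1}$, and this is where the Weil-index correction will ultimately enter.

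The next step is the change of variables $g\mapsto d^{-1}g$ inside the zeta integral $I(\,\cdot\,,\chi)$ (and, on the dual side, $g\mapsto({}^{\iota}d)^{-1}g$). The Haar measure on $Z^2N\backslash GL_2$ is unchanged; evaluating $f_{2s-1}$ after an Iwasawa decomposition produces a modular shift $\delta_B(d)^{\pm(2s+1)/4}=|a|^{\pm(2s+1)/4}$; the twisting character produces $\chi(\det d^{-1})=\chi(a)^{-1}$; and the metaplectic identities $\mathfrak s(d)\mathfrak s(g)=\sigma_2(d,g)^{\pm 1}\mathfrak s(dg)$ produce, by Lemma \ref{BLS}, Hilbert symbols of the shape $(a,\mathfrak t(g))_F$. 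Some of these symbols cancel between the $\theta^\psi$-factor and the section-factor, while the rest combine with the $\mu_\psi$-twists in \eqref{central-theta}--\eqref{central-induced} by way of $\mu_\psi(ab)=\mu_\psi(a)\mu_\psi(b)(a,b)_F$, $\mu_\psi(ab^2)=\mu_\psi(a)$ and $\mu_{\psi_a}(a)=\mu_\psi(a)(a,a)_F$ to deposit exactly the single constant $\mu_{\psi_a}(a)^{-1}$; the non-metaplectic bookkeeping runs as in \cite{Gelbart-PS}*{Lemma 5.1.3} and \cite{Zhang}*{Lemma 2.8}. The outcome is an identity $I(W^\bullet,W_{\theta^{\psi_a}}^\bullet,f_{2s-1},\chi)=c(a,s)\,I(W,W_{\theta^\psi},f_{2s-1},\chi)$ in which $c(a,s)$ is an explicit product of a power of $|a|$ and of values of $\omega_\pi$, $\chi$ and $\mu_\psi$ at $a$.

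For the intertwined side I would use that $M(2s-1,\eta)$ is independent of $\psi$, so $\hat N(2s-1,\eta,\psi_a)=\dfrac{\gamma(2s-1,\eta^{-2},\psi_a)}{\gamma(2s-1,\eta^{-2},\psi)}\,\hat N(2s-1,\eta,\psi)$, together with the classical $\psi$-dependence of the Tate $\gamma$-factor $\gamma(2s-1,\eta^{-2},\psi_a)=\eta^{-2}(a)|a|^{e(s)}\gamma(2s-1,\eta^{-2},\psi)$ ($e$ affine), which with $\eta=\omega_{\pi\otimes\chi}^{-1}$, hence $\eta^{-2}=\omega_{\pi\otimes\chi}^{2}$, contributes the factor $\omega_{\pi\otimes\chi}^{2}(a)|a|^{e(s)}$. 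Running the conjugation analysis against a section of the induced representation with parameter $1-2s$ (so the $|a|$-exponent flips sign) relates $I(W^\bullet,W_{\theta^{\psi_a}}^\bullet,\hat N(\,\cdot\,,\psi_a)f_{2s-1},\chi)$ to $I(W,W_{\theta^\psi},\hat N(\,\cdot\,,\psi)f_{2s-1},\chi)$ by $\omega_{\pi\otimes\chi}^{2}(a)|a|^{e(s)}c(a,1-s)$. Dividing the $\psi_a$-functional equation by the $\psi$-functional equation then gives
\[
 \gamma(s,\pi\otimes\chi,\mathrm{Sym}^2,\psi_a)=\frac{\omega_{\pi\otimes\chi}^{2}(a)\,|a|^{e(s)}\,c(a,1-s)}{c(a,s)}\,\gamma(s,\pi\otimes\chi,\mathrm{Sym}^2,\psi),
\]
and a final tally of the exponents (using $\omega_{\pi\otimes\chi}=\omega_\pi\chi^2$, and adding in the central-character contributions from the $t(1,a^{-1})$-transport on the dual side) collapses the right-hand constant to $\mu_{\psi_a}(a)^{-1}\omega_\pi(a)^3\chi(a)^6|a|^{3(s-1/2)}$.

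The $GL_2$ manipulations (modular characters, $\chi\circ\det$, unimodularity of the measure) are as in \cite{Zhang} and \cite{Gelbart-PS}, and the Tate input is classical; the genuinely delicate point, and the one I expect to be the main obstacle, is the metaplectic bookkeeping. One must check that the $g$-dependent Hilbert symbols produced by the substitution, together with the $\mu_\psi$-genuine twists in the $\widetilde Z$-equivariance of $W_{\theta^\psi}\!\cdot f_{2s-1}$ (from \eqref{central-theta} and \eqref{central-induced}) and in the realization of $I(2s-1,\eta)$ inside a genuine principal series, conspire through the Weil-index relations to deposit exactly one factor $\mu_{\psi_a}(a)^{-1}$; and one must arrange the conjugation on the intertwined side with care, since $t(a,1)$ does not lie in the inducing subgroup of the target of $M(2s-1,\eta)$ unless $a$ is a square, so that using ${}^{\iota}d=t(1,a^{-1})$ — which does behave well there — is essential for the change of variables to close up.
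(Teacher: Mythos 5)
Your proposal follows essentially the same route as the paper: translate the Whittaker data by $d=t(a,1)$ to pass from $(\psi,\theta^{\psi})$ to $(\psi_a,\theta^{\psi_a})$, change variables in the zeta integral, handle the dual side via $t(1,a^{-1})={}^{\iota}d$ together with the central element $t(a,a)$ (whence the $\omega_{\pi}(a)$, $\mu_{\psi}(a)^{-1}$ and the Hilbert-symbol correction that assembles into $\mu_{\psi_a}(a)^{-1}$), and extract the ratio of the two functional equations; the paper's proof is exactly this computation carried to the end. One concrete point needs repair, however: your premise that $M(2s-1,\eta)$ is independent of $\psi$ is false under the convention in force here, because the additive Haar measure in the intertwining integral is normalized relative to the additive character, so passing from $\psi$ to $\psi_a$ replaces $dx$ by $d_ax=|a|^{1/2}\,dx$. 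The paper isolates this as the identity $M_a(2s-1,\omega^{-1}_{\pi\otimes\chi})=|a|^{1/2}M(2s-1,\omega^{-1}_{\pi\otimes\chi})$ (combined with the evaluation at $\mathfrak{s}(t(1,a))$, which contributes $|a|^{(2s-3)/4}$), and without that extra $|a|^{1/2}$ your exponent tally would produce $|a|^{3s-2}$ rather than $|a|^{3(s-1/2)}$. Since you never actually perform the final tally but only assert that it "collapses" to the stated constant, this is the one step you must carry out explicitly --- with the measure factor included --- for the argument to close.
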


\begin{proof}
Let $W^a(g):=W(t(a,1)g)$ and $W^a_{\theta^{\psi}}(\mathfrak{s}(g)):=W_{\theta^{\psi}}(\mathfrak{s}(t(a,1))\mathfrak{s}(g))$. We can check that $W^a$ is in $\mathcal{W}(\pi,\psi_a)$ and $W^a_{\theta^{\psi}}$ belongs to $\mathcal{W}(\theta^{\psi_a},\psi_a^{-1})$ (cf. \cite{Gelbart-PS}*{Lemma 5.1.3}). For $W^a \in \mathcal{W}(\pi,\psi_a)$ and $W^a_{\theta^{\psi}} \in \mathcal{W}(\theta^{\psi_a},\psi_a^{-1})$ and $f_{2s-1} \in I(2s-1,\omega^{-1}_{\pi \otimes \chi})$, we have
\[
\begin{split}
 &I(W^a,W^a_{\theta^{\psi}},f_{2s-1},\chi)\\
& =\int_{Z^2N \backslash GL_2} W(t(a,1)g)W_{\theta^{\psi}}(\mathfrak{s}(t(a,1))\mathfrak{s}(g)) f_{2s-1}(\mathfrak{s}(t(a^{-1},1))\mathfrak{s}(t(a,1))\mathfrak{s}(g)) \chi(\mathrm{det}(g)) dg.
\end{split}
\]
By changing variables $t(a,1)g \mapsto g$, we get
\[
   I(W^a,W^a_{\theta^{\psi}},f_{2s-1},\chi)=\chi(a)^{-1}|a|^{-\frac{2s+1}{4}}\int_{Z^2N \backslash GL_2} W(g)W_{\theta^{\psi}}(\mathfrak{s}(g)) f_{2s-1}(\mathfrak{s}(g))\chi(\mathrm{det}(g)) dg.
\]

\par
The dual side needs to be treated more carefully. For $W^a \in \mathcal{W}(\pi,\psi_a)$, $W^a_{\theta^{\psi}} \in \mathcal{W}(\theta^{\psi_a},\psi_a^{-1})$ and $f_{2s-1} \in I(2s-1,\omega^{-1}_{\pi \otimes \chi})$, we break $t(a,1)$ into $t(a,a)t(1,a^{-1})$ and $I_2$ into $t(1,a)t(1,a^{-1})$. Then
\[
\begin{split}
  &I(W^a,W^a_{\theta^{\psi}}, \hat{N}(s,\omega^{-1}_{\pi \otimes \chi},\psi_a)f_{2s-1},\chi)\\
  &=\omega_{\pi}(a)\mu_{\psi}(a)^{-1}(a,a^{-1})_F\int_{Z^2N \backslash GL_2} W(t(1,a^{-1})g)W_{\theta^{\psi}}(\mathfrak{s}(t(1,a^{-1}))\mathfrak{s}(g))\\
  &\quad\times  \hat{N}(s,\omega^{-1}_{\pi \otimes \chi},\psi_a)f_{2s-1}(\mathfrak{s}(t(1,a))\mathfrak{s}(t(1,a^{-1}))\mathfrak{s}(g))\chi(a)\chi(\mathrm{det}(t(1,a^{-1})g))dg.
\end{split}
\]
Changing an additive character implies a change to measures as the measure are chosen relative to the additive character. We define the fixed measure $dx$ with respect to $\psi$ as in \S \ref{sec5:1} and we denote by $d_ax$ the measure adapted to $\psi_a$. Then $d_ax=|a|^{\frac{1}{2}}dx$. Let $M_a(2s-1,\omega^{-1}_{\pi \otimes \chi})f_{2s-1}$ denote the application of the intertwining operator to $f_{2s-1}$ given by
\[
 M_a(2s-1,\omega^{-1}_{\pi \otimes \chi})f_{2s-1}(\mathfrak{s}(g))=\int_{F} f_{2s-1}(\mathfrak{s}(w_2)\mathfrak{s}(n(x))\mathfrak{s}(g)) d_ax.
\]
Using $ \gamma(2s-1,\omega_{\pi}^2\chi^4,\psi_a)=|a|^{2s-\frac{3}{2}}\omega_{\pi}^2(a)\chi^4(a)\gamma(2s-1,\omega_{\pi}^2\chi^4,\psi)$ (cf. \cite[(3.2.3)]{Tate}), $\hat{N}(2s-1,\omega^{-1}_{\pi \otimes \chi},\psi_a)$ can be written in terms of the one for $\psi$ as
\[
\begin{split}
  &\hat{N}(2s-1,\omega^{-1}_{\pi \otimes \chi},\psi_a)f_{2s-1}(\mathfrak{s}(t(1,a)))\\
  &=|a|^{2s-\frac{3}{2}}\omega_{\pi}^2(a)\chi^4(a)\gamma(2s-1,\omega_{\pi}^2\chi^4,\psi)M_a(2s-1,\omega^{-1}_{\pi \otimes \chi})f_{2s-1}(\mathfrak{s}(t(1,a)))\\
  &=|a|^{2s-\frac{3}{2}}\omega_{\pi}^2(a)\chi^4(a)\gamma(2s-1,\omega_{\pi}^2\chi^4,\psi)|a|^{\frac{2s-3}{4}+\frac{1}{2}}M(2s-1,\omega^{-1}_{\pi \otimes \chi})f_{2s-1}.
  \end{split}
\]
By making the change of variables $t(1,a^{-1})g \mapsto g$, we achieve that
\[
\begin{split}
  &I(W^a,W^a_{\theta^{\psi}}, \hat{N}(s,\omega^{-1}_{\pi \otimes \chi},\psi_a)f_{2s-1},\chi)\\
  &=\mu_{\psi_a}(a)^{-1}|a|^{\frac{5}{2}s-\frac{7}{4}}\omega^3_{\pi}(a)\chi^5(a)\int_{Z^2N \backslash GL_2} W(g)W_{\theta^{\psi}}(\mathfrak{s}(g)) \hat{N}(2s-1,\omega^{-1}_{\pi \otimes \chi},\psi)
   f_{2s-1}(\mathfrak{s}(g))\chi(\mathrm{det}(g)) dg
  \end{split}
  \]
  as desired.
\end{proof}

Henceforth it suffices to prove the stability of $\gamma$-factor with $\psi$ of conductor $\mathcal{O}$. In contrast to $\gamma$-factors in the Langlands-Shahidi method \cite[Theorem 5.1-(iv)]{Ganapathy-Lomeli}, the auxiliary factor $\mu_{\psi_a}(a)^{-1}$ appears due to the presence of the exceptional representation $\theta^{\psi}$.

\subsection{Proof of stability}
Before we launch our computation, let us fix our measures. Because $N(F),\overline{N}(F) \simeq F$ and $A(F) \simeq F^{\times}$, we identify the measure on groups $N(F)$ and $\overline{N}(F)$ with an additive measure $dx$ on $F$ and the measure on $A(F)$ with the multiplicative measure $d^{\times} a$ on $F^{\times}$. We normalize our additive measure so that $\mathrm{vol}(\mathcal{O},dx)=1$. We take the multiplicative measure $d^{\times} a$ to be $d^{\times} a=da/|a|$. Under this normalization, $d^{\times}a$ assigns $\mathcal{O}^{\times}$ to $(q-1)/q$ and $\mathfrak{p}^m$ to $q^{-m}$.

\par
Our immediate goal is to construct explicit Howe vectors $W_m \in \mathcal{W}(\pi,\psi)$, $W_{\theta^{\psi},m} \in \mathcal{W}(\theta^{\psi},\psi^{-1})$, a section $f^i_{2s-1} \in I(2s-1,\omega^{-1}_{\pi \otimes \chi})$, and a highly ramified character $\chi$ such that $I(W_m,W_{\theta^{\psi},m},f_{2s-1}^i,\chi)$ is a constant.

\begin{proposition}
\label{constant}
Let $\pi$ be an irreducible admissible representation of $GL_2$. For every $i \geq m$ and $m \geq \max\{L, \mathfrak{f}(\chi) \}$, we have
\begin{equation}
\label{eq-constant}
 I(W_m,W_{\theta^{\psi},m},f_{2s-1}^i,\chi)=q^{-3i-m}.
\end{equation}
\end{proposition}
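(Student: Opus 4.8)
The plan is to unfold the $\chi$-twisted integral $I(W_m,W_{\theta^{\psi},m},f^i_{2s-1},\chi)$ (the $\chi$-twist of \eqref{symsquare-RS} introduced in \S\ref{sec5:3}) against two pieces of data: the support of the section $f^i_{2s-1}$ and the sharp support of the Howe vector $W_{\theta^{\psi},m}$ on the diagonal torus. Throughout one uses the hypothesis $i\ge m\ge\max\{L,\mathfrak f(\chi)\}$ in three ways: the bound $m\ge L$ makes $W_m$ and $W_{\theta^{\psi},m}$ genuine Howe vectors obeying the equivariance of Lemma~\ref{Howe}(2) (and its evident analogue for $W$); the bound $i\ge m$ gives $\overline N_i=\overline N\cap J_i\subseteq\overline N\cap J_m$; and the bound $m\ge\mathfrak f(\chi)$ makes $\chi$ trivial on $1+\mathfrak p^m$.

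First I would restrict the domain of integration. Since $f^i_{2s-1}(\mathfrak s(g))=0$ unless $g\in Z^2P\overline N_i$, and since by the uniqueness in the Bruhat decomposition $GL_2=B\cup Bw_2N$ (equivalently, of the big cell $\overline NTN$) the map $(a,\overline n)\mapsto Z^2N\,t(a,1)\overline n$ is a bijection from $A\times\overline N_i$ onto $Z^2N\backslash Z^2P\overline N_i$, the integral collapses to
\[
 I(W_m,W_{\theta^{\psi},m},f^i_{2s-1},\chi)=\int_{A}\int_{\overline N_i} W_m\big(t(a,1)\overline n\big)\,W_{\theta^{\psi},m}\big(\mathfrak s(t(a,1)\overline n)\big)\,f^i_{2s-1}\big(\mathfrak s(t(a,1)\overline n)\big)\,\chi(\det(t(a,1)\overline n))\,\mu(a)\,d\overline n\,d^\times a,
\]
where $\mu$ is the (unramified) density of the invariant measure on $Z^2N\backslash GL_2$ in these coordinates, normalised as at the start of this section.

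Next I would evaluate the integrand on this chart. Writing $\overline n=\overline n(v)$ with $v\in\mathfrak p^{3i}$, the element $t(a,1)\overline n(v)$ lies in $Z^2P\overline N_i$ with trivial $Z^2$-part, $P$-part $t(a,1)$ and $\overline N$-part $\overline n(v)$, so $f^i_{2s-1}(\mathfrak s(t(a,1)\overline n(v)))=\sigma_2(t(a,1),\overline n(v))^{-1}\delta_B(t(a,1))^{(2s+1)/4}$. Because $\overline n(v)\in\overline N\cap J_m$ and $\psi_m$ is trivial on $\overline N\cap J_m$ (indeed $d_m^{-1}\overline n(v)d_m=\overline n(v\varpi^{-2m})$ has zero $(1,2)$-entry, so $\tau_m$ of it is $1$), Lemma~\ref{Howe}(2) and its $W$-analogue give $W_{\theta^{\psi},m}(\mathfrak s(t(a,1)\overline n(v)))=\sigma_2(t(a,1),\overline n(v))^{-1}W_{\theta^{\psi},m}(\mathfrak s(t(a,1)))$ and $W_m(t(a,1)\overline n(v))=W_m(t(a,1))$. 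The two occurrences of the Kubota cocycle cancel in the product (this is exactly the mechanism, noted at the end of Section~\ref{sec2}, by which the integrand descends to $GL_2$), $\det(t(a,1)\overline n(v))=a$, and the inner integral contributes $\mathrm{vol}(\overline N_i)=\mathrm{vol}(\overline N\cap J_i)=q^{-3i}$. It then remains to compute $\int_A W_m(t(a,1))W_{\theta^{\psi},m}(\mathfrak s(t(a,1)))\,\delta_B(t(a,1))^{(2s+1)/4}\chi(a)\mu(a)\,d^\times a$: by Lemma~\ref{excep-a-support}(1) the integrand vanishes unless $a\in1+\mathfrak p^m$, and on $1+\mathfrak p^m$ one has $t(a,1)\in J_m$ with $\psi_m(t(a,1))=1$, hence $W_m(t(a,1))=W_m(I_2)=1$ and $W_{\theta^{\psi},m}(\mathfrak s(t(a,1)))=W_{\theta^{\psi},m}(I_2)=1$ by Lemma~\ref{Howe}(1)--(2), while $\delta_B(t(a,1))=|a|=1$, $\chi(a)=1$ (since $\mathfrak f(\chi)\le m$) and $\mu(a)=1$ (since $\mu$ is unramified). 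So this integral equals $\mathrm{vol}(1+\mathfrak p^m,d^\times a)=q^{-m}$, yielding $I(W_m,W_{\theta^{\psi},m},f^i_{2s-1},\chi)=q^{-3i}\cdot q^{-m}=q^{-3i-m}$ as claimed.

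I do not expect a deep obstacle here; the two points that need genuine care are bookkeeping rather than hard. First, one must fix the coordinates and the invariant measure on $Z^2N\backslash GL_2$ correctly — the quotient is by $Z^2$ rather than $Z$, and one has to confirm that the parametrisation by $A\times\overline N_i$ is a true bijection with no hidden identifications along the lower-dimensional complement of the big cell. Second, one must track the Kubota $2$-cocycle $\sigma_2$ through both $f^i_{2s-1}$ and $W_{\theta^{\psi},m}$ and make the cancellation explicit. The actual content of the statement is the clean separation of two scales: the support of $f^i_{2s-1}$ (via Lemma~\ref{compact-decomp}) contributes the factor $q^{-3i}$, while the sharp support of the torus values of the exceptional Howe vector (Lemma~\ref{excep-a-support}) together with $\mathfrak f(\chi)\le m$ contributes the factor $q^{-m}$.
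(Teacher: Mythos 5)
Your proof is correct and follows essentially the same route as the paper: reduce to the big cell using the support of $f^i_{2s-1}$ (which collapses both the $Z^2\backslash Z$-integral and the $\overline N$-integral), use the Howe-vector invariance under $\overline N_i\subseteq \overline N\cap J_m$ and the torus support of $W_{\theta^{\psi},m}$ from Lemma~\ref{excep-a-support}, and finish with the volume computation and the triviality of $\chi$ on $1+\mathfrak p^m$. The only differences are presentational (you make the cocycle cancellation and the $\overline N_i$-invariance explicit where the paper invokes genuineness and cites Zhang's lemmas).
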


\begin{proof}
We compute $I(W_m,W_{\theta^{\psi},m},f_{2s-1}^i,\chi)$ on the open dense subset $NZ^2 \backslash NT\overline{N}$ of $NZ^2 \backslash GL_2$. The decomposition of the Haar measure $dg=\delta_B(a)^{-1}dnd^{\times}zd^{\times}ad\overline{n}$ leads us to get
\[
\begin{split}
 I(W_m,W_{\theta^{\psi},m},f_{2s-1}^i,\chi)=&\int_{Z^2 \backslash Z} \int_{F^{\times}} \int_{F} W_m(t(z,z)t(a,1)\overline{n}(x))W_{\theta^{\psi},m}(\mathfrak{s}[t(z,z)t(a,1)\overline{n}(x)]) \\
 & \times f_{2s-1}^i(\mathfrak{s}[t(z,z)t(a,1)\overline{n}(x)]) \chi(z^2a)|a|^{-1} dx d^{\times}a d^{\times}z.
\end{split}
\]
Now we would like to write $\mathfrak{s}[t(z,z)t(a,1)\overline{n}(x)]=\mathfrak{s}(t(z,z)) \mathfrak{s}(t(a,1))\mathfrak{s}(\overline{n}(x))$. This equality does not hold in general but the fact that both $W_{\theta^{\psi},m}$ and $f_{2s-1}^i$ are genuine make it possible to do this maneuver. This leads us to exploit the property of $f^i_{2s-1}$ and we obtain
\[
  I(W_m,W_{\theta^{\psi},m},f_{2s-1}^i,\chi)=\int_{F^{\times}} \int_{\mathfrak{p}^{3i}} W_m(t(a,1)\overline{n}(x)) W_{\theta^{\psi},m}(\mathfrak{s}(t(a,1))\mathfrak{s}(\overline{n}(x))) \chi(a) |a|^{\frac{2s+1}{4}-1} dx d^{\times}a.
\]
Since $i \geq m$, we have $\mathfrak{p}^{3i} \subseteq \mathfrak{p}^{3m}$. In the light of Lemma \ref{Howe} and \cite[Lemma 5.1]{Zhang} together, we obtain $W_m(t(a,1)\overline{n}(x))=W_m(t(a,1))$ and $W_{\theta^{\psi},m}(\mathfrak{s}(t(a,1))\mathfrak{s}(\overline{n}(x)))=W_{\theta^{\psi},m}(\mathfrak{s}(t(a,1)))$. It follows from Lemma \ref{excep-a-support} joined with \cite[Lemma 5.2]{Zhang} that
\[
\begin{split}
  I(W_m,W_{\theta^{\psi},m},f_{2s-1}^i,\chi)&=q^{-3i}\int_{1+\mathfrak{p}^m} W_m(t(a,1))W_{\theta^{\psi},m}(t(a,1)) \chi(a) |a|^{\frac{2s+1}{4}-1} dx d^{\times}a\\
  &=q^{-3i}\int_{1+\mathfrak{p}^m} \chi(a) d^{\times} a.
\end{split}
\]
Then the assumption $m \geq \max\{L, \mathfrak{f}(\chi)\}$ gives rise to $I(W_m,W_{\theta^{\psi},m},f_{2s-1}^i,\chi)=q^{-3i-m}$, which concludes the proof.
\end{proof}

We express the difference of Rankin-Selberg integrals in terms of the Mellin transform of a product of certain Bessel functions. In the case of $GL_2(F) \times GL_2(F)$, \eqref{dualexpression} is basically the content of a crucial lemma of Soudry \cite[Lemma 4.5]{Soudry}.

\begin{proposition}[The Mellin transform]
Let $\pi$ and $\sigma$ be irreducible admissible representations of $GL_2$ having the same central character $\omega=\omega_{\pi}=\omega_{\sigma}$. We fix $W^1 \in \mathcal{W}(\pi,\psi)$ and $W^2 \in \mathcal{W}(\sigma,\psi)$ as above, and form Howe Whittaker functions $W^1_m$ and $W^2_m$, respectively.
 For $m \geq 6L$ and $i \geq \max\{m+1, I(N_m,\omega_{\pi\otimes\chi})\}$, we have
\begin{equation}
\label{dualexpression}
\begin{split}
  &I(W^1_m,W_{\theta^{\psi},m},M(2s-1,\omega^{-1}_{\pi \otimes \chi})f^i_{2s-1},\chi)-I(W^2_m,W_{\theta^{\psi},m},M(2s-1,\omega^{-1}_{\sigma \otimes \chi})f^i_{2s-1},\chi)\\
  &=q^{-3i-m+3L} \int_{F^{\times}} [W^1_{3L}(t(a,1)w_2)-W^2_{3L}(t(a,1)w_2)] j_{\theta^{\psi}}(a) \omega(a)^{-1} \mu_{\psi}(a^{-1})^{-1} \chi^{-1}(a)|a|^{-\frac{2s+1}{4}}d^{\times}a.
\end{split}
\end{equation}
In particular the integral can be assumed to range over the compact set $\mathfrak{p}^{-9L}$.
\end{proposition}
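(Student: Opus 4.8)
The plan is to unfold both twisted zeta integrals along the big Bruhat cell $\overline{B}w_2N$, exploiting that the exceptional Howe vector $W_{\theta^{\psi},m}$, the intertwined section $\widetilde f^i_{1-2s}=M(2s-1,\omega^{-1}_{\pi\otimes\chi})f^i_{2s-1}$ and the twisting character $\chi$ enter the two terms identically, so that the difference only feels $W^1_m-W^2_m$. First I would parametrize the open dense part of $Z^2N\backslash GL_2$ by $t(a,1)w_2n(x)$ with $a\in F^{\times}$ and $x\in F$ (the torus coordinate being taken modulo $Z^2$) and decompose the Haar measure accordingly. Using $t(a,1)w_2=w_2t(1,a)$ and the left equivariance of $\widetilde f^i_{1-2s}\in J(1-2s,\omega^{-1}_{\pi\otimes\chi})$ under $\widetilde Z^2\,{}^{w_2}\widetilde A N^{\ast}$, one pulls $t(a,1)$ out of $\widetilde f^i_{1-2s}$ at the cost of an explicit character of $a$ (a modulus power $|a|^{-(2s+1)/4}$, the inducing-character contribution $\omega^{-1}_{\pi\otimes\chi}$, and a genuine factor coming from the section $\mathfrak s$), which leaves values of the shape $\widetilde f^i_{1-2s}(\mathfrak s(w_2)\mathfrak s(n(x')))$; these are evaluated to the constant $\mathrm{vol}(\overline N_i)=q^{-3i}$ by Proposition~\ref{eval-intertwining} applied with $X=N_m$, as soon as $i\ge I(N_m,\omega_{\pi\otimes\chi})$ and $x'$ lies in the relevant compact set.

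Once $\widetilde f^i_{1-2s}$ has been turned into a constant on the surviving range, the $x$-integral decouples: $W^j_m$ and $W_{\theta^{\psi},m}$ are Howe vectors, hence right invariant under a congruence subgroup of level governed by $m$, so for $i\ge m+1$ the integrands $W^j_m(t(a,1)w_2n(x))$ and $W_{\theta^{\psi},m}(\mathfrak s(t(a,1))\mathfrak s(w_2)\mathfrak s(n(x)))$ are independent of $x$ over that range and the $x$-integration contributes only a power of $q$; this is the metaplectic analogue of \cite[Lemma 4.5]{Soudry} and \cite[\S 5.1]{Zhang}. I would then use the nesting relation Lemma~\ref{Howe}-\ref{Howe-3} to lower the level of $W^j_m$ from $m$ to $3L$ and Proposition~\ref{Bessel}-\ref{Bessel-2} to replace $W_{\theta^{\psi},m}(\mathfrak s(t(a,1))\mathfrak s(w_2))$ by the full Bessel function $j_{\theta^{\psi}}(a)$; the hypothesis $m\ge 6L$ is exactly what makes $m$ an admissible multiple of $3$ with $|a|$ inside the range of validity of Proposition~\ref{Bessel}-\ref{Bessel-2}. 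Collecting the volume factors into $q^{-3i-m+3L}$ and the characters of $a$ — those extracted from $\widetilde f^i_{1-2s}$, the factor $\chi(\det(t(a,1)w_2n(x)))=\chi(-1)\chi(a)$, and the central-character contributions of $\pi$, $\sigma$ and $\theta^{\psi}$ via \eqref{central-theta} — into $\omega(a)^{-1}\mu_{\psi}(a^{-1})^{-1}\chi^{-1}(a)|a|^{-(2s+1)/4}$ yields \eqref{dualexpression}.

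Finally, the assertion that the integral may be taken over $\mathfrak p^{-9L}$ is immediate from the support estimate for Howe vectors of $\pi$ (resp. of $\Pi$ when $\pi$ is not generic) — the $GL_2$-counterpart of Lemma~\ref{excep-a-support}-\ref{excep-a-support-2}, cf. \cite[\S 5.1]{Zhang} — applied to $W^j_{3L}$: at level $3L$ one has $W^j_{3L}(t(a,1)w_2)\ne 0$ only for $a\in\mathfrak p^{-3\cdot 3L}=\mathfrak p^{-9L}$, so the integrand in \eqref{dualexpression} vanishes off $\mathfrak p^{-9L}$. The main obstacle is the metaplectic bookkeeping throughout the first two steps: since $\mathfrak s$ is only a set-theoretic splitting, every rearrangement of $\mathfrak s(t(a,1))\mathfrak s(w_2)\mathfrak s(n(x))$ into $\mathfrak s(t(a,1)w_2n(x))$ costs Hilbert-symbol values $\sigma_2$, and the left $\widetilde Z^2$-equivariance of $W_{\theta^{\psi},m}$ (by $\mu_{\psi}^{-1}$, via \eqref{central-theta}) must be matched against that of $\widetilde f^i_{1-2s}$ (by $\omega^{-1}_{\pi\otimes\chi}$) and against $\chi\circ\det$ so that the descent of the torus integral through $Z^2\backslash T$ is legitimate and the surviving auxiliary factor comes out to be precisely $\mu_{\psi}(a^{-1})^{-1}$; this is the metaplectic replacement for the non-metaplectic computation of \cite[Lemma 4.5]{Soudry} and \cite[\S 5.1]{Zhang}. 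A secondary point is verifying that the ranges in Proposition~\ref{eval-intertwining} and Proposition~\ref{Bessel}-\ref{Bessel-2} can be met simultaneously, which is exactly what forces the stated bounds $m\ge 6L$ and $i\ge\max\{m+1,I(N_m,\omega_{\pi\otimes\chi})\}$.
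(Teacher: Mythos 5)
Your overall route is the paper's: unfold on the big Bruhat cell, evaluate the intertwined section via Proposition~\ref{eval-intertwining}, decouple the $x$-integral using the $(N,\psi)\times(N_m,\psi_m^{-1})$ quasi-invariance of the Howe vectors, lower the level from $m$ to $3L$ by the nesting relation, and convert the partial Bessel function into $j_{\theta^{\psi}}$ via Proposition~\ref{Bessel}. But two concrete steps are missing, and the first is the one that makes the whole computation possible. You never justify why the $x$-integral is supported in a compact set in the first place: for a single integral $I(W^j_m,W_{\theta^{\psi},m},Mf^i_{2s-1},\chi)$ the variable $x$ ranges over all of $F$, and Proposition~\ref{eval-intertwining} only evaluates $\widetilde f^i_{1-2s}(\mathfrak s(w_2)\mathfrak s(n(x)))$ for $x$ in a \emph{fixed} compact set $X$ with $i\geq I(X,\omega_{\pi\otimes\chi})$; it says nothing for large $x$. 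The paper first invokes \cite{Zhang}*{Proposition 5.3} to show that $W^1_m(tw_2n(x))=W^2_m(tw_2n(x))$ for all $n(x)\in N-N_m$ — this is exactly where the hypothesis $\omega_\pi=\omega_\sigma$ is used — so that the \emph{difference} of the two integrals truncates to $x\in\mathfrak p^{-m}$, and only then can one take $X=N_m$ in Proposition~\ref{eval-intertwining}. Saying "the difference only feels $W^1_m-W^2_m$" does not capture this; without the truncation the argument stalls at the very first step.

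The second gap is in your parametrization of the open cell. The torus integral is over $Z^2\backslash T\cong(Z^2\backslash Z)\times A$, so besides $t(a,1)$ there is a finite sum over the four cosets of $Z^2$ in $Z$; writing only $t(a,1)w_2n(x)$ "modulo $Z^2$" drops this. The paper's Lemma~\ref{z-support} disposes of the nontrivial cosets by a \emph{support} computation on the intertwined section: unwinding $M(2s-1,\omega^{-1}_{\pi\otimes\chi})f^i_{2s-1}$ at $\mathfrak s(t(z,z))\mathfrak s(t(1,a))\mathfrak s(w_2)\mathfrak s(n(x))$ forces $z=\alpha(1-x\overline y)$ with $\alpha\in(F^\times)^2$ and $1-x\overline y\in 1+\mathfrak p^{2m+3}$ a square (here $i\geq m+1$ is used), hence $z\in(F^\times)^2$. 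Your proposed "matching of the $\widetilde Z^2$-equivariances" cannot replace this: the integrand is automatically $Z^2$-invariant, so equivariance tells you nothing about which cosets of $Z^2$ in $Z$ actually contribute; one genuinely needs the vanishing of $\widetilde f^i_{1-2s}$ off the square locus. The remaining ingredients you cite (Lemma~\ref{Howe}, Proposition~\ref{Bessel}-\ref{Bessel-2}, the reduction to level $3L$, and the support estimate giving $\mathfrak p^{-9L}$ via \cite{Zhang}*{Lemma 5.2}) are used exactly as in the paper.
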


\begin{proof}
We take the difference and compute the dual side of the functional equation on the dense open subset $NZ^2 \backslash NTw_2N$. Then we have
\[
\begin{split}
 &I(W^1_m,W_{\theta^{\psi},m},M(2s-1,\omega^{-1}_{\pi \otimes \chi})f^i_{2s-1},\chi)-I(W^2_m,W_{\theta^{\psi},m},M(2s-1,\omega^{-1}_{\sigma \otimes \chi})f^i_{2s-1},\chi)\\
 &=\int_{Z^2 \backslash Z} \int_{F^{\times}} \int_{F} [W^1_m(t(z,z)t(1,a)w_2n(x))-W^2_m(t(z,z)t(1,a)w_2n(x))] 
 \\
  &\quad \times W_{\theta^{\psi},m}(\mathfrak{s}[t(z,z)t(1,a)w_2n(x)]) \widetilde{f}^i_{1-2s}(\mathfrak{s}[t(z,z)t(1,a)w_2n(x)]) \chi(z^2a)|a|dxd^{\times}ad^{\times}z.
\end{split}
\]
As we have seen before, we may write $\mathfrak{s}[t(z,z)t(1,a)w_2n(x)]=\mathfrak{s}(t(z,z))\mathfrak{s}(t(1,a))\mathfrak{s}(w_2)\mathfrak{s}(n(x))$, namely, 
the $``$genuineness" of $W_{\theta^{\psi},m}$ and $\widetilde{f}^i_{1-2s}$ eliminates the discrepancy of those expressions. 
By \cite[Proposition 5.3]{Zhang}, we obtain
\[
  W^1_m(t(z,z)t(1,a)w_2n(x))=W^2_m(t(z,z)t(1,a)w_2n(x))
\]
for all $n(x) \in N-N_m$. Therefore
\[
\begin{split}
 &I(W^1_m,W_{\theta^{\psi},m},M(2s-1,\omega^{-1}_{\pi \otimes \chi})f^i_{2s-1},\chi)-I(W^2_m,W_{\theta^{\psi},m},M(2s-1,\omega^{-1}_{\sigma \otimes \chi})f^i_{2s-1},\chi)\\
 &=\int_{Z^2 \backslash Z} \int_{F^{\times}} \int_{\mathfrak{p}^{-m}} [W^1_m(t(z,z)t(1,a)w_2n(x))-W^2_m(t(z,z)t(1,a)w_2n(x))] 
 \\
  &\quad  \times W_{\theta^{\psi},m}(\mathfrak{s}(t(z,z))\mathfrak{s}(t(1,a))\mathfrak{s}(w_2)\mathfrak{s}(n(x)))\\
  &\quad \times  \widetilde{f}^i_{1-2s}(\mathfrak{s}(t(z,z))\mathfrak{s}(t(1,a))\mathfrak{s}(w_2)\mathfrak{s}(n(x))) \chi(z^2a)|a|dxd^{\times}ad^{\times}z.\\
\end{split}
\]
We concentrate on the support of $t(z,z)$. For the moment we take Lemma \ref{z-support} below for granted.

\begin{lemma}
\label{z-support}
If $i \geq m+1$ and $n(x) \in \mathfrak{p}^{-m}$, then
\[
 \widetilde{f}^i_{1-2s}(\mathfrak{s}(t(z,z))\mathfrak{s}(t(1,a))\mathfrak{s}(w_2)\mathfrak{s}(n(x)))=
 \begin{cases}
 \omega^{-1}_{\pi \otimes \chi}(z) |a|^{\frac{2s-3}{4}} \widetilde{f}^i_{1-2s}(\mathfrak{s}(w_2)\mathfrak{s}(n(x))) & \quad \text{if} \quad t(z,z) \in Z^2\\
 0 & \quad \text{otherwise}.
 \end{cases}
\]
\end{lemma}

Let us resume our argument. Lemma \ref{z-support} aligned with Proposition \ref{eval-intertwining} implies that
\[
\begin{split}
 &I(W^1_m,W_{\theta^{\psi},m},M(2s-1,\omega^{-1}_{\pi \otimes \chi})f^i_{2s-1},\chi)-I(W^2_m,W_{\theta^{\psi},m},M(2s-1,\omega^{-1}_{\sigma \otimes \chi})f^i_{2s-1},\chi)\\
 &=q^{-3i}\int_{F^{\times}} \int_{\mathfrak{p}^{-m}} [W^1_m(t(1,a)w_2n(x))-W^2_m(t(1,a)w_2n(x))] W_{\theta^{\psi},m}(\mathfrak{s}(t(1,a))\mathfrak{s}(w_2)\mathfrak{s}(n(x))) \\
  &\quad  \times  \chi(a)|a|^{\frac{2s-3}{4}+1}dxd^{\times}a.
  \\
 \end{split}
\]
We see from Lemma \ref{Howe} and \cite[Lemma 5.1]{Zhang} that $W_m^i(t(1,a)w_2n(x))=\psi(x)W_m^i(t(1,a)w_2)$ and
$W_{\theta^{\psi},m}(\mathfrak{s}(t(1,a))\mathfrak{s}(w_2)\mathfrak{s}(n(x)))=\psi(x)^{-1}W_{\theta^{\psi},m}(\mathfrak{s}(t(1,a))\mathfrak{s}(w_2))$ for $i=1,2$ and all $x \in \mathfrak{p}^{-m}$. The integral becomes
\[
\begin{split}
&I(W^1_m,W_{\theta^{\psi},m},M(2s-1,\omega^{-1}_{\pi \otimes \chi})f^i_{2s-1},\chi)-I(W^2_m,W_{\theta^{\psi},m},M(2s-1,\omega^{-1}_{\sigma \otimes \chi})f^i_{2s-1},\chi)\\
 &=q^{-3i+m} \int_{F^{\times}} [W^1_m(t(1,a)w_2)-W^2_m(t(1,a)w_2)]  W_{\theta^{\psi},m}(\mathfrak{s}(t(1,a))\mathfrak{s}(w_2)) \chi(a)|a|^{\frac{2s+1}{4}}d^{\times}a.
\end{split}
\]
We concern with removing the dependence of $m$. To this end, we know from \cite[Lemma 5.1, Proposition 5.3]{Zhang} that
\begin{equation}
\label{m-removed}
\begin{split}
 &W^1_m(t(1,a)w_2)-W^2_m(t(1,a)w_2)\\
 &=\frac{1}{\mathrm{vol}(N_m)} \int_{N_{3L}} [W^1_{3L}(t(1,a)w_2n)-W^2_{3L}(t(1,a)w_2n)] \psi^{-1}(n)dn\\
 &=\frac{\mathrm{vol}(N_{3L})}{\mathrm{vol}(N_m)} [W^1_{3L}(t(1,a)w_2)-W^2_{3L}(t(1,a)w_2)].
 \end{split}
\end{equation}
We take $m$ to be $m \geq 6L$. Appealing to Proposition \ref{Bessel}-\ref{Bessel-2}, we relate the partial Bessel function with the full Bessel function by
\begin{equation}
\label{Bessel-Partial}
  \mathrm{vol}(N_m)W_{\theta^{\psi},m}(\mathfrak{s}(t(a,1))\mathfrak{s}(w_2))=j_{\theta^{\psi}}(a) \quad \text{for all} \quad |a| \leq q^{2m}.
\end{equation}
Applying the change of variables $a \mapsto a^{-1}$ and then putting \eqref{m-removed} and \eqref{Bessel-Partial} together, we have
\[
\begin{split}
  &I(W^1_m,W_{\theta^{\psi},m},M(2s-1,\omega^{-1}_{\pi \otimes \chi})f^i_{2s-1},\chi)-I(W^2_m,W_{\theta^{\psi},m},M(2s-1,\omega^{-1}_{\sigma \otimes \chi})f^i_{2s-1},\chi)\\
  &=q^{-3i-m+3L} \int_{F^{\times}} [W^1_{3L}(t(a,1)w_2)-W^2_{3L}(t(a,1)w_2)] j_{\theta^{\psi}}(a) \omega(a)^{-1} \mu_{\psi}(a^{-1})^{-1} \chi^{-1}(a)|a|^{-\frac{2s+1}{4}}d^{\times}a.
\end{split}
\]
Here $a$ is in fact taken over the compact set $\mathfrak{p}^{-9L}$ utilizing \cite[Lemma 5.2]{Zhang}.
\end{proof}

It remains to show Lemma \ref{z-support}.

\begin{proof}[Proof of Lemma \ref{z-support}]
We compute the intertwining operator $\widetilde{f}^i_{1-2s}$ by brutal force. Since
\[
\widetilde{f}^i_{1-2s}(\mathfrak{s}(t(z,z))\mathfrak{s}(t(1,a))\mathfrak{s}(w_2)\mathfrak{s}(n(x)))
   =\int_{F} f^i_{2s-1}(\mathfrak{s}(w_2)\mathfrak{s}(n(y)) \mathfrak{s}(t(z,z))\mathfrak{s}(t(1,a))\mathfrak{s}(w_2)\mathfrak{s}(n(x)) dy,
\]
the support of $\widetilde{f}^i_{1-2s}(\mathfrak{s}(t(z,z))\mathfrak{s}(t(1,a))\mathfrak{s}(w_2)\mathfrak{s}(n(x)))$ boils down to the support of 
\[
  f^i_{2s-1}(\mathfrak{s}(t(z,z))\mathfrak{s}(t(a,1)) \mathfrak{s}(w_2)\mathfrak{s}(n(y)) \mathfrak{s}(w_2)\mathfrak{s}(n(x))).
\]
We assume $t(z,z)t(a,1)\overline{n}(y)n(x)=\begin{pmatrix} \alpha\beta & \upsilon \\ & \alpha  \end{pmatrix}\overline{n}(\overline{y})$ for $\alpha \in (F^{\times})^2$, $x \in \mathfrak{p}^{-m}$ and $\overline{y} \in \mathfrak{p}^{3i}$. Then
\[
  \begin{pmatrix} za & zax \\ zy & zxy+z \end{pmatrix}=\begin{pmatrix} \alpha\beta+\delta\overline{y} & \upsilon \\ \alpha \overline{y} & \alpha \end{pmatrix}.
\]
Focusing on the bottom entries $zy=\alpha\overline{y}$ and $zxy+z=\alpha$, we conclude $z=\alpha(1-x\overline{y})$. According to the surjective map  \cite[Lemma 3.3]{ChaiZhang}, $1-x\overline{y}$, being an element of $1+\mathfrak{p}^{2m+3}$, is in fact a square element. Therefore $z$ belongs to $(F^{\times})^2$.
\end{proof}

We are now in the position to present our main result.

\begin{theorem}
Let $\pi$ and $\sigma$ be irreducible admissible representations of $GL_2$ having the same central character. Then for every sufficiently highly ramified character $\chi$ of $F^{\times}$ we have
\[
\gamma(s,\pi \otimes \chi,\mathrm{Sym}^2,\psi)=\gamma(s,\sigma \otimes \chi,\mathrm{Sym}^2,\psi).
\]
\end{theorem}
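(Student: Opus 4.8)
The plan is to pin down the $\gamma$-factor by evaluating both sides of the functional equation on the explicit Howe vectors and the section $f^i_{2s-1}$ of \S\ref{sec5:2}, and then to reduce the equality of the two $\gamma$-factors to the vanishing of a single Mellin transform against a highly ramified character. First I would fix $W^1\in\mathcal{W}(\pi,\psi)$ and $W^2\in\mathcal{W}(\sigma,\psi)$ with $W^1(I_2)=W^2(I_2)=1$ and $W_{\theta^{\psi}}\in\mathcal{W}(\theta^{\psi},\psi^{-1})$ with $W_{\theta^{\psi}}(I_2)=1$, and choose an integer $L$ --- \emph{independent of $\chi$} --- so that $W^1$, $W^2$ and $W_{\theta^{\psi}}$ are right $K_L$-invariant. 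For a ramified $\chi$ I then take $m\geq\max\{6L,\mathfrak{f}(\chi)\}$ and $i\geq\max\{m+1,I(N_m,\omega_{\pi\otimes\chi})\}$ and form the Howe vectors $W^1_m$, $W^2_m$, $W_{\theta^{\psi},m}$ together with $f^i_{2s-1}\in I(2s-1,\omega^{-1}_{\pi\otimes\chi})$. Since $\omega_{\pi\otimes\chi}=\omega_{\pi}\chi^2$ and $\hat N(2s-1,\omega^{-1}_{\pi\otimes\chi},\psi)=\gamma(2s-1,\omega^2_{\pi}\chi^4,\psi)\,M(2s-1,\omega^{-1}_{\pi\otimes\chi})$, evaluating the functional equation \eqref{func-Nhat} (twisted by $\chi$ as in \S\ref{sec5:3}) on these data and invoking Proposition~\ref{constant} gives
\[
\gamma(2s-1,\omega^2_{\pi}\chi^4,\psi)\,I\bigl(W^1_m,W_{\theta^{\psi},m},M(2s-1,\omega^{-1}_{\pi\otimes\chi})f^i_{2s-1},\chi\bigr)=q^{-3i-m}\,\gamma(s,\pi\otimes\chi,\mathrm{Sym}^2,\psi),
\]
together with the identical identity for $\sigma$, whose central character $\omega_{\sigma}$ equals $\omega:=\omega_{\pi}$.

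Next I would subtract the two identities. The difference of the left-hand integrals is exactly the quantity computed by the Mellin transform identity \eqref{dualexpression}; substituting it and cancelling the common factor $q^{-3i-m}$ yields
\[
\gamma(s,\pi\otimes\chi,\mathrm{Sym}^2,\psi)-\gamma(s,\sigma\otimes\chi,\mathrm{Sym}^2,\psi)=q^{3L}\,\gamma(2s-1,\omega^2\chi^4,\psi)\int_{\mathfrak{p}^{-9L}}\phi(a)\,\chi^{-1}(a)\,|a|^{-\frac{2s+1}{4}}\,d^{\times}a,
\]
where $\phi(a)=\bigl[W^1_{3L}(t(a,1)w_2)-W^2_{3L}(t(a,1)w_2)\bigr]\,j_{\theta^{\psi}}(a)\,\omega(a)^{-1}\mu_{\psi}(a^{-1})^{-1}$. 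The decisive point is that $\phi$ is a \emph{fixed} function: it depends on $L$, on $W^1,W^2,W_{\theta^{\psi}}$ and on the Bessel function $j_{\theta^{\psi}}$, but not on $\chi$ or on $s$, and --- by Lemma~\ref{excep-a-support}, Proposition~\ref{Bessel} and \cite[Lemma~5.2]{Zhang} --- on the range $\mathfrak{p}^{-9L}$ it is uniformly locally constant, invariant under multiplication by $1+\mathfrak{p}^{N}$ for a fixed level $N=N(L)$.

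The vanishing then follows from the standard stability mechanism: decomposing $\mathfrak{p}^{-9L}\cap F^{\times}$ into cosets $a_0(1+\mathfrak{p}^{N})$, on each of which $\phi$ and $|a|$ are constant, one has $\int_{a_0(1+\mathfrak{p}^{N})}\chi^{-1}(a)\,d^{\times}a=0$ whenever $\mathfrak{f}(\chi)>N$, because $\chi$ is then nontrivial on $1+\mathfrak{p}^{N}$; summing (absolutely convergently for $s$ in a suitable half-plane, then continuing meromorphically) shows that the whole integral is $0$ for such $\chi$. Finally, for $\chi$ sufficiently ramified the character $\omega^2\chi^4$ is itself ramified --- here the hypothesis $p\neq2$ forces $\mathfrak{f}(\chi^4)=\mathfrak{f}(\chi)$ --- so $L(2s-1,\omega^2\chi^4)=1$ and $\gamma(2s-1,\omega^2\chi^4,\psi)=\varepsilon(2s-1,\omega^2\chi^4,\psi)$ is a nonzero monomial in $q^{-s/2}$, hence harmless. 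Therefore $\gamma(s,\pi\otimes\chi,\mathrm{Sym}^2,\psi)=\gamma(s,\sigma\otimes\chi,\mathrm{Sym}^2,\psi)$ for all sufficiently highly ramified $\chi$.

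I expect the main obstacle to lie not in this last Mellin step, but in the two structural inputs it rests on: Proposition~\ref{constant} (the exact evaluation $I(W_m,W_{\theta^{\psi},m},f^i_{2s-1},\chi)=q^{-3i-m}$, which demands carefully tracking the Kubota cocycle and the genuineness of $W_{\theta^{\psi},m}$ and $f^i_{2s-1}$ through the Iwasawa decomposition, together with the support statements of Lemma~\ref{excep-a-support}) and the Mellin transform identity \eqref{dualexpression} (which requires the asymptotics and local uniform smoothness of the partial and full Bessel functions of $\theta^{\psi}$ from \S\ref{sec5:1}, the reduction \eqref{m-removed} that replaces the running level $m$ by the fixed level $3L$, and the computation of the intertwining operator on $f^i_{2s-1}$ via Proposition~\ref{eval-intertwining} and Lemma~\ref{z-support}). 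The delicate technical content is precisely in guaranteeing that the function $\phi$ produced this way has a genuinely $\chi$-independent uniform-local-constancy level and controlled behaviour near $a=0$, so that the coset-by-coset cancellation against $\chi^{-1}$ applies unconditionally.
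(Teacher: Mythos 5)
Your proposal is correct and follows essentially the same route as the paper's proof: evaluate the functional equation on Howe vectors and the section $f^i_{2s-1}$ via Proposition~\ref{constant}, express the difference of the two sides through the Mellin transform identity \eqref{dualexpression}, use the uniform local constancy of the partial Bessel data (Proposition~\ref{Bessel}) to kill the integral against a sufficiently ramified $\chi$, and convert between $\Gamma$ and $\gamma$ by the stability of the Tate factor $\gamma(2s-1,\omega^2\chi^4,\psi)$ via Proposition~\ref{division}. The only differences are presentational (you carry the Tate factor explicitly throughout rather than working with $\Gamma$ and converting at the end), and your remarks on convergence near $a=0$ and on $\mathfrak{f}(\chi^4)=\mathfrak{f}(\chi)$ for $p\neq 2$ are, if anything, slightly more careful than the paper's.
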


\begin{proof}
We may assume that $m \geq \max \{ \mathfrak{f}(\chi), 6L \}$. We enlarge $i$ so that $i \geq \max \{m+1,I(N_m,\omega_{\pi\otimes\chi}) \}$. Substituting \eqref{eq-constant} and \eqref{dualexpression} in the functional equation \eqref{unnoraml-intertwining}, then simply implies:
\begin{equation}
\label{gamma-difference}
\begin{split}
 & \Gamma(s,\pi \otimes \chi,\mathrm{Sym}^2,\psi)-\Gamma(s,\sigma \otimes \chi,\mathrm{Sym}^2,\psi)\\
 &=q^{3L} \int_{\mathfrak{p}^{-9L}} [W^1_{3L}(t(a,1)w_2)-W^2_{3L}(t(a,1)w_2)] j_{\theta^{\psi}}(a) \omega(a)^{-1}\mu_{\psi}(a^{-1})^{-1}  \chi^{-1}(a)|a|^{-\frac{2s+1}{4}}d^{\times}a.
 \end{split}
\end{equation}
To complete the proof, we must now observe from Proposition \ref{Bessel}-\ref{Bessel-3} that the integrand
\[
 a \mapsto [W^1_{3L}(t(a,1)w_2)-W^2_{3L}(t(a,1)w_2)] j_{\theta^{\psi}}(a) \omega(a)^{-1}\mu_{\psi}(a^{-1})^{-1}  |a|^{-\frac{2s+1}{4}}
\]
is right invariant under $1+\mathfrak{p}^{6L}$ for $a \in \mathfrak{p}^{-9L}$. Taking $\chi$ a sufficiently highly ramified character, the integral in \eqref{gamma-difference} vanishes. Since Tate's gamma factor $\gamma(2s-1,\omega^2_{\pi \otimes \chi},\psi)$ stabilizes under highly ramified twist, we conclude from Proposition \ref{division} that
\[
 \gamma(s,\pi \otimes \chi,\mathrm{Sym}^2,\psi)=\gamma(s,\sigma \otimes \chi,\mathrm{Sym}^2,\psi)
\]
as expected.
\end{proof}

 We illustrate an important consequence of this that we highlight below. We recover the result of Gelbart and Jacquet \cite[(6.4)]{Gelbart-Jacquet}.

\begin{theorem}
Let $\pi$ and $\sigma$ be irreducible admissible representations of $GL_2$ having the same central character. Then for every sufficiently highly ramified character $\chi$ of $F^{\times}$ we have
\[
   L(s,\pi \otimes \chi,\mathrm{Sym}^2)=L(s,\sigma \otimes \chi,\mathrm{Sym}^2)= 1.
\]
Moreover $\varepsilon(s,\pi \otimes \chi,\mathrm{Sym}^2,\psi)= \varepsilon(s,\sigma \otimes \chi,\mathrm{Sym}^2,\psi)$.
\end{theorem}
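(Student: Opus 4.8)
The plan is to deduce this final theorem from the stability of $\gamma$-factors just established, together with the structural results on $L$-functions and $\varepsilon$-factors from Sections \ref{sec3} and \ref{sec2}. First I would show that $L(s,\pi\otimes\chi,\mathrm{Sym}^2)=1$ for $\chi$ sufficiently highly ramified. By Theorem \ref{main-factorization} (after passing to the induced representation of Langlands type having $\pi\otimes\chi$ as quotient, as in \S\ref{sec4:1}), the $L$-function $L(s,\pi\otimes\chi,\mathrm{Sym}^2)^{-1}$ is a least common multiple of terms $L(s,\pi_i^{(j)},\mathrm{Sym}^2)^{-1}$ with $0\leq j\leq 1$, i.e. of $L(s,\chi\mu_k,\mathrm{Sym}^2)^{-1}$ for the inducing characters $\chi\mu_k$ and of $L(s,\chi^2\mu_k\mu_l\times\cdots)^{-1}$ type factors coming from the zeroth and first derivatives; in every case these are Tate-type $L$-functions (recall $L(s,\eta,\mathrm{Sym}^2)=L(s,\eta\times\eta)=L(s,\eta^2)$ for a character $\eta$) attached to characters of the form $\chi^2\cdot(\text{fixed character})$. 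For $\chi$ highly ramified each such $\chi^2\cdot(\text{fixed})$ is ramified, so every Tate $L$-factor is $1$, and hence $L(s,\pi\otimes\chi,\mathrm{Sym}^2)=1$. The same applies verbatim to $\sigma\otimes\chi$. This gives the first displayed equality.

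Next, with both $L$-functions equal to $1$ and the $\gamma$-factors equal by the previous theorem, the $\varepsilon$-factor equality is essentially formal. From the definition \eqref{epsilon},
\[
\varepsilon(s,\pi\otimes\chi,\mathrm{Sym}^2,\psi)=\frac{\gamma(s,\pi\otimes\chi,\mathrm{Sym}^2,\psi)L(s,\pi\otimes\chi,\mathrm{Sym}^2)}{L(1-s,\widetilde{\pi\otimes\chi},\mathrm{Sym}^2)}.
\]
Since $\widetilde{\pi\otimes\chi}=\widetilde\pi\otimes\chi^{-1}$ and $\chi^{-1}$ is highly ramified exactly when $\chi$ is, the denominator $L(1-s,\widetilde\pi\otimes\chi^{-1},\mathrm{Sym}^2)$ also equals $1$ by the first part. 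Therefore $\varepsilon(s,\pi\otimes\chi,\mathrm{Sym}^2,\psi)=\gamma(s,\pi\otimes\chi,\mathrm{Sym}^2,\psi)$, and likewise for $\sigma$; the stability of $\gamma$-factors then yields $\varepsilon(s,\pi\otimes\chi,\mathrm{Sym}^2,\psi)=\varepsilon(s,\sigma\otimes\chi,\mathrm{Sym}^2,\psi)$.

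The main point requiring care is the first step: one must check that \emph{every} factor entering the least common multiple in Theorem \ref{main-factorization}, for the Langlands-type representation attached to $\pi\otimes\chi$, is a Tate $L$-function of a character that becomes ramified once $\chi$ is highly ramified. Concretely, if $\pi$ is realized as the Langlands quotient of $\mathrm{Ind}_B^{GL_2}(\pi_1\nu^{u_1}\boxtimes\pi_2\nu^{u_2})$ with each $\pi_i$ a character of $F^\times$, then $\pi\otimes\chi$ corresponds to $\mathrm{Ind}_B^{GL_2}(\pi_1\chi\nu^{u_1}\boxtimes\pi_2\chi\nu^{u_2})$, and Theorem \ref{Langlands} gives $L(s,\pi\otimes\chi,\mathrm{Sym}^2)=L(s+u_1+u_2,\pi_1\chi\times\pi_2\chi)\prod_i L(s+2u_i,\pi_i\chi,\mathrm{Sym}^2)$, a product of Tate $L$-functions of the ramified characters $\pi_1\pi_2\chi^2$ and $\pi_i^2\chi^2$; each is identically $1$. (If $\pi_i$ were a discrete series of $GL_2$ one would instead invoke the discrete series case already settled in \cite{Ya17} together with the known stability there, but in rank $2$ the only discrete series contributions are from $\pi$ itself being supercuspidal or Steinberg, for which $L(s,\pi\otimes\chi,\mathrm{Sym}^2)=1$ for highly ramified $\chi$ follows from the equality with the Artin side proved in the previous section, since $\mathrm{Sym}^2(\rho(\pi)\otimes\chi)=\mathrm{Sym}^2(\rho(\pi))\otimes\chi^2$ has no unramified subquotient for $\chi$ highly ramified.) I expect no further obstacle: the remaining manipulations are the bookkeeping of ramification conductors and the formal algebra of the functional equation.
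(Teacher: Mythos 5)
Your argument is correct, but it follows a genuinely different route from the one the paper takes. The paper dispatches this theorem in a single line: it invokes Yamana's Lemma 3.15 of \cite{Ya17}, which confines the possible poles of $L(s,\pi\otimes\chi,\mathrm{Sym}^2)$, and then runs the argument of Jacquet--Shalika \cite{JS}*{Proposition 5.1} --- the simple integrals controlling those poles are expanded via the asymptotics of Whittaker functions restricted to the torus (a finite sum of finite functions), and a sufficiently ramified twist $\chi$ annihilates every term that could contribute a pole, forcing $L=1$. You instead argue structurally: the inductive formula of Theorem \ref{Langlands} reduces $L(s,\pi\otimes\chi,\mathrm{Sym}^2)$, for $\pi$ arising from a Langlands-type principal series, to a product of Tate $L$-functions of characters of the form $\chi^{2}\cdot(\text{fixed})$, each of which is ramified once $\chi$ is highly ramified (note that odd residual characteristic is used here, so that $\mathfrak{f}(\chi^{2})=\mathfrak{f}(\chi)$ and squaring does not destroy ramification); for the discrete series case, which Theorem \ref{Langlands} does not cover, you correctly fall back on the equality with the Artin factor, whose inertia invariants vanish under a highly ramified twist. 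Your handling of the $\varepsilon$-factor (both $L$-factors in \eqref{epsilon} equal $1$, so $\varepsilon=\gamma$, and the $\gamma$-factors agree by the preceding stability theorem) is the same formal step either route must take. The trade-off: your route leans on the full strength of the $L$-function computation of Section \ref{sec4} (the product formula and the discrete-series input from \cite{Ya17}), whereas the Jacquet--Shalika route needs only the a priori localization of poles and the finiteness of the Whittaker asymptotic expansion, and would therefore go through even before the complete determination of $L(s,\pi,\mathrm{Sym}^2)$. Both are sound; just make sure to take the ramification bound to be the maximum of the one needed for $\gamma$-stability and the one needed to kill all the Tate factors.
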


\begin{proof}
 In virtue of Yamana \cite[Lemma 3.15]{Ya17} concerning the pole of the symmetric square $L$-function, this can be proved exactly as in Jacquet and Shalika \cite[Proposition 5.1]{JS}. 
\end{proof}

\begin{acknowledgments}
The author wishes to thank J. Cogdell for pointing out the unfinished project in A. Kable \cite{Kable01} and for encouraging us to write this article. The second part  ``stability of $\gamma$-factors" grows out of several discussions with M. Krishnamurthy. We wish to express our gratitude to M. Krishnamurthy for keen insights and for fruitful mathematical communications. We also thank every members of the number theory and the representation theory group at the University of Iowa for vibrating environments when this paper was written. Finally, we would like to convey our sincere appreciation to the anonymous referee for invaluable suggestions which improve exposition and organization of our paper. 
\end{acknowledgments}

\bibliographystyle{amsplain}

\begin{bibdiv}
\begin{biblist}

\bib{BLS}{article}{
   author={Banks, William D.},
   author={Levy, Jason},
   author={Sepanski, Mark R.},
   title={Block-compatible metaplectic cocycles},
   journal={J. Reine Angew. Math.},
   volume={507},
   date={1999},
   pages={131--163},
  }

  \bib{BernsteinZelevinsky}{article}{
   author={Bernstein, I. N.},
   author={Zelevinsky, A. V.},
   title={Induced representations of reductive ${\germ p}$-adic groups. I},
   journal={Ann. Sci. \'{E}cole Norm. Sup. (4)},
   volume={10},
   date={1977},
   number={4},
   pages={441--472},
  }
  
  \bib{BuGi}{article}{
   author={Bump, Daniel},
   author={Ginzburg, David},
   title={Symmetric square $L$-functions on ${\rm GL}(r)$},
   journal={Ann. of Math. (2)},
   volume={136},
   date={1992},
   number={1},
   pages={137--205},
   }
   
   \bib{Chai}{article}{
   author={Chai, Jingsong},
   title={Some results on archimedean Rankin-Selberg integrals},
   journal={Pacific J. Math.},
   volume={273},
   date={2015},
   number={2},
   pages={277--305},
  }
   
   \bib{ChaiZhang}{article}{
   author={Chai, Jingsong},
   author={Zhang, Qing},
   title={A strong multiplicity one theorem for $\rm SL_2$},
   journal={Pacific J. Math.},
   volume={285},
   date={2016},
   number={2},
   pages={345--374},
   }

\bib{Chen}{article}{
   author={Chen, Shih-Yu},
   title={Gamma factors for the Asai cube representation},
   journal={to appear in Mathematische Zeitschrift},
   note={https://arxiv.org/abs/arXiv:1904.07844},
    }
     
   \bib{Cogdell-PS}{article}{
   author={Cogdell, J. W.},
   author={Piatetski-Shapiro, I. I.},
   title={Derivatives and L-functions for $GL_n$},
   conference={
      title={Representation theory, number theory, and invariant theory},
   },
   book={
      series={Progr. Math.},
      volume={323},
      publisher={Birkh\"{a}user/Springer, Cham},
   },
   date={2017},
   pages={115--173},
}

\bib{CoShTs}{article}{
   author={Cogdell, J. W.},
   author={Shahidi, F.},
   author={Tsai, T.-L.},
   title={Local Langlands correspondence for ${\rm GL}_n$ and the exterior
   and symmetric square $\varepsilon$-factors},
   journal={Duke Math. J.},
   volume={166},
   date={2017},
   number={11},
   pages={2053--2132},
  }

\bib{Ganapathy-Lomeli}{article}{
   author={Ganapathy, Radhika},
   author={Lomel\'{\i}, Luis},
   title={On twisted exterior and symmetric square $\gamma$-factors},
   language={English, with English and French summaries},
   journal={Ann. Inst. Fourier (Grenoble)},
   volume={65},
   date={2015},
   number={3},
   pages={1105--1132},
  }
  
  \bib{GaoShahidiSzpruch}{article}{
   author={Gao, Fan},
   author={Shahidi, Freydoon},
   author={Szpruch, Dani},
   title={Local coefficients and gamma factors for principal series of covering groups},
   journal={to appear in Mem. Amer. Math. Soc.},
   note={https://arxiv.org/abs/arXiv:1902.02686v3},
    }
    
    \bib{Gelbart-Jacquet}{article}{
   author={Gelbart, Stephen},
   author={Jacquet, Herv\'{e}},
   title={A relation between automorphic representations of ${\rm GL}(2)$
   and ${\rm GL}(3)$},
   journal={Ann. Sci. \'{E}cole Norm. Sup. (4)},
   volume={11},
   date={1978},
   number={4},
  }

 \bib{Gelbart-PS}{article}{
   author={Gelbart, Stephen},
   author={Piatetski-Shapiro, I. I.},
   title={Distinguished representations and modular forms of half-integral
   weight},
   journal={Invent. Math.},
   volume={59},
   date={1980},
   number={2},
   pages={145--188},
  }

  \bib{HKS}{article}{
   author={Harris, Michael},
   author={Kudla, Stephen S.},
   author={Sweet, William J.},
   title={Theta dichotomy for unitary groups},
   journal={J. Amer. Math. Soc.},
   volume={9},
   date={1996},
   number={4},
   pages={941--1004},
  }
  
  \bib{Henniart}{article}{
   author={Henniart, Guy},
   title={Correspondance de Langlands et fonctions $L$ des carr\'{e}s ext\'{e}rieur
   et sym\'{e}trique},
   language={French},
   journal={Int. Math. Res. Not. IMRN},
   date={2010},
   number={4},
   pages={633--673},
  }
      
    \bib{Howe}{article}{
   author={Howe, Roger},
   title={Classification of Irreducible Representations of $GL_2(F)$},
   journal={preprint, I.H.E.S., Bures-sur-Yvette, France},
   date={1978},
   }
   
   \bib{JS}{article}{
   author={Jacquet, Herv\'{e}},
   author={Shalika, Joseph},
   title={A lemma on highly ramified $\epsilon$-factors},
   journal={Math. Ann.},
   volume={271},
   date={1985},
   number={3},
   pages={319--332},
   }
   
   \bib{JPSS}{article}{
   author={Jacquet, H.},
   author={Piatetskii-Shapiro, I. I.},
   author={Shalika, J. A.},
   title={Rankin-Selberg convolutions},
   journal={Amer. J. Math.},
   volume={105},
   date={1983},
   number={2},
   pages={367--464},
  }
  
  \bib{JO-3}{article}{
   author={Jo, Yeongseong},
   title={Derivatives and exceptional poles of the local exterior square
   $L$-function for $GL_m$},
   journal={Math. Z.},
   volume={294},
   date={2020},
   number={3-4},
   pages={1687--1725},
  }
 
  \bib{JO20-2}{article}{
   author={Jo, Yeongseong},
   title={Factorization of the local exterior square $L$-function of $GL_m$},
   journal={Manuscripta Math.},
   volume={162},
   date={2020},
   number={3-4},
   pages={493--536},
   }  
 
\bib{JO20}{article}{
   author={Jo, Yeongseong},
   title={Rankin$-$Selberg $L$-functions via good sections},
   journal={Forum Math.},
   volume={32},
   date={2020},
   number={4},
   pages={1039--1074},
   }
    
  \bib{Kable99}{article}{
   author={Kable, Anthony C.},
   title={The main involutions of the metaplectic group},
   journal={Proc. Amer. Math. Soc.},
   volume={127},
   date={1999},
   number={4},
   pages={955--962},
  }
  
  \bib{Kable01}{article}{
   author={Kable, Anthony C.},
   title={The tensor product of exceptional representations on the general
   linear group},
   language={English, with English and French summaries},
   journal={Ann. Sci. \'{E}cole Norm. Sup. (4)},
   volume={34},
   date={2001},
   number={5},
   pages={741--769},
  }
  
  \bib{Kaplan13}{article}{
   author={Kaplan, Eyal},
   title={On the gcd of local Rankin-Selberg integrals for even orthogonal
   groups},
   journal={Compos. Math.},
   volume={149},
   date={2013},
   number={4},
   pages={587--636},
  }

\bib{Kaplan17}{article}{
   author={Kaplan, Eyal},
   title={The characterization of theta-distinguished representations of
   ${\rm GL}(n)$},
   journal={Israel J. Math.},
   volume={222},
   date={2017},
   number={2},
   pages={551--598},
  }

\bib{Kaplan}{article}{
   author={Kaplan, Eyal},
   title={The double cover of odd general spin groups, small
   representations, and applications},
   journal={J. Inst. Math. Jussieu},
   volume={16},
   date={2017},
   number={3},
   pages={609--671},
  }
  
  \bib{KaPa}{article}{
   author={Kazhdan, D. A.},
   author={Patterson, S. J.},
   title={Metaplectic forms},
   journal={Inst. Hautes \'{E}tudes Sci. Publ. Math.},
   number={59},
   date={1984},
   pages={35--142},
  }

  \bib{Lojasiewicz}{book}{
   author={\L ojasiewicz, Stanis\l aw},
   title={Introduction to complex analytic geometry},
   note={Translated from the Polish by Maciej Klimek},
   publisher={Birkh\"{a}user Verlag, Basel},
   date={1991},
   pages={xiv+523},
   }
  
  \bib{Matringe09}{article}{
   author={Matringe, Nadir},
   title={Conjectures about distinction and local Asai $L$-functions},
   journal={Int. Math. Res. Not. IMRN},
   date={2009},
   number={9},
   pages={1699--1741},
   }

  \bib{Matringe}{article}{
   author={Matringe, Nadir},
   title={On the local Bump-Friedberg $L$-function},
   journal={J. Reine Angew. Math.},
   volume={709},
   date={2015},
   pages={119--170},
   issn={0075-4102},
  }
  
  \bib{PP}{article}{
   author={Patterson, S. J.},
   author={Piatetski-Shapiro, I. I.},
   title={The symmetric-square $L$-function attached to a cuspidal
   automorphic representation of ${\rm GL}_3$},
   journal={Math. Ann.},
   volume={283},
   date={1989},
   number={4},
   pages={551--572},
  }
  
  \bib{Piatetski-Shapiro}{article}{
   author={Piatetski-Shapiro, I. I.},
   title={$L$-functions for ${\rm GSp}_4$},
   note={Olga Taussky-Todd: in memoriam},
   journal={Pacific J. Math.},
   date={1997},
   number={Special Issue},
   pages={259--275},
 }
  
 \bib{PSRA87}{article}{
   author={Piatetski-Shapiro, I.},
   author={Rallis, Stephen},
   title={Rankin triple $L$ functions},
   journal={Compositio Math.},
   volume={64},
   date={1987},
   number={1},
   pages={31--115},
  }
  
  \bib{Schmidt-Tran}{article}{
   author={Schmidt, Ralf},
   author={Tran, Long},
   title={Zeta integrals for ${\rm GSp}(4)$ via Bessel models},
   journal={Pacific J. Math.},
   volume={296},
   date={2018},
   number={2},
   pages={437--480},
  }
  
  \bib{Shahidi}{article}{
   author={Shahidi, Freydoon},
   title={Twisted endoscopy and reducibility of induced representations for
   $p$-adic groups},
   journal={Duke Math. J.},
   volume={66},
   date={1992},
   number={1},
   pages={1--41},
  }
  
  \bib{Shimura}{article}{
   author={Shimura, Goro},
   title={On the holomorphy of certain Dirichlet series},
   journal={Proc. London Math. Soc. (3)},
   volume={31},
   date={1975},
   number={1},
   pages={79--98},
   }
  
  \bib{Soudry}{article}{
   author={Soudry, David},
   title={The $L$ and $\gamma $ factors for generic representations of ${\rm
   GSp}(4,\,k)\times {\rm GL}(2,\,k)$ over a local non-Archimedean field
   $k$},
   journal={Duke Math. J.},
   volume={51},
   date={1984},
   number={2},
   pages={355--394},
  }

  \bib{TA14}{article}{
   author={Takeda, Shuichiro},
   title={The twisted symmetric square $L$-function of $\mathrm{GL}(r)$},
   journal={Duke Math. J.},
   volume={163},
   date={2014},
   number={1},
   pages={175--266},
   }
   
   \bib{Takeda15}{article}{
   author={Takeda, Shuichiro},
   title={On a certain metaplectic Eisenstein series and the twisted
   symmetric square $L$-function},
   journal={Math. Z.},
   volume={281},
   date={2015},
   number={1-2},
   pages={103--157},
   }
   
  
  \bib{Tate}{article}{
   author={Tate, J.},
   title={Number theoretic background},
   conference={
      title={Automorphic forms, representations and $L$-functions},
      address={Proc. Sympos. Pure Math., Oregon State Univ., Corvallis,
      Ore.},
      date={1977},
   },
   book={
      series={Proc. Sympos. Pure Math., XXXIII},
      publisher={Amer. Math. Soc., Providence, R.I.},
   },
   date={1979},
   pages={3--26},
 }
  
\bib{Wal03}{article}{
   author={Waldspurger, J.-L.},
   title={La formule de Plancherel pour les groupes $p$-adiques (d'apr\`es
   Harish-Chandra)},
   language={French, with French summary},
   journal={J. Inst. Math. Jussieu},
   volume={2},
   date={2003},
   number={2},
   pages={235--333},
  }
  
  \bib{Wang}{book}{
   author={Wang, Chian-Jen},
   title={On the existence of cuspidal distinguished representations of
   metaplectic groups},
   note={Thesis (Ph.D.)--The Ohio State University},
   publisher={ProQuest LLC, Ann Arbor, MI},
   date={2003},
   pages={99},
   }

 \bib{Weil}{book}{
   author={Weil, Andr\'{e}},
   title={Basic number theory},
   series={Classics in Mathematics},
   note={Reprint of the second (1973) edition},
   publisher={Springer-Verlag, Berlin},
   date={1995},
   pages={xviii+315},
  }

  \bib{Ya14}{article}{
   author={Yamana, Shunsuke},
   title={L-functions and theta correspondence for classical groups},
   journal={Invent. Math.},
   volume={196},
   date={2014},
   number={3},
   pages={651--732},
  }

   \bib{Ya17}{article}{
   author={Yamana, Shunsuke},
   title={Local symmetric square $L$-factors of representations of general
   linear groups},
   journal={Pacific J. Math.},
   volume={286},
   date={2017},
   number={1},
   pages={215--256},
  }

\bib{Zhang}{article}{
   author={Zhang, Qing},
   title={A local converse theorem for $\rm U(1,1)$},
   journal={Int. J. Number Theory},
   volume={13},
   date={2017},
   number={8},
   pages={1931--1981},
   }

\end{biblist}
\end{bibdiv}

\end{document}